\numberwithin{equation}{section}
\numberwithin{figure}{section}
\theoremstyle{plain}
\newtheorem{thm}{\protect\theoremname}[section]
\theoremstyle{definition}
\newtheorem{defn}[thm]{\protect\definitionname}
\theoremstyle{remark}
\newtheorem{rem}[thm]{\protect\remarkname}
\theoremstyle{remark}
\newtheorem{notation}[thm]{\protect\notationname}
\theoremstyle{definition}
\newtheorem{example}[thm]{\protect\examplename}
\theoremstyle{plain}
\newtheorem{lem}[thm]{\protect\lemmaname}
\theoremstyle{plain}
\newtheorem{cor}[thm]{\protect\corollaryname}
\theoremstyle{plain}
\newtheorem{prop}[thm]{\protect\propositionname}
\theoremstyle{remark}
\newtheorem{claim}[thm]{\protect\claimname}
\DeclareRobustCommand*\cal{\@fontswitch\relax\mathcal}
\tikzset{curve/.style={settings={#1},to path={(\tikztostart)
    .. controls ($(\tikztostart)!\pv{pos}!(\tikztotarget)!\pv{height}!270:(\tikztotarget)$)
    and ($(\tikztostart)!1-\pv{pos}!(\tikztotarget)!\pv{height}!270:(\tikztotarget)$)
    .. (\tikztotarget)\tikztonodes}},
    settings/.code={\tikzset{quiver/.cd,#1}
        \def\pv##1{\pgfkeysvalueof{/tikz/quiver/##1}}},
    quiver/.cd,pos/.initial=0.35,height/.initial=0}
\tikzset{tail reversed/.code={\pgfsetarrowsstart{tikzcd to}}}
\tikzset{2tail/.code={\pgfsetarrowsstart{Implies[reversed]}}}
\tikzset{2tail reversed/.code={\pgfsetarrowsstart{Implies}}}
\tikzset{no body/.style={/tikz/dash pattern=on 0 off 1mm}}
\providecommand{\claimname}{Claim}
\providecommand{\corollaryname}{Corollary}
\providecommand{\definitionname}{Definition}
\providecommand{\examplename}{Example}
\providecommand{\lemmaname}{Lemma}
\providecommand{\notationname}{Notation}
\providecommand{\propositionname}{Proposition}
\providecommand{\remarkname}{Remark}
\providecommand{\theoremname}{Theorem}
\begin{document}
\title{An unoriented skein exact triangle in unoriented link Floer homology}
\author{Gheehyun Nahm}
\thanks{The author was partially supported by the Simons Grant \emph{New structures in low-dimensional topology.}}
\begin{abstract}
We define band maps in unoriented link Floer homology and show that
they form an unoriented skein exact triangle. These band maps are
similar to the band maps in equivariant Khovanov homology given by
the Lee deformation.

As a key tool, we use a Heegaard Floer analogue of Bhat's recent $2$-surgery
exact triangle in instanton Floer homology, which may be of independent
interest. Unoriented knot Floer homology corresponds to $I^{\sharp}$
of the knot in our $2$-surgery exact triangle.
\end{abstract}

\address{Department of Mathematics, Princeton University, Princeton, New Jersey
08544, USA}
\email{gn4470@math.princeton.edu}

\maketitle
\tableofcontents{}

\section{\label{sec:Introduction}Introduction}

Heegaard Floer homology, defined by Ozsv\'{a}th and Szab\'{o} \cite{MR2113019},
is a powerful Floer theoretic invariant of closed, oriented three-manifolds.
A closely related invariant for knots is knot Floer homology, defined
by Ozsv\'{a}th and Szab\'{o} \cite{MR2065507} and independently
by Rasmussen \cite{MR2704683}, which generalizes to oriented links.
Link Floer homology categorifies the Alexander polynomial, and satisfies
an oriented skein exact triangle \cite[Theorem 10.2]{MR2065507} that
categorifies the oriented skein relation. For unoriented links, there
is a closely related invariant, unoriented link Floer homology, defined
by Ozsv\'{a}th, Stipsicz, and Szab\'{o} \cite{MR3694597}. In this
paper, we show that an algebraic variant of unoriented link Floer
homology\footnote{See Remark \ref{rem:curlyCFL-tensor} for a comparison with the invariant
defined in \cite{MR3694597}.}, $\boldsymbol{HFL'}^{-}(Y,L)$, satisfies an unoriented skein exact
triangle over the field $\mathbb{F}=\mathbb{Z}/2$ with two elements.
\begin{defn}
\label{def:unoriented-skein-triple-intro}Three links $L_{a},L_{b},L_{c}\subset Y$
form an \emph{unoriented skein triple }if the links are identical
outside of a ball $B^{3}\subset Y$, in which they differ as in Figure
\ref{fig:skein-moves}. 
\end{defn}

\begin{thm}
\label{thm:unoriented-exact-intro}Let $L_{a},L_{b},L_{c}\subset Y$
be an unoriented skein triple. Then there exists an $\mathbb{F}\llbracket U\rrbracket$-linear
exact triangle 
\[
\cdots\to\boldsymbol{HFL'}^{-}(Y,L_{a})\to\boldsymbol{HFL'}^{-}(Y,L_{b})\to\boldsymbol{HFL'}^{-}(Y,L_{c})\to\boldsymbol{HFL'}^{-}(Y,L_{a})\to\cdots.
\]
\end{thm}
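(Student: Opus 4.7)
The plan is to realize the three members of the skein triple as the three outcomes of a single surgery construction and then feed the result into the Heegaard Floer $2$-surgery exact triangle that the author constructs as a Floer-theoretic analogue of Bhat's triangle. I would proceed in three steps: (i) a local geometric reduction of the skein triple to a surgery triple on an auxiliary knot, (ii) invocation of the $2$-surgery triangle to get an exact triangle on $\boldsymbol{HFL'}^{-}$, and (iii) identification of the resulting connecting maps with the band maps defined earlier in the paper.

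For step (i), inside the ball $B^{3}$ in which $L_{a}$, $L_{b}$, $L_{c}$ differ I would introduce a small framed unknot $K$ encircling the crossing region. With appropriate framing conventions, the three surgeries on $K$ that appear in the $2$-surgery triangle produce three ambient pairs that are locally modeled by, and hence globally identified with, $(Y, L_{a})$, $(Y, L_{b})$, $(Y, L_{c})$, respectively. This is analogous to the way the oriented skein triangle in knot Floer homology arises from a nearby unknot; the difference is that here we use the three surgeries of the $2$-surgery triangle rather than the three consecutive integer surgeries of the classical Ozsv\'{a}th--Szab\'{o} triangle. Step (ii) is then immediate: applying the $2$-surgery triangle to $K$ and using the promised identification of $\boldsymbol{HFL'}^{-}$ with the $I^{\sharp}$-analogue yields the exact triangle
\[
\cdots \to \boldsymbol{HFL'}^{-}(Y, L_{a}) \to \boldsymbol{HFL'}^{-}(Y, L_{b}) \to \boldsymbol{HFL'}^{-}(Y, L_{c}) \to \cdots
\]
with connecting maps induced by the $2$-handle cobordisms along $K$. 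The $\mathbb{F}\llbracket U\rrbracket$-linearity is automatic because the $U$-action comes from basepoints on $L$ disjoint from the surgery region.

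For step (iii), I would show that each $2$-handle cobordism along $K$, when pulled across the strands of $L$ by a standard handle slide, induces on the link an explicit saddle cobordism realizing the band between the two relevant diagrams. Functoriality of the cobordism maps for $\boldsymbol{HFL'}^{-}$ would then identify the connecting map in the $2$-surgery triangle with the geometrically defined band map, completing the proof.

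The main obstacle I expect is step (iii). Formal exactness is free once the triangle is in place, but matching the algebraic connecting morphism supplied by the $2$-surgery triangle with the geometric band map typically requires a chain-level model calculation --- for example in a standard Heegaard diagram supported in $B^{3}$ in which both maps can be written down explicitly --- together with careful bookkeeping of basepoints and the $U$-action. This is where I would expect the bulk of the technical effort to be concentrated.
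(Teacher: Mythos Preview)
Your step~(i) contains a genuine gap: the three terms of the $2$-surgery triangle applied to an auxiliary unknot $K$ in $B^{3}$ do not produce the unoriented skein triple. The $2$-surgery triangle (Theorem~\ref{thm:2surgery}) has the shape
\[
\cdots \to \boldsymbol{HFL'}^{-}(Y, K \sqcup L) \to \boldsymbol{HFL'}^{-}(Y_{\lambda}(K), L) \to \boldsymbol{HFL'}^{-}(Y_{\lambda+2\mu}(K), L) \to \cdots,
\]
so one term always carries $K$ as an additional link component, while the other two are the \emph{same} link $L$ sitting in \emph{different} surgered three-manifolds. If $K$ is a small unknot in $B^{3}$, the only framings with $Y_{\lambda}(K)\cong Y\cong Y_{\lambda+2\mu}(K)$ are $\lambda=-1$ and $\lambda+2\mu=+1$; the corresponding Rolfsen twists insert a full positive or negative twist into the strands threading $K$, not a single saddle resolution. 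No placement of $K$ turns the three outputs into $(Y,L_{a})$, $(Y,L_{b})$, $(Y,L_{c})$. This is a structural mismatch rather than a bookkeeping issue: the unoriented skein triple consists of three links in the \emph{same} manifold related by band moves, whereas the $2$-surgery triple has one ``unsurgered'' term containing $K$ and two honest Dehn fillings.

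The paper does use the $2$-surgery triangle, but only as one of two ingredients. The proof reduces to the local statement (Theorem~\ref{thm:sym2-t2}) that the twisted complex $\boldsymbol{\beta}_{a}\xrightarrow{\tau}\boldsymbol{\beta}_{b}$ is homotopy equivalent to $\boldsymbol{\beta}_{c}^{E}$, and this is established by interpolating through an auxiliary twisted complex $\boldsymbol{\beta}_{0}\xrightarrow{\theta}\boldsymbol{\beta}_{2}$ built from the slope-$0$ and slope-$2$ curves on the local torus. The equivalence $\bigl(\widetilde{\boldsymbol{\beta}_{0}}\to\widetilde{\boldsymbol{\beta}_{2}}\bigr)\simeq\boldsymbol{\beta}_{c}^{E}$ is precisely the local $2$-surgery computation (Claim~\ref{claim:step-3}), and this is the part your proposal captures. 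But the remaining equivalence $\bigl(\boldsymbol{\beta}_{a}\to\boldsymbol{\beta}_{b}\bigr)\simeq\bigl(\boldsymbol{\beta}_{0}\to\boldsymbol{\beta}_{2}\bigr)$ (Claim~\ref{claim:step-1}, proved in Section~\ref{sec:proof-step1}) has no surgery-theoretic interpretation; it is a direct holomorphic polygon count on $\mathbb{T}^{2}$ relating the ``skein'' curves to the ``surgery'' curves. That second computation is what your outline omits, and it is not a formality: it is the step that actually bridges the skein picture and the $2$-surgery picture. The difficulty you anticipated in step~(iii) is therefore not the main one --- the real work sits earlier, in the geometric reduction you assumed away.
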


A key tool in proving Theorem \ref{thm:unoriented-exact-intro} is
Claim \ref{claim:step-3}, a local version of Theorem \ref{thm:2-surgery-intro},
which may be of independent interest. Theorem \ref{thm:2-surgery-intro}
is a Heegaard Floer analogue of Bhat's recent $2$-surgery exact triangle
\cite{2311.04242} for $I^{\sharp}$, an instanton Floer theoretic
invariant \cite{floer1988instanton} for three-manifolds and links
defined by Kronheimer and Mrowka \cite{MR2805599}.
\begin{thm}
\label{thm:2-surgery-intro}Let $K$ be a knot in $Y$. Then given
any framing $\lambda$ of $K$, there exists an $\mathbb{F}\llbracket U\rrbracket$-linear
exact triangle
\[
\cdots\to\boldsymbol{HFL'}^{-}(Y,K)\to\boldsymbol{HF}^{-}(Y_{\lambda}(K))\to\boldsymbol{HF}^{-}(Y_{\lambda+2\mu}(K))\to\boldsymbol{HFL'}^{-}(Y,K)\to\cdots,
\]
where $\mu$ is the meridian, $Y_{\lambda}(K)$, resp., $Y_{\lambda+2\mu}(K)$
is the $\lambda$, resp., $\lambda+2\mu$-surgery of $Y$ along $K$,
and $\boldsymbol{HF}^{-}$ denotes the Heegaard Floer homology of
the three-manifold.
\end{thm}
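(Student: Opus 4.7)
My plan is to follow the strategy indicated by the paper's remark that Claim \ref{claim:step-3} is a \emph{local} version of Theorem \ref{thm:2-surgery-intro}, proving the local statement first and then bootstrapping to the global theorem by a standard Heegaard Floer gluing argument. A direct attack on the global statement via the Ozsv\'ath--Szab\'o surgery triangle machinery is not possible: the three slopes $\lambda$, $\lambda+2\mu$, and any third slope $\sigma$ cannot all pairwise form primitive triangles on the boundary torus, since $\lambda\cdot(\lambda+2\mu)=\pm 2$. This algebraic obstruction is precisely what forces the ``third vertex'' to be an algebraic modification, namely $\boldsymbol{HFL'}^-$.

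In the local picture, I would choose a small ball $B^3 \subset Y$ containing a standard unknotted arc of $K$. Inside this ball the two surgeries $Y_\lambda(K)$ and $Y_{\lambda+2\mu}(K)$ are obtained by two different rational tangle replacements, and the relevant local contribution to $\boldsymbol{HFL'}^-(Y,K)$ is computed from the chain-level definition of $\boldsymbol{CFL'}^-$ applied to this arc. Choosing a genus-one Heegaard diagram for the ball with fixed boundary parametrization, the two framings become two $\beta$-curves on the torus, and the three local complexes become explicit small $\mathbb{F}\llbracket U\rrbracket$-complexes that can be compared by hand. The content of Claim \ref{claim:step-3} would then package the resulting chain-level exact triangle, with connecting maps given by holomorphic triangle counts in a multi-diagram that assembles all three local configurations.

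To pass from the local claim to the global statement, I would pick a Heegaard multi-diagram for $Y$ adapted to the ball $B^3$, so that a genus-one summand $\Sigma_0$ of the Heegaard surface carries the local data and the remaining pieces encode the common complement $Y \setminus B^3$. The three chain maps of the global exact triangle are induced by holomorphic triangle counts on this multi-diagram, and by the standard associativity of holomorphic polygon maps, the composition of any two consecutive maps is chain-homotopic to zero, with null-homotopies furnished by quadrilateral counts. The local exactness on $\Sigma_0$ then propagates globally because the complement contributes the same tensor factor, in the appropriate sense, to all three vertices.

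The main obstacle will be the rigorous local-to-global step: one must verify that the null-homotopies produced locally extend to global null-homotopies, and that local exactness (equivalently, acyclicity of a particular mapping cone) extends to exactness of the global mapping cone. In the standard surgery triangle this is handled by tracking the decomposition of polygon moduli spaces in the multi-diagram and invoking an energy/convexity argument to rule out unwanted degenerations; the same toolkit should suffice here. A secondary but delicate point is to ensure that the particular algebraic normalization underlying $\boldsymbol{HFL'}^-$ (as opposed to the invariant of \cite{MR3694597}) is precisely the one forced by the surgery construction---this is presumably why the algebraic variant is introduced in the first place, and one has to keep careful track of the $\mathbb{F}\llbracket U\rrbracket$-module structure and of the interaction of the two basepoints on $K$ with the holomorphic triangle maps.
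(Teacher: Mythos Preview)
Your high-level architecture matches the paper: prove a local statement on a genus-one piece (this is Claim \ref{claim:step-3}, carried out in Section \ref{sec:proof-step3}) and then lift it to the global diagram by stabilization (Proposition \ref{prop:stabilization}), after which Lemma \ref{lem:exact-triangle} gives the exact triangle. So the skeleton is correct, and your observation that $\lambda\cdot(\lambda+2\mu)=\pm2$ obstructs a classical three-slope triangle is exactly the point.

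There is, however, a genuine gap at the heart of your local model. You say the two framings become two $\beta$-curves on the torus; fine. What you never say is how $\boldsymbol{CFL'}^{-}(Y,K)$ becomes a \emph{third object of the same kind}, so that all three complexes are of the form $\boldsymbol{CF}^{-}(\boldsymbol{\alpha},-)$ for the same $\boldsymbol{\alpha}$ and the triangle-detection machinery can be set up. This is not automatic: unoriented knot Floer homology uses two link basepoints each weighted $U^{1/2}$, while the surgeries are computed with a single free basepoint weighted $U$, so the third vertex is not a priori a Lagrangian in the same Fukaya category. The paper's resolution (Subsection \ref{subsec:unoriented-local-system}, Remark \ref{rem:same-as-unoriented}) is to represent $\boldsymbol{CFL'}^{-}(Y,K)$ by the meridional curve $\boldsymbol{\beta}_{\infty}$ equipped with a rank-$2$ local system $E$ with monodromy $\phi=e_{1}e_{0}^{\ast}+Ue_{0}e_{1}^{\ast}$; the local computation in Section \ref{sec:proof-step3} is then a count of polygons on $\mathbb{T}^{2}$ with this local system switched on. Your sentence ``the relevant local contribution to $\boldsymbol{HFL'}^{-}(Y,K)$ is computed from the chain-level definition of $\boldsymbol{CFL'}^{-}$ applied to this arc'' papers over precisely this issue: without the local-system device (or the equivalent half-integer-weight reformulation), you have only two $\beta$-curves and no Lagrangian for the third vertex, so none of the holomorphic triangle or quadrilateral maps you invoke are even defined.

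Two smaller corrections. First, calling the surgeries ``rational tangle replacements in $B^{3}$'' is misleading: surgery excises a solid-torus neighbourhood of $K$, and what varies locally is a $\beta$-curve on the Heegaard surface, not a tangle in the three-manifold. Second, the ``energy/convexity'' degeneration analysis you anticipate for the local-to-global step is not needed; the paper's argument is the formal one that if $\underline{f}$ and $\underline{g}$ compose to $\Theta^{+}$ then $\mu_{2}(-,\underline{f})$ is a quasi-isomorphism (Lemma \ref{lem:theta-quasi-iso}), together with the stabilization formula $\mu_{n}(S(-),\ldots,S(-))=S(\mu_{n}(-,\ldots,-))$ of Proposition \ref{prop:stabilization}.
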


These Heegaard Floer theoretic invariants come in various flavors,
for which our exact triangles also hold (see Theorems \ref{thm:unoriented-exact-triangle}
and \ref{thm:2surgery}). We propose the \emph{unreduced hat version},
$\widehat{HFL'}$ (Definition \ref{def:hat-version}) as the counterpart
of $I^{\sharp}$ in Heegaard Floer homology. Indeed, Theorem \ref{thm:unoriented-exact-intro}
is partly motivated by Kronheimer and Mrowka's skein exact triangle
for $I^{\sharp}$ \cite{MR2805599}. Another closely related motivation
is to define a spectral sequence from reduced Khovanov homology to
knot Floer homology. See Subsection \ref{subsec:Motivation} for further
discussion.

\begin{figure}[h]
\begin{centering}
\includegraphics[scale=2]{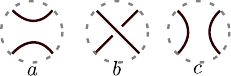}
\par\end{centering}
\caption{\label{fig:skein-moves}A local diagram for an unoriented skein triple}
\end{figure}

\begin{rem}
We thank Fan Ye for communicating to the author that a similar analogue
of $I^{\sharp}$ for knots in Heegaard Floer homology\footnote{For a knot $K$, put two link basepoints on it and consider the chain
complex over $\mathbb{Z}[H]/H^{2}$ where both basepoints have weight
$2H$.} has been suggested in his Miami talk \cite{miami}.
\end{rem}

\subsection{\label{subsec:Unoriented-link-Floer}Unoriented link Floer homology}

Let us define the unoriented link Floer homology groups that we study
in this paper.
\begin{defn}
\label{def:unoriented-intro}Let $L$ be a $k$-component link in
a closed, oriented three-manifold $Y$. To define the \emph{unoriented
link Floer homology} of $L$, we need to choose two basepoints $w_{i},z_{i}\in L_{i}$
for each component $L_{i}$ of $L$. Let $(\Sigma,\boldsymbol{\alpha},\boldsymbol{\beta},\boldsymbol{w},\boldsymbol{z})$
be an admissible, $2k$-pointed Heegaard diagram that represents this
pointed link $L\subset Y$. Then the \emph{unoriented link Floer homology
chain complex} is freely generated by intersection points ${\bf x}\in\mathbb{T}_{\boldsymbol{\alpha}}\cap\mathbb{T}_{\boldsymbol{\beta}}$:
\[
\boldsymbol{CFL'}^{-}(Y,L)=\bigoplus_{{\bf x}\in\mathbb{T}_{\boldsymbol{\alpha}}\cap\mathbb{T}_{\boldsymbol{\beta}}}\mathbb{F}\llbracket U_{1}^{1/2},\cdots,U_{k}^{1/2}\rrbracket{\bf x},
\]
and the differential is given by counting \emph{both basepoints} $w_{i}$
and $z_{i}$ with weight $U_{i}^{1/2}$:
\[
\partial^{-}{\bf x}=\sum_{{\bf y}\in\mathbb{T}_{\boldsymbol{\alpha}}\cap\mathbb{T}_{\boldsymbol{\beta}}}\sum_{\phi\in D({\bf x},{\bf y}),\ \mu(\phi)=1}\#\mathcal{M}(\phi)\prod_{i}U_{i}^{\frac{1}{2}\left(n_{w_{i}}(\phi)+n_{z_{i}}(\phi)\right)}{\bf y}.
\]
The \emph{unoriented link Floer homology of the link $L\subset Y$
}is $\boldsymbol{HFL'}^{-}(Y,L)$, the homology of $\boldsymbol{CFL'}^{-}(Y,L)$. 

The action of the $U_{i}$'s on $\boldsymbol{CFL'}^{-}(Y,L)$ are
homotopic by \cite[Equation (2.8)]{MR4845975}, and so we also view
$\boldsymbol{HFL'}^{-}(Y,L)$ as an $\mathbb{F}\llbracket U\rrbracket$-module,
where $U$ acts by multiplying by $U_{i}$.
\end{defn}

We define the \emph{unreduced }and\emph{ reduced hat versions} of
unoriented link Floer homology. We will be lax about issues related
to naturality in this section (in particular, basepoints) and postpone
the discussion to Section \ref{sec:band-maps}.
\begin{defn}
\label{def:hat-version}The \emph{(unreduced) hat version of unoriented
link Floer homology}, $\widehat{HFL'}(Y,L)$, is the homology of the
chain complex
\[
\widehat{CFL'}(Y,L)=\boldsymbol{CFL'}^{-}(Y,L)/U_{1}.
\]
\end{defn}

\begin{defn}
\label{def:reduced-hat}A \emph{marked link} $L$ is a link together
with a distinguished basepoint (which we call the \emph{marked point})
on $L$. The\emph{ reduced (hat version of) unoriented link Floer
homology of a marked link $L$, $\widetilde{HFL'}(Y,L)$,} is the
homology of
\[
\widetilde{CFL'}(Y,L)=\boldsymbol{CFL'}^{-}(Y,L)/U_{i}^{1/2},
\]
where $L_{i}$ is the link component that the marked point is on.

Given a basepoint on a link $L$, the corresponding \emph{basepoint
action}\footnote{Beware that in Heegaard Floer homology, the basepoint action usually
refers to the $\Phi_{w}$ and $\Psi_{z}$ maps \cite[Section 4.2]{MR3905679}.
We call multiplication by $U_{i}^{1/2}$ the basepoint action to match
the convention of Khovanov homology: see Remark \ref{rem:khovanov-convention}.} is multiplication by $U_{i}^{1/2}$ if the point is on the link component
$L_{i}$.
\end{defn}

For \emph{knots}, the reduced hat version, $\widetilde{HFK'}$, is
just knot Floer homology, $\widehat{HFK}$. We will study some examples
shortly, in Subsubsections \ref{subsec:hopf} and \ref{subsec:Trefoils}.

We end this subsection with some remarks that explain various aspects
of the definitions.
\begin{rem}
In the definition of the unreduced hat version, we could have quotiented
out by any $U_{i}$, since multiplication by $U_{1}$ is homotopic
to multiplication by $U_{i}$.
\end{rem}

\begin{rem}
\label{rem:curlyCFL-tensor}All these unoriented link Floer chain
complexes can be defined using a more general chain complex ${\cal CFL}^{-}$
defined over ${\cal R}=\mathbb{F}\llbracket Z_{1},\cdots,Z_{k},W_{1},\cdots,W_{k}\rrbracket$.
For instance, we have
\[
\boldsymbol{CFL'}^{-}={\cal CFL}^{-}\otimes_{{\cal R}}\mathbb{F}\llbracket U_{1}^{1/2},\cdots,U_{k}^{1/2},Z_{1},\cdots,Z_{k},W_{1},\cdots,W_{k}\rrbracket/(U_{i}^{1/2}=Z_{i}=W_{i}\ {\rm for\ all\ }i).
\]
In \cite{MR3694597}, they identify all the $U_{i}^{1/2}$'s together
and work with polynomial rings instead of power series rings: the
completion of their chain complex is 
\[
{\cal CFL}^{-}\otimes_{{\cal R}}\mathbb{F}\llbracket U^{1/2},Z_{1},\cdots,Z_{k},W_{1},\cdots,W_{k}\rrbracket/(U^{1/2}=Z_{i}=W_{i}\ {\rm for\ all\ }i).
\]
This is different from ours for links with more than one component,
although they are the same for knots.
\end{rem}

\begin{rem}
For simplicity, let us focus on links in $S^{3}$. Let $L\subset S^{3}$
be a $k$-component oriented link. Link Floer homology $\widehat{HFL}(Y,L)$
has the \emph{Maslov gradings}\footnote{There are other conventions that differ from ours by an overall shift
$\in\frac{1}{2}\mathbb{Z}$.} ${\rm gr}_{\boldsymbol{w}},{\rm gr}_{\boldsymbol{z}}\in\mathbb{Z}$
and \emph{the Alexander grading} \cite{MR2443092} which takes values
in $\mathbb{H}=h+\mathbb{Z}^{k}\subset(\frac{1}{2}\mathbb{Z})^{k}$
for some $h$\footnote{If $L$ is a knot, then $\mathbb{H}=\mathbb{Z}$.}.
The \emph{collapsed Alexander grading}, obtained by summing the coordinates
of $\mathbb{H}$, is $({\rm gr}_{\boldsymbol{w}}-{\rm gr}_{\boldsymbol{z}})/2\in\mathbb{Z}$.

Hence, we can define $\chi_{{\rm gr}_{\boldsymbol{z}}}(\widehat{HFL}(S^{3},L))$,
the Euler characteristic of $\widehat{HFL}(S^{3},L)$ with respect
to ${\rm gr}_{\boldsymbol{z}}$, as an element of the free, rank $1$
module $\mathbb{Z}[\mathbb{H}]$ over the group ring $\mathbb{Z}[\mathbb{Z}^{k}]$.
This Euler characteristic can be written in terms of the multi-variable
Alexander polynomial.

Our unoriented link Floer homology group is singly graded, by the
$\delta$-grading $({\rm gr}_{\boldsymbol{w}}+{\rm gr}_{\boldsymbol{z}})/2\in\mathbb{Z}$
(as in \cite[Subsection 2.1]{MR3694597}), since we identify $Z_{i}=W_{i}$
in the sense of Remark \ref{rem:curlyCFL-tensor}. However, the Alexander
grading modulo $(2\mathbb{Z})^{k}$ still exists. Let $\mathbb{H}^{un}:=h+(\mathbb{Z}/2)^{\oplus k}\subset\left(\frac{1}{2}\mathbb{Z}/2\mathbb{Z}\right)^{\oplus k}$
be the corresponding $(\mathbb{Z}/2)^{\oplus k}$-torsor.

Hence, we can consider the Euler characteristic\footnote{If we want to consider $L$ as an unoriented link, then the spaces
$\mathbb{H}$ and $\mathbb{H}^{un}$ should be thought of as $H_{1}(Y\backslash L;\mathbb{Z})$-
and $H_{1}(Y\backslash L;\mathbb{Z}/2)$-torsors living in $H_{1}(Y\backslash L;\frac{1}{2}\mathbb{Z})$
and $H_{1}(Y\backslash L;\frac{1}{2}\mathbb{Z}/2\mathbb{Z})$, respectively.
Also, changing the orientation of a link component changes the $\delta$-grading
by an overall shift, and hence changes the Euler characteristic by
a factor of $\pm1$. (Compare \cite[Proposition 7.1]{MR3694597}.)
Viewed this way, these Euler characteristics, up to sign, do not depend
on the orientation of $L$.} of the unreduced and reduced hat versions (let us mark the $j$th
component of $L$) with respect to the $\delta$-grading as an element
of the free, rank $1$ module $\mathbb{Z}[\mathbb{H}^{un}]$ over
the group ring $\mathbb{Z}[(\mathbb{Z}/2)^{\oplus k}]$. Then, we
have
\[
\chi_{\delta}(\widehat{HFL'}(S^{3},L))=(1-m_{j})\chi_{\delta}(\widetilde{HFL'}(S^{3},L)),\ \chi_{\delta}(\widehat{HFL}(S^{3},L))=\prod_{i\neq j}(1+m_{i})\chi_{\delta}(\widetilde{HFL'}(S^{3},L)),
\]
where $m_{i}$ is the $i$th unit vector of $(\mathbb{Z}/2)^{\oplus k}$.
(We thank Jacob Rasmussen for this remark.)
\end{rem}

\begin{rem}
\label{rem:khovanov-convention}We assigned the weights $U_{i}^{1/2}$
to the basepoints and called multiplication by $U_{i}^{1/2}$ the
basepoint action, to match the convention of equivariant Khovanov
homology \cite{MR1740682,MR2173845,MR4504654}: the homology groups
have a $U$-action, and the basepoint action squared is the $U$-action.
Similarly, the unreduced and reduced hat versions are defined analogously
to Khovanov homology, which we now recall.

The \emph{equivariant Khovanov homology given by the Lee deformation}
uses the Frobenius algebra ${\cal A}=\mathbb{F}[x,U]/(x^{2}=U)$ over
$\mathbb{F}[U]$ and the multiplication and comultiplication maps
are $\mathbb{F}[U]$-linear maps given by the following.
\begin{gather*}
m:{\cal A}\otimes_{\mathbb{F}[U]}{\cal A}\to{\cal A}:1\otimes1\mapsto1,\ 1\otimes x\mapsto x,\ x\otimes1\mapsto x,\ x\otimes x\mapsto U1\otimes1\\
\Delta:{\cal A}\to{\cal A}\otimes_{\mathbb{F}[U]}{\cal A}:1\mapsto1\otimes x+x\otimes1,\ x\mapsto x\otimes x+U1\otimes1
\end{gather*}
Given a link diagram, one considers the cube of resolutions, and uses
this Frobenius algebra to build a chain complex. In particular, the
vertices of the cube of resolutions chain complex are the equivariant
Khovanov homology of unlinks, which is
\[
\mathbb{F}[x_{1},\cdots,x_{k},U]/(x_{1}^{2}=\cdots=x_{k}^{2}=U)
\]
for the $k$-component unlink. The homology of this chain complex
is \emph{equivariant Khovanov homology}, which corresponds to the
minus version\footnote{Since we work over a field of characteristic $2$, the homology is
not as interesting: as $\mathbb{F}[U]$-modules, it is isomorphic
to unreduced Khovanov homology tensored with $\mathbb{F}[U]$ over
$\mathbb{F}$.}. To get \emph{unreduced Khovanov homology}, or the hat version, we
quotient out the chain complex by $U$. If the link has a marked point,
then the chain complex can be viewed as a chain complex over $\mathbb{F}[x,U]/(x^{2}=U)$,
where the action of $x$ is given by the marked point in each resolution.
The \emph{reduced Khovanov homology,} or the reduced hat version,
is given by quotienting out the chain complex by $x$.
\end{rem}

\begin{rem}
\label{rem:minus-polynomial}We can define the minus version of unoriented
link Floer homology over a polynomial ring instead of a power series
ring; this involves assuming \emph{strong $\mathfrak{s}$-admissibility},
which depends on the ${\rm Spin}^{c}$-structure $\mathfrak{s}$.
The unoriented skein exact triangle, Theorem \ref{thm:unoriented-exact},
holds in this version as well (Remark \ref{rem:skein-exact-hfl}),
although the $2$-surgery exact triangle, Theorem \ref{thm:2-surgery-intro},
does not (Remark \ref{rem:2-exact-hfl}). We mainly work with power
series rings and suppress ${\rm Spin}^{c}$-structures to simplify
the exposition. 
\end{rem}

\subsection{An unoriented skein exact triangle}

We will continue to be lax about basepoints in this subsection.
\begin{defn}
\label{def:unoriented-skein-triple}Three (marked) links $L_{a},L_{b},L_{c}\subset Y$
form an \emph{unoriented skein triple }if the links and the marked
points (if the links are marked) are identical outside of a ball $B^{3}\subset Y$,
in which they differ as in Figure \ref{fig:skein-moves}. 
\end{defn}

\begin{defn}
A \emph{band on a link $L\subset Y$} is an ambient $2$-dimensional
$1$-handle, i.e. it is an embedding $\iota:[0,1]\times[0,1]\hookrightarrow Y$
such that $\iota^{-1}(L)=[0,1]\times\{0,1\}$. A link $L'$ is obtained
by a \emph{band move on a link $L$} if it is given by surgering $L$
along a band on $L$.

The following are the three types of bands:
\begin{itemize}
\item a band is a \emph{non-orientable band} if it does not change the number
of connected components;
\item a band is a \emph{split band} if it increases the number of connected
components;
\item a band is a \emph{merge band} if it decreases the number of connected
components.
\end{itemize}
\end{defn}

Unoriented skein triples are related by band moves, as in Figure \ref{fig:skein-moves-bands},
and the cyclic order of the three bands is always non-orientable,
split, and merge.

\begin{figure}[h]
\begin{centering}
\includegraphics[scale=2]{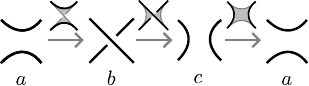}
\par\end{centering}
\caption{\label{fig:skein-moves-bands}A local diagram for an unoriented skein
triple together with the band maps}
\end{figure}

\begin{thm}
\label{thm:unoriented-exact}Let $L_{a},L_{b},L_{c}\subset Y$ be
an unoriented skein triple. Then there exist exact triangles in the
various versions
\begin{gather*}
\cdots\to\boldsymbol{HFL'}^{-}(Y,L_{a})\to\boldsymbol{HFL'}^{-}(Y,L_{b})\to\boldsymbol{HFL'}^{-}(Y,L_{c})\to\boldsymbol{HFL'}^{-}(Y,L_{a})\to\cdots\\
\cdots\to\widehat{HFL'}(Y,L_{a})\to\widehat{HFL'}(Y,L_{b})\to\widehat{HFL'}(Y,L_{c})\to\widehat{HFL'}(Y,L_{a})\to\cdots\\
\cdots\to\widetilde{HFL'}(Y,L_{a})\to\widetilde{HFL'}(Y,L_{b})\to\widetilde{HFL'}(Y,L_{c})\to\widetilde{HFL'}(Y,L_{a})\to\cdots,
\end{gather*}
where the maps can be interpreted as band maps.
\end{thm}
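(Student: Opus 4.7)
The plan is to reduce Theorem \ref{thm:unoriented-exact} to the local $2$-surgery exact triangle of Claim \ref{claim:step-3} applied in a neighborhood of the skein ball $B^3$. Since $L_a, L_b, L_c$ agree outside $B^3$, both the construction of the band maps and the verification of exactness are essentially local statements: I expect to set up a Heegaard multi-diagram in which the three resolutions appear as different choices of a single local piece, and to deduce the triangle from an instance of Claim \ref{claim:step-3} applied to this local model, with all the ``global'' data from $Y \setminus B^3$ riding along as a background link.

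The first step is to construct the band maps in Section \ref{sec:band-maps}. For split and merge bands these are modeled on the usual saddle cobordism maps in link Floer homology, obtained by counting holomorphic triangles in a Heegaard triple $(\Sigma, \boldsymbol{\alpha}, \boldsymbol{\beta}, \boldsymbol{\beta}', \boldsymbol{w}, \boldsymbol{z})$ where $\boldsymbol{\beta}'$ is a handleslide of $\boldsymbol{\beta}$ along a curve encoding the band; the novel case is the non-orientable band, which I would model by the same sort of triangle map, proving well-definedness by the standard handleslide, stabilization and isotopy invariance arguments (adapted to the non-orientable setting). The second step is to choose, inside $B^3$, a small unknotted knot $K$ whose $\lambda$- and $(\lambda + 2\mu)$-surgeries locally realize two of the three tangles in the skein triple, while the third appears as the ambient link with $K$ adjoined. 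The shift by $2\mu$ between framings is exactly what is needed to exchange the two unoriented smoothings of a crossing --- this is the geometric reason Claim \ref{claim:step-3}, rather than the usual $+1$-surgery triangle of Ozsv\'ath--Szab\'o, is the right tool. Plugging this configuration into Claim \ref{claim:step-3} and absorbing $K$ via standard Kirby and handle-cancellation moves identifies the three vertices with $\boldsymbol{HFL'}^{-}(Y, L_a), \boldsymbol{HFL'}^{-}(Y, L_b), \boldsymbol{HFL'}^{-}(Y, L_c)$.

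The main obstacle is the third step: identifying the three connecting maps of this surgery triangle with the band maps constructed in step one. This is a local holomorphic-triangle computation in which both the surgery map (as produced by the proof of Claim \ref{claim:step-3}) and the saddle map are expressed in a common Heegaard quadruple diagram, and the triangle counts are shown to agree by locating a dominant ``small triangle'' class supported near the local model and showing that all other contributions vanish. The non-orientable case is the most delicate, since the usual orientation-based cancellations are unavailable and one must instead argue directly with the combinatorics of the local diagram; working over $\mathbb{F} = \mathbb{Z}/2$ is what makes this tractable. Once the identification is completed for the minus version $\boldsymbol{HFL'}^{-}$, the hat version $\widehat{HFL'}$ and the reduced hat version $\widetilde{HFL'}$ follow by quotienting by $U_1$ and by $U_j^{1/2}$ respectively, since no basepoint is crossed by any of the three bands and all of the maps in sight commute with these quotients.
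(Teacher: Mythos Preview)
Your plan correctly identifies Claim \ref{claim:step-3} as a key ingredient, but it underestimates what else is needed.  The paper does not derive the skein triangle \emph{from} the $2$-surgery triangle; rather, Claim \ref{claim:step-3} is only one of three local steps (Claims \ref{claim:step-1}, \ref{claim:step-2}, \ref{claim:step-3}) that together prove the actual local statement, Theorem \ref{thm:sym2-t2}.

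The concrete gap is in your second step.  In the local Heegaard model the three skein resolutions are represented by attaching curves $\boldsymbol{\beta}_a,\boldsymbol{\beta}_b,\boldsymbol{\beta}_c$ on a genus-one surface with three basepoints (so one works in ${\rm Sym}^2(\mathbb{T}^2)$), while the $2$-surgery triangle of Claim \ref{claim:step-3} involves \emph{different} curves $\widetilde{\boldsymbol{\beta}_0},\widetilde{\boldsymbol{\beta}_2},\boldsymbol{\beta}_\infty$.  One has $\boldsymbol{\beta}_c=\boldsymbol{\beta}_\infty$, but $\boldsymbol{\beta}_a$ and $\boldsymbol{\beta}_b$ are not the slope-$0$ and slope-$2$ curves, and there is no unknotted $K\subset B^3$ whose $\lambda$- and $(\lambda+2\mu)$-surgeries transform the ambient tangle into the other two skein tangles while returning the ambient three-manifold: the three skein tangles have no closed component to play the role of $K$, and if $K$ is an auxiliary circle encircling the strands then $\pm1$-surgery inserts a full twist, not the half-twist realised by the non-orientable band $L_a\to L_b$.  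Your proposed ``Kirby and handle-cancellation moves'' do not bridge this gap.  What actually bridges it is Claim \ref{claim:step-1}, a separate holomorphic-polygon computation in $\mathbb{T}^2$ showing that the twisted complexes $\boldsymbol{\beta}_a\xrightarrow{\tau}\boldsymbol{\beta}_b$ and $\boldsymbol{\beta}_0\xrightarrow{\theta}\boldsymbol{\beta}_2$ are morally quasi-isomorphic, followed by the handleslide equivalence of Claim \ref{claim:step-2}.  Without these, your ``third step'' is not a residual verification but the whole problem: identifying the non-orientable band map with the surgery map is equivalent to proving Theorem \ref{thm:sym2-t2} directly in ${\rm Sym}^2(\mathbb{T}^2)$, which is exactly what the paper's three-step destabilisation trick is designed to avoid.
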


If $Y=S^{3}$, then the above band maps are homogeneous with respect
to the $\delta$-grading\footnote{In general, they preserve relative homological gradings; see Subsection
\ref{subsec:spinc-structures-and-grading}.}.

We show in Section \ref{sec:Computation-for-unlinks} that band maps
for planar links in unoriented link Floer homology agree with those
in Khovanov homology.

\subsubsection{\label{subsec:hopf}Unlinks and the Hopf link}

Recall that link Floer homology $\widehat{HFL}$ cannot have a skein
exact triangle without any modifications, as there should be an exact
triangle involving the unknot, the Hopf link, and the unknot\footnote{In Manolescu's unoriented skein exact triangle \cite{MR2350128},
the unknots have two extra basepoints each; this doubles the rank
of $\widehat{HFL}({\rm unknot})$.}, but $\widehat{HFL}({\rm unknot})$ has rank $1$ and $\widehat{HFL}({\rm Hopf\ link})$
has rank $4$.

In contrast, the unreduced hat version of unoriented link Floer homology
of the unlink $UL_{k}$ with $k$ components has rank $2^{k}$, and
that of the Hopf link $H$ has rank $4$. The Hopf link and two unknots
indeed form an exact triangle, and in fact all these homology groups
are the same as Khovanov homology, as vector spaces.

\subsubsection{\label{subsec:Trefoils}Trefoil knots}

The unoriented knot Floer homology of a trefoil in $S^{3}$ is 
\[
\mathbb{F}\llbracket U^{1/2}\rrbracket\oplus\mathbb{F}\llbracket U^{1/2}\rrbracket/(U^{1/2}),
\]
and so the unreduced hat version has rank $4$ and the reduced hat
version has rank $3$. The rank of the reduced version is the same
as the ranks of both reduced Khovanov and reduced instanton Floer
homology over any field, but the rank of the unreduced version is
the same as the ranks of unreduced Khovanov and unreduced instanton
Floer homology over $\mathbb{Q}$, but not over $\mathbb{F}=\mathbb{Z}/2$.
This is as expected: similarly, it is conjectured that the $\widehat{HF}$
and $I^{\sharp}$ of a three-manifold are isomorphic over $\mathbb{Q}$
(\cite[Conjecture 7.24]{MR2652464}, \cite[Conjecture 1.1]{MR4407491}),
but they are not isomorphic over $\mathbb{F}$ (\cite[Theorem 1.5]{2311.04242},
\cite{li20242torsioninstantonfloerhomology}).

\subsubsection{Comparing with Manolescu's maps \cite{MR2350128}}

Manolescu's unoriented skein exact triangle and ours both involve
pointed links and maps between them. Let us discuss them for one specific
example: a non-orientable band $B:K_{a}\to K_{b}$ between two unknots
in $S^{3}$. In this case, the number of basepoints on the knots and
the rank of the maps are different: Manolescu's map has half rank,
and our map is zero (as in Khovanov homology).

Recall that if we have an admissible Heegaard diagram with three attaching
curves $\boldsymbol{\alpha},\boldsymbol{\beta}_{a},\boldsymbol{\beta}_{b}$,
then we can define a holomorphic triangle counting map 
\[
\mu_{2}:CF(\boldsymbol{\alpha},\boldsymbol{\beta}_{a})\otimes CF(\boldsymbol{\beta}_{a},\boldsymbol{\beta}_{b})\to CF(\boldsymbol{\alpha},\boldsymbol{\beta}_{b}).
\]
The band maps are $\mu_{2}(-\otimes\Theta_{B})$ for some cycle $\Theta_{B}\in CF(\boldsymbol{\beta}_{a},\boldsymbol{\beta}_{b})$.

Manolescu's unoriented skein exact triangle requires the two unknots
to each have at least four basepoints. See Figure \ref{fig:non-orientable-unknot-manolescu}\footnote{The crossing of $K_{b}$ is drawn correctly; see Remarks \ref{rem:orientations}
and \ref{rem:orientation2} for orientation conventions.}: we consider the simplest case where there are exactly four. First,
the attaching curves $\boldsymbol{\alpha}$ and $\boldsymbol{\beta}_{i}$
describe $K_{i}$ for $i\in\{a,b\}$, i.e. $\widehat{HFK}(S^{3},K_{i})=\widehat{HF}(\boldsymbol{\alpha},\boldsymbol{\beta}_{i})$\footnote{By $\widehat{HF}(\boldsymbol{\alpha},\boldsymbol{\beta}_{i})$, we
mean the homology of the chain complex freely generated over $\mathbb{F}$
by intersection points $\mathbb{T}_{\boldsymbol{\alpha}}\cap\mathbb{T}_{\boldsymbol{\beta}_{i}}$
and where the differential counts holomorphic bigons that do not cross
any basepoints.}. Under this identification, the cycle $\Theta_{B}=\tau_{0}+\tau_{1}\in\widehat{CF}(\boldsymbol{\beta}_{a},\boldsymbol{\beta}_{b})$
induces the band map $\widehat{HFK}(S^{3},K_{a})\to\widehat{HFK}(S^{3},K_{b})$.
It turns out that all the maps in Manolescu's exact triangle come
from similar looking diagrams.

In contrast, our exact triangle requires the two knots to have exactly
two basepoints, and they need to be on the same side of the knot with
respect to the band, as in Figure \ref{fig:non-orientable-unknot-ours}
(we could have chosen $w_{1},z_{1}$ instead). See Figure \ref{fig:non-orientable-unknot-ours}:
similarly, $\boldsymbol{HFK'}^{-}(S^{3},K_{i})=\boldsymbol{HF}^{-}(\boldsymbol{\alpha},\boldsymbol{\beta}_{i})$\footnote{By $\boldsymbol{HF}^{-}(\boldsymbol{\alpha},\boldsymbol{\beta}_{i})$,
we mean the homology of the chain complex freely generated over $\mathbb{F}\llbracket U^{1/2}\rrbracket$
by intersection points $\mathbb{T}_{\boldsymbol{\alpha}}\cap\mathbb{T}_{\boldsymbol{\beta}_{i}}$
and where the differential counts every holomorphic bigon, where both
basepoints have weight $U^{1/2}$, as in Definition \ref{def:unoriented-intro}.}, and under this identification, the cycle $\tau_{1}\in\boldsymbol{CF}^{-}(\boldsymbol{\beta}_{a},\boldsymbol{\beta}_{b})$
induces the band map $\boldsymbol{HFK'}^{-}(S^{3},K_{a})\to\boldsymbol{HFK'}^{-}(S^{3},K_{b})$
(and also in the unreduced and reduced hat versions). It turns out
that all the non-orientable band maps in our exact triangle come from
similar looking diagrams; however, the diagrams for the split and
merge band maps are different, and these band maps are more complicated
to describe.

\begin{figure}[h]
\begin{centering}
\includegraphics[scale=1.5]{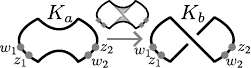}\includegraphics[scale=0.75]{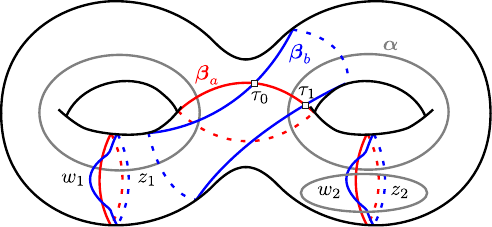}
\par\end{centering}
\caption{\label{fig:non-orientable-unknot-manolescu}A non-orientable band
between two unknots for Manolescu's exact triangle, and a Heegaard
diagram for it}
\end{figure}

\begin{figure}[h]
\begin{centering}
\includegraphics[scale=1.5]{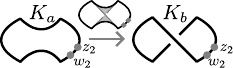}\includegraphics[scale=0.75]{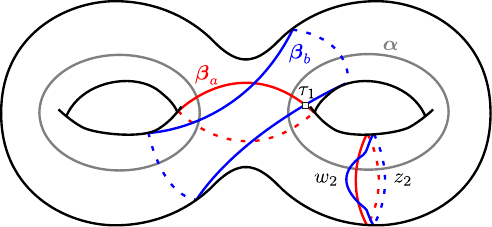}
\par\end{centering}
\caption{\label{fig:non-orientable-unknot-ours}A non-orientable band between
two unknots for our exact triangle, and a Heegaard diagram for it}
\end{figure}

\subsection{\label{subsec:The-proof}A word on the proof}

We discuss some key ideas involved in the definition of the split
and merge band maps and the proof of Theorem \ref{thm:unoriented-exact}.

The split and merge band maps involve links with different numbers
of link components, and it is not obvious how to define them since
we want exactly two basepoints on each link component. (Compare \cite{MR2574747}.)
We achieve this by adding a basepoint in the three-manifold when we
consider the links with one fewer component (this does not change
the quasi-isomorphism type of the chain complex \cite[Proposition 6.5]{MR2443092}).
This basepoint becomes two link basepoints on the link with more components.

See Figure \ref{fig:genus2-everything}: if we consider only $\boldsymbol{\beta}_{a}$
or $\boldsymbol{\beta}_{b}$, then we treat the basepoints $w_{1}$
and $z_{1}$ as one basepoint (with weight $U_{1})$ on the three-manifold.
When we consider $\boldsymbol{\beta}_{c}$, this basepoint becomes
two basepoints (with weight $U_{1}^{1/2}$ each) on the extra link
component.

As usual, we prove the exact triangle by essentially using the triangle
detection lemma \cite[Lemma 4.2]{MR2141852} and reducing it to a
local computation. We have to work in ${\rm Sym}^{2}(\mathbb{T}^{2})$,
i.e. we have to compute the number of certain kinds of holomorphic
disks, triangles, and quadrilaterals in ${\rm Sym}^{2}(\mathbb{T}^{2})$,
which makes the local computation challenging. Instead of directly
carrying out the local computation in ${\rm Sym}^{2}(\mathbb{T}^{2})$,
we use a trick that reduces it to two local computations in $\mathbb{T}^{2}$,
which are combinatorial and well understood. This trick utilizes Claim
\ref{claim:step-3}, a local version of Theorem \ref{thm:2-surgery-intro},
which is a Heegaard Floer analogue of Bhat's $2$-surgery exact triangle
\cite{2311.04242}. (See the discussion right before Theorem \ref{thm:2surgery}
and Section \ref{sec:The-plan}.)

\begin{figure}[h]
\begin{centering}
\includegraphics{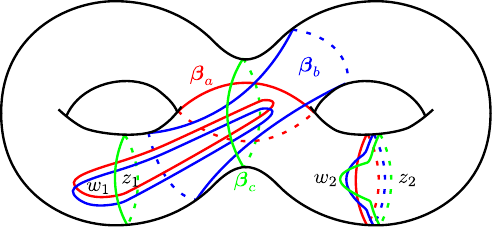}
\par\end{centering}
\caption{\label{fig:genus2-everything}We reduce the proof of Theorem \ref{thm:unoriented-exact}
to a local computation, Theorem \ref{thm:sym2-t2}.}
\end{figure}

\begin{rem}
We prove Theorem \ref{thm:2-surgery-intro} directly, by doing a model
computation on the torus in the spirit of \cite{MR2113020,MR2141852}.
We thank Ian Zemke for communicating to the author that Theorem \ref{thm:2-surgery-intro}
can also be proven using his bordered $\boldsymbol{HF}^{-}$ theory
\cite{2109.11520,1011.1317,MR2377279}.
\end{rem}

\begin{rem}
\label{rem:orbifold}The instanton invariants $I^{\sharp}(Y,L)$ come
from the orbifold $Y(L)$ whose underlying three-manifold is $Y$
and $L$ is the $\mathbb{Z}/2$-orbifold points, and the maps in Bhat's
$2$-surgery exact triangle \cite{2311.04242} come from orbifold
cobordism maps.

We show that $\boldsymbol{CFL'}^{-}(Y,L)$ splits into ${\rm Spin}^{c}(Y(L))$-summands
where ${\rm Spin}^{c}(Y(L))$ is an $H_{1}^{orb}(Y(L);\mathbb{Z})$-torsor,
but we do not discuss further connections with orbifolds in this paper:
we do not interpret the maps in Theorem \ref{thm:2-surgery-intro}
as orbifold cobordism maps. However, our maps come from triple Heegaard
diagrams, and so it is possible to interpret them as a composition
of a link cobordism map \cite{MR3905679} and some algebraic maps
such as the creation and annihilation maps that we define in Subsubsection
\ref{subsec:Birth-and-death}.
\end{rem}

\subsection{\label{subsec:Motivation}Motivation}

Link Floer homology and instanton Floer homology are closely related
to Khovanov homology \cite{MR1740682}, which is defined combinatorially
from an unoriented cube of resolutions. Rasmussen \cite{MR2189938}
conjectured that the rank of reduced Khovanov homology is always greater
than or equal to the rank of knot Floer homology; Dowlin \cite{MR4777638}
proved this conjecture over $\mathbb{Q}$.

One challenge in relating Khovanov homology to knot Floer homology
lies in the difficulty of finding a suitable generalization of knot
Floer homology for links that satisfies an unoriented skein exact
triangle\footnote{Dowlin's spectral sequence uses an oriented cube of resolutions which
involves singular knots.}. Manolescu's unoriented skein exact triangle \cite{MR2350128} in
link Floer homology requires additional basepoints on the links. As
a result, the $E^{2}$ page of the corresponding cube of resolutions
is not a link invariant, which was shown by Baldwin and Levine \cite{MR2964628}.
Also, Baldwin, Levine, and Sarkar \cite{MR3604486} suggested that
one should take the Koszul resolution with respect to the basepoint
actions on the Khovanov side.

In instanton Floer homology, Kronheimer and Mrowka \cite{MR2805599}
defined a spectral sequence from unreduced Khovanov homology to $I^{\sharp}$
by defining and iterating an unoriented skein exact triangle for $I^{\sharp}$.
They also defined a spectral sequence from reduced Khovanov homology
to $I^{\natural}$, a reduced version of $I^{\sharp}$, which is conjecturally
isomorphic to $\widehat{HFK}$ for \emph{knots} over $\mathbb{Q}$
(\cite[Conjecture 7.24]{MR2652464}, \cite[Conjecture 1.1]{MR4407491}).
Hence, a Heegaard Floer analogue of $I^{\sharp}$ and $I^{\natural}$
for (marked) links would be a good candidate for a suitable generalization
of knot Floer homology from the perspective of Khovanov homology.

Our motivation is to develop a theory in Heegaard Floer homology that
is analogous to $I^{\sharp}$ and $I^{\natural}$ and construct a
spectral sequence from Khovanov homology to this theory. We propose
the unreduced and reduced hat versions of unoriented link Floer homology
(Definitions \ref{def:hat-version} and \ref{def:reduced-hat}) as
the counterparts of $I^{\sharp}$ and $I^{\natural}$, and show Theorems
\ref{thm:2-surgery-intro} and \ref{thm:unoriented-exact} which are
counterparts of theorems in instanton Floer homology. In Section \ref{sec:Computation-for-unlinks},
we show that for planar links, the band maps in the various versions
of unoriented link Floer homology agree with the band maps in (equivariant)
Khovanov homology, and hence also with the band maps in $I^{\sharp}$
and $I^{\natural}$. Hence, if one could iterate our unoriented skein
exact triangle, then one would get spectral sequences from Khovanov
homology to unoriented link Floer homology. We study this in \cite{gn_spectral};
see Subsection \ref{subsec:Future-directions}.

It is interesting to compare our proposal with the existing conjectures
that relate Heegaard Floer homology and $I^{\#},I^{\natural}$. First,
for knots, the counterpart of $I^{\natural}$ that we propose is indeed
$\widehat{HFK}$. Also, for a three-manifold $Y$, $I^{\sharp}(Y)$
is conjecturally isomorphic to $\widehat{HF}(Y)$ over $\mathbb{Q}$,
and a version of our $2$-surgery exact triangle involves $\widehat{HF}(Y_{\lambda}(K))$,
$\widehat{HF}(Y_{\lambda+2\mu}(K))$, and $\widehat{HFK'}(Y,K)$ (over
$\mathbb{F}$), whereas Bhat's $2$-surgery exact triangle \cite{2311.04242}
involves $I^{\sharp}(Y_{\lambda}(K))$, $I^{\sharp}(Y_{\lambda+2\mu}(K))$,
and $I^{\sharp}(Y,K)$.

\subsection{\label{subsec:Future-directions}Future directions}

It is natural to ask whether one can define unoriented link cobordism
maps in unoriented link Floer homology, just like $I^{\sharp}$. We
show in a forthcoming work \cite{gn_spectral} that the band maps
as defined in this paper sometimes do not commute, but also that it
is possible to modify them so that they commute. Using this as a key
step, we iterate a modified version of our unoriented skein exact
triangle and obtain spectral sequences from Khovanov homology to unoriented
link Floer homology in the minus, unreduced hat, and reduced hat versions,
and hence in particular a spectral sequence from reduced Khovanov
homology to knot Floer homology.

It would also be interesting to be able to compute these band maps
(or more ambitiously, the spectral sequence), either using ideas from
bordered Heegaard Floer homology \cite{MR3827056} or grid homology
\cite{MR3381987}. One difficulty in interpreting our maps in grid
homology comes from that we only consider links with exactly two basepoints
on each component. We do not know whether there are similar band maps
in the case where there are more basepoints, such that they form an
unoriented skein exact triangle.

Also, we do not deal with signs in this paper and work over a field
of characteristic $2$. In contrast, Dowlin \cite{MR4777638} has
to divide by $2$. It would be interesting to see whether the theorems
in this paper hold over $\mathbb{Z}$.

There are two different $2$-surgery exact triangles in Heegaard Floer
homology: there is Theorem \ref{thm:2-surgery-intro}, and also Ozsv\'{a}th
and Szab\'{o}'s integer surgery exact triangle \cite[Section 9.5]{MR2113020},
which has two copies of $\boldsymbol{HF}^{-}(Y)$ instead of $\boldsymbol{HFL'}^{-}(Y,K)$
(and the maps are different). In \cite{gn}, we show analogous statements
for any positive rational surgery slope: we get (at least) $p$ different
$p/q$-surgery exact triangles, one of which generalizes Ozsv\'{a}th
and Szab\'{o}'s $n$ or $1/n$-surgery exact triangle. The others
involve algebraic modifications of knot Floer homology; more precisely,
these are variants (if necessary) of $2i/p$-modified knot Floer homology
\cite{1407.1795} for $i=0,1,\cdots,p-1$. Note that $1$-modified
knot Floer homology is unoriented knot Floer homology.

\subsection{Organization}

In Section \ref{sec:Preliminaries}, we start by setting notations
and recalling some definitions in Heegaard Floer theory in Subsection
\ref{subsec:Heegaard-Floer-homology}. In order to prove Theorems
\ref{thm:2-surgery-intro} and \ref{thm:unoriented-exact}, we consider
local systems that a priori involve negative powers of $U$. We define
the type of local systems we consider in Subsection \ref{subsec:unoriented-local-system},
and discuss\emph{ weak admissibility} and \emph{positivity} in Subsection
\ref{subsec:Positivity-and-admissibility}, which ensure that the
differential and (higher) composition maps are well-defined. We use
the language of $A_{\infty}$-categories to simplify the homological
algebra; we recall this in Subsection \ref{subsec:Twisted-complexes}.
We set up notations for standard translates in Subsection \ref{subsec:standard-translates}
and recall theorems on stabilizations in Subsection \ref{subsec:Stabilizations}.
In Subsection \ref{subsec:link-heegaard-diagram}, we clarify our
conventions for link Floer homology, and we define ${\rm Spin}^{c}$-structures
for unoriented links and \emph{strong admissibility} in Subsection
\ref{subsec:spinc-link}. We set up conventions for gradings in Subsection
\ref{subsec:Gradings}, especially when nontrivial local systems are
present. We introduce a simplifying assumption in Subsection \ref{subsec:Homologous-attaching-curves},
in which case checking whether gradings exist is simpler.

In Section \ref{sec:band-maps}, we precisely define the objects we
consider, define the band maps, and show that they are well-defined.
Using these notions, we state the main theorem, the unoriented skein
exact triangle (and the $2$-surgery exact triangle), in Section \ref{sec:An-unoriented-skein}.
In Section \ref{sec:Computation-for-unlinks}, we use the unoriented
skein exact triangle to deduce that band maps for planar links are
the same as Khovanov homology.

In Section \ref{sec:The-plan}, we briefly discuss the key steps of
the proof, and we carry out the local computations in Sections \ref{sec:proof-step1}
and \ref{sec:proof-step3}. Section \ref{sec:proof-step3} is also
the local computation for the $2$-surgery exact triangle. We use
these computations in Section \ref{sec:composition} and finish off
the proof.

\subsection{Acknowledgements}

We thank Peter Ozsv\'{a}th for his continuous support, explaining
a lot of the arguments in this paper, and helpful discussions. We
also thank Ian Zemke for his continuous support, teaching the author
a lot of previous works, especially \cite{2308.15658}, and helpful
discussions. We thank Deeparaj Bhat for sharing his work on the $2$-surgery
exact triangle back in March 2023. We thank John Baldwin, Deeparaj
Bhat, Fraser Binns, Evan Huang, Seungwon Kim, Yonghwan Kim, Jae Hee
Lee, Adam Levine, Jiakai Li, Glen Lim, Ayodeji Lindblad, Robert Lipshitz,
Marco Marengon, Sucharit Sarkar, Zolt\'{a}n Szab\'{o}, Alison Tatsuoka,
Jacob Rasmussen, Joshua Wang, and Fan Ye for helpful discussions.
We also thank Robert Lipshitz, Peter Ozsv\'{a}th, Jacob Rasmussen,
Joshua Wang, and Ian Zemke for helpful comments on earlier drafts
of this paper.

\section{\label{sec:Preliminaries}Preliminaries}

\subsection{\label{subsec:Heegaard-Floer-homology}Heegaard Floer homology and
local systems}

We set up notations and notions that we use throughout this paper.
We use local systems that \emph{a priori} involve negative powers
of $U$, as in \cite{2308.15658}. Although it is not necessary to
consider nontrivial local systems to define the unoriented link Floer
homology groups and the band maps, we will use them to prove the exact
triangles.

We do not need any new analytic input, and thus we will refrain from
discussing the analytic foundations and refer the readers to \cite{MR2113019,MR2113020,MR2240908,MR3509974,2011.00113}.
\begin{defn}[{\cite[Definition 3.1]{MR2443092}}]
Given a closed, oriented, genus $g$ surface $\Sigma$ together with
a set of \emph{basepoints} $\boldsymbol{p}=\{p_{i}\}_{i=1}^{l}$ on
$\Sigma$, an \emph{attaching curve} is a set $\boldsymbol{\alpha}$
of pairwise disjoint, simple closed curves on $\Sigma\backslash\boldsymbol{p}$
whose images span a $g$-dimensional subspace of $H_{1}(\Sigma)$.
A \emph{multi-Heegaard diagram}\footnote{This definition will be slightly modified in Subsection \ref{subsec:unoriented-local-system}.}
is a collection $(\Sigma,\boldsymbol{\alpha}_{0},\cdots,\boldsymbol{\alpha}_{m},\boldsymbol{p})$
of such a pointed surface together with attaching curves. We assume
that attaching curves intersect transversely, and that there are no
triple intersections. 
\end{defn}

\begin{notation}
We write $\alpha$ for \emph{one} alpha circle. The boldsymbol $\boldsymbol{\alpha}$
means a set of alpha circles, and write $\boldsymbol{\alpha}=\{\alpha^{1},\cdots,\alpha^{g}\}$.
If the curves $\alpha^{i}$ lie in $\Sigma$, then, by abuse of notation,
$\boldsymbol{\alpha}$ also means the image of $\mathbb{T}_{\boldsymbol{\alpha}}=\alpha^{1}\times\cdots\times\alpha^{g}$
in ${\rm {\rm Sym}^{g}(\Sigma)}$ and $\alpha^{1}\cup\cdots\cup\alpha^{g}\subset\Sigma$.
\end{notation}

\begin{defn}
Given a multi-Heegaard diagram $(\Sigma,\boldsymbol{\alpha}_{0},\boldsymbol{\alpha}_{1},\cdots,\boldsymbol{\alpha}_{m},\boldsymbol{p})$,
an\emph{ elementary two-chain }is a connected component of $\Sigma\backslash\left(\boldsymbol{\alpha}_{0}\cup\cdots\cup\boldsymbol{\alpha}_{m}\right)$,
and a \emph{two-chain} is a formal sum ${\cal D}$ of elementary two-chains.
A two-chain is \emph{nonnegative} if its local multiplicities are
nonnegative. Denote the \emph{$\boldsymbol{\alpha}_{i}$-boundary
of ${\cal D}$} as $\partial_{\boldsymbol{\alpha}_{i}}{\cal D}$ (which
is a one-chain). A \emph{cornerless two-chain} is a two-chain ${\cal D}$
for which $\partial_{\boldsymbol{\alpha}_{i}}{\cal D}$ is a cycle
for all $i$. A cornerless two-chain is \emph{periodic} if it does
not contain any basepoints.
\end{defn}

\begin{defn}
A \emph{domain} is a two-chain ${\cal D}$ together with an ordered
sequence of \emph{vertices} ${\bf x}_{0},\cdots,{\bf x}_{k}$ where
$k\ge1$ and ${\bf x}_{j}\in\boldsymbol{\alpha}_{i_{j}}\cap\boldsymbol{\alpha}_{i_{j+1}}$
for $j=0,\cdots,k$ ($i_{k+1}=i_{0}$), such that 
\[
\partial(\partial_{\boldsymbol{\alpha}_{i_{j}}}{\cal D})={\bf x}_{j+1}-{\bf x}_{j},\ \partial_{\boldsymbol{\alpha}_{i}}{\cal D}=0
\]
for $j=0,\cdots,k$ and $i\neq i_{0},\cdots,i_{k}$\footnote{Not all two-chains \emph{lift} to a domain.}.
Let $D({\bf x}_{0},\cdots,{\bf x}_{k})$ be the set of \emph{domains
with vertices ${\bf x}_{0},\cdots,{\bf x}_{k}$}. 

An \emph{$\boldsymbol{\alpha}_{i_{0}}\cdots\boldsymbol{\alpha}_{i_{k}}$-domain}
is a domain with vertices ${\bf x}_{0},\cdots,{\bf x}_{k}$ for some
intersection points ${\bf x}_{j}\in\boldsymbol{\alpha}_{i_{j}}\cap\boldsymbol{\alpha}_{i_{j+1}}$,
and a\emph{ domain with $k+1$ vertices }is an $\boldsymbol{\alpha}_{i_{0}}\cdots\boldsymbol{\alpha}_{i_{k}}$-domain
for some $i_{0},\cdots,i_{k}$.
\end{defn}

By abuse of notation, we also denote domains as ${\cal D}$. Also,
by abuse of notion, we say a two-chain ${\cal D}$ is in $D({\bf x}_{0},\cdots,{\bf x}_{k})$
if there exists a domain in $D({\bf x}_{0},\cdots,{\bf x}_{k})$ whose
two-chain is ${\cal D}$, and we may identify ${\cal D}$ with the
corresponding domain.
\begin{rem}
Note that the two-chain of a domain uniquely determines the domain
if the domain has at least three vertices. This may not be the case
otherwise: for instance, given any cornerless two-chain whose boundary
lies in $\boldsymbol{\alpha}\cup\boldsymbol{\beta}$ for some $\boldsymbol{\alpha},\boldsymbol{\beta}$,
then for any ${\bf x}\in\boldsymbol{\alpha}\cap\boldsymbol{\beta}$,
there exists a domain in $D({\bf x},{\bf x})$ whose underlying two-chain
is the given two-chain\footnote{A cornerless two-chain can be lifted to a domain if and only if it
has exactly two vertices (which are identical).}. (These are not the only examples.)
\end{rem}

The Maslov index $\mu({\cal D})$ of a domain ${\cal D}$ can be computed
(and/or defined) combinatorially, using the formulas in \cite[Section 4]{MR2240908}
and \cite{MR2811652}. Note that $\mu({\cal D})$, in general, cannot
be defined just from the underlying two-chain of ${\cal D}$\footnote{Similarly, the monodromy (Definition \ref{def:hf-chain-cpx}) also
depends on the vertices of ${\cal D}$.}.

It is shown in \cite[Theorems 3.2, 3.3]{MR2811652} that $\mu({\cal D})$
is cyclically symmetric in the vertices of ${\cal D}$, and that $\mu$
is additive, i.e. $\mu({\cal D}_{1})+\mu({\cal D}_{2})=\mu({\cal D}_{1}+{\cal D}_{2})$,
with respect to the following composition of domains: given ${\cal D}_{1}\in D({\bf x}_{a},\cdots,{\bf x}_{a+b-1},{\bf y})$
and ${\cal D}_{2}\in D({\bf x}_{0},\cdots,{\bf x}_{a-1},{\bf y},{\bf x}_{a+b},\cdots,{\bf x}_{k})$
($a\ge0,b\ge1$), their composition is ${\cal D}_{1}+{\cal D}_{2}\in D({\bf x}_{0},\cdots,{\bf x}_{k})$.

To define the Heegaard Floer chain complex, we need to choose a coefficient
ring and assign weights to basepoints. We also equip attaching curves
with local systems.
\begin{defn}
If $R$ is a power series ring $R=\mathbb{F}\llbracket X_{1},\cdots,X_{n}\rrbracket$,
denote 
\[
R^{\infty}=\mathbb{F}\llbracket X_{1},\cdots,X_{n}\rrbracket[X_{1}^{-1},\cdots,X_{n}^{-1}].
\]
Note that this depends on the identification $R=\mathbb{F}\llbracket X_{1},\cdots,X_{n}\rrbracket$.
For an $R=\mathbb{F}\llbracket X_{1},\cdots,X_{n}\rrbracket$-module
$E$, write $E^{\infty}:=E\otimes_{R}R^{\infty}.$

A \emph{coefficient ring} is a power series ring $R=\mathbb{F}\llbracket X_{1},\cdots,X_{n}\rrbracket$,
its quotient, or $R^{\infty}$.
\end{defn}

\begin{defn}
If $\boldsymbol{p}$ is the set of all basepoints and $R$ is a coefficient
ring, then  a \emph{weight function} is a function $w:\boldsymbol{p}\to R$.
Given a weight function $w$, define the \emph{weight of a two-chain
${\cal D}$} as 
\[
w({\cal D}):=\prod_{p\in\boldsymbol{p}}w(p)^{n_{p}({\cal D})}.
\]
\end{defn}

\begin{defn}
A \emph{($R$-)local system on an attaching curve $\boldsymbol{\alpha}$}
is a pair $(E,\Phi)$ where $E$ is a free $R$-module, and $\Phi$
is the \emph{monodromy}: it is a groupoid representation 
\[
\Phi:\Pi_{1}(\boldsymbol{\alpha})\to{\rm Hom}_{R}(E,E),
\]
where $\Pi_{1}(\boldsymbol{\alpha})$ is the fundamental groupoid
of $\boldsymbol{\alpha}=\mathbb{T}_{\boldsymbol{\alpha}}$, and ${\rm Hom}_{R}(E,E)$
is a groupoid with one object whose automorphism group is ${\rm Hom}_{R}(E,E)$.
Write $\boldsymbol{\alpha}^{(E,\Phi)}$, or simply $\boldsymbol{\alpha}^{E}$,
to signify that $\boldsymbol{\alpha}$ is equipped with the local
system $(E,\Phi)$.
\end{defn}

The local systems in this paper will all be specified by an oriented
arc $G$ on the Heegaard surface that satisfies certain conditions.
We define this in Subsection \ref{subsec:unoriented-local-system}
and also modify the definition of a Heegaard diagram to a collection
$(\Sigma,\boldsymbol{\alpha}_{0},\cdots,\boldsymbol{\alpha}_{m},\boldsymbol{p})$
or $(\Sigma,\boldsymbol{\alpha}_{0},\cdots,\boldsymbol{\alpha}_{m},\boldsymbol{p},G)$.
\begin{defn}
A \emph{Heegaard datum} is a tuple that consists of a multi-Heegaard
diagram $(\Sigma,\boldsymbol{\alpha}_{0},\cdots,\boldsymbol{\alpha}_{m},\boldsymbol{p})$,
a coefficient ring $R$, a weight function $w:\boldsymbol{p}\to R$,
and local systems $(E_{i},\Phi_{i})$ on $\boldsymbol{\alpha}_{i}$
for $i=0,\cdots,m$.
\end{defn}

Later, we will define properties of Heegaard diagrams. A Heegaard
datum is said to have a given property if its underlying Heegaard
diagram possesses that property.
\begin{defn}
\label{def:hf-chain-cpx}Given a Heegaard datum with Heegaard diagram
$(\Sigma,\boldsymbol{\alpha},\boldsymbol{\beta},\boldsymbol{p})$,
coefficient ring $R$, weight function $w:\boldsymbol{p}\to R$, and
local systems $(E_{\boldsymbol{\alpha}},\Phi_{\boldsymbol{\alpha}}),(E_{\boldsymbol{\beta}},\Phi_{\boldsymbol{\beta}})$
on $\boldsymbol{\alpha},\boldsymbol{\beta}$, respectively, define
the group $CF(\boldsymbol{\alpha}^{E_{\boldsymbol{\alpha}}},\boldsymbol{\beta}^{E_{\boldsymbol{\beta}}})$
as a direct sum of ${\rm Hom}_{R}(E_{\boldsymbol{\alpha}},E_{\boldsymbol{\beta}})$'s:
\[
CF_{R,w}(\boldsymbol{\alpha}^{E_{\boldsymbol{\alpha}}},\boldsymbol{\beta}^{E_{\boldsymbol{\beta}}})=\bigoplus_{{\bf x}\in\boldsymbol{\alpha}\cap\boldsymbol{\beta}}{\rm Hom}_{R}(E_{\boldsymbol{\alpha}},E_{\boldsymbol{\beta}}){\bf x},
\]
and define the differential $\partial$ as the $R$-linear map such
that 
\begin{equation}
\partial(e{\bf x})=\sum_{{\cal D}\in D({\bf x},{\bf y}),\ \mu({\cal D})=1}\#{\cal M}({\cal D})w({\cal D})\rho({\cal D})(e){\bf y},\label{eq:differential}
\end{equation}
where $e\in{\rm Hom}_{R}(E_{\boldsymbol{\alpha}},E_{\boldsymbol{\beta}})$
and $\rho({\cal D}):{\rm Hom}_{R}(E_{\boldsymbol{\alpha}},E_{\boldsymbol{\beta}})\to{\rm Hom}_{R}(E_{\boldsymbol{\alpha}},E_{\boldsymbol{\beta}})$
is the \emph{monodromy of ${\cal D}$}: 
\[
\rho({\cal D})=\Phi_{\boldsymbol{\beta}}(\partial_{\boldsymbol{\beta}}{\cal D})\circ e\circ\Phi_{\boldsymbol{\alpha}}(\partial_{\boldsymbol{\alpha}}{\cal D}).
\]
\end{defn}

We refer to elements of $CF_{R,w}(\boldsymbol{\alpha}^{E_{\boldsymbol{\alpha}}},\boldsymbol{\beta}^{E_{\boldsymbol{\beta}}})$
as \emph{morphisms} from $\boldsymbol{\alpha}^{E_{\boldsymbol{\alpha}}}$
to $\boldsymbol{\beta}^{E_{\boldsymbol{\beta}}}$. In most cases,
we omit the subscripts $R$ and $w$. Also, we often decorate the
symbol $CF$: for instance, $\boldsymbol{CF}^{-}$ means that we are
working over a power series ring, and $\widehat{CF}$ means that we
are working over some quotient of the power series ring. We will mostly
focus on the $\boldsymbol{CF}^{-}$ case.

We get (higher) composition maps $\mu_{n}$\footnote{The differential $\partial=\mu_{1}$ is also a composition map.}
when we have many attaching curves. If $\boldsymbol{\alpha}_{0}^{(E_{0},\Phi_{0})},\cdots,\boldsymbol{\alpha}_{d}^{(E_{d},\Phi_{d})}$
are attaching curves with local systems, then the composition map
is given by 
\begin{multline}
\mu_{d}:CF(\boldsymbol{\alpha}_{0}^{E_{0}},\boldsymbol{\alpha}_{1}^{E_{1}})\otimes\cdots\otimes CF(\boldsymbol{\alpha}_{d-1}^{E_{d-1}},\boldsymbol{\alpha}_{d}^{E_{d}})\to CF(\boldsymbol{\alpha}_{0}^{E_{0}},\boldsymbol{\alpha}_{d}^{E_{d}}):\\
e_{1}{\bf x}_{1}\otimes\cdots\otimes e_{d}{\bf x}_{d}\mapsto\sum_{{\cal D}\in D({\bf x}_{1},\cdots,{\bf x}_{d},{\bf y}),\ \mu({\cal D})=2-d}\#{\cal M}({\cal D})w({\cal D})\rho({\cal D})(e_{1}\otimes\cdots\otimes e_{d}){\bf y},\label{eq:higher-multiplication}
\end{multline}
where $\rho({\cal D})$ is the \emph{monodromy} defined as follows:
given a collection of paths $p_{j}:{\bf x}_{j}\to{\bf x}_{j+1}$ in
$\boldsymbol{\alpha}_{j}$ ($j=0,\cdots,d$) for intersection points
${\bf x}_{j}\in\boldsymbol{\alpha}_{j-1}\cap\boldsymbol{\alpha}_{j}$
($j=0,\cdots,d$, $\boldsymbol{\alpha}_{-1}:=\boldsymbol{\alpha}_{d}$),
define the\emph{ monodromy of $p_{0},\cdots,p_{d}$} as 
\begin{multline*}
\rho(p_{0},\cdots,p_{d}):{\rm Hom}_{R}(E_{0},E_{1})\otimes\cdots\otimes{\rm Hom}_{R}(E_{d-1},E_{d})\to{\rm Hom}_{R}(E_{0},E_{d}):\\
e_{1}\otimes\cdots\otimes e_{d}\mapsto\Phi_{d}(p_{d})\circ e_{d}\circ\Phi_{d-1}(p_{d-1})\circ e_{d-1}\circ\cdots\circ\Phi_{1}(p_{1})\circ e_{1}\circ\Phi_{0}(p_{0}).
\end{multline*}
Given an $\boldsymbol{\alpha}_{0},\cdots,\boldsymbol{\alpha}_{d}$-domain
${\cal D}$, its \emph{monodromy} is $\rho({\cal D})=\rho(\partial_{\boldsymbol{\alpha}_{0}}({\cal D}),\cdots,\partial_{\boldsymbol{\alpha}_{d}}({\cal D}))$.
\begin{rem}
We need the Heegaard datum to satisfy an \emph{admissibility criterion}
(Subsubsection \ref{subsec:Admissibility}) for the sum in the definition
of $\mu_{n}$ to be well-defined. Also, the groupoid representations
we consider in this paper will \emph{a priori} have negative powers
of $U$: indeed, they will be maps into ${\rm Hom}_{R}(E_{\boldsymbol{\alpha}}^{\infty},E_{\boldsymbol{\beta}}^{\infty})\simeq{\rm Hom}_{R}(E_{\boldsymbol{\alpha}},E_{\boldsymbol{\beta}})\otimes_{R}R^{\infty}$.
We show in Subsection \ref{subsec:Positivity} that this is not a
problem for the Heegaard data that we consider in this paper (see
Subsection \ref{subsec:unoriented-local-system}).
\end{rem}

We need to choose a family of almost complex structure (on ${\rm Sym}^{d}(\Sigma)$,
or on $\Sigma\times D_{n}$, where $D_{n}$ is a disk with $n$ punctures,
if we work in the cylindrical reformulation) to define the Heegaard
Floer chain complex and the composition maps. In this paper, we will
have to work in various different (families of) almost complex structures
(for instance, see Section \ref{sec:The-plan}). Using the arguments
of \cite[Section 10c]{MR2441780} and \cite[Section 3.3]{MR3509974},
it is possible to show that a generic one-parameter family that interpolates
two such (families of) almost complex structures induces an $A_{\infty}$-functor
between the $A_{\infty}$-categories given by the two choices, and
that this $A_{\infty}$-functor induces isomorphisms on the homology
groups. (See Remark \ref{rem:ainf-pedantry} for a definition of the
$A_{\infty}$-categories we consider.)
\begin{rem}
\label{rem:orientations}We use the convention that almost complex
structures on surfaces rotate the tangent spaces in a counterclockwise
direction. This is the same as \cite{MR2350128} but is different
from \cite{MR2964628,MR3604486}.
\end{rem}

\subsection{\label{subsec:unoriented-local-system}Heegaard data for unoriented
link Floer homology}

In this subsection, we describe the kinds of Heegaard data that we
consider in this paper. The local systems will be specified by an
oriented arc $G\subset\Sigma$ on the Heegaard surface. Hence, instead
of including local systems as a part of a Heegaard datum, we simply
incorporate $G$ into the Heegaard diagram: modify the definition
of a multi-Heegaard diagram to be a collection of a pointed surface,
attaching curves, and \emph{possibly also an oriented arc $G\subset\Sigma$}
(this will always be drawn dashed and grey). Also, the coefficient
rings and the weight functions will mostly be determined by the Heegaard
diagram.

\subsubsection*{The Heegaard diagram, ignoring $G$}

Let $\boldsymbol{p}$ be the set of basepoints. For any two attaching
curves $\boldsymbol{\alpha}$ and $\boldsymbol{\beta}$, the sub Heegaard
diagram $(\Sigma,\boldsymbol{\alpha},\boldsymbol{\beta},\boldsymbol{p})$
represents a pointed link in a pointed three-manifold, where there
are exactly two basepoints in each link component. Equivalently, there
exists an \emph{equivalence relation} on $\boldsymbol{p}$ where the
equivalence classes have size $1$ or $2$, such that for each attaching
curve $\boldsymbol{\gamma}$ and each connected component ${\cal D}$
of $\Sigma\backslash\boldsymbol{\gamma}$, the intersection ${\cal D}\cap\boldsymbol{p}$
is an equivalence class.

\subsubsection*{The oriented arc $G$}

The negative boundary $\partial_{-}G$ of $G$ is a basepoint. For
any attaching curve $\boldsymbol{\alpha}$, there is at most one circle
$\alpha^{i}\in\boldsymbol{\alpha}$ that intersects $G$, and the
boundary $\{\partial_{+}G,\partial_{-}G\}$ of $G$ is contained in
the same connected component of $\Sigma\backslash\boldsymbol{\alpha}$.
Also, we assume that there are at most two circles that intersect
$G$, and that they are standard, small translates\footnote{These two circles intersect each other at two points, and intersect
other circles in the ``same way'': see Definition \ref{def:standard-translate}.} of each other, that intersect each other at two points, right above
and below $G$. See Figure \ref{fig:local-system} for a schematic
near $G$.

\subsubsection*{The coefficient ring $R$}

If $G$ does not exist, $R=\mathbb{F}\llbracket X_{1},\cdots,X_{n}\rrbracket$;
if $G$ exists, $R=\mathbb{F}\llbracket U,X_{1},\cdots,X_{n}\rrbracket$,
where $X_{i}=U_{i}$ or $U_{i}^{1/2}$. We call $U$ the \emph{distinguished
variable}. Sometimes, we consider $R^{\infty}$ or a quotient of $R$
by some $U_{i}$ or $U_{i}^{1/2}$'s.

\subsubsection*{The weight function}

If $G$ exists, then $\partial_{-}G$ has weight $U$. If $X_{i}=U_{i}^{1/2}$,
then there are exactly two basepoints with weight $U_{i}^{1/2}$,
and they belong to the same equivalence class of $\boldsymbol{p}$.
We call these basepoints \emph{link basepoints}. If $X_{i}=U_{i}$,
then there is exactly one basepoint with weight $U_{i}$, and it forms
a (singleton) equivalence class of $\boldsymbol{p}$. We call these
\emph{free basepoints}\footnote{The basepoint $\partial_{-}G$ is neither a link basepoint nor a free
basepoint.}. These are all the basepoints.

\subsubsection*{The local systems}

If an attaching curve $\boldsymbol{\alpha}$ does not intersect $G$,
then it has the trivial local system. If some $\alpha^{i}\in\boldsymbol{\alpha}$
intersects $G$, then we consider the local system $(E,\phi)$ where
$E$ is the rank $2$ free $R$-module $E=e_{0}R\oplus e_{1}R$, and
the monodromy sends a path $p$ in $\boldsymbol{\alpha}$ to $\phi^{\#(G\cap p)}$,
where 
\[
\phi=e_{1}e_{0}^{\ast}+Ue_{0}e_{1}^{\ast}\in{\rm Hom}_{R}(E^{\infty},E^{\infty}).
\]
The intersection $G\cap p$ is positive if $p$ goes from top to bottom
in Figure \ref{fig:local-system}.

\begin{figure}[h]
\begin{centering}
\includegraphics[scale=1.5]{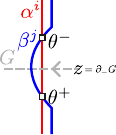}
\par\end{centering}
\caption{\label{fig:local-system}A local diagram for the Heegaard diagram
near $G$}
\end{figure}

\begin{defn}
We chose a basis for the nontrivial local system $E$. Choose the
trivial basis $\{1\}$ for the trivial local system. These induce
$R$-bases on the $R$-Hom spaces. An \emph{$R$-generator} (or simply
a \emph{generator}) of $\boldsymbol{CF}^{-}(\boldsymbol{\alpha}^{E_{\boldsymbol{\alpha}}},\boldsymbol{\beta}^{E_{\boldsymbol{\beta}}})$
is of the form $f{\bf x}$, where $f$ is an $R$-basis element of
${\rm Hom}_{R}(E_{\boldsymbol{\alpha}},E_{\boldsymbol{\beta}})$ and
${\bf x}\in\boldsymbol{\alpha}\cap\boldsymbol{\beta}$.
\end{defn}

\begin{defn}
\label{def:total-multiplicity}Define the \emph{total multiplicity
of a two-chain ${\cal D}$ as 
\[
P({\cal D}):=n_{link}({\cal D})+2n_{free}({\cal D}),
\]
where} $link$ is the set of link basepoints together with $\partial_{+}G,\partial_{-}G$
(if $G$ exists), and $free$ is the set of all free basepoints. 
\end{defn}

\begin{example}
\label{exa:local-system}Let us consider the first two genus $1$
Heegaard diagrams in Figure \ref{fig:local-system-ex}, where we work
over $\mathbb{F}\llbracket U\rrbracket$, and $U$ is the distinguished
variable.

Let us consider the first diagram. The attaching curve $\alpha$ has
a nontrivial local system, but $\beta$'s local system is trivial.
The chain complex $\boldsymbol{CF}^{-}(\alpha^{E_{\alpha}},\beta)$
is freely generated by $e_{0}a,e_{1}a,e_{0}b,e_{1}b,e_{0}c,e_{1}c$
as an $\mathbb{F}\llbracket U\rrbracket$-module. The differential
acts on the generators by 
\[
e_{0}a\mapsto e_{0}b,\ e_{1}a\mapsto e_{1}b,\ e_{0}a\mapsto e_{1}c,\ e_{1}a\mapsto Ue_{0}c,
\]
where the first two are given by the two-chain $D_{1}$ and the last
two are given by $D_{2}$. Note that if we identify $e_{0}x$ with
$x$ and $e_{1}x$ with $U^{1/2}x$ for $x=a,b,c$, then this chain
complex is isomorphic to the chain complex $\boldsymbol{CF}^{-}(\alpha,\beta)$
given by the third diagram, where we work over $\mathbb{F}\llbracket U^{1/2}\rrbracket$,
and where we assign the weight $U^{1/2}$ to both $z$ and $w$.

Let us consider the second diagram. Both attaching curves $\alpha$
and $\beta$ have a nontrivial local system. The module $\boldsymbol{CF}^{-}(\alpha^{E_{\alpha}},\beta^{E_{\beta}})$
is freely generated by $e_{i}e_{j}^{\ast}\theta^{+},e_{i}e_{j}^{\ast}\theta^{-}$
($i,j=0,1$) as an $\mathbb{F}\llbracket U\rrbracket$-module. The
differential is as follows:
\[
\begin{gathered}e_{0}e_{0}^{\ast}\theta^{+}\mapsto e_{1}e_{1}^{\ast}\theta^{-}+e_{0}e_{0}^{\ast}\theta^{-},\ e_{1}e_{1}^{\ast}\theta^{+}\mapsto e_{0}e_{0}^{\ast}\theta^{-}+e_{1}e_{1}^{\ast}\theta^{-},\\
e_{1}e_{0}^{\ast}\theta^{+}\mapsto Ue_{0}e_{1}^{\ast}\theta^{-}+e_{1}e_{0}^{\ast}\theta^{-},\ e_{0}e_{1}^{\ast}\theta^{+}\mapsto U^{-1}e_{1}e_{0}^{\ast}\theta^{-}+e_{0}e_{1}^{\ast}\theta^{-}.
\end{gathered}
\]
In each case, the first summand is the contribution of $D_{1}$; the
second summand is that of $D_{2}$. Note that the differential is
well defined on $\boldsymbol{CF}^{\infty}(\alpha^{E_{\alpha}},\beta^{E_{\beta}})$,
but \emph{not} well defined on $\boldsymbol{CF}^{-}(\alpha^{E_{\alpha}},\beta^{E_{\beta}})$
as we get a negative power of $U$. Our solution (see Subsubsection
\ref{subsec:Positivity}) will be to consider the free submodule $\boldsymbol{CF}_{fil}^{-}(\alpha^{E_{\alpha}},\beta^{E_{\beta}})\le\boldsymbol{CF}^{-}(\alpha^{E_{\alpha}},\beta^{E_{\beta}})$
generated by $Ue_{0}e_{1}^{\ast}\theta^{+}$ together with all $e_{i}e_{j}^{\ast}\theta^{+},e_{i}e_{j}^{\ast}\theta^{-}$
for $i,j=0,1$ except $e_{0}e_{1}^{\ast}\theta^{+}$ (i.e. replace
$e_{0}e_{1}^{\ast}\theta^{+}$ by $Ue_{0}e_{1}^{\ast}\theta^{+}$).
Then, $\boldsymbol{CF}_{fil}^{-}(\alpha^{E_{\alpha}},\beta^{E_{\beta}})$
will be a sub chain complex of $\boldsymbol{CF}^{\infty}(\alpha^{E_{\alpha}},\beta^{E_{\beta}})$.
\begin{figure}[h]
\begin{centering}
\includegraphics[scale=1.5]{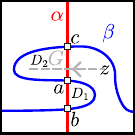}\qquad{}\includegraphics[scale=1.5]{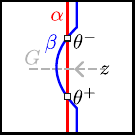}\qquad{}\includegraphics[scale=1.5]{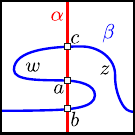}
\par\end{centering}
\caption{\label{fig:local-system-ex}Some genus 1 examples}
\end{figure}
\end{example}

\begin{rem}
\label{rem:same-as-unoriented}One can define the composition maps
$\mu_{n}$ without mentioning local systems, if there is at most one
attaching curve with a nontrivial local system. Let us demonstrate
this for $\mu_{1}$ (we have observed a special case in the first
example of Example \ref{exa:local-system}). Assume that exactly one
of $\boldsymbol{\alpha}$ and $\boldsymbol{\beta}$ has a nontrivial
local system, and let the coefficient ring be $S=\mathbb{F}\llbracket U^{1/2},X_{1},X_{2},\cdots,X_{k}\rrbracket$.
Then $\boldsymbol{CF}_{S}^{-}(\boldsymbol{\alpha}^{E_{\boldsymbol{\alpha}}},\boldsymbol{\beta}^{E_{\boldsymbol{\beta}}})$
is $S$-linearly isomorphic to the chain complex $\boldsymbol{CF}_{R}^{-}(\boldsymbol{\alpha},\boldsymbol{\beta})$,
where $R=\mathbb{F}\llbracket U^{1/2},X_{1},X_{2},\cdots,X_{k}\rrbracket$,
and we add $w:=\partial_{+}G$ to the set of basepoints, assign the
weight $U^{1/2}$ to $w$ and $z$, and assign the same weights as
before to all the basepoints except $w$ and $z$. The $S$-linear
chain isomorphism is given by mapping $e_{0}{\bf x}\mapsto{\bf x}$,
$e_{1}{\bf x}\mapsto U^{1/2}{\bf x}$ if $E_{\boldsymbol{\beta}}$
is nontrivial; and $e_{1}^{\ast}{\bf x}\mapsto{\bf x}$, $e_{0}^{\ast}{\bf x}\mapsto U^{1/2}{\bf x}$
if $E_{\boldsymbol{\alpha}}$ is nontrivial.

We will see an instance of this for $\mu_{2}$ in Subsubsection \ref{subsec:Merge-and-split}:
compare Remark \ref{rem:band-non-trivial-local-system} and the discussion
right before it.
\end{rem}

\subsection{\label{subsec:Positivity-and-admissibility}Weak admissibility and
positivity}

We mainly work in the minus version. Recall that in Subsection \ref{subsec:unoriented-local-system},
we have specialized to specific kinds of Heegaard data, which involved
local systems that a priori involve negative powers of $U$. Under
these assumptions, we first deal with \emph{admissibility}, which
ensures that the sums in Equations (\ref{eq:differential}) and (\ref{eq:higher-multiplication})
are well-defined in $\boldsymbol{CF}^{\infty}$.

We then deal with \emph{positivity}, i.e. we show that our maps only
involve nonnegative powers of $U$. More precisely, we define a $\boldsymbol{CF}^{-}$-type
sub \emph{chain complex} of $\boldsymbol{CF}^{\infty}$, which we
call the space of \emph{filtered maps} $\boldsymbol{CF}_{fil}^{-}$
as in \cite{2308.15658}, on which the other composition maps ($\mu_{n}$
for $n\ge2$) are defined as well. By $\boldsymbol{CF}^{-}$-type,
we mean that $\boldsymbol{CF}_{fil}^{-}$ is a (free) sub $R$-module
of $\boldsymbol{CF}^{-}$, and that $R^{\infty}\boldsymbol{CF}_{fil}^{-}=\boldsymbol{CF}^{\infty}$.
In our case, positivity is ensured because we have \emph{at most two}
attaching curves with nontrivial local systems\footnote{When we prove the exact triangle, we need to consider two, and fortunately
only two.}. If at most one of $E_{\boldsymbol{\alpha}}$ or $E_{\boldsymbol{\beta}}$
is a trivial local system, then we will have $\boldsymbol{CF}_{fil}^{-}(\boldsymbol{\alpha}^{E_{\boldsymbol{\alpha}}},\boldsymbol{\beta}^{E_{\boldsymbol{\beta}}})=\boldsymbol{CF}^{-}(\boldsymbol{\alpha}^{E_{\boldsymbol{\alpha}}},\boldsymbol{\beta}^{E_{\boldsymbol{\beta}}})$.

We first record a useful lemma.
\begin{defn}
Define a grading
\[
{\rm gr}_{U}:{\rm Hom}_{R}(A,B)\backslash\{0\}\to\frac{1}{2}\mathbb{Z}
\]
for $A,B\in\{E^{\infty},R^{\infty}\}$ as follows.

First, for $r\in R^{\infty}\backslash\{0\}$, let ${\rm gr}_{U}(r)$
be the largest integer $n$ such that $r\in U^{n}R[X_{1}^{-1},\cdots,X_{k}^{-1}]$,
and define ${\rm gr}_{U}:{\rm Hom}_{R}(R^{\infty},R^{\infty})\backslash\{0\}\to\mathbb{Z}$
by identifying ${\rm Hom}_{R}(R^{\infty},R^{\infty})\simeq R^{\infty}$.

Then, define ${\rm gr}_{U}$ for the basis elements $e_{0},e_{1},e_{0}^{\ast},e_{1}^{\ast},e_{0}e_{0}^{\ast},\cdots$
such that it is multiplicative and ${\rm gr}_{U}e_{0}=0$, ${\rm gr}_{U}e_{1}=1/2$
(for instance, ${\rm gr}_{U}e_{1}^{\ast}=-1/2$). For $\sum r_{i}f_{i}\in{\rm Hom}_{R}(A,B)$
where $f_{i}$ are distinct basis elements and $r_{i}\in R^{\infty}\backslash\{0\}$,
let
\[
{\rm gr}_{U}\left(\sum r_{i}f_{i}\right):=\min_{i}({\rm gr}_{U}(r_{i}){\rm gr}_{U}(f_{i})).
\]
\end{defn}

\begin{figure}[h]
\begin{centering}
\includegraphics[scale=1.5]{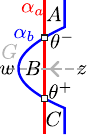}
\par\end{centering}
\caption{\label{fig:local-system-lemma}A local diagram for Lemmas \ref{lemma:deg-output}
and \ref{lem:positivity}}
\end{figure}

\begin{lem}
\label{lemma:deg-output}Let $\boldsymbol{\alpha}_{0}^{E_{0}},\cdots,\boldsymbol{\alpha}_{d}^{E_{d}}$
be attaching curves with local systems. Consider generators $f_{i}{\bf x}_{i}\in\boldsymbol{CF}^{-}(\boldsymbol{\alpha}_{i-1}^{E_{i-1}},\boldsymbol{\alpha}_{i}^{E_{i}})$
for $i=1,\cdots,d$ and ${\bf y}\in\boldsymbol{\alpha}_{0}\cap\boldsymbol{\alpha}_{d}$.
Assume that there is a domain ${\cal D}\in D({\bf x}_{1},\cdots,{\bf x}_{d},{\bf y})$,
and let 
\[
g=w({\cal D})\rho({\cal D})(f_{1}\otimes\cdots\otimes f_{d}).
\]
If $g\neq0$, then 
\[
{\rm gr}_{U}g=\frac{1}{2}(n_{w}({\cal D})+n_{z}({\cal D}))+\sum{\rm gr}_{U}f_{i},
\]
where $w=\partial_{+}G$ and $z=\partial_{-}G$.
\end{lem}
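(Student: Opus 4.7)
The plan is to establish this by observing that $g$ is a product of three kinds of ${\rm gr}_U$-homogeneous factors---the weight $w(\mathcal{D})$, the monodromy factors $\Phi_i(\partial_{\boldsymbol{\alpha}_i}\mathcal{D})$, and the basis elements $f_i$---and then summing their degrees, with the final bookkeeping reducing to a topological intersection identity on $\Sigma$. The assumption $g\neq 0$ guarantees that no cancellation occurs, so that the degree of the composition is genuinely the sum of the degrees of its factors.

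For the weight, the conventions of Subsection \ref{subsec:unoriented-local-system} dictate that among all basepoints, only $z=\partial_-G$ carries weight involving the distinguished variable $U$ (namely weight $U$); all link and free basepoints contribute powers of $X_i$'s, to which ${\rm gr}_U$ is by definition insensitive. Hence ${\rm gr}_U w(\mathcal{D})=n_z(\mathcal{D})$.

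For the monodromy, I would first verify directly that $\phi=e_1e_0^{\ast}+Ue_0e_1^{\ast}$ is ${\rm gr}_U$-homogeneous of degree $1/2$: in the basis grading ${\rm gr}_U e_0=0$, ${\rm gr}_U e_1=1/2$, both summands of $\phi$ have degree $1/2$. It follows that $\phi^n$ is homogeneous of degree $n/2$ for every $n\in\mathbb{Z}$, consistent with $\phi^2=U\cdot\mathrm{id}$ and $\phi^{-1}=e_0e_1^{\ast}+U^{-1}e_1e_0^{\ast}$. Since each $f_i$ is itself a basis element and hence homogeneous, the whole composition defining $\rho(\mathcal{D})(f_1\otimes\cdots\otimes f_d)$ is a homogeneous combination of basis elements, and under our nonvanishing hypothesis,
\[
{\rm gr}_U\,\rho(\mathcal{D})(f_1\otimes\cdots\otimes f_d)=\sum_i {\rm gr}_U f_i+\frac{1}{2}\sum_{j}\#\bigl(G\cap\partial_{\boldsymbol{\alpha}_j}\mathcal{D}\bigr),
\]
where only the (at most two) attaching curves with nontrivial local system contribute to the second sum.

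The remaining ingredient is the topological identity $\sum_j \#(G\cap\partial_{\boldsymbol{\alpha}_j}\mathcal{D})=n_w(\mathcal{D})-n_z(\mathcal{D})$. This follows from a standard argument: $\partial\mathcal{D}$ is a $1$-cycle in $\Sigma$, and $G$ is an oriented arc from $\partial_-G=z$ to $\partial_+G=w$, so the signed count of transverse crossings records exactly the change in local multiplicity of $\mathcal{D}$ between the two endpoints of $G$. Combining these three contributions gives
\[
{\rm gr}_U g=n_z(\mathcal{D})+\frac{1}{2}\bigl(n_w(\mathcal{D})-n_z(\mathcal{D})\bigr)+\sum_i {\rm gr}_U f_i=\frac{1}{2}\bigl(n_w(\mathcal{D})+n_z(\mathcal{D})\bigr)+\sum_i {\rm gr}_U f_i,
\]
as claimed. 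The only subtle point is the orientation bookkeeping: one must confirm that a positive transverse crossing of $G$ (in the sense of Figure \ref{fig:local-system}) contributes $\phi$ rather than $\phi^{-1}$, so that the signed count has the correct sign $n_w-n_z$; this is straightforward from our conventions.
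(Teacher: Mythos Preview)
Your proof is correct and is essentially the argument the paper has in mind: the paper's proof is simply ``This follows from inspecting Figure~\ref{fig:local-system-lemma},'' and your decomposition into the weight contribution $n_z(\mathcal D)$, the monodromy contribution $\tfrac12\sum_j\#(G\cap\partial_{\boldsymbol{\alpha}_j}\mathcal D)$, and the topological identity $\sum_j\#(G\cap\partial_{\boldsymbol{\alpha}_j}\mathcal D)=n_w(\mathcal D)-n_z(\mathcal D)$ is exactly what that inspection unfolds to. You have made explicit what the paper leaves to the reader, including the homogeneity of $\phi$ and the role of the nonvanishing hypothesis.
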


\begin{proof}
This follows from inspecting Figure \ref{fig:local-system-lemma}.
Note that for this claim, we do not need that only at most two of
$\boldsymbol{\alpha}_{0},\cdots,\boldsymbol{\alpha}_{d}$ have nontrivial
local systems.
\end{proof}

\subsubsection{\label{subsec:Admissibility}Weak admissibility}

There are two kinds of admissibility conditions: \emph{strong admissibility}
and \emph{weak admissibility}. Strong admissibility lets us work with
polynomial coefficient rings by ensuring that for each $m$, there
are only finitely many domains with Maslov index $m$, if we restrict
to a specific ${\rm Spin}^{c}$-structure. Weak admissibility lets
us work with power series rings $R=\mathbb{F}\llbracket Y_{1},\cdots,Y_{k}\rrbracket$
by ensuring that there are only finitely many domains that contribute
when the coefficient ring is $R/(Y_{1}^{N},\cdots,Y_{k}^{N})$, for
each $N$\footnote{We have not discussed positivity yet, so to be precise, we can only
talk about $\boldsymbol{CF}^{\infty}$; in this case, weak admissibility
ensures a finite count for each $N$, if the coefficient \emph{module}
is $R^{\infty}/(Y_{1}^{N}R+\cdots+Y_{k}^{N}R)$.}.

We discuss strong admissibility when there are two attaching curves
in Subsubsection \ref{subsec:Strong-admissibility-2}; for more than
two attaching curves, we only study a restrictive case in Subsubsection
\ref{subsec:Strong-admissibility-c}. We can work with either weak
or strong admissibility for the purpose of this paper if we only consider
two attaching curves, but when there are more than two, we will mainly
work with weak admissibility, since in some cases, we have to consider
the sum of infinitely many polygon maps, as in \cite[Section 9]{MR2113020}.
\begin{defn}
Let $\boldsymbol{q}$ be the set of basepoints together with $\partial_{+}G$.
The Heegaard diagram is \emph{weakly admissible} if every cornerless
two-chain ${\cal D}$ such that $n_{\boldsymbol{q}}({\cal D})=0$
has both positive and negative local multiplicities.
\end{defn}

\begin{lem}
If the Heegaard datum is weakly admissible, then the sums in Equations
(\ref{eq:differential}) and (\ref{eq:higher-multiplication}) are
well-defined for $\boldsymbol{CF}^{\infty}$.
\end{lem}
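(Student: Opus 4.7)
The plan is to show that for every $N\geq 1$, only finitely many domains contribute nontrivially to the sums in (\ref{eq:differential}) and (\ref{eq:higher-multiplication}) modulo the ideal $I_{N}=(U^{N},X_{1}^{N},\ldots,X_{n}^{N})$ of $R^{\infty}$. Since $\boldsymbol{CF}^{\infty}$ is complete with respect to this filtration, convergence modulo each $I_{N}$ is enough.

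Fix generators $f_{1}{\bf x}_{1},\ldots,f_{d}{\bf x}_{d}$ and a target ${\bf y}$, and note that only nonnegative domains carry holomorphic representatives, so I restrict to nonnegative $\mathcal{D}\in D({\bf x}_{1},\ldots,{\bf x}_{d},{\bf y})$. First I would invoke Lemma \ref{lemma:deg-output}: when nonzero, the $U$-grading of $w(\mathcal{D})\rho(\mathcal{D})(f_{1}\otimes\cdots\otimes f_{d})$ equals $\tfrac{1}{2}(n_{w}(\mathcal{D})+n_{z}(\mathcal{D}))+\sum_{i}{\rm gr}_{U}f_{i}$ with $w=\partial_{+}G$, $z=\partial_{-}G$, so the term vanishes modulo $U^{N}$ unless $n_{w}(\mathcal{D})+n_{z}(\mathcal{D})$ is bounded in terms of $N$ and the $f_{i}$. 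Every free basepoint $p_{j}$ of weight $X_{j}$ contributes the factor $X_{j}^{n_{p_{j}}(\mathcal{D})}$ to $w(\mathcal{D})$, forcing $n_{p_{j}}(\mathcal{D})<N$ for a nonzero contribution modulo $X_{j}^{N}$; link basepoints weighted by $U_{i}^{1/2}$ are handled the same way. Altogether, every multiplicity $n_{p}(\mathcal{D})$ for $p\in\boldsymbol{q}$ is bounded by an explicit constant $C_{p}$ depending only on $N$ and the $f_{i}$.

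It then suffices to show that, for any such bounds, only finitely many nonnegative domains in $D({\bf x}_{1},\ldots,{\bf x}_{d},{\bf y})$ satisfy them. I would fix one such $\mathcal{D}_{0}$; every other differs from it by a cornerless two-chain, and the bounds on $n_{\boldsymbol{q}}$ leave only finitely many cosets of the subgroup $\mathcal{P}_{0}$ of cornerless two-chains with $n_{\boldsymbol{q}}=0$. Within a fixed coset, if infinitely many $P_{k}\in\mathcal{P}_{0}$ satisfied $\mathcal{D}_{0}+P_{k}\geq 0$, then by a rescaling-compactness argument in the finitely generated real vector space $\mathcal{P}_{0}\otimes\mathbb{R}$ (in the spirit of the original Ozsv\'{a}th--Szab\'{o} argument) one would extract a nonzero ray in the rational polyhedral cone $\{P\geq 0\}\subset\mathcal{P}_{0}\otimes\mathbb{R}$, which by Gordan's lemma must contain a nonzero integer point, contradicting weak admissibility.

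The main subtlety I expect is the presence of nontrivial local systems: their monodromies $\rho(\mathcal{D})$ a priori introduce negative powers of $U$, which naively threatens to let infinitely many domains contribute to a single coefficient. Lemma \ref{lemma:deg-output} is what tames this, converting the a priori unbounded contribution into a bound on $n_{w}+n_{z}$. Once this is in hand, the argument reduces cleanly to the classical weak admissibility argument applied to the enlarged basepoint set $\boldsymbol{q}=\boldsymbol{p}\cup\{\partial_{+}G\}$.
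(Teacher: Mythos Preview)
Your proof is correct and follows essentially the same approach as the paper: both use Lemma \ref{lemma:deg-output} to convert the monodromy contribution into a bound on $n_{w}+n_{z}$, reduce to bounding all multiplicities at $\boldsymbol{q}$, and then appeal to the standard Ozsv\'ath--Szab\'o finiteness argument. The paper simply cites \cite[Lemma 4.13]{MR2113019} for the last step, whereas you spell out the rescaling-compactness argument explicitly; otherwise the proofs are the same.
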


\begin{proof}
Fix generators $f_{i}{\bf x}_{i}\in\boldsymbol{CF}^{\infty}(\boldsymbol{\alpha}_{i-1}^{E_{i-1}},\boldsymbol{\alpha}_{i}^{E_{i}})$
for $i=1,\cdots,d$ and ${\bf y}\in\boldsymbol{\alpha}_{0}\cap\boldsymbol{\alpha}_{d}$.
For each $M\in\mathbb{Z}$, we would like to show that there are only
finitely many nonnegative domains ${\cal D}\in D({\bf x}_{1},\cdots,{\bf x}_{d},{\bf y})$
such that 
\[
g\notin U^{M}{\rm Hom}_{R}(E_{0},E_{d})+X_{1}^{M}{\rm Hom}_{R}(E_{0},E_{d})+\cdots+X_{k}^{M}{\rm Hom}_{R}(E_{0},E_{d}),
\]
where $g=w({\cal D})\rho({\cal D})(f_{1}\otimes\cdots\otimes f_{d})$\footnote{Here, identify ${\rm Hom}_{R}(E_{i}^{\infty},E_{j}^{\infty})\simeq{\rm Hom}_{R}(E_{i},E_{j})\otimes_{R}R^{\infty}$.}.

The proof of \cite[Lemma 4.13]{MR2113019} shows that for each $N$,
there are only finitely many nonnegative $\boldsymbol{\alpha}_{0},\cdots,\boldsymbol{\alpha}_{d}$-domains
${\cal D}$ such that for all $q\in\boldsymbol{q}$, we have $n_{q}({\cal D})\le N$.
Hence the above follows, from Lemma \ref{lemma:deg-output} and that
$g\in U^{\left\lfloor {\rm gr}_{U}g\right\rfloor }{\rm Hom}_{R}(E_{0},E_{d})$.
\end{proof}
We can now define $\boldsymbol{CF}^{\infty}(\boldsymbol{\alpha}^{E_{\boldsymbol{\alpha}}},\boldsymbol{\beta}^{E_{\boldsymbol{\beta}}})$
and show that it is a chain complex. Similarly, one can check that
the higher $A_{\infty}$-relations hold: these are simpler since there
are no contributions from boundary degenerations.
\begin{lem}
\label{lem:d2=00003D0}Let us consider a weakly admissible Heegaard
datum with attaching curves with local systems $\boldsymbol{\alpha}^{E_{\boldsymbol{\alpha}}},\boldsymbol{\beta}^{E_{\boldsymbol{\beta}}}$.
The map $\partial$ on $\boldsymbol{CF}^{\infty}(\boldsymbol{\alpha}^{E_{\boldsymbol{\alpha}}},\boldsymbol{\beta}^{E_{\boldsymbol{\beta}}})$
satisfies $\partial^{2}=0$.
\end{lem}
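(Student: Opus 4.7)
The plan is to follow the standard argument showing $\partial^{2}=0$ in Heegaard Floer homology (as in \cite{MR2113019}), with extra care to track monodromies of local systems and to verify that everything converges in $\boldsymbol{CF}^{\infty}$. Weak admissibility, combined with Lemma \ref{lemma:deg-output}, ensures that for any generator $f{\bf x}$ the sum defining $\partial^{2}(f{\bf x})$ makes sense in $\boldsymbol{CF}^{\infty}(\boldsymbol{\alpha}^{E_{\boldsymbol{\alpha}}},\boldsymbol{\beta}^{E_{\boldsymbol{\beta}}})$, so it suffices to verify that the coefficient of each ${\bf z}$ vanishes.

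Expanding, the coefficient of ${\bf z}$ in $\partial^{2}(f{\bf x})$ is a sum over pairs $({\cal D}_{1},{\cal D}_{2})$ with ${\cal D}_{1}\in D({\bf x},{\bf y})$, ${\cal D}_{2}\in D({\bf y},{\bf z})$, and $\mu({\cal D}_{1})=\mu({\cal D}_{2})=1$, weighted by $\#{\cal M}({\cal D}_{1})\cdot\#{\cal M}({\cal D}_{2})\cdot w({\cal D}_{2})\rho({\cal D}_{2})\circ w({\cal D}_{1})\rho({\cal D}_{1})(f)$. The first key step is to establish multiplicativity: for the concatenated domain ${\cal D}={\cal D}_{1}+{\cal D}_{2}\in D({\bf x},{\bf z})$, one has $w({\cal D})=w({\cal D}_{1})w({\cal D}_{2})$ (immediate from the definition of $w$), $\mu({\cal D})=2$ (by additivity of the Maslov index), and $\rho({\cal D})=\rho({\cal D}_{2})\circ\rho({\cal D}_{1})$. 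The last identity reduces to the groupoid homomorphism property of $\Phi_{\boldsymbol{\alpha}}$ and $\Phi_{\boldsymbol{\beta}}$, using that $\partial_{\boldsymbol{\alpha}}{\cal D}$ (resp.\ $\partial_{\boldsymbol{\beta}}{\cal D}$) is the concatenation of the corresponding boundaries of ${\cal D}_{1}$ and ${\cal D}_{2}$.

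Using this multiplicativity, I would regroup the sum by the total domain ${\cal D}\in D({\bf x},{\bf z})$ with $\mu({\cal D})=2$: the coefficient of ${\bf z}$ becomes $\sum_{{\cal D}}w({\cal D})\rho({\cal D})(f)\cdot N({\cal D})$, where
\[
N({\cal D})=\sum_{{\bf y}}\sum_{{\cal D}={\cal D}_{1}+{\cal D}_{2}}\#{\cal M}({\cal D}_{1})\cdot\#{\cal M}({\cal D}_{2})\in\mathbb{F}.
\]
By standard Gromov compactness and gluing for bigons in ${\rm Sym}^{g}(\Sigma)$ (or in the cylindrical formulation of \cite{MR2240908}), $N({\cal D})$ counts precisely the ends of the $1$-dimensional compactified moduli space of holomorphic representatives of ${\cal D}$ corresponding to broken flowlines; the only other ends come from $\boldsymbol{\alpha}$- and $\boldsymbol{\beta}$-boundary degenerations. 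Since we work over $\mathbb{F}=\mathbb{Z}/2$, the total number of ends of a compact $1$-manifold is even, so $N({\cal D})$ equals the count of boundary degenerations mod $2$.

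The main (and only nontrivial) obstacle is thus to verify that the boundary degeneration contributions vanish. This is handled exactly as in the classical case: for each ${\cal D}$ the $\boldsymbol{\alpha}$- and $\boldsymbol{\beta}$-boundary degenerations split off a periodic region, and by the standard argument (e.g.\ \cite[Section 5]{MR2240908}) they occur in canceling pairs for generic almost complex structures. This argument is unaffected by the presence of local systems because the monodromy and weight are constant along a family of boundary degenerations (the associated domain is fixed). Putting these ingredients together gives $\partial^{2}=0$. The higher $A_{\infty}$-relations follow by the identical argument applied to moduli of polygons, replacing pairs of bigons by breakings of index $3-d$ polygons; the multiplicativity of $\rho$ and $w$ under domain concatenation used here is the same.
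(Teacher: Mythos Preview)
Your overall framework is correct, and the multiplicativity of $w$ and $\rho$ under concatenation is exactly what makes the standard argument go through with local systems. However, your treatment of boundary degenerations has a genuine gap.

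You assert that boundary degenerations ``occur in canceling pairs for generic almost complex structures,'' citing the singly-pointed setting. That is false here: we are in a genuinely multi-pointed situation (each component of $\Sigma\backslash\boldsymbol{\alpha}$ and $\Sigma\backslash\boldsymbol{\beta}$ contains one or two basepoints), and in this setting Maslov index $2$ boundary degenerations \emph{do} occur and contribute nontrivially (see \cite[Theorem 5.5]{MR2443092}, \cite[Lemma 2.1]{MR3709653}, \cite[Proposition 7.9]{2011.00113}). The count $\#\widetilde{N}({\cal D},{\bf x})$ is nonzero (and independent of ${\cal D}$ and ${\bf x}$), so one cannot dismiss these contributions by genericity. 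The paper's argument instead computes the total contribution explicitly: summing $w({\cal D})\rho({\cal D})$ over the connected components of $\Sigma\backslash\boldsymbol{\alpha}$ gives $\bigl(U+\sum_i U_i\bigr)\,{\rm Id}_{{\rm Hom}(E_{\boldsymbol{\alpha}},E_{\boldsymbol{\beta}})}$, and the same for $\boldsymbol{\beta}$; the two sums then cancel in characteristic $2$.

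There is a second, related point you miss entirely. For this computation to yield a multiple of ${\rm Id}$, one needs $\rho({\cal D})={\rm Id}$ for each boundary degeneration domain ${\cal D}$, i.e.\ that $\partial{\cal D}$ has algebraic intersection zero with $G$. This is \emph{not} automatic; it uses the specific hypothesis from Subsection~\ref{subsec:unoriented-local-system} that $\partial_{+}G$ and $\partial_{-}G$ lie in the same connected component of $\Sigma\backslash\boldsymbol{\alpha}$ (and of $\Sigma\backslash\boldsymbol{\beta}$). Your remark that ``the monodromy and weight are constant along a family of boundary degenerations'' only says $\rho$ is fixed for a fixed ${\cal D}$, not that it is the identity; without the hypothesis on $G$, different components could carry nontrivial and unequal monodromies, and the cancellation would fail.
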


\begin{proof}
The proof follows the usual proof of $\partial^{2}=0$, which studies
the ends of the moduli spaces ${\cal M}({\cal D})$ for domains in
${\cal D}({\bf x},{\bf y})$ with Maslov index $2$. The ends consist
of broken flowlines and boundary degenerations, and so $\partial^{2}$
equals the contribution from the boundary degenerations. See \cite[Theorem 5.5]{MR2443092},
\cite[Lemma 2.1]{MR3709653}, \cite[Proposition 7.9]{2011.00113}.
(Our case is algebraically slightly more complicated as we have nontrivial
local systems.)

Let the Heegaard surface be $\Sigma$, and let the coefficient ring
be $R^{\infty}$ where $R=\mathbb{F}\llbracket U,X_{1},\cdots,X_{k}\rrbracket$
(as in Subsection \ref{subsec:unoriented-local-system}, in particular
$X_{i}=U_{i}$ or $U_{i}^{1/2}$), where $U$ is the distinguished
variable. Let ${\bf x}\in\boldsymbol{\alpha}\cap\boldsymbol{\beta}$,
and let us consider domains ${\cal D}\in D({\bf x},{\bf x})$. We
claim that 
\[
\sum_{{\cal D}}w({\cal D})\rho({\cal D})=\left(U+\sum_{i}U_{i}\right){\rm Id}_{{\rm Hom}(E_{\boldsymbol{\alpha}},E_{\boldsymbol{\beta}})},
\]
where the two-chain of ${\cal D}$ ranges through the connected components
of $\Sigma\backslash\boldsymbol{\alpha}$, or the connected components
of $\Sigma\backslash\boldsymbol{\beta}$. This is almost exactly like
the case where all the local systems are trivial, but with the added
complication that $\partial{\cal D}$ might intersect $G$. However,
this is not possible as the two endpoints of $G$ lie in the same
connected component of $\Sigma\backslash\boldsymbol{\alpha}$ (and
$\Sigma\backslash\boldsymbol{\beta}$).

Let $\#\widetilde{N}({\cal D},{\bf x})$ be the number of boundary
degenerations with two-chain ${\cal D}$ and vertex ${\bf x}$. We
have 
\[
\partial^{2}(e{\bf x})=\sum_{{\cal D}}\#\widetilde{N}({\cal D},{\bf x})w({\cal D})\rho({\cal D})(e){\bf x},
\]
and this is zero since $\#\widetilde{N}({\cal D},{\bf x})$ does not
depend on ${\cal D}$ (and ${\bf x}$).
\end{proof}

\subsubsection{\label{subsec:Positivity}Positivity}

We define the space of \emph{filtered maps}.
\begin{defn}
Define the space of \emph{filtered maps}, $\boldsymbol{CF}_{fil}^{-}(\boldsymbol{\alpha}^{E_{\boldsymbol{\alpha}}},\boldsymbol{\beta}^{E_{\boldsymbol{\beta}}})\le\boldsymbol{CF}^{-}(\boldsymbol{\alpha}^{E_{\boldsymbol{\alpha}}},\boldsymbol{\beta}^{E_{\boldsymbol{\beta}}})$,
as follows.
\begin{itemize}
\item If either $\boldsymbol{\alpha}$ or $\boldsymbol{\beta}$ has the
trivial local system, then every element of $\boldsymbol{CF}^{-}(\boldsymbol{\alpha}^{E_{\boldsymbol{\alpha}}},\boldsymbol{\beta}^{E_{\boldsymbol{\beta}}})$
is \emph{filtered}.
\item If both $\boldsymbol{\alpha}$ and $\boldsymbol{\beta}$ have nontrivial
local systems, then $\boldsymbol{y}\in\boldsymbol{CF}^{-}(\boldsymbol{\alpha}^{E_{\boldsymbol{\alpha}}},\boldsymbol{\beta}^{E_{\boldsymbol{\beta}}})$
is \emph{filtered} if for each summand $f{\bf x}$ of $\boldsymbol{y}$
such that $\theta^{+}\in{\bf x}$, we have that ${\rm gr}_{U}f\ge0$.
\end{itemize}
In other words, we only rule out $e_{0}e_{1}^{\ast}{\bf x}$ (but
allow $Ue_{0}e_{1}^{\ast}{\bf x}$) for ${\bf x}$ such that $\theta^{+}\in{\bf x}$.
\end{defn}

The following is the main lemma that ensures positivity. (Compare
\cite[Lemmas A.3, A.5]{2308.15658})
\begin{lem}
\label{lem:positivity}Let $\boldsymbol{\alpha}_{0}^{E_{0}},\cdots,\boldsymbol{\alpha}_{d}^{E_{d}}$
be attaching curves with local systems. Assume that at most two of
them have nontrivial local systems. Consider generators $f_{i}{\bf x}_{i}\in\boldsymbol{CF}^{-}(\boldsymbol{\alpha}_{i-1}^{E_{i-1}},\boldsymbol{\alpha}_{i}^{E_{i}})$
for $i=1,\cdots,d$ and ${\bf y}\in\boldsymbol{\alpha}_{0}\cap\boldsymbol{\alpha}_{d}$.
If there exists a nonnegative domain ${\cal D}\in D({\bf x}_{1},\cdots,{\bf x}_{d},{\bf y})$,
let 
\[
g=w({\cal D})\rho({\cal D})(f_{1}\otimes\cdots\otimes f_{d}).
\]
If $f_{1}{\bf x}_{1},\cdots,f_{d}{\bf x}_{d}$ are filtered, then
$g{\bf y}$ is filtered.
\end{lem}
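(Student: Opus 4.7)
The approach is to combine the grading identity of Lemma \ref{lemma:deg-output} with a case analysis on which among $E_0, \ldots, E_d$ are nontrivial, ultimately reducing to a local geometric claim. First, I would observe that $g{\bf y}$ is filtered automatically unless both $E_0$ and $E_d$ are nontrivial; since at most two of the $E_i$ are nontrivial by hypothesis, this substantive case forces every intermediate $E_i$ to be trivial. In this configuration the intermediate $f_i$ all equal the scalar basis element $1 \in R$ with ${\rm gr}_U f_i = 0$, while $f_1$ and $f_d$ (or $f_1$ alone if $d=1$) are drawn from Hom-bases involving $E_0, E_d$ with grading values in $\{0, \pm 1/2\}$.

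Next, applying Lemma \ref{lemma:deg-output}, whenever $g \ne 0$ one has
\[
{\rm gr}_U g = \tfrac{1}{2}(n_w({\cal D}) + n_z({\cal D})) + \sum_i {\rm gr}_U f_i,
\]
and nonnegativity of ${\cal D}$ gives $n_w, n_z \ge 0$. Consequently ${\rm gr}_U g \ge \sum_i {\rm gr}_U f_i$. In every configuration but one, this sum is $\ge 0$ and we are done; the unique residual possibility is $\sum_i {\rm gr}_U f_i = -\tfrac{1}{2}$, realized either by $f_1 = e_1^*$ together with $f_d = e_0$ (for $d \ge 2$), or by $f_1 = e_0 e_1^*$ (for $d = 1$, in which case the filtered hypothesis on $f_1 {\bf x}_1$ forces $\theta^+ \notin {\bf x}_1$). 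It therefore suffices to verify the geometric claim that whenever $\theta^+ \in {\bf y}$ in these residual configurations, $n_w({\cal D}) + n_z({\cal D}) \ge 1$.

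The geometric claim will be the main obstacle. I would prove it by inspecting the local picture near the two standard translates intersecting $G$ (Figure \ref{fig:local-system}): the corner of ${\cal D}$ at $\theta^+$ occupies a specific quadrant determined by the vertex type, and the residual configuration rules out any matching $\theta^\pm$ corner at the inputs. Combined with the nonnegativity of ${\cal D}$ and the placement of $w, z$ as endpoints of $G$ adjacent to the bigon formed by the two standard translates, this forces ${\cal D}$ to propagate with positive multiplicity into a region bordering either $w$ or $z$, yielding the required bound. This local analysis parallels in spirit the positivity arguments of \cite[Lemmas A.3, A.5]{2308.15658}.
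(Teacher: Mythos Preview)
Your initial reduction contains a genuine gap. You claim that $g{\bf y}$ is filtered automatically unless both $E_0$ and $E_d$ are nontrivial, on the grounds that $\boldsymbol{CF}_{fil}^- = \boldsymbol{CF}^-$ in that situation. But ``filtered'' means lying in $\boldsymbol{CF}_{fil}^- \subset \boldsymbol{CF}^-$, and a priori $g$ lives only in ${\rm Hom}_R(E_0^\infty, E_d^\infty)$: you must still check that $g$ involves no negative powers of $U$. When one of the two nontrivial local systems sits at an \emph{intermediate} index (say $E_0$ and $E_b$ are nontrivial with $0<b<d$, or $E_a$ and $E_d$ with $0<a<d$, or both in the interior), the monodromy along $\boldsymbol{\alpha}_a$ or $\boldsymbol{\alpha}_b$ can contribute genuine $U^{-1}$ factors, and indeed $\sum_i {\rm gr}_U f_i$ can reach $-1$ in these configurations (e.g.\ $f_1=e_1^*$, $f_b=e_0$, $f_{b+1}=e_1^*$). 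Your case analysis never touches these, and they do not reduce to your residual geometric claim. This is precisely the content of the paper's case $b\ne d$ and the sub-case $a\ne 0$ of $b=d$: in each, one shows that $g\ne 0$ with $n_w=n_z=0$ forces an odd crossing of $G$ by some $\partial_{\boldsymbol{\alpha}_i}{\cal D}$, and then the local picture (nonnegativity plus $n_w=n_z=0$) forces the rigid configuration $a=0$, $b=d=1$, $\theta^+\in{\bf x}_1$, $\theta^-\in{\bf y}$, contradicting the case hypothesis.

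There is also a smaller gap in your main case. When $E_0,E_d$ are both nontrivial, $\sum {\rm gr}_U f_i=-1/2$, and $\theta^-\in{\bf y}$, you declare $g{\bf y}$ filtered without argument. But ${\rm gr}_U g=-1/2$ is compatible both with $g=r\,e_0 e_1^*$ (then $r\in R$ and $g{\bf y}$ is filtered) and with $g=r\,e_1 e_0^*$ where ${\rm gr}_U r=-1$ (then $g{\bf y}\notin\boldsymbol{CF}^-$). Ruling out the latter again requires the local observation that, under $n_w=n_z=0$, the boundaries cannot cross $G$ in the relevant way; the paper handles this explicitly. Your geometric claim for $\theta^+\in{\bf y}$ is correct and matches the paper's computation $n_z+n_w=n_B+n_C+1\ge 1$, but it does not by itself cover these remaining configurations.
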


\begin{proof}
There is nothing to show if all the local systems are trivial. Let
us assume that $\boldsymbol{\alpha}_{a}$ and $\boldsymbol{\alpha}_{b}$
($a\le b$) have nontrivial local systems, and that the rest have
trivial local systems. Also, assume that $g\neq0$. Let $w=\partial_{+}G$,
$z=\partial_{-}G$ as in Figure \ref{fig:local-system-lemma}. We
will use the following repeatedly: if $n_{w}({\cal D})=n_{z}({\cal D})=0$
and the boundary $\partial_{\boldsymbol{\alpha}_{a}}{\cal D}$ or
$\partial_{\boldsymbol{\alpha}_{b}}{\cal D}$ intersects $G$, then
$n_{A}({\cal D})=n_{C}({\cal D})=0$ and $n_{B}({\cal D})=1$. Hence,
$a=0$, $b=d=1$, $\theta^{+}\in{\bf x}_{1}$, and $\theta^{-}\in{\bf y}$.
Note that in this case, ${\rm gr}_{U}g={\rm gr}_{U}f_{1}\ge0$, and
so $g{\bf y}$ is filtered.

We divide into a few cases. If $b\neq d$, then we we are done if
${\rm gr}_{U}g\ge-1/2$. If ${\rm gr}_{U}g<-1/2$, then it must be
the case that ${\rm gr}_{U}f_{a+1}={\rm gr}_{U}f_{b+1}=-1/2$, ${\rm gr}_{U}f_{i}=0$
for all $i\neq a+1,b+1$, and $n_{w}({\cal D})=n_{z}({\cal D})=0$.
Hence, $f_{b}=e_{0}$ or $e_{0}e_{1}^{\ast}$, and $f_{b+1}=e_{1}^{\ast}$.
Since $g{\bf y}\neq0$, the $\boldsymbol{\alpha}_{b}$-boundary $\partial_{\boldsymbol{\alpha}_{b}}{\cal D}$
intersects $G$.

If $b=d$, then we are done if ${\rm gr}_{U}g\ge0$. Hence, assume
that ${\rm gr}_{U}g<0$. Then it must be the case that ${\rm gr}_{U}f_{a+1}=-1/2$
(and so $f_{a+1}=e_{1}^{\ast}$ or $e_{0}e_{1}^{\ast}$), and ${\rm gr}_{U}f_{i}=0$
for all $i\neq a+1$, and $n_{w}({\cal D})=n_{z}({\cal D})=0$. Hence,
we can assume that $\partial_{\boldsymbol{\alpha}_{a}}{\cal D}$ and
$\partial_{\boldsymbol{\alpha}_{b}}{\cal D}$ do not intersect $G$.
\begin{itemize}
\item If $a\neq0$, then $f_{a}=e_{0}$, and so $\partial_{\boldsymbol{\alpha}_{a}}{\cal D}$
intersects $G$.
\item If $a=0$ and $\theta^{+}\in{\bf y}$, then $n_{z}({\cal D})+n_{w}({\cal D})=n_{B}({\cal D})+n_{C}({\cal D})+1\ge1$.
\item If $a=0$ and $\theta^{-}\in{\bf y}$, then since $\partial_{\boldsymbol{\alpha}_{a}}{\cal D}$
and $\partial_{\boldsymbol{\alpha}_{b}}{\cal D}$ do not intersect
$G$, $g=e_{0}e_{1}^{\ast}$. Hence $g{\bf y}$ is filtered.
\end{itemize}
\end{proof}
As everything is well-defined, we can now show that $\boldsymbol{CF}_{fil}^{-}(\boldsymbol{\alpha}^{E_{\boldsymbol{\alpha}}},\boldsymbol{\beta}^{E_{\boldsymbol{\beta}}})$
is a chain complex, and similarly that the higher $A_{\infty}$-relations
hold.
\begin{cor}
\label{cor:filtered-is-chain-complex}Let us consider a weakly admissible
Heegaard datum with attaching curves with local systems $\boldsymbol{\alpha}^{E_{\boldsymbol{\alpha}}},\boldsymbol{\beta}^{E_{\boldsymbol{\beta}}}$.
Then $\boldsymbol{CF}_{fil}^{-}(\boldsymbol{\alpha}^{E_{\boldsymbol{\alpha}}},\boldsymbol{\beta}^{E_{\boldsymbol{\beta}}})$
together with $\partial$ is a chain complex.
\end{cor}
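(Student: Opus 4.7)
The plan is to assemble the two preceding lemmas. By construction $\boldsymbol{CF}_{fil}^{-}(\boldsymbol{\alpha}^{E_{\boldsymbol{\alpha}}},\boldsymbol{\beta}^{E_{\boldsymbol{\beta}}})$ is a free $R$-submodule of $\boldsymbol{CF}^{-}$, and the latter embeds into $\boldsymbol{CF}^{\infty}(\boldsymbol{\alpha}^{E_{\boldsymbol{\alpha}}},\boldsymbol{\beta}^{E_{\boldsymbol{\beta}}})$, on which Lemma \ref{lem:d2=00003D0} already gives $\partial^{2}=0$. So the only nontrivial thing I need to check is that $\partial$ carries filtered elements to filtered elements; the identity $\partial^{2}=0$ on $\boldsymbol{CF}_{fil}^{-}$ then descends from $\boldsymbol{CF}^{\infty}$ for free.

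To verify preservation, I apply Lemma \ref{lem:positivity} with $d=1$ and the two attaching curves $\boldsymbol{\alpha}_{0}^{E_{0}}=\boldsymbol{\alpha}^{E_{\boldsymbol{\alpha}}}$, $\boldsymbol{\alpha}_{1}^{E_{1}}=\boldsymbol{\beta}^{E_{\boldsymbol{\beta}}}$ (at most two nontrivial local systems, so the hypothesis is automatic). For a filtered generator $f\mathbf{x}\in\boldsymbol{CF}_{fil}^{-}$ and any nonnegative domain $\mathcal{D}\in D(\mathbf{x},\mathbf{y})$ of Maslov index $1$, the lemma guarantees that the single contribution $\#\mathcal{M}(\mathcal{D})\,w(\mathcal{D})\,\rho(\mathcal{D})(f)\,\mathbf{y}$ is filtered. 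Weak admissibility ensures that only finitely many such domains contribute modulo $(U^{N},X_{1}^{N},\ldots,X_{k}^{N})$ for each $N$, so $\partial(f\mathbf{x})$ is well-defined in $\boldsymbol{CF}^{\infty}$ as the $(U,X_{1},\ldots,X_{k})$-adic limit of finite sums of filtered elements. Extending $R$-linearly to all of $\boldsymbol{CF}_{fil}^{-}$ then gives $\partial(\boldsymbol{CF}_{fil}^{-})\subseteq\boldsymbol{CF}_{fil}^{-}$.

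I expect no real obstacle here; the substantive work has already been done in Lemmas \ref{lem:d2=00003D0} and \ref{lem:positivity}, and this corollary is essentially their combination. The one minor subtlety worth spelling out is the stability of the filtered condition under the relevant adic limit. This is automatic, though: when both local systems are nontrivial, $\boldsymbol{CF}_{fil}^{-}$ is cut out of $\boldsymbol{CF}^{-}$ by the pointwise condition ${\rm gr}_{U}f\ge 0$ on each $\theta^{+}$-summand $f\mathbf{x}$, and such conditions are clearly preserved under $(U,X_{1},\ldots,X_{k})$-adic limits of coefficients in $R^{\infty}$. With preservation in hand, the chain complex axioms follow.
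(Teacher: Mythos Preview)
Your proof is correct and follows essentially the same approach as the paper: combine Lemma~\ref{lem:d2=00003D0} (which gives $\partial^{2}=0$ on $\boldsymbol{CF}^{\infty}$) with Lemma~\ref{lem:positivity} applied with $d=1$ (which shows $\partial$ preserves $\boldsymbol{CF}_{fil}^{-}$). The paper's proof is just those two sentences; your version additionally spells out the adic-limit closure of the filtered condition, which is a reasonable elaboration but not strictly needed.
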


\begin{proof}
Lemma \ref{lem:d2=00003D0} shows that $(\boldsymbol{CF}^{\infty}(\boldsymbol{\alpha}^{E_{\boldsymbol{\alpha}}},\boldsymbol{\beta}^{E_{\boldsymbol{\beta}}}),\partial)$
is a chain complex, and Lemma \ref{lem:positivity} shows that $\boldsymbol{CF}_{fil}^{-}(\boldsymbol{\alpha}^{E_{\boldsymbol{\alpha}}},\boldsymbol{\beta}^{E_{\boldsymbol{\beta}}})$
is a sub chain complex of $\boldsymbol{CF}^{\infty}(\boldsymbol{\alpha}^{E_{\boldsymbol{\alpha}}},\boldsymbol{\beta}^{E_{\boldsymbol{\beta}}})$.
\end{proof}
By abuse of notation, we mean $\boldsymbol{CF}_{fil}^{-}$ whenever
we write $\boldsymbol{CF}^{-}$ from now on.

\subsection{\label{subsec:Twisted-complexes}Twisted complexes}

We use the language of twisted complexes in order to nicely package
the necessary algebra. We mostly follow \cite[Subsection 2.3]{2308.15658}
and \cite[Sections 1b, 3k, 3l, and 3m]{MR2441780}. Also compare \cite{MR3509974}.

\subsubsection{$A_{\infty}$-categories and twisted complexes}

We work with non-unital $A_{\infty}$-categories over $\mathbb{F}$.
Our conventions are slightly different from \cite{MR2441780}, due
to the nature of Heegaard Floer homology: we use the homological convention
instead of the cohomological convention, and $A_{\infty}$-categories
are not necessarily (homologically) graded. However, the $A_{\infty}$-categories
we consider will be graded in some special cases: we say that an $A_{\infty}$-category
is \emph{graded} if the Hom spaces ${\rm Hom}(\alpha,\beta)$ are
$\mathbb{Z}$-graded and the composition maps $\mu_{d}$ have degree
$d-2$. When everything is graded, we use the convention that for
a homogeneous $f\in{\rm Hom}(\alpha,\beta)$, the corresponding element
in ${\rm Hom}(\alpha[k],\beta[l])$ has degree $\deg f+k-l$. In other
words, $[k]$ shifts the grading down by $k$.
\begin{rem}
\label{rem:ainf-pedantry}Given a weakly admissible Heegaard datum
as in Subsection \ref{subsec:unoriented-local-system}, define the
associated $A_{\infty}$-category as follows: let $\boldsymbol{\alpha}_{0}^{E_{0}},\cdots,\boldsymbol{\alpha}_{m}^{E_{m}}$
be the attaching curves together with the local systems, and assume
that we have made the analytic choices necessary to define the composition
maps in Heegaard Floer theory. Then, the $A_{\infty}$-category we
work with has objects $\boldsymbol{\alpha}_{0}^{E_{0}},\cdots,\boldsymbol{\alpha}_{m}^{E_{m}}$,
and the Hom spaces and composition maps $\mu_{d}$ are nontrivial
only if the attaching curves are in the correct order, i.e.
\[
{\rm Hom}(\boldsymbol{\alpha}_{i}^{E_{i}},\boldsymbol{\alpha}_{j}^{E_{j}})=\begin{cases}
\boldsymbol{CF}_{fil}^{-}(\boldsymbol{\alpha}_{i}^{E_{i}},\boldsymbol{\alpha}_{j}^{E_{j}}) & {\rm if}\ i<j\\
0 & {\rm if}\ i\ge j
\end{cases}.
\]
When we write $\boldsymbol{CF}^{-}(\boldsymbol{\alpha},\boldsymbol{\beta})$,
it will be implicit that $\boldsymbol{\alpha}$ comes before $\boldsymbol{\beta}$.
\end{rem}

Let ${\cal A}$ be an $A_{\infty}$-category. Then, the \emph{additive
enlargement $\Sigma{\cal A}$} is defined as follows. Objects are
formal direct sums 
\[
\bigoplus_{i\in I}V_{i}\otimes\alpha_{i}
\]
where $I$ is a finite set, $\{\alpha_{i}\}$ is a family of objects
of ${\cal A}$, and $\{V_{i}\}$ is a family of finite-dimensional
vector spaces. Morphisms are defined as a combination of morphisms
between the vector spaces and morphisms in ${\cal A}$:
\[
{\rm Hom}_{\Sigma{\cal A}}\left(\bigoplus_{i\in I}V_{i}\otimes\alpha_{i},\bigoplus_{j\in J}W_{j}\otimes\beta_{j}\right)=\bigoplus_{i,j}{\rm Hom}_{\mathbb{F}}(V_{i},W_{j})\otimes{\rm Hom}_{{\cal A}}(\alpha_{i},\beta_{j}).
\]
Similarly, compositions $\mu_{d}$ are defined by combining the ordinary
composition of maps between vector spaces and morphisms in ${\cal A}$.
The additive enlargement $\Sigma{\cal A}$ forms an $A_{\infty}$-category.

A \emph{twisted complex in ${\cal A}$ }consists of an object $\underline{\alpha}\in{\rm Ob}(\Sigma{\cal A})$
together with a differential $\delta_{\underline{\alpha}}\in{\rm Hom}_{\Sigma{\cal A}}\left(\underline{\alpha},\underline{\alpha}\right)$,
such that 
\[
\sum_{n\ge1}\mu_{n}^{\Sigma{\cal A}}(\delta_{\underline{\alpha}},\cdots,\delta_{\underline{\alpha}})=0.
\]
Seidel assumes that the differential $\delta_{\underline{\alpha}}$
is strictly lower triangular with respect to a filtration on $\underline{\alpha}$
to ensure that the above sum is finite.

Twisted complexes in ${\cal A}$ also form an $A_{\infty}$-category
${\rm Tw}{\cal A}$. Morphisms between twisted complexes are the same
as before, 
\[
{\rm Hom}_{{\rm Tw}{\cal A}}\left((\underline{\alpha},\delta_{\underline{\alpha}}),(\underline{\beta},\delta_{\underline{\beta}})\right)={\rm Hom}_{\Sigma{\cal A}}(\underline{\alpha},\underline{\beta}),
\]
but the compositions are different:
\[
\mu_{d}^{{\rm Tw}{\cal A}}(\underline{\varphi_{1}},\cdots,\underline{\varphi_{d}})=\sum_{i_{0},\dots,i_{d}\ge0}\mu_{d+i_{0}+\cdots+i_{d}}^{\Sigma{\cal A}}(\overbrace{\delta_{\underline{\alpha_{0}}},\cdots,\delta_{\underline{\alpha_{0}}}}^{i_{0}},\underline{\varphi_{1}},\overbrace{\delta_{\underline{\alpha_{1}}},\cdots,\delta_{\underline{\alpha_{1}}}}^{i_{1}},\underline{\varphi_{2}},\cdots,\underline{\varphi_{d}},\overbrace{\delta_{\underline{\alpha_{d}}},\cdots,\delta_{\underline{\alpha_{d}}}}^{i_{d}}).
\]

\begin{rem}
If ${\cal A}$ is a graded $A_{\infty}$-category, then its \emph{graded
additive enlargement} $\Sigma{\cal A}$ is defined exactly the same,
except that the finite-dimensional vector spaces $V_{i}$ are also
$\mathbb{Z}$-graded. It forms a graded $A_{\infty}$-category. Similarly,
\emph{graded twisted complexes} are defined exactly the same, except
that we require that the differential has degree $-1$. Morphisms
between twisted complexes and their compositions are defined exactly
the same. The $A_{\infty}$-category of graded twisted complexes,
${\rm Tw}{\cal A}$, is graded.
\end{rem}

\begin{example}
\label{exa:twisted-complexes}
\begin{itemize}
\item Given an object $\alpha$ of ${\cal A}$, $\alpha$ together with
the zero differential $\delta_{\alpha}=0$ form a twisted complex
in ${\cal A}$.
\item Given objects $\beta_{1},\beta_{2}$ of ${\cal A}$ and a morphism
$\psi:\beta_{1}\to\beta_{2}$ such that $\mu_{1}^{{\cal A}}(\psi)=0$,
the \emph{mapping cone of $\psi$} is the twisted complex $(\underline{\beta},\delta_{\underline{\beta}})$
given by $\underline{\beta}=\beta_{1}\oplus\beta_{2}$ and $\delta_{\underline{\beta}}=\psi$.
We write this as 
\[
\beta_{1}\xrightarrow{\psi}\beta_{2}.
\]
\end{itemize}
\end{example}

\begin{notation}
As above, we will usually underline twisted complexes and maps between
them. An exception is when the twisted complex comes from a single
object $\alpha$, as in the first example of Example \ref{exa:twisted-complexes}.
Also, if $\alpha_{0}$ and $\beta_{0}$ are summands of the twisted
complexes $\underline{\alpha}$ and $\underline{\beta}$, respectively,
and $\theta:\alpha_{0}\to\beta_{0}$ is a map, then we denote the
map $\underline{\alpha}\to\underline{\beta}$ whose $\alpha_{0}\to\beta_{0}$
component is $\theta$ and all the other components are $0$ as $\underline{\theta}:\underline{\alpha}\to\underline{\beta}$.
\end{notation}

\subsubsection{$A_{\infty}$-functors and twisted complexes}

We will define twisted complexes of attaching curves in one almost
complex structure, and deform the almost complex structure. To do
this, we have to understand how the $A_{\infty}$-functor induced
by deforming the almost complex structure extends to the category
of twisted complexes.

Recall that an $A_{\infty}$-functor ${\cal F}:{\cal A}\to{\cal B}$
between two $A_{\infty}$-categories is a collection of a map ${\rm Ob}{\cal F}:{\rm Ob}{\cal A}\to{\rm Ob}{\cal B}$
and multilinear maps ${\cal F}_{d}:{\rm Hom}_{{\cal A}}(\alpha_{0},\alpha_{1})\otimes\cdots\otimes{\rm Hom}_{{\cal A}}(\alpha_{d-1},\alpha_{d})\to{\rm Hom}_{{\cal B}}({\cal F}(\alpha_{0}),{\cal F}(\alpha_{d}))$
for $d\ge1$ that satisfy the appropriate $A_{\infty}$-relations.
If the $A_{\infty}$-categories ${\cal A}$ and ${\cal B}$ are graded,
then ${\cal F}$ is \emph{graded} if ${\cal F}_{d}$ has degree $d-1$
for all $d\ge1$.

Given an $A_{\infty}$-functor ${\cal F}:{\cal A}\to{\cal B}$ between
$A_{\infty}$-categories, the induced $A_{\infty}$-functor $\Sigma{\cal F}:\Sigma{\cal A}\to\Sigma{\cal B}$
is defined as follows: on the objects, 
\[
\left(\Sigma{\cal F}\right)\left(\bigoplus_{i\in I}V_{i}\otimes\alpha_{i}\right)=\bigoplus_{i\in I}V_{i}\otimes{\cal F}(\alpha_{i}).
\]
On the morphisms, $\left(\Sigma{\cal F}\right)_{d}(\underline{\varphi_{1}},\cdots,\underline{\varphi_{d}})$
is defined analogously to $\mu_{d}^{\Sigma{\cal A}}$, where we combine
ordinary composition of maps between vector spaces and ${\cal F}_{d}$.

The induced $A_{\infty}$-functor ${\rm Tw}{\cal F}:{\rm Tw}{\cal A}\to{\rm Tw}{\cal B}$
is defined as follows:

\[
\begin{aligned}\left({\rm Tw}{\cal F}\right)\left(\underline{\alpha},\delta_{\underline{\alpha}}\right) & =\left(\left(\Sigma{\cal F}\right)\left(\underline{\alpha}\right),\sum_{n\ge1}\left(\Sigma{\cal F}\right)_{n}\left(\delta_{\underline{\alpha}},\cdots,\delta_{\underline{\alpha}}\right)\right),\\
\left({\rm Tw}F\right)_{d}(\underline{\varphi_{1}},\cdots,\underline{\varphi_{d}}) & =\sum_{i_{0},\dots,i_{d}\ge0}\left(\Sigma{\cal F}\right)_{d+i_{0}+\cdots+i_{d}}(\overbrace{\delta_{\underline{\alpha_{0}}},\cdots,\delta_{\underline{\alpha_{0}}}}^{i_{0}},\underline{\varphi_{1}},\\
 & \ \ \ \ \ \ \ \ \ \ \ \ \ \ \ \ \ \ \ \ \overbrace{\delta_{\underline{\alpha_{1}}},\cdots,\delta_{\underline{\alpha_{1}}}}^{i_{1}},\underline{\varphi_{2}},\cdots,\underline{\varphi_{d}},\overbrace{\delta_{\underline{\alpha_{d}}},\cdots,\delta_{\underline{\alpha_{d}}}}^{i_{d}}).
\end{aligned}
\]

If everything is graded, then ${\rm Tw}{\cal F}$ is graded as well.
\begin{rem}
\label{rem:pedantry-twisted}As noted above, we will define twisted
complexes of attaching curves and maps between them in one almost
complex structure, and will continue talking about them in a different
almost complex structure. The latter should be interpreted as the
image under ${\rm Tw}{\cal F}$, where ${\cal F}$ is the $A_{\infty}$-functor
given by deforming the almost complex structure. Note that ${\cal F}$
depends on the choice of the deformation. Fortunately, it will turn
out that all the twisted complexes $\underline{\boldsymbol{\beta}}$
that we consider in this paper are ``invariant'' under ${\rm Tw}{\cal F}$.
Furthermore, although the maps between the twisted complexes that
we consider might depend on the deformation, it will not matter since
we will not care about exactly what these maps are.
\end{rem}

\subsubsection{Exact triangles and twisted complexes}

If $C_{1},C_{2},C_{3}$ are chain complexes, $f:C_{1}\to C_{2}$ is
a chain map, and the mapping cone $C_{1}\xrightarrow{f}C_{2}$ is
quasi-isomorphic to $C_{3}$, then we have an exact triangle of their
homology groups 
\[
\cdots\to H(C_{1})\xrightarrow{f_{\ast}}H(C_{2})\to H(C_{3})\to H(C_{1})\to\cdots.
\]
In this paper, we care about what the maps in this exact triangle
are. Compare with the triangle detection lemma \cite[Lemma 4.2]{MR2141852}.
\begin{lem}
\label{lem:exact-triangle}Let the following be a twisted complex
\[\begin{tikzcd}
	{\underline{\beta}} & {\beta_1} & {\beta_2} & {\beta_3}
	\arrow["{:=}"{description}, draw=none, from=1-1, to=1-2]
	\arrow["f", from=1-2, to=1-3]
	\arrow[curve={height=12pt}, from=1-2, to=1-4]
	\arrow["g", from=1-3, to=1-4]
\end{tikzcd}\]If the homology of ${\rm Hom}(\alpha,\underline{\beta})$ is $0$,
then the following sequence of homology groups is exact.
\[
H({\rm Hom}(\alpha,\beta_{1}))\xrightarrow{\mu_{2}(-,f)}H({\rm Hom}(\alpha,\beta_{2}))\xrightarrow{\mu_{2}(-,g)}H({\rm Hom}(\alpha,\beta_{3}))
\]
\end{lem}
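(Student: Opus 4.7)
The plan is to regard $D := \mathrm{Hom}(\alpha, \underline{\beta})$ as a filtered chain complex in which $C := \mathrm{Hom}(\alpha, \beta_3)$ sits as a subcomplex and the quotient is precisely the mapping cone of $\mu_2(-,f)$, and then to combine the resulting long exact sequence with the long exact sequence of that mapping cone.

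Write $A := \mathrm{Hom}(\alpha, \beta_1)$, $B := \mathrm{Hom}(\alpha, \beta_2)$, and $h$ for the (possibly zero) $\beta_1 \to \beta_3$ component of $\delta_{\underline{\beta}}$ represented by the curved arrow in the diagram. Unwinding the definition of $\mu_1^{{\rm Tw}\mathcal{A}}$ on $D = A \oplus B \oplus C$ should give the differential
\[
d(a, b, c) = \bigl(\mu_1(a),\ \mu_1(b) + \mu_2(a, f),\ \mu_1(c) + \mu_2(b, g) + \mu_2(a, h) + \mu_3(a, f, g)\bigr).
\]
Since $\alpha$ carries the zero differential, the only $\delta$-insertions into $\mu_n^{\Sigma\mathcal{A}}$ come from $\delta_{\underline{\beta}}$; and since $\delta_{\underline{\beta}}$ has no component landing in $\beta_1$ and only the three nonzero components $f$, $g$, $h$, no further $A_\infty$-terms can appear. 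In particular $C \subset D$ is a subcomplex and $D/C$ is literally $\mathrm{Cone}(\mu_2(-,f))$.

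From the short exact sequence $0 \to C \to D \to \mathrm{Cone}(\mu_2(-,f)) \to 0$, together with the hypothesis $H(D) = 0$, the connecting map $\partial : H(\mathrm{Cone}(\mu_2(-,f))) \to H(C)$ is an isomorphism. Separately, the mapping cone fits into the standard exact triangle
\[
H(A) \xrightarrow{\mu_2(-,f)_*} H(B) \xrightarrow{i_*} H(\mathrm{Cone}(\mu_2(-,f))) \xrightarrow{p_*} H(A),
\]
which is exact at $H(B)$, so $\ker i_* = \mathrm{im}\,\mu_2(-,f)_*$.

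The crux of the argument is to identify $\partial \circ i_*$ with $\mu_2(-, g)_*$. Given a cycle $b \in B$, the class $i_*[b]$ is represented by $(0, b) \in D/C$; one lifts to $(0, b, 0) \in D$ and computes $d(0,b,0) = (0, 0, \mu_2(b, g))$, so $\partial(i_*[b]) = [\mu_2(b, g)] = \mu_2(-,g)_*[b]$. Because $\partial$ is an isomorphism, this forces $\ker \mu_2(-, g)_* = \ker i_* = \mathrm{im}\,\mu_2(-, f)_*$, which is the claimed exactness. The argument is essentially formal once one has the explicit formula for $d$; the only point requiring care is ensuring that the higher $A_\infty$-contributions to $\mu_1^{{\rm Tw}\mathcal{A}}$ really do collapse into the three displayed terms, and this is forced by the shape of $\delta_{\underline{\beta}}$.
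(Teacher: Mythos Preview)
Your argument is correct and is precisely the kind of diagram chase the paper has in mind; the paper's own proof is the single sentence ``One can show this using a diagram chasing argument and the definition of $\mathrm{Hom}(\alpha,\underline{\beta})$,'' and you have simply spelled that out via the filtration $C \subset D$ and the mapping-cone long exact sequence. The only point worth noting is that the twisted-complex condition $\sum_n \mu_n(\delta,\dots,\delta)=0$ is what guarantees $\mu_1(f)=\mu_1(g)=0$ (from the $\beta_1\to\beta_2$ and $\beta_2\to\beta_3$ components), so that $\mu_2(-,f)$ and $\mu_2(-,g)$ are honest chain maps; you implicitly use this but could mention it explicitly.
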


\begin{proof}
One can show this using a diagram chasing argument and the definition
of ${\rm Hom}(\alpha,\underline{\beta})$.
\end{proof}
\begin{rem}
That $\underline{\beta}$ is a twisted complex is equivalent to that
$\underline{\beta_{23}}:=\beta_{2}\xrightarrow{g}\beta_{3}$ is a
twisted complex and that the induced map $\beta_{1}\to\underline{\beta_{23}}$
is a cycle. That the homology of ${\rm Hom}(\alpha,\underline{\beta})$
is $0$ is equivalent to that the induced map ${\rm Hom}(\alpha,\beta_{1})\to{\rm Hom}(\alpha,\underline{\beta_{23}})$
is a quasi-isomorphism.

Analogous statements hold if we consider $\underline{\beta_{12}}:=\beta_{1}\xrightarrow{f}\beta_{2}$
and $\underline{\beta_{12}}\to\beta_{3}$ instead.
\end{rem}

We record a lemma that we use to deduce the exactness of the triangles
for the hat and infinity versions from a statement for the minus version.
\begin{lem}
\label{lem:hat-infty-alg}Let $C$ be a free chain complex over $\mathbb{F}\llbracket U\rrbracket$
such that its homology $H(C)=0$. Then, 
\[
H(C\otimes_{\mathbb{F}\llbracket U\rrbracket}\mathbb{F}\llbracket U\rrbracket/U)=0,\ H(C\otimes_{\mathbb{F}\llbracket U\rrbracket}\mathbb{F}\llbracket U\rrbracket[U^{-1}])=0.
\]
\end{lem}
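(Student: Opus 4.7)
The plan is to derive both vanishing statements from standard homological algebra, using the fact that $C$ is free (hence flat) over $\mathbb{F}\llbracket U\rrbracket$.

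For the localized statement, I would note that localization at the multiplicative set $\{U^n\}_{n\ge 0}$ is an exact functor. Exact functors commute with taking homology, so
\[
H\bigl(C\otimes_{\mathbb{F}\llbracket U\rrbracket}\mathbb{F}\llbracket U\rrbracket[U^{-1}]\bigr) \;\cong\; H(C)\otimes_{\mathbb{F}\llbracket U\rrbracket}\mathbb{F}\llbracket U\rrbracket[U^{-1}] \;=\; 0.
\]

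For the quotient statement, I would use the short exact sequence of $\mathbb{F}\llbracket U\rrbracket$-modules
\[
0 \longrightarrow \mathbb{F}\llbracket U\rrbracket \xrightarrow{\;\cdot U\;} \mathbb{F}\llbracket U\rrbracket \longrightarrow \mathbb{F}\llbracket U\rrbracket/U \longrightarrow 0.
\]
Since $C$ is free and therefore flat, tensoring with $C$ yields a short exact sequence of chain complexes
\[
0 \longrightarrow C \xrightarrow{\;\cdot U\;} C \longrightarrow C/U \longrightarrow 0,
\]
whose associated long exact sequence in homology contains the fragment
\[
H_n(C) \xrightarrow{\;\cdot U\;} H_n(C) \longrightarrow H_n(C/U) \longrightarrow H_{n-1}(C).
\]
The hypothesis $H(C)=0$ forces both outer terms to vanish, so $H_n(C/U)=0$ for every $n$.

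There is no real obstacle here, since both assertions reduce to exactness properties of the functors $-\otimes_{\mathbb{F}\llbracket U\rrbracket}\mathbb{F}\llbracket U\rrbracket[U^{-1}]$ and $-\otimes_{\mathbb{F}\llbracket U\rrbracket}\mathbb{F}\llbracket U\rrbracket/U$, applied to a flat complex whose homology vanishes. The only point worth mentioning is that flatness of $C$ (as an immediate consequence of being free) is what allows one to pass the short exact sequence of coefficient modules through $C\otimes_{\mathbb{F}\llbracket U\rrbracket}-$ to obtain the short exact sequence of chain complexes in the second argument.
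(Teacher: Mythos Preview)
Your proof is correct and essentially matches the paper's. The quotient-by-$U$ case is handled identically via the short exact sequence $0\to C\xrightarrow{\cdot U}C\to C/U\to 0$; the only minor difference is that for the localization you invoke exactness of localization (so $H(C[U^{-1}])\cong H(C)[U^{-1}]=0$), whereas the paper simply says one checks directly that any cycle is a boundary---both are standard and equally short.
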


\begin{proof}
The first statement follows from the short exact sequence of chain
complexes 
\[
0\to C\xrightarrow{\cdot U}C\rightarrow C\otimes_{\mathbb{F}\llbracket U\rrbracket}\mathbb{F}\llbracket U\rrbracket/U\to0.
\]
For the second statement, it is easy to directly check that any cycle
is a boundary.
\end{proof}

\subsection{\label{subsec:standard-translates}Standard translates}

Our main ``local computation,'' Theorem \ref{thm:sym2-t2}, morally
says that two specific maps $\underline{f}:\underline{\boldsymbol{\beta}_{1}}\to\underline{\boldsymbol{\beta}_{2}}$
and $\underline{g}:\underline{\boldsymbol{\beta}_{2}}\to\underline{\boldsymbol{\beta}_{1}}$
are homotopy inverses of each other. However, the $A_{\infty}$-categories
we work in are not unital: we do not even define $\boldsymbol{CF}^{-}(\boldsymbol{\alpha},\boldsymbol{\alpha})$.
Instead of trying to define units and homotopy inverses, we use standard
translates and use some ad-hoc arguments. The definitions in this
subsection should be treated as useful shorthands rather than actual
definitions.

\begin{figure}[h]
\begin{centering}
\includegraphics{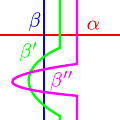}
\par\end{centering}
\caption{\label{fig:standard-translate}Standard translates of $\beta$}
\end{figure}

\begin{defn}
\label{def:standard-translate}Let $\boldsymbol{\beta}=\{\beta^{1},\cdots,\beta^{n}\}$
be an attaching curve. A \emph{standard translate $\boldsymbol{\beta}'=\{\beta'{}^{1},\cdots,\beta'{}^{n}\}$}
of $\boldsymbol{\beta}$ is given by slightly translating each circle
$\beta^{i}$, such that $\#(\beta^{i}\cap\beta'{}^{j})=2\delta_{ij}$.
If we consider other attaching curves $\boldsymbol{\alpha}$, then
we assume that the translate is sufficiently small in the sense that
$\boldsymbol{\alpha}$ meets $\boldsymbol{\beta}$ and $\boldsymbol{\beta}'$
in the same way (see Figure \ref{fig:standard-translate}). Denote
the top homological grading generator of $\boldsymbol{CF}^{-}(\boldsymbol{\beta},\boldsymbol{\beta}')$
as $\Theta_{\boldsymbol{\beta}}^{+}$, or simply $\Theta^{+}$. If
$\boldsymbol{\beta}$ intersects the oriented arc $G$ (i.e. has a
nontrivial local system), then the translate is small enough so that
$G$ also meets $\boldsymbol{\beta}$ and $\boldsymbol{\beta}'$ in
the same way. We will sometimes denote the generator ${\rm Id}_{E}\Theta_{\boldsymbol{\beta}}^{+}$
of $\boldsymbol{CF}^{-}(\boldsymbol{\beta}^{E},\boldsymbol{\beta}'{}^{E})$
as $\Theta_{\boldsymbol{\beta}}^{+}$ or $\Theta^{+}$.

Given a map $f:\boldsymbol{\alpha}^{E_{\boldsymbol{\alpha}}}\to\boldsymbol{\beta}^{E_{\boldsymbol{\beta}}}$,
define the corresponding maps $f':a\to b$ for $a\in\{\boldsymbol{\alpha}^{E_{\boldsymbol{\alpha}}},\boldsymbol{\alpha}'{}^{E_{\boldsymbol{\alpha}'}}\}$
and $b\in\{\boldsymbol{\beta}^{E_{\boldsymbol{\beta}}},\boldsymbol{\beta}'{}^{E_{\boldsymbol{\beta}'}}\}$
by taking the nearest intersection points. Using these, define standard
translates $\underline{\boldsymbol{\alpha}'}$ of twisted complexes
$\underline{\boldsymbol{\alpha}}$ of attaching curves (see Remark
\ref{rem:near-caveat} for a caveat), and given a map $\underline{f}:\underline{\boldsymbol{\alpha}}\to\underline{\boldsymbol{\beta}}$,
define the corresponding maps $\underline{f'}:a\to b$ for $a\in\{\underline{\boldsymbol{\alpha}},\underline{\boldsymbol{\alpha}'}\}$
and $b\in\{\underline{\boldsymbol{\beta}},\underline{\boldsymbol{\beta}'}\}$
as well.

Similarly, if $\underline{\boldsymbol{\beta}}$ is a twisted complex,
then denote the map $\underline{\boldsymbol{\beta}}\to\underline{\boldsymbol{\beta}'}$
given by $\Theta_{\boldsymbol{\beta}}^{+}:\boldsymbol{\beta}\to\boldsymbol{\beta}'$
for each summand $\boldsymbol{\beta}$ as $\Theta_{\underline{\boldsymbol{\beta}}}^{+}$,
or simply $\Theta^{+}\in\boldsymbol{CF}^{-}(\underline{\boldsymbol{\beta}},\underline{\boldsymbol{\beta}'})$.
\end{defn}

\begin{rem}
\label{rem:near-caveat}We do not claim that the induced map $f'$,
resp., $\underline{f'}$ is always a cycle if $f$, resp., $\underline{f}$
is a cycle, that $\underline{\boldsymbol{\alpha}'}$ is always a twisted
complex, and that $\Theta_{\underline{\boldsymbol{\beta}}}^{+}$ is
a cycle. However, they will be for all the cases that we consider
in this paper.
\end{rem}

Instead of working with homotopy equivalences (and homotopy inverses),
we use the following and work with quasi-isomorphisms in the sense
of Definition \ref{def:moral-def}\footnote{By Yoneda, they should be the same; but our $A_{\infty}$-categories
do not even have ${\rm Hom}(\alpha,\alpha)$.}.

\begin{lem}
\label{lem:theta-quasi-iso}Let us consider a weakly admissible Heegaard
datum. Consider a twisted complex $\underline{\boldsymbol{\beta}}$
of attaching curves with local systems and its small translate $\underline{\boldsymbol{\beta}'}$,
with the same filtration. If $\underline{\boldsymbol{\beta}'}$ is
a twisted complex, and if $\underline{\Theta}:\underline{\boldsymbol{\beta}}\to\underline{\boldsymbol{\beta}'}$
is a cycle that respects the filtration (i.e. lower triangular), and
for each summand $\boldsymbol{\beta}$ of $\underline{\boldsymbol{\beta}}$,
the component of $\underline{\Theta}$ that is from $\boldsymbol{\beta}$
to $\boldsymbol{\beta}'$ is homotopic to $\Theta_{\boldsymbol{\beta}}^{+}:\boldsymbol{\beta}\to\boldsymbol{\beta}'$,
then for any other attaching curve $\boldsymbol{\alpha}$, the induced
map 
\[
\mu_{2}(-,\underline{\Theta}):\boldsymbol{CF}^{-}(\boldsymbol{\alpha},\underline{\boldsymbol{\beta}})\to\boldsymbol{CF}^{-}(\boldsymbol{\alpha},\underline{\boldsymbol{\beta}'})
\]
is a quasi-isomorphism.
\end{lem}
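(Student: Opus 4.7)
The plan is to reduce to the single-attaching-curve statement that $\mu_{2}(-,\Theta_{\boldsymbol{\beta}}^{+})$ is a quasi-isomorphism, and then upgrade this to twisted complexes by induction along the filtration.

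First I would fix a filtration $\underline{\boldsymbol{\beta}}=F_{0}\supset F_{1}\supset\cdots\supset F_{n}=0$ realizing the strictly lower triangular differential $\delta_{\underline{\boldsymbol{\beta}}}$, and transport it to $\underline{\boldsymbol{\beta}'}$ via the standard translate (the standard translate construction is compatible with filtrations). The hypothesis that $\underline{\Theta}$ is lower triangular means that $\mu_{2}(-,\underline{\Theta})$ induces a filtered map between the resulting filtrations on $\boldsymbol{CF}^{-}(\boldsymbol{\alpha},\underline{\boldsymbol{\beta}})$ and $\boldsymbol{CF}^{-}(\boldsymbol{\alpha},\underline{\boldsymbol{\beta}'})$. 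On each associated graded piece $F_{i}/F_{i+1}$, the differential $\delta_{\underline{\boldsymbol{\beta}}}$ contributes nothing and $\underline{\Theta}$ reduces to its diagonal component $\Theta_{i}$ from a single summand $\boldsymbol{\beta}_{i}$ to its translate $\boldsymbol{\beta}_{i}'$. By hypothesis $\Theta_{i}$ is chain-homotopic to $\Theta_{\boldsymbol{\beta}_{i}}^{+}$, so $\mu_{2}(-,\Theta_{i})$ and $\mu_{2}(-,\Theta_{\boldsymbol{\beta}_{i}}^{+})$ agree up to chain homotopy (via the $A_{\infty}$-relations $\mu_{1}\mu_{2}+\mu_{2}\mu_{1}=\mu_{2}\circ\delta+\ldots$ applied to the homotopy).

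The key single-curve input is that for any attaching curve $\boldsymbol{\alpha}$, the map
\[
\mu_{2}(-,\Theta_{\boldsymbol{\beta}}^{+}):\boldsymbol{CF}^{-}(\boldsymbol{\alpha},\boldsymbol{\beta})\to\boldsymbol{CF}^{-}(\boldsymbol{\alpha},\boldsymbol{\beta}')
\]
is a quasi-isomorphism. I would prove this by a ``nearest-point'' argument in the spirit of the continuation maps in Ozsv\'{a}th--Szab\'{o} \cite{MR2113019}: if the translate is small enough, each intersection point ${\bf x}\in\boldsymbol{\alpha}\cap\boldsymbol{\beta}$ has a canonical nearest neighbor ${\bf x}'\in\boldsymbol{\alpha}\cap\boldsymbol{\beta}'$, and there is a small triangle ${\cal D}_{{\bf x}}$ with vertices ${\bf x},\Theta_{\boldsymbol{\beta}}^{+},{\bf x}'$ whose two-chain is supported in the thin strips between $\boldsymbol{\beta}$ and $\boldsymbol{\beta}'$, contributing ${\bf x}\mapsto{\bf x}'$ on the nose. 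All other triangle domains have strictly larger area (in particular strictly positive multiplicity at one of the basepoints or at $\partial_{-}G$), so in the filtration by the maximal exponent of $U,U_{i},U_{i}^{1/2}$ the map $\mu_{2}(-,\Theta_{\boldsymbol{\beta}}^{+})$ reduces to the tautological bijection ${\bf x}\mapsto{\bf x}'$ twisted by the identity on the local systems (which is automatic since a small enough translate keeps $G$ on the same side as $\boldsymbol{\beta}$). That bijection is an isomorphism of chain complexes on the associated graded, so $\mu_{2}(-,\Theta_{\boldsymbol{\beta}}^{+})$ is a quasi-isomorphism by the standard completeness argument for filtered complexes over $\mathbb{F}\llbracket U\rrbracket$.

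Given the single-curve statement, I would finish by induction on the filtration length: the five-lemma, applied to the long exact sequences of the mapping cones of $F_{i+1}\hookrightarrow F_{i}$ on both sides and the naturality of $\mu_{2}(-,\underline{\Theta})$, shows that if $\mu_{2}(-,\underline{\Theta})$ is a quasi-isomorphism on $F_{i+1}$ and on the associated graded $F_{i}/F_{i+1}$ (which we just proved), then it is a quasi-isomorphism on $F_{i}$. The main obstacle I foresee is the bookkeeping around positivity and local systems in the nearest-point argument: one must ensure that the small translate passes through $G$ in exactly the same combinatorial pattern as $\boldsymbol{\beta}$, so that the small triangles ${\cal D}_{{\bf x}}$ carry the identity monodromy and produce filtered (rather than merely $\boldsymbol{CF}^{\infty}$) morphisms. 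This is guaranteed by the last sentence of Definition \ref{def:standard-translate}, but verifying it concretely in the presence of two nontrivially framed curves requires invoking Lemma \ref{lem:positivity}.
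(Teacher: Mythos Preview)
Your proposal is correct and follows essentially the same approach as the paper: use the filtration to reduce to the single-summand case, where $\mu_{2}(-,\Theta_{\boldsymbol{\beta}}^{+})$ is a quasi-isomorphism by the standard nearest-point argument. The paper's proof is terser---it simply states that the filtrations induce filtrations on the chain complexes, that $\mu_{2}(-,\underline{\Theta})$ is lower triangular, and that the maps on successive quotients are quasi-isomorphisms---treating the single-curve input as standard rather than spelling out the small-triangle/area-filtration argument you give.
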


\begin{proof}
The filtrations on $\underline{\boldsymbol{\beta}},\underline{\boldsymbol{\beta}'}$
induce a filtration on the chain complexes $\boldsymbol{CF}^{-}(\boldsymbol{\alpha},\underline{\boldsymbol{\beta}})$
and $\boldsymbol{CF}^{-}(\boldsymbol{\alpha},\underline{\boldsymbol{\beta}'})$.
With respect to these filtrations, $\mu_{2}(-,\underline{\Theta})$
is lower triangular, and the induced maps on the successive quotients
are quasi-isomorphisms.
\end{proof}
\begin{rem}
\label{rem:theta-quasi-iso}Note that if $\Theta_{\underline{\boldsymbol{\beta}}}^{+}:\underline{\boldsymbol{\beta}}\to\underline{\boldsymbol{\beta}'}$
is a cycle, then it satisfies the condition of Lemma \ref{lem:theta-quasi-iso}.
Also, if the $A_{\infty}$-functor induced by changing the almost
complex structure preserves $\underline{\boldsymbol{\beta}}$ and
$\underline{\boldsymbol{\beta}'}$, then the image of $\Theta_{\underline{\boldsymbol{\beta}}}^{+}$
satisfies the condition of Lemma \ref{lem:theta-quasi-iso}.
\end{rem}

\begin{lem}[Derived Nakayama]
\label{lem:derived-nakayama}Let $R$ be a power series ring $\mathbb{F}\llbracket X_{1},\cdots,X_{n}\rrbracket$
over a field $\mathbb{F}$, and let $C,C'$ be two finite, free chain
complexes over $R$. Then, a $R$-linear chain map $f:C\to C'$ is
a quasi-isomorphism if 
\[
f\otimes_{R}\mathbb{F}:C\otimes_{R}\mathbb{F}\to C'\otimes_{R}\mathbb{F}
\]
is a quasi-isomoprhism.
\end{lem}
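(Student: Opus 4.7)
The plan is to reduce the claim to the following assertion: any finite, free chain complex $D$ over $R$ with $D \otimes_R \mathbb{F}$ acyclic is itself acyclic. Applying this to the mapping cone $D = \mathrm{Cone}(f)$, which is finite and free, and using that tensor product commutes with mapping cones so that $\mathrm{Cone}(f) \otimes_R \mathbb{F} = \mathrm{Cone}(f \otimes_R \mathbb{F})$ is acyclic by hypothesis, will then yield that $f$ is a quasi-isomorphism.

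I would prove the reduced statement by induction on $n$. The base case $n = 0$, where $R = \mathbb{F}$, is trivial. For the inductive step, let $\bar{R} = R/(X_n) \cong \mathbb{F}\llbracket X_1, \ldots, X_{n-1} \rrbracket$, and consider the short exact sequence of chain complexes
\[
0 \to D \xrightarrow{\cdot X_n} D \to D/X_n D \to 0.
\]
The quotient $D/X_n D$ is a finite, free chain complex over $\bar R$, and $(D/X_n D) \otimes_{\bar R} \mathbb{F} = D \otimes_R \mathbb{F}$ is acyclic by hypothesis. By the inductive hypothesis, $D/X_n D$ is acyclic, so the associated long exact sequence in homology shows that multiplication by $X_n$ is an isomorphism on $H_*(D)$.

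Since $D$ is bounded and each $D_i$ is a finitely generated free $R$-module, $H_*(D)$ is a finitely generated $R$-module. The equality $X_n \cdot H_*(D) = H_*(D)$ then gives $\mathfrak{m} \cdot H_*(D) = H_*(D)$ for the maximal ideal $\mathfrak{m} = (X_1, \ldots, X_n)$ of the local ring $R$, and Nakayama's lemma forces $H_*(D) = 0$, completing the induction. The main obstacle is conceptual rather than technical: one must recognize that finiteness of the complex is precisely what makes the homology a finitely generated module, which is what allows Nakayama's lemma to apply at the last step.
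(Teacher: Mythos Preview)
Your proof is correct and follows essentially the same approach as the paper: both reduce to showing the mapping cone is acyclic, then peel off one variable at a time via the short exact sequence $0 \to D \xrightarrow{\cdot X_j} D \to D/X_jD \to 0$ and apply Nakayama's lemma. The only cosmetic difference is that the paper organizes this as a downward induction on the quotients $R_k = R/(X_1,\ldots,X_k)$ within a fixed $R$, whereas you induct on the number of variables $n$; the content is the same.
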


\begin{proof}
Let $R_{k}=R/(X_{1},\cdots,X_{k})$, and consider the mapping cones
$C_{k}$ of $f\otimes_{R}R_{k}$. We show by induction that the homology
$H(C_{0})$ is trivial if $H(C_{n})$ is trivial. 

Assume that $H(C_{k+1})$ is trivial. Consider the long exact sequence
induced by the short exact sequence 
\[
0\to C_{k}\xrightarrow{\cdot X_{k}}C_{k}\to C_{k+1}\to0.
\]
Since $H(C_{k+1})$ is trivial, the map $\cdot X_{k}:H(C_{k})\to H(C_{k})$
is an isomorphism, and so $H(C_{k})$ is trivial by Nakayama's Lemma.
\end{proof}
\begin{defn}
\label{def:moral-def}Two cycles $\underline{f}:\underline{\boldsymbol{\beta}_{1}}\to\underline{\boldsymbol{\beta}_{2}}$
and $\underline{g}:\underline{\boldsymbol{\beta}_{2}}\to\underline{\boldsymbol{\beta}_{1}}$
are \emph{morally quasi-inverses} if the compositions $\mu_{2}(\underline{f},\underline{g'}):\underline{\boldsymbol{\beta}_{1}}\to\underline{\boldsymbol{\beta}_{1}'}$
and $\mu_{2}(\underline{g'},\underline{f}):\underline{\boldsymbol{\beta}_{2}'}\to\underline{\boldsymbol{\beta}_{2}}$
satisfy the conditions of Lemma \ref{lem:theta-quasi-iso} modulo
$(X_{1},\cdots,X_{n})$, where the coefficient ring is of the form
$R=\mathbb{F}\llbracket X_{1},\cdots,X_{n}\rrbracket$. If such $\underline{f},\underline{g}$
exist, then we say that $\underline{\boldsymbol{\beta}_{1}}$ and
$\underline{\boldsymbol{\beta}_{2}}$ are \emph{morally quasi-isomorphic}.

Similarly, cycles $\underline{f}:\underline{\boldsymbol{\beta}_{1}}\to\underline{\boldsymbol{\beta}_{2}}$
and $\underline{g}:\underline{\boldsymbol{\beta}_{2}}\to\underline{\boldsymbol{\beta}_{1}}$
are \emph{morally homotopy inverses} if the compositions $\mu_{2}(\underline{f},\underline{g'})$
and $\mu_{2}(\underline{g'},\underline{f})$ are $\Theta^{+}$. If
such $\underline{f},\underline{g}$ exist, then we say that $\underline{\boldsymbol{\beta}_{1}}$
and $\underline{\boldsymbol{\beta}_{2}}$ are \emph{morally homotopy
equivalent}.
\end{defn}

Note that if two maps are morally homotopy inverses, then they are
morally quasi-inverses.
\begin{rem}
\label{rem:moral-caveat}Note that we do not claim that being morally
quasi-isomorphic or morally homotopy equivalent is an equivalence
relation, nor that it does not depend on the choice of the standard
translate. In our case, we will have twisted complexes $\underline{\boldsymbol{\beta}_{i}}$
for $i=1,2,3,4$, and we will show that $\underline{\boldsymbol{\beta}_{i}}$
and $\underline{\boldsymbol{\beta}_{i+1}}$ are morally quasi-isomorphic.
We will deduce that $\underline{\boldsymbol{\beta}_{1}}$ and $\underline{\boldsymbol{\beta}_{4}}$
are morally homotopy equivalent by using Lemma \ref{lem:theta-quasi-iso}
and an ad-hoc argument. (There are other ways to finish off.)
\end{rem}

\subsection{\label{subsec:Stabilizations}Stabilizations}

If the multi-Heegaard diagram is the connected sum of two Heegaard
diagrams where the curves are sufficiently simple on one side, we
can often reduce the computations to a computation on the other side,
by stretching the neck, as in \cite[Theorem 9.4]{MR2113020}. A subtlety
is that we have infinitely many domains that might contribute. One
could quotient out the coefficient ring by $(U_{1}^{N},\cdots,U_{l}^{N})$
for large $N$'s, in which case only finitely many domains contribute;
another way is to degenerate the neck and work in a pinched Heegaard
surface as in \cite[Sections 8, 9, and 10]{2011.00113} and \cite[Section 6]{2201.12906}.
Also compare \cite[Subsection 6.1]{2308.15658}. In this subsection,
we consider the latter and work in a pinched Heegaard surface, but
if there are only finitely many domains that could contribute, then
all the statements hold for a sufficiently long neck.

Consider a Heegaard datum with Heegaard surface $\Sigma$. Let us
first discuss genus stabilization, free stabilization, and free link
stabilization.

\begin{figure}[h]
\begin{centering}
\includegraphics{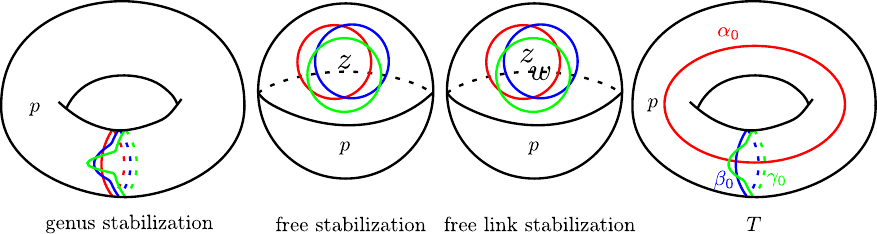}
\par\end{centering}
\caption{\label{fig:stabilization}Different kinds of stabilizations that we
consider. $p$ is the point we connected sum along, and $z$ is a
basepoint.}
\end{figure}

\emph{Genus stabilization} is where the Heegaard surface $S(\Sigma)$
is given by connected summing $\Sigma$ with $\mathbb{T}^{2}$, and
the attaching curves are $S(\boldsymbol{\beta})$, which are given
by the attaching curve $\boldsymbol{\beta}$ on $\Sigma$ together
with some fixed, essential curve, in a standard way (i.e. the fixed
curves are pairwise standard translates of each other) as in Figure
\ref{fig:stabilization}. The coefficient ring and weight functions
are unchanged. For a generator $f{\bf x}\in\boldsymbol{CF}^{-}(\boldsymbol{\alpha}^{E_{\boldsymbol{\alpha}}},\boldsymbol{\beta}^{E_{\boldsymbol{\beta}}})$,
we define 
\[
S(f{\bf x}):=f{\bf x}\times\theta^{+}\in\boldsymbol{CF}^{-}(S(\boldsymbol{\alpha}^{E_{\boldsymbol{\alpha}}}),S(\boldsymbol{\beta}^{E_{\boldsymbol{\beta}}}))
\]
where $\theta^{+}$ is the top homological grading intersection point
in the connected summed $\mathbb{T}^{2}$.

\emph{Free stabilization} is similar, but we connected sum with a
pointed $S^{2}$ as in Figure \ref{fig:stabilization}. The coefficient
ring is $R\llbracket V\rrbracket$, where $V$ is a new variable,
and the weight function is the same except that we assign the weight
$V$ to the new basepoint. We called this connected summed part the
\emph{free stabilization region}.

\emph{Free link stabilization} is a slight variant of free stabilization.
We connected sum with a doubly pointed $S^{2}$, the coefficient ring
is $R\llbracket V^{1/2}\rrbracket$, and we assign the weight $V^{1/2}$
to both the two new basepoints.

A Heegaard datum ${\cal H}^{stab}$ is a \emph{stabilization of a
Heegaard datum} ${\cal H}$ if ${\cal H}^{stab}$ is obtained from
${\cal H}$ by a sequence of genus stabilizations, free stabilizations,
and free link stabilizations. To apply \cite[Proposition 6.5]{2201.12906}
as stated, we will assume that subsequent genus, free, and free link
stabilizations are connected summed at the stabilized region, i.e.
a stabilization of a Heegaard datum with Heegaard surface $\Sigma$
is given by connected summing with a pointed surface $\Sigma'$ at
one point\footnote{This is not a serious constraint, as one can always assume this after
some handleslides.}\footnote{See \cite[Proposition 6.1]{MR4503956} for generalized connected sums,
where we ``connected sum'' at more than one point.}. We denote the composition of the $S$ maps as $S$.
\begin{prop}[{\cite[Proposition 6.5]{2201.12906}}]
\label{prop:stabilization}If the connected summing region is pinched,
then 
\[
\mu_{n}(S(\boldsymbol{y}_{1}),\cdots,S(\boldsymbol{y}_{n}))=S(\mu_{n}(\boldsymbol{y}_{1},\cdots,\boldsymbol{y}_{n})).
\]
\end{prop}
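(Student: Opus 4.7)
The plan is to pass to the pinched degeneration and decompose holomorphic polygons into matched pairs on the two sides of the node, showing that the contribution from the stabilization side is always exactly $1$ and carries no extra weight. Concretely, write the pinched Heegaard surface as $\Sigma \vee \Sigma'$, where $\Sigma'$ is a $\mathbb{T}^{2}$ in the genus case, an $S^{2}$ with one weighted basepoint in the free case, or an $S^{2}$ with two link basepoints in the free link case. All attaching curves in $S(\boldsymbol{\gamma})$ are obtained from $\boldsymbol{\gamma}$ on $\Sigma$ together with pairwise standard translates of a single fixed curve on $\Sigma'$ (or, for free (link) stabilization, they simply pass through the stabilization region in a standard small arc), and each generator $S(\mathbf{x}_{i})$ equals $\mathbf{x}_{i} \times \theta^{+}$.

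First, I would invoke the neck-stretching package of \cite{2011.00113} to identify the moduli space of Maslov-index $(2-n)$ holomorphic $(n+1)$-gons in the pinched diagram with vertices $S(\mathbf{x}_{1}),\ldots,S(\mathbf{x}_{n}),S(\mathbf{y})$ as a fiber product of the moduli space of $(n+1)$-gons on the $\Sigma$ side, with vertices $\mathbf{x}_{1},\ldots,\mathbf{x}_{n},\mathbf{y}$, and the moduli space of $(n+1)$-gons on the $\Sigma'$ side, with all vertices equal to $\theta^{+}$; the matching is over the node $p$. In the pinched limit this fiber product becomes literally a product, since any contributing configuration must have multiplicity $0$ at $p$ (the relevant domains are nonnegative and $p$ is a free basepoint on the stabilized side).

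Second, I would carry out the model computation on $\Sigma'$. On $\mathbb{T}^{2}$ with $n+1$ pairwise standard translates of a single essential curve, a direct combinatorial count shows that for vertices $\theta^{+},\ldots,\theta^{+}$ the only Maslov-index-$0$ domain is the \emph{constant} one (its underlying two-chain is zero), whose moduli space is a single transversely cut-out point, cf.\ the small triangle argument in \cite[Section 9]{MR2113020}. On $S^{2}$, with or without link basepoints, the same conclusion holds more trivially: there is a unique thin constant domain and it has multiplicity zero at the new basepoint(s). In particular the local weight contribution from $\Sigma'$ is $1$, not a power of $V$ or $V^{1/2}$.

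Third, I would combine these two inputs. The signed/mod-$2$ count of $(n+1)$-gons in the pinched diagram with input $S(\mathbf{x}_{1}),\ldots,S(\mathbf{x}_{n})$ and output $\mathbf{y}\times\theta^{+}$ equals the corresponding count in the unstabilized diagram from $\mathbf{x}_{1},\ldots,\mathbf{x}_{n}$ to $\mathbf{y}$; the weight $w(\mathcal{D})$ and the monodromy $\rho(\mathcal{D})$ are preserved because no new basepoints are crossed and the local systems on the stabilized side extend trivially across the node. Summing over $\mathbf{y}$ gives $\mu_{n}(S(\mathbf{y}_{1}),\ldots,S(\mathbf{y}_{n}))=S(\mu_{n}(\mathbf{y}_{1},\ldots,\mathbf{y}_{n}))$. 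The main obstacle is the analytic content of the first step, namely the transverse fiber-product description of the pinched moduli space and the verification that every broken configuration at the node forces multiplicity zero there; this is precisely the content of \cite[Proposition 6.5]{2201.12906} (building on the pinched-neck framework of \cite{2011.00113}), which I would cite rather than reprove, leaving only the elementary classification on $\mathbb{T}^{2}$ and $S^{2}$ above to finish.
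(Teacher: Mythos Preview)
The paper does not give its own proof of this proposition; it is simply quoted from \cite[Proposition 6.5]{2201.12906}. Your proposal likewise ends by deferring the analytic core to that same reference, so at the level of what is actually being invoked, you and the paper agree.

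That said, the explanatory sketch you wrap around the citation has a concrete error. You write that the fiber product over the node becomes a product ``since any contributing configuration must have multiplicity $0$ at $p$ (the relevant domains are nonnegative and $p$ is a free basepoint on the stabilized side).'' In the paper's conventions (see the caption to Figure~\ref{fig:stabilization}), $p$ is the connect-sum point, \emph{not} a basepoint; the new basepoint is a separate point $z$ (or $z,w$ in the free link case), and in the genus case there is no new basepoint on $\Sigma'$ at all. Nonnegativity alone does not force the multiplicity at the node to vanish. The actual mechanism in \cite{2011.00113,2201.12906} is an index/matching argument: a domain on the $\Sigma'$ side with positive multiplicity at the node has strictly positive Maslov index (for genus stabilization, the only cornerless two-chain is a multiple of $[\mathbb{T}^{2}]$, which contributes $2$ to the index per copy), so the total index constraint $2-n$ forces the $\Sigma'$ piece to be the constant domain and hence the node multiplicity to be zero. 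Your description of the free (link) case is also too loose: there \emph{are} new curves on the $S^{2}$ (this is what produces the $\theta^{\pm}$ generators in Proposition~\ref{prop:free-stabilization}); they do not merely ``pass through in a small arc.'' If you want to keep the sketch, fix these two points; otherwise, citing \cite[Proposition~6.5]{2201.12906} as the paper does is sufficient.
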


Another case we need is when we have a triple Heegaard diagram $(\Sigma,\boldsymbol{\alpha},\boldsymbol{\beta},\boldsymbol{\gamma},\boldsymbol{p})$
and we connect sum with $(\mathbb{T}^{2},\alpha_{0},\beta_{0},\gamma_{0})$
where $\alpha_{0}$ is the meridian and $\beta_{0}$ and $\gamma_{0}$
are (standard translates of) the longitude, and get $(\Sigma\#\mathbb{T}^{2},T(\boldsymbol{\alpha}),T(\boldsymbol{\beta}),T(\boldsymbol{\gamma}),\boldsymbol{p})$.
Note that if we only consider the $\beta$ and $\gamma$ attaching
curves, then this is a genus stabilization. For a generator $f{\bf x}\in\boldsymbol{CF}^{-}(\boldsymbol{\alpha}^{E_{\boldsymbol{\alpha}}},\boldsymbol{\beta}^{E_{\boldsymbol{\beta}}})$
or $\boldsymbol{CF}^{-}(\boldsymbol{\alpha}^{E_{\boldsymbol{\alpha}}},\boldsymbol{\gamma}^{E_{\boldsymbol{\gamma}}})$,
let $T(f{\bf x}):=f{\bf x}\times\theta$ where $\theta$ is the unique
intersection point in the connected summed $\mathbb{T}^{2}$. For
$\boldsymbol{y}\in\boldsymbol{CF}^{-}(\boldsymbol{\beta}^{E_{\boldsymbol{\beta}}},\boldsymbol{\gamma}^{E_{\boldsymbol{\gamma}}})$,
let $T(\boldsymbol{y}):=S(\boldsymbol{y})$. Then, \cite[Proposition 6.5]{2201.12906}
also implies the following proposition.
\begin{prop}[{\cite[Proposition 6.5]{2201.12906}}]
\label{prop:stabilization-1}If the connected summing region is pinched,
then 
\[
\mu_{2}(T(\boldsymbol{y}_{1}),T(\boldsymbol{y}_{2}))=T(\mu_{2}(\boldsymbol{y}_{1},\boldsymbol{y}_{2})).
\]
\end{prop}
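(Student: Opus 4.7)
The plan is to adapt the argument for Proposition \ref{prop:stabilization} (i.e., \cite[Proposition 6.5]{2201.12906}) to this slightly generalized connected sum. Observe that the only new feature compared to a genus stabilization is that the $\boldsymbol{\alpha}$-side is now connected summed with the meridian $\alpha_0$ of $\mathbb{T}^2$, while the $\boldsymbol{\beta}$- and $\boldsymbol{\gamma}$-sides are connected summed with the standard longitudinal translates $\beta_0,\gamma_0$. Since $\alpha_0$ is the meridian, $\alpha_0\cap\beta_0$ and $\alpha_0\cap\gamma_0$ each consist of a single point, which we call $\theta$; the pair $\beta_0\cap\gamma_0$ consists of two points $\theta^+,\theta^-$ exactly as in the genus stabilization setup.

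First, I would apply the pinched-neck formalism of \cite[Sections 8-10]{2011.00113} and \cite[Section 6]{2201.12906}: every holomorphic triangle counted on $\Sigma\#\mathbb{T}^2$ decomposes as a matched pair of triangles, one on each side of the neck, whose local multiplicities at the connect-sum point agree. Second, I would enumerate the $\alpha_0\beta_0\gamma_0$-domains in $\mathbb{T}^2$ whose ordered vertices are $(\theta,\theta^+,\theta)$. There is exactly one nonnegative such domain of Maslov index $0$, namely the small embedded triangle cut out near the cluster of intersection points by short arcs on $\alpha_0,\beta_0,\gamma_0$, and it admits a unique (constant after a change of coordinates) holomorphic representative. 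Choosing the middle vertex to be $\theta^-$ instead would force a strictly negative local multiplicity, and any other homology class with the prescribed vertices differs from the small triangle by a periodic domain in $\mathbb{T}^2$, which strictly raises the Maslov index.

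Assembling these pieces exactly as in the proof of Proposition \ref{prop:stabilization}, the count defining $\mu_2(T(\boldsymbol{y}_1),T(\boldsymbol{y}_2))$ is identified with the count defining $\mu_2(\boldsymbol{y}_1,\boldsymbol{y}_2)$ on $\Sigma$, paired with the unique small triangle in $\mathbb{T}^2$, which contributes the output vertex $\theta\in\alpha_0\cap\gamma_0$ with coefficient $1$. Since $\mathbb{T}^2$ contains no basepoints and the curves $\alpha_0,\beta_0,\gamma_0$ do not meet the arc $G$, neither the weight function nor the local systems are affected by the stabilization, so the output equals $\mu_2(\boldsymbol{y}_1,\boldsymbol{y}_2)\times\theta=T(\mu_2(\boldsymbol{y}_1,\boldsymbol{y}_2))$.

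The only step requiring care is the local enumeration in $\mathbb{T}^2$ above; once it is in hand the rest of the proof is a direct quotation of the pinched-neck machinery already invoked for Proposition \ref{prop:stabilization}, which is why the proposition can simply be attributed to \cite[Proposition 6.5]{2201.12906}.
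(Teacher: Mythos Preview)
Your proposal is correct and matches the paper's approach: the paper gives no argument beyond stating that \cite[Proposition 6.5]{2201.12906} ``also implies'' this proposition, and your sketch is precisely the local verification on the $\mathbb{T}^2$ side (unique small Maslov-index-$0$ triangle with vertices $(\theta,\theta^+,\theta)$) that makes that citation go through. There is nothing further to add.
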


We understand the chain complexes of stabilized diagrams. We will
need a statement for free (link) stabilizations. Note that for free
link stabilizations, since $z$ and $w$ are in the same elementary
two-chain, if $\boldsymbol{CF}^{-}({\cal H}^{freelink})$ and $\boldsymbol{CF}^{-}({\cal H}^{free})$
are the chain complexes of free link stabilization and free stabilization,
respectively, then we simply have $\boldsymbol{CF}^{-}({\cal H}^{freelink})=\boldsymbol{CF}^{-}({\cal H}^{free})\oplus V^{1/2}\boldsymbol{CF}^{-}({\cal H}^{free})$
as chain complexes.
\begin{prop}[{\cite[Proposition 6.5]{MR2443092}, \cite[Proposition 6.5, Lemma 14.4]{1512.01184}}]
\label{prop:free-stabilization}Let us consider a weakly admissible
Heegaard datum ${\cal H}$ with Heegaard diagram $(\Sigma,\boldsymbol{\alpha},\boldsymbol{\beta},\boldsymbol{p})$
and coefficient ring $R$. Let ${\cal H}^{stab}$ be a free stabilization
of ${\cal H}$ at the point $p\in\Sigma$. Then, if the connected
summing region is pinched, then the differential for ${\cal H}^{stab}$
is of the form 
\[
\begin{pmatrix}\partial_{{\cal H}}\otimes_{R}{\rm Id}_{R\llbracket V\rrbracket} & F\\
0 & \partial_{{\cal H}}\otimes_{R}{\rm Id}_{R\llbracket V\rrbracket}
\end{pmatrix}
\]
with respect to the identification $\boldsymbol{CF}^{-}({\cal H}^{stab})\simeq\boldsymbol{CF}^{-}({\cal H})\llbracket V\rrbracket\oplus\boldsymbol{CF}^{-}({\cal H})\llbracket V\rrbracket$
where the two summands consist of the generators that have $\theta^{+}$,
resp., $\theta^{-}$.

Furthermore, if $W$ is the weight of the connected component of $\Sigma\backslash\boldsymbol{\alpha}$
that contains the point $p$, then the map
\[
F:\boldsymbol{CF}^{-}({\cal H})\{\theta^{-}\}\llbracket V\rrbracket\to\boldsymbol{CF}^{-}({\cal H})\{\theta^{+}\}\llbracket V\rrbracket
\]
is homotopic to $V-W$ over $R\llbracket V\rrbracket$.
\end{prop}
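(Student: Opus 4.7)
The plan is to prove Proposition~\ref{prop:free-stabilization} by a neck-stretching and pinching argument, in the spirit of \cite[Section 6]{2201.12906} and \cite[Sections 8--10]{2011.00113}. Although the hypothesis already asks that the connected-summing region be pinched, we think of the stabilized Heegaard surface as the nodal surface $\Sigma\vee_{p}S^{2}$, and analyze Maslov index $1$ disks as matched pairs on the two components.

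The first step is to identify the generators. The free stabilization adds a small $\alpha$-circle $\alpha_{0}$ and a small $\beta$-circle $\beta_{0}$ on the stabilized $S^{2}$, meeting transversely in two intersection points $\theta^{+}$ and $\theta^{-}$, with the basepoint $z$ of weight $V$ placed on one of the two small bigons between them. Every generator of $\boldsymbol{CF}^{-}({\cal H}^{stab})$ is then of the form $({\bf x},\theta^{\pm})$ with ${\bf x}\in\mathbb{T}_{\boldsymbol{\alpha}}\cap\mathbb{T}_{\boldsymbol{\beta}}$, which immediately gives the stated $R\llbracket V\rrbracket$-linear splitting.

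The next step is to extract the block-triangular shape of the differential. Under pinching, any Maslov index $1$ disk decomposes into a matched pair of disks, one in $\Sigma$ and one in $S^{2}$, with Maslov indices summing to $1$. If the $S^{2}$-piece is constant at $\theta^{+}$ or $\theta^{-}$, the pair reproduces a Maslov index $1$ disk in ${\cal H}$ and does not meet any basepoint in the stabilized region, yielding the diagonal entries $\partial_{{\cal H}}\otimes_{R}{\rm Id}$. If instead the $S^{2}$-piece is a nontrivial bigon, then the $\Sigma$-piece is constant; the grading conventions of \cite[Section 14]{1512.01184} force the contribution to point from the $\theta^{-}$-summand to the $\theta^{+}$-summand, producing the upper-triangular entry $F$ and $0$ on the opposite off-diagonal.

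The final step is to identify $F$ up to homotopy. In the pinched picture there are two elementary Maslov index $1$ bigons from $\theta^{-}$ to $\theta^{+}$ in $S^{2}$: the small bigon containing $z$, contributing the weight $V$; and the complementary bigon, which contains the node $p$ and hence by the matching condition extends into the connected component of $\Sigma\setminus\boldsymbol{\alpha}$ containing $p$, picking up the weight $W$. Writing $F$ as the sum of these two contributions plus any contributions from matched bigons of higher multiplicity, one uses the boundary-degeneration identity from the proof of Lemma~\ref{lem:d2=00003D0} together with the homotopies provided in \cite[Lemma 14.4]{1512.01184} to absorb the corrections into a chain homotopy, yielding $F\simeq V-W$.

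The hardest step will be the third one: producing the explicit chain homotopy that absorbs all the higher-multiplicity contributions into $V-W$. The subtlety is that, under weak admissibility with power-series coefficients, infinitely many domains can contribute to $F$ a priori, so one must either work directly on the nodal Heegaard surface as in \cite[Sections 8--10]{2011.00113} (where the matching condition kills most contributions) or truncate the coefficient ring by a high power of the maximal ideal $(U,X_{1},\ldots,X_{k})$, carry out the finite computation, and then pass to the inverse limit. The cited Lemma~14.4 of \cite{1512.01184} already handles an essentially identical local computation in the closed-link setting and provides a template for the homotopy.
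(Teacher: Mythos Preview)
The paper does not prove this proposition itself; it quotes it from \cite[Proposition~6.5]{MR2443092} and \cite[Proposition~6.5, Lemma~14.4]{1512.01184}, and your neck-pinching and matched-disk sketch is exactly the argument carried out in those references. One small caveat on your step~3: the vanishing of the $\theta^{+}\!\to\!\theta^{-}$ block is a geometric computation (the relevant Maslov index~$1$ bigons on the $S^{2}$ side that avoid both the basepoint and the node cancel over~$\mathbb{F}$), not a formal consequence of grading conventions---grading only constrains that block to be a degree-zero map---and your claim that ``the $\Sigma$-piece is constant whenever the $S^{2}$-piece is a nontrivial bigon'' fails precisely for the bigon through the node~$p$; but you correctly handle that case in step~4, so the overall sketch stands.
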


In particular, if we assume the condition described in Subsection
\ref{subsec:unoriented-local-system}, then $W=U_{i}$ for some $i$,
and so the free-stabilization map $S:\boldsymbol{CF}^{-}({\cal H})\to\boldsymbol{CF}^{-}({\cal H}^{stab})$
is a quasi-isomorphism.

\subsection{\label{subsec:link-heegaard-diagram}Links and Heegaard diagrams}

We assume that the reader is familiar with the definition of link
Floer homology \cite{MR2443092}. There are various additional data
that we can equip to a link; we clarify our conventions.
\begin{defn}
\label{def:suture-data}A \emph{minimally pointed link $(L,\boldsymbol{u})$}
is a link $L\subset Y$ together with a set of \emph{link basepoints}
$\boldsymbol{u}\subset L$ such that for each connected component
$L_{i}$ of $L$, we have $|L_{i}\cap\boldsymbol{u}|=2$.

A \emph{suture datum} on a minimally pointed link $L\subset Y$ is
a choice of one of the two connected components of $L_{i}\backslash\boldsymbol{u}$
(which corresponds to the component inside the $\alpha$-handlebody)
for each connected component $L_{i}$. Denote the set of the chosen
components as $\alpha_{L}$. 

A \emph{$w|z$-partition} of a minimally pointed link $(L,\boldsymbol{u})$
is a partition $\boldsymbol{u}=\boldsymbol{w}\sqcup\boldsymbol{z}$
of the link basepoints into $w$-basepoints $\boldsymbol{w}$ and
$z$-basepoints $\boldsymbol{z}$ such that each link component has
exactly one $w$-basepoint and one $z$-basepoint.
\end{defn}

Let us consider the Heegaard diagram $(\Sigma,\boldsymbol{\alpha},\boldsymbol{\beta},\boldsymbol{p})$
of a Heegaard datum that we considered in Subsection \ref{subsec:unoriented-local-system}.
In this subsection, we assume that there is no oriented arc $G$,
for simplicity.

This Heegaard diagram specifies a sutured manifold \cite{MR2253454,MR4337438}
whose boundary components are either a sphere with one suture or a
torus with two sutures. Equivalently\footnote{Different sutures on the sphere components give rise to naturally
chain homotopy equivalent Heegaard Floer chain complexes, so we only
focus on the sutures on the torus components.}, this specifies a minimally pointed link $(L,\boldsymbol{u})$ equipped
with a suture datum, inside a pointed three-manifold $(Y,\boldsymbol{v})$.
(We have $\boldsymbol{p}=\boldsymbol{v}\sqcup\boldsymbol{u}$).
\begin{rem}
\label{rem:orientation2}We use the convention that the orientation
of the Heegaard surface is the \emph{same} as the boundary of the
$\alpha$-handlebody and is \emph{different }from the boundary of
the $\beta$-handlebody. Also see Remark \ref{rem:orientations}.
Note that \cite{MR2350128} uses a different convention (see \cite[Section 4.1]{MR3604486});
one should take the mirror of the links to translate to our convention.
\end{rem}

The sutured manifold corresponding to a suture datum is given as follows.
The underlying three-manifold is $Y\backslash N(L\cup\boldsymbol{v})$
where $N$ means tubular neighborhood. The boundary component $\partial N(L_{i})$
has two meridional sutures on $\partial N(L_{i})$, one for each basepoint
in $L_{i}\cap\boldsymbol{u}$; they are oriented such that $R_{-}$\footnote{Our convention is that $R_{-}$ corresponds to the $\alpha$-handlebody
part.} is the part that lies over the chosen component of $L_{i}\backslash\boldsymbol{u}$.
Also, each connected component of $\partial N(\boldsymbol{v})$ has
one suture.

Given a minimally pointed link $(L,\boldsymbol{u})$, there are three
different kinds of choices we can make: a $w|z$-partition, an orientation
on $L$, and a suture datum. Two of these determine the third: we
use the convention that the link goes from $z$-basepoints to $w$-basepoints
in the chosen components of $L\backslash\boldsymbol{u}$ (i.e. the
$\alpha$-handlebody). In particular, given a suture datum, a $w|z$-partition
is equivalent to an orientation on $L$. We say that the Heegaard
diagram $(\Sigma,\boldsymbol{\alpha},\boldsymbol{\beta},\boldsymbol{p})$
together with such choice $\boldsymbol{z},\boldsymbol{w}\subset\boldsymbol{p}$
\emph{represents the oriented link $\overrightarrow{L}\subset Y$}
(equipped with the $w|z$-partition, or a sutured datum).

\subsection{\label{subsec:spinc-link}${\rm Spin}^{c}$-structures for unoriented
links}

In this subsection, we discuss ${\rm Spin}^{c}$-structures for unoriented
links equipped with a suture datum in a three-manifold, and discuss
relative homological gradings of the corresponding Heegaard Floer
chain complex. 

A Heegaard diagram with more than two attaching curves defines a four-manifold
$X$ with boundary, and in our case, the link basepoints define an
embedded surface $S$. It should be possible to define the composition
map $\mu_{n}^{\mathfrak{s}}$ corresponding to a relative ${\rm Spin}^{c}$-structure
$\mathfrak{s}$ on $(X,S)$, such that $\mu_{n}=\sum_{\mathfrak{s}}\mu_{n}^{\mathfrak{s}}$.
We will not discuss this in this paper in full generality. However,
we will discuss two crude versions of this idea: one is in the form
of an Alexander $\mathbb{Z}/2$-splitting, which we introduce in Subsubsection
\ref{subsec:z/2-structure}; the other case is when the Heegaard diagram
is sufficiently simple, which we discuss in Subsection \ref{subsec:Homologous-attaching-curves}. 

\subsubsection{\label{subsec:G=00003Demptyset}The case $G=\emptyset$}

Let $L^{sut}=(L,\boldsymbol{u},\alpha_{L})$ be a minimally pointed
(unoriented) link equipped with a suture datum (Definition \ref{def:suture-data})
in a pointed three-manifold $(Y,\boldsymbol{v})$. Let us consider
a Heegaard datum whose Heegaard diagram $(\Sigma,\boldsymbol{\alpha},\boldsymbol{\beta},\boldsymbol{p})$
represents $L^{sut}$ in $(Y,\boldsymbol{v})$. In particular, assume
that there is no oriented arc $G$. Let $Y_{L^{sut}}$ be the sutured
manifold given by $L^{sut}$.

There are two different spaces of relative ${\rm Spin}^{c}$-structures
that we can consider. We can consider ${\rm Spin}^{c}(Y_{L^{sut}})$,
the space of \emph{${\rm Spin}^{c}$-structures on the sutured manifold
$Y_{L^{sut}}$,} as in \cite[Section 4]{MR2253454}, which comes with
a map $\mathfrak{s}:\boldsymbol{\alpha}\cap\boldsymbol{\beta}\to{\rm Spin}^{c}(Y_{L^{sut}})$
that specifies the ${\rm Spin}^{c}$-structure of intersection points.
We can also consider $\underline{{\rm Spin}^{c}}(Y,L)$, the space
of \emph{relative ${\rm Spin}^{c}$-structures on $Y\backslash N(L)$},
as in \cite[Section 3]{MR2443092}, which comes with a map $c_{1}:\underline{{\rm Spin}^{c}}(Y,L)\to H^{2}(Y,L;\mathbb{Z})$.
Both spaces are $H^{2}(Y,L;\mathbb{Z})$-torsors. 

Given an orientation $\mathfrak{o}$ on $L$, \cite[Section 3.6]{MR2443092}
gives an $H^{2}(Y,L;\mathbb{Z})$-torsor isomorphism $F_{\mathfrak{o}}:{\rm Spin}^{c}(Y_{L^{sut}})\to\underline{{\rm Spin}^{c}}(Y,L)$.
Hence, we can also define a map $\mathfrak{s}_{\mathfrak{o}}:=F_{\mathfrak{o}}\circ\mathfrak{s}:\boldsymbol{\alpha}\cap\boldsymbol{\beta}\to\underline{{\rm Spin}^{c}}(Y,L)$\footnote{In \cite{MR2443092}, this is denoted as $\mathfrak{s}_{\boldsymbol{w},\boldsymbol{z}}$
where $\boldsymbol{w},\boldsymbol{z}$ are the $w$-, $z$-basepoints,
respectively, which are determined by $\mathfrak{o}$ and the Heegaard
diagram.}. If $\mathfrak{o}'$ is another orientation on $L$, then $\mathfrak{s}_{\mathfrak{o}'}({\bf x})=\mathfrak{s}_{\mathfrak{o}}({\bf x})+PD(\mu)$
\cite[Lemma 3.12]{MR2443092} for some $\mu\in M$\footnote{The homology class $\mu$ is the sum of the meridians of components
of $L$ on which $\mathfrak{o}$ and $\mathfrak{o}'$ are different;
we do not need this fact.}, where $M$ is the $\mathbb{Z}$-linear subspace of $H_{1}(Y\backslash L;\mathbb{Z})$
spanned by the meridians of components of $L$. Hence, given $\mathfrak{s}\in{\rm Spin}^{c}(Y_{L^{sut}})$,
we can define its \emph{Chern class} $c_{1}(\mathfrak{s})\in H^{2}(Y;\mathbb{Z})$
as the image of $c_{1}(F_{\mathfrak{o}}(\mathfrak{s}))\in H^{2}(Y,L;\mathbb{Z})$
under the map $H^{2}(Y,L;\mathbb{Z})\to H^{2}(Y;\mathbb{Z})$, which
does not depend on the orientation $\mathfrak{o}$.

We do not need the entirety of ${\rm Spin}^{c}(Y_{L^{sut}})$: we
consider a quotient. Let us identify $H^{2}(Y,L;\mathbb{Z})\simeq H_{1}(Y\backslash L;\mathbb{Z})$.
\begin{defn}
\label{def:orbifold-spinc}Define the space of ${\rm Spin}^{c}$-structures
as ${\rm Spin}^{c}(Y(L^{sut})):={\rm Spin}^{c}(Y_{L^{sut}})/2M$,
which is an $H_{1}^{orb}(Y(L);\mathbb{Z}):=H_{1}(Y\backslash L;\mathbb{Z})/2M$-torsor.
The above map $\mathfrak{s}:\boldsymbol{\alpha}\cap\boldsymbol{\beta}\to{\rm Spin}^{c}(Y_{L^{sut}})$
descends to a map $\mathfrak{s}:\boldsymbol{\alpha}\cap\boldsymbol{\beta}\to{\rm Spin}^{c}(Y(L^{sut}))$.

For $\mathfrak{s}\in{\rm Spin}^{c}(Y(L^{sut}))$, define its \emph{Chern
class} $c_{1}(\mathfrak{s})\in H^{2}(Y;\mathbb{Z})$ as above. A ${\rm Spin}^{c}$-structure
$\mathfrak{s}$ is \emph{torsion} if $c_{1}(\mathfrak{s})$ is torsion.
\end{defn}

Recall that the coefficient ring $R$ is of the form $\mathbb{F}\llbracket U_{1}^{k_{1}},\cdots,U_{m}^{k_{m}}\rrbracket$
for $k_{i}\in\{1,1/2\}$. Recall that $k_{i}=1/2$ if and only if
it corresponds to a link component $L_{i}$ of $L$; let $\mu_{i}\in H_{1}^{orb}(Y(L);\mathbb{Z})$
be the meridian of $L_{i}$ in this case. If $k_{i}=1$, let $\mu_{i}=0$.
Let 
\[
\mathfrak{s}\left(U_{1}^{n_{1}}\cdots U_{m}^{n_{m}}{\bf x}\right):=\mathfrak{s}({\bf x})+\sum_{i}2n_{i}\mu_{i}\in{\rm Spin}^{c}(Y(L^{sut})).
\]
To talk about ${\rm Spin}^{c}(Y(L^{sut}))$-summands, we should view
$\boldsymbol{CF}^{-}(\boldsymbol{\alpha},\boldsymbol{\beta})$ as
an $R':=\mathbb{F}\llbracket U_{1},\cdots,U_{m}\rrbracket$-module.
\begin{defn}
\label{def:rprime-gen-gempty}An \emph{$R'$-generator }of $\boldsymbol{CF}^{-}(\boldsymbol{\alpha},\boldsymbol{\beta})$
is an element of the form $U_{1}^{n_{1}}\cdots U_{m}^{n_{m}}{\bf x}$
where $n_{i}\in\{0,1/2\}$, and ${\bf x}\in\boldsymbol{\alpha}\cap\boldsymbol{\beta}$.
Note that these form an $R'$-basis of $\boldsymbol{CF}^{-}(\boldsymbol{\alpha},\boldsymbol{\beta})$.

For $\mathfrak{s}\in{\rm Spin}^{c}(Y(L^{sut}))$, define $\boldsymbol{CF}^{-}(\boldsymbol{\alpha},\boldsymbol{\beta};\mathfrak{s})$
as the $R'$-sub chain complex\footnote{Indeed, this is a sub chain complex by \cite[Lemma 4.7]{MR2253454}.}
of $\boldsymbol{CF}^{-}(\boldsymbol{\alpha},\boldsymbol{\beta})$
generated by the $R'$-generators $\boldsymbol{y}$ such that $\mathfrak{s}(\boldsymbol{y})=\mathfrak{s}$.
\end{defn}

Recall from \cite[Section 2.6]{MR2113019}, \cite[Section 4.2]{1512.01184}
that we can define $\mathfrak{s}_{\boldsymbol{r}}:\boldsymbol{\alpha}\cap\boldsymbol{\beta}\to{\rm Spin}^{c}(Y)$
if $\boldsymbol{r}\subset\Sigma$ is a set of points such that each
connected component of $\Sigma\backslash\boldsymbol{\alpha}$ and
$\Sigma\backslash\boldsymbol{\beta}$ contains exactly one point in
$\boldsymbol{r}$. The Chern class of $\mathfrak{s}({\bf x})$ can
be computed in terms of the Chern class of $\mathfrak{s}_{\boldsymbol{r}}({\bf x})$:
indeed, choose an orientation on $L$ and let $\overrightarrow{L}$
be the corresponding oriented link, and let $\boldsymbol{w},\boldsymbol{z}$
be the corresponding $w|z$-partition. Let $\boldsymbol{v}$ be the
free basepoints. Then, \cite[Lemma 3.13]{MR2443092} (also see \cite[Lemma 3.3]{MR3905679})
implies
\begin{equation}
c_{1}(\mathfrak{s}({\bf x}))=c_{1}(\mathfrak{s}_{\boldsymbol{v}\sqcup\boldsymbol{w}}({\bf x}))-PD(\overrightarrow{L})=c_{1}(\mathfrak{s}_{\boldsymbol{v}\sqcup\boldsymbol{z}}({\bf x}))+PD(\overrightarrow{L}).\label{eq:c1-average}
\end{equation}

\begin{lem}
\label{lem:cornerless-maslov}Let $(\Sigma,\boldsymbol{\alpha},\boldsymbol{\beta},\boldsymbol{p})$
be a Heegaard diagram, and let ${\cal D}\in D({\bf x},{\bf x})$ be
a cornerless domain. Then, 
\[
\mu({\cal D})=\left\langle c_{1}(\mathfrak{s}({\bf x})),H({\cal D})\right\rangle +P({\cal D}),
\]
where $P({\cal D})$ is the total multiplicity of ${\cal D}$ (Definition
\ref{def:total-multiplicity}).
\end{lem}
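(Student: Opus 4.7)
The plan is to derive the formula by averaging two applications of the standard cornerless-domain Maslov-index formula, using two different basepoint witnesses that together produce the averaged Spin${}^c$-structure $\mathfrak{s}({\bf x})$.

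The key input is the standard multi-pointed formula (the analogue of \cite[Theorem 7.1]{MR2113020} for several basepoints, derived by writing a cornerless domain as a periodic domain plus a multiple of the Maslov-index-$2$ sphere class $[\Sigma]$ and using that $[\Sigma]=0$ in $H_{2}(Y)$): if $\boldsymbol{r}\subset\boldsymbol{p}$ contains exactly one basepoint in each component of $\Sigma\backslash\boldsymbol{\alpha}$ and of $\Sigma\backslash\boldsymbol{\beta}$, then for any cornerless ${\cal D}\in D({\bf x},{\bf x})$,
\[
\mu({\cal D})=\langle c_{1}(\mathfrak{s}_{\boldsymbol{r}}({\bf x})),H({\cal D})\rangle+2n_{\boldsymbol{r}}({\cal D}).
\]

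First I would pick an auxiliary orientation $\mathfrak{o}$ on $L$, producing a $w|z$-partition $\boldsymbol{u}=\boldsymbol{w}\sqcup\boldsymbol{z}$. Because each link component contributes one $w$- and one $z$-basepoint lying in matching alpha- and beta-regions, both $\boldsymbol{v}\sqcup\boldsymbol{w}$ and $\boldsymbol{v}\sqcup\boldsymbol{z}$ are admissible basepoint witnesses in the above sense, and the displayed formula applies to each.

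Averaging the two identities then yields the result. Summing the two equalities in \eqref{eq:c1-average} gives
\[
c_{1}(\mathfrak{s}_{\boldsymbol{v}\sqcup\boldsymbol{w}}({\bf x}))+c_{1}(\mathfrak{s}_{\boldsymbol{v}\sqcup\boldsymbol{z}}({\bf x}))=2c_{1}(\mathfrak{s}({\bf x})),
\]
so the Chern-class contribution collapses to $\langle c_{1}(\mathfrak{s}({\bf x})),H({\cal D})\rangle$. The multiplicity terms combine as
\[
n_{\boldsymbol{v}\sqcup\boldsymbol{w}}({\cal D})+n_{\boldsymbol{v}\sqcup\boldsymbol{z}}({\cal D})=2n_{\boldsymbol{v}}({\cal D})+n_{\boldsymbol{w}}({\cal D})+n_{\boldsymbol{z}}({\cal D})=2n_{free}({\cal D})+n_{link}({\cal D})=P({\cal D}),
\]
and dividing the summed identity by $2$ gives the desired formula.

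There is no serious obstacle once the standard cornerless-domain formula and \eqref{eq:c1-average} are in hand; the rest is bookkeeping. The only sanity check is that the statement is independent of the auxiliary orientation $\mathfrak{o}$, which is immediate because reversing $\mathfrak{o}$ just interchanges the two identities being averaged.
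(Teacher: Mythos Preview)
Your proposal is correct and is precisely the argument the paper has in mind: the paper's proof just says the lemma is a direct consequence of the standard Maslov-index formula for cornerless domains together with Equation~\eqref{eq:c1-average}, and your averaging of the two basepoint witnesses $\boldsymbol{v}\sqcup\boldsymbol{w}$ and $\boldsymbol{v}\sqcup\boldsymbol{z}$ is exactly how these two ingredients combine.
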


\begin{proof}
This is a direct consequence of the Maslov index formula for cornerless
domains in Heegaard diagrams for three-manifolds (\cite[Proposition 7.5]{MR2113020},
\cite[Corollary 4.12]{MR2240908}, \cite[Equation (4.9)]{1512.01184})
and Equation (\ref{eq:c1-average}). (Compare \cite[Section 5.2]{1711.07110}.)
\end{proof}
\begin{cor}
Given a weakly admissible Heegaard datum with Heegaard diagram $(\Sigma,\boldsymbol{\alpha},\boldsymbol{\beta},\boldsymbol{p})$,
the chain complex $\boldsymbol{CF}^{-}(\boldsymbol{\alpha},\boldsymbol{\beta};\mathfrak{s})$
has a relative homological $\mathbb{Z}/\mathfrak{d}(c_{1}(\mathfrak{s}))$-grading,
where $\mathfrak{d}(c_{1}(\mathfrak{s}))$ is the divisibility of
$c_{1}(\mathfrak{s})$.
\end{cor}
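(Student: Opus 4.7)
The plan is to define the relative grading explicitly via Maslov indices and total multiplicities of domains, and to reduce well-definedness modulo $\mathfrak{d}(c_{1}(\mathfrak{s}))$ to Lemma \ref{lem:cornerless-maslov}.

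First, I assign $U_{i}^{1/2}$ the grading $-1$ and $U_{i}$ the grading $-2$, so that the weight $w(\mathcal{D})$ of any domain has grading $-P(\mathcal{D})$. Given two $R'$-generators $\boldsymbol{y}_{i}=f_{i}\mathbf{x}_{i}$ ($i=1,2$) in the same ${\rm Spin}^{c}(Y(L^{sut}))$-structure $\mathfrak{s}$, one first checks (via standard ${\rm Spin}^{c}$-theory on sutured manifolds together with the identification ${\rm Spin}^{c}(Y(L^{sut}))={\rm Spin}^{c}(Y_{L^{sut}})/2M$) that, after possibly multiplying $\boldsymbol{y}_{1}$ or $\boldsymbol{y}_{2}$ by a monomial in the $U_{i}$'s (absorbing the $2M$-discrepancy using the link-basepoint weights $U_{i}^{1/2}\cdot U_{i}^{1/2}=U_{i}$), there exists a domain $\mathcal{D}\in D(\mathbf{x}_{1},\mathbf{x}_{2})$. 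I then define
\[
\mathrm{gr}(\boldsymbol{y}_{1})-\mathrm{gr}(\boldsymbol{y}_{2}):=\mu(\mathcal{D})-P(\mathcal{D})+\mathrm{gr}(f_{1})-\mathrm{gr}(f_{2}).
\]

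The key step is well-definedness modulo $\mathfrak{d}(c_{1}(\mathfrak{s}))$. If $\mathcal{D}_{1},\mathcal{D}_{2}\in D(\mathbf{x}_{1},\mathbf{x}_{2})$ are two choices, their two-chain difference is cornerless and lifts to a cornerless domain $\mathcal{P}\in D(\mathbf{x}_{1},\mathbf{x}_{1})$ by the remark in Subsection \ref{subsec:Heegaard-Floer-homology}. The cyclic symmetry and additivity of the Maslov index then give $\mu(\mathcal{D}_{1})-\mu(\mathcal{D}_{2})=\mu(\mathcal{P})$, and applying Lemma \ref{lem:cornerless-maslov} together with $P(\mathcal{P})=P(\mathcal{D}_{1})-P(\mathcal{D}_{2})$ yields
\[
\big(\mu(\mathcal{D}_{1})-P(\mathcal{D}_{1})\big)-\big(\mu(\mathcal{D}_{2})-P(\mathcal{D}_{2})\big)=\big\langle c_{1}(\mathfrak{s}),H(\mathcal{P})\big\rangle\in\mathfrak{d}(c_{1}(\mathfrak{s}))\,\mathbb{Z},
\]
since the pairing of $c_{1}(\mathfrak{s})$ with any integral homology class is a multiple of its divisibility.

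Finally, I verify that the differential $\partial$ lowers this grading by exactly $1$: a term of $\partial\boldsymbol{y}$ from a Maslov-index-$1$ domain $\mathcal{D}$ comes with weight $w(\mathcal{D})$ of grading $-P(\mathcal{D})$, yielding a net shift of $\mu(\mathcal{D})-P(\mathcal{D})+P(\mathcal{D})=1$. The main obstacle is the setup, namely verifying that the underlying intersection points of any two $R'$-generators in a common ${\rm Spin}^{c}(Y(L^{sut}))$-class can always be connected by a domain after reabsorbing the $2M$-discrepancy through link-basepoint weights; once this bookkeeping is in place, the remainder of the argument is a direct application of Lemma \ref{lem:cornerless-maslov}.
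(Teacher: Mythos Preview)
Your argument is correct and is precisely the unpacking of the paper's one-line proof (``Immediate from Lemma~\ref{lem:cornerless-maslov}''): define the relative grading by $\mu(\mathcal{D})-P(\mathcal{D})$ on a connecting domain, and use Lemma~\ref{lem:cornerless-maslov} on the cornerless difference to see that the ambiguity lies in $\mathfrak{d}(c_1(\mathfrak{s}))\mathbb{Z}$.

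One minor point: your concern about the ``$2M$-discrepancy'' is unnecessary. In this paper's conventions, domains are allowed to cross basepoints, so $D(\mathbf{x}_1,\mathbf{x}_2)\neq\emptyset$ is equivalent to $\mathfrak{s}_{\boldsymbol{r}}(\mathbf{x}_1)=\mathfrak{s}_{\boldsymbol{r}}(\mathbf{x}_2)$ in ${\rm Spin}^c(Y)$, which is strictly weaker than agreement in ${\rm Spin}^c(Y(L^{sut}))={\rm Spin}^c(Y_{L^{sut}})/2M$ (there is a surjection $H_1^{orb}(Y(L);\mathbb{Z})\to H_1(Y;\mathbb{Z})$). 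So a connecting domain always exists with no need to multiply by monomials; moreover, multiplying an $R'$-generator by $U_i$ does not change its underlying intersection point anyway. This does not affect the validity of your proof.
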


\begin{proof}
Immediate from Lemma \ref{lem:cornerless-maslov}.
\end{proof}
\begin{example}
\label{exa:simple-obstruction}Consider the weakly admissible Heegaard
diagram $(\mathbb{T}^{2},\alpha,\beta,\{w,z\})$ given by the left
side of Figure \ref{fig:genus1-ab}. Work over $\mathbb{F}\llbracket U^{1/2}\rrbracket$,
and assign the weight $U^{1/2}$ to both $w$ and $z$. The group
$H_{1}^{orb}(Y(L);\mathbb{Z})\simeq\mathbb{Z}\oplus\mathbb{Z}/2$;
the meridian $\mu$ represents $(0,1)$. The ${\rm Spin}^{c}$-structures
$\mathfrak{s}(\sigma)$ and $\mathfrak{s}(\tau)$ are both torsion
(in fact $c_{1}=0$), and they differ by $\mu$. We have 
\[
\boldsymbol{CF}^{-}(\alpha,\beta;\mathfrak{s}(\sigma))=\sigma\mathbb{F}\llbracket U\rrbracket\oplus U^{1/2}\tau\mathbb{F}\llbracket U\rrbracket,\ \boldsymbol{CF}^{-}(\alpha,\beta;\mathfrak{s}(\tau))=\tau\mathbb{F}\llbracket U\rrbracket\oplus U^{1/2}\sigma\mathbb{F}\llbracket U\rrbracket.
\]
The differential is identically $0$.

Now, consider the right side of Figure \ref{fig:genus1-ab}. The group
$H_{1}^{orb}(Y(L);\mathbb{Z})\simeq H_{1}(Y;\mathbb{Z})\simeq\mathbb{Z}$,
and $\mathfrak{s}(\sigma)=\mathfrak{s}(\tau)$. Their Chern class
$c_{1}(\mathfrak{s}(\sigma))$ is a generator of $H_{1}(Y;\mathbb{Z})$;
in particular, it has divisibility $1$. Indeed, there are two domains
in $D(\tau,\sigma)$ with weights $1,U^{1/2}$, respectively.
\end{example}

\begin{figure}[h]
\begin{centering}
\includegraphics[scale=1.5]{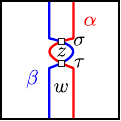}\qquad{}\includegraphics[scale=1.5]{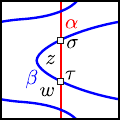}
\par\end{centering}
\caption{\label{fig:genus1-ab}Some simple doubly pointed Heegaard diagrams}
\end{figure}

\subsubsection{\label{subsec:Gneq0}The case $G\protect\neq\emptyset$}

Let us consider the case where $G$ may exist. In this case, we can
formally work with the Heegaard diagram $(\Sigma,\boldsymbol{\alpha},\boldsymbol{\beta},\boldsymbol{p}\sqcup\{\partial_{+}G\})$;
define the corresponding $L^{sut}$ and $\mathfrak{s}:\boldsymbol{\alpha}\cap\boldsymbol{\beta}\to{\rm Spin}^{c}(Y(L^{sut}))$
accordingly.

If at most one of $\boldsymbol{\alpha}$ and $\boldsymbol{\beta}$
intersects $G$, then Remark \ref{rem:same-as-unoriented} justifies
this. However, if both $\boldsymbol{\alpha}$ and $\boldsymbol{\beta}$
intersect $G$, then it turns out that the link $L$ is not the link
we want: consider the projection map $\pi:\overline{N}(\Sigma)\to\Sigma$,
and assume that $L$ is vertical in $\overline{N}(\Sigma)$, i.e.
that $L\cap\overline{N}(\Sigma)$ is the inverse image of the link
basepoints on $\Sigma$, under $\pi$. Then $\pi^{-1}(G)$ is a split
band on $L$; the link we should consider is the link obtained by
surgering $L$ along $\pi^{-1}(G)$. We will not try to fix this;
for our purposes, it is sufficient to consider Chern class summands
as follows.
\begin{defn}
\label{def:spinc-g-nontrivial}For $c\in H^{2}(Y;\mathbb{Z})$, define
$\boldsymbol{CF}^{-}(\boldsymbol{\alpha}^{E_{\boldsymbol{\alpha}}},\boldsymbol{\beta}^{E_{\boldsymbol{\beta}}};c)$
as the $R$-sub chain complex of $\boldsymbol{CF}^{-}(\boldsymbol{\alpha}^{E_{\boldsymbol{\alpha}}},\boldsymbol{\beta}^{E_{\boldsymbol{\beta}}})$
generated by the intersection points ${\bf x}$ such that $c_{1}(\mathfrak{s}({\bf x}))=c$.
\end{defn}

\subsubsection{\label{subsec:Strong-admissibility-2}Strong $\mathfrak{s}$-admissibility
and $CF^{-}$}

Finally, let us comment on how to define $CF^{-}$.
\begin{defn}
\label{def:strongly-admissible}The Heegaard diagram $(\Sigma,\boldsymbol{\alpha},\boldsymbol{\beta},\boldsymbol{p})$
is \emph{$\mathfrak{s}$-strongly admissible} if all cornerless two-chains
${\cal D}$ such that 
\[
\left\langle c_{1}(\mathfrak{s}),H({\cal D})\right\rangle +P({\cal D})=0
\]
have both positive and negative local multiplicities, where $H({\cal D})\in H_{2}(Y;\mathbb{Z})$
is the homology class that ${\cal D}$ represents.
\end{defn}

Note that a weakly admissible Heegaard diagram is $\mathfrak{s}$-strongly
admissible for all torsion $\mathfrak{s}$.

If $(\Sigma,\boldsymbol{\alpha},\boldsymbol{\beta},\boldsymbol{p})$
is $\mathfrak{s}$-strongly admissible, then by Lemma \ref{lem:cornerless-maslov},
we can work over the polynomial ring $\mathbb{F}[U_{1}^{k_{1}},\cdots,U_{m}^{k_{m}}]$
instead and define $CF^{-}(\boldsymbol{\alpha},\boldsymbol{\beta};\mathfrak{s})$.
If $G\neq\emptyset$, similarly define $CF^{-}(\boldsymbol{\alpha}^{E_{\boldsymbol{\alpha}}},\boldsymbol{\beta}^{E_{\boldsymbol{\beta}}};c)$.
\begin{rem}
Equation (\ref{eq:c1-average}) says that our strong admissibility
condition considers the average of the ${\rm Spin}^{c}$-structures
obtained by the different choices of the $z$-basepoints. This has
the advantage that the left hand side of Figure \ref{fig:genus1-ab}
is strongly admissible with respect to both torsion ${\rm Spin}^{c}$-structures.
In contrast, it is not even weakly admissible with respect to $z$
(or $w$).
\end{rem}

\subsection{\label{subsec:Gradings}Gradings}

Gradings in $A_{\infty}$-categories, if they exist, will turn out
to be very useful. For instance, we will use gradings to show that
some generators do not arise as summands of certain composition maps.

\subsubsection{A homological $\mathbb{Z}$-grading}

Sometimes, the $A_{\infty}$-categories we consider can be given a
homological $\mathbb{Z}$-grading, in the sense of Subsection \ref{subsec:Twisted-complexes}.
Compare \cite[Section 3]{MR2509750}, \cite[Section 5.2]{MR2964628}.
Note that our notions differ: their periodic domains are cornerless
two-chains in this paper.

\begin{defn}
\label{def:gradable}A Heegaard diagram is \emph{(homologically) $\mathbb{Z}$-gradable}
if whenever ${\cal D}$ and ${\cal D}'$ are two domains with the
same vertices, then 
\begin{equation}
P({\cal D})-\mu({\cal D})=P({\cal D}')-\mu({\cal D}').\label{eq:hom-gradable}
\end{equation}
\end{defn}

Lemma \ref{lemma:deg-output} implies the following proposition.
\begin{prop}
\label{prop:Z-grading}If a weakly admissible Heegaard datum is homologically
$\mathbb{Z}$-gradable, then the corresponding $A_{\infty}$-category
${\cal A}$ (Remark \ref{rem:ainf-pedantry}) can be (homologically)
$\mathbb{Z}$-graded (i.e. $\mu_{d}$ has degree $d-2$), such that:
\begin{itemize}
\item each $R$-generator $f{\bf x}\in\boldsymbol{CF}^{-}(\boldsymbol{\alpha}^{E_{\boldsymbol{\alpha}}},\boldsymbol{\beta}^{E_{\boldsymbol{\beta}}})$
is homogeneous;
\item $U_{i}^{k}$ has degree $-2k$, i.e. if $\boldsymbol{y}$ is homogeneous,
then ${\rm gr}_{{\cal A}}(U_{i}^{k}\boldsymbol{y})={\rm gr}_{{\cal A}}(\boldsymbol{y})-2k$
for $k\in\mathbb{Z}$ or $1/2\mathbb{Z}$;
\item if $f{\bf x}$ and $g{\bf x}$ are generators, then ${\rm gr}_{{\cal A}}(f{\bf x})-{\rm gr}_{{\cal A}}(g{\bf x})=-2({\rm gr}_{U}(f)-{\rm gr}_{U}(g))$.
\end{itemize}
\end{prop}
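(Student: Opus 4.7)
The plan is to put
\[
\text{gr}(r \cdot f{\bf x}) := \omega(r) - 2\,\text{gr}_U(f) + \gamma({\bf x}),
\]
where $r \in R$, $f{\bf x}$ is an $R$-generator, $\omega \colon R \to \mathbb{Z}$ is the multiplicative grading with $\omega(U) = \omega(U_i) = -2$ and $\omega(U_i^{1/2}) = -1$, and $\gamma$ is an integer-valued function on intersection points to be chosen so that each $\mu_d$ has degree $d-2$. With such a formula, the three listed bullets all hold tautologically, and the proposition reduces to the existence of $\gamma$.

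First I would compute the degree change under $\mu_d$. A direct check from the weight function in Subsection \ref{subsec:unoriented-local-system} gives
\[
\omega(w({\cal D})) = -P({\cal D}) + n_w({\cal D}) - n_z({\cal D}),
\]
and combining this with Lemma \ref{lemma:deg-output} applied to the $U$-grading of $w({\cal D}) \rho({\cal D})(f_1 \otimes \cdots \otimes f_d)$ yields, for each nonzero output term coming from a domain ${\cal D} \in D({\bf x}_1, \ldots, {\bf x}_d, {\bf y})$ with $\mu({\cal D}) = 2-d$,
\[
\text{gr}(\text{output}) - \sum_i \text{gr}(f_i{\bf x}_i) = -P({\cal D}) + \gamma({\bf y}) - \sum_i \gamma({\bf x}_i),
\]
independently of which output term we select. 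Hence $\mu_d$ has degree $d-2$ if and only if
\[
\gamma({\bf y}) - \sum_i \gamma({\bf x}_i) = P({\cal D}) - \mu({\cal D})
\]
for every contributing ${\cal D}$; by hypothesis \eqref{eq:hom-gradable} the right-hand side depends only on the vertices, so this is a well-posed system.

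Next I would construct $\gamma$: in each domain-connectivity component of each $\boldsymbol{\alpha}_i \cap \boldsymbol{\alpha}_j$, pick a base point ${\bf x}_0$ with a base value (to be determined), and extend by $\gamma({\bf x}) - \gamma({\bf x}_0) := P({\cal E}) - \mu({\cal E})$ for any ${\cal E} \in D({\bf x}_0, {\bf x})$, which is well-defined by \eqref{eq:hom-gradable}. For an arbitrary polygon domain ${\cal D}$ (with $d \geq 2$), composing it with connecting domains at each of its vertices and using additivity of $P$ and $\mu$ reduces the polygon constraint to a finite linear relation among the base values indexed by a \emph{base polygon domain}.

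The main obstacle is showing that this finite linear system on the base values is consistent. The key observation is that any cycle of constraints obtained by gluing several base polygon domains together reduces, by additivity of $(P, \mu)$ combined with \eqref{eq:hom-gradable}, to the tautology $0 = 0$; hence no obstructing cocycle arises, so a solution exists, and any such choice of base values produces the desired $\gamma$.
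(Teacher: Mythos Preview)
Your approach is essentially the same as the paper's, which gives only the one-sentence proof ``Lemma~\ref{lemma:deg-output} implies the following proposition''; you have expanded that sentence into an explicit construction. The grading formula you write down and the degree computation using Lemma~\ref{lemma:deg-output} are correct (your identity $\omega(w({\cal D})) = -P({\cal D}) + n_w({\cal D}) - n_z({\cal D})$ checks out, and the $n_w - n_z$ term cancels against the $\tfrac{1}{2}(n_w+n_z)$ from Lemma~\ref{lemma:deg-output} after accounting for the $U$-content of $w({\cal D})$), so the reduction to finding $\gamma$ with $\gamma({\bf y}) - \sum_i \gamma({\bf x}_i) = P({\cal D}) - \mu({\cal D})$ is exactly right.

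The only place your writeup is thin is the last paragraph. The assertion that ``any cycle of constraints reduces to the tautology $0=0$'' is correct, but it is not a one-liner: what you need is that the kernel of the map $\{\text{domains}\} \to \{\text{formal sums of intersection points}\}$, ${\cal D} \mapsto {\bf y} - \sum_i {\bf x}_i$, is generated by (i) differences ${\cal D} - {\cal D}'$ with identical vertex tuples and (ii) composition relations ${\cal D}_1 + {\cal D}_2 - ({\cal D}_1 \circ {\cal D}_2)$. Both types lie in $\ker(P-\mu)$ by hypothesis and by additivity respectively, so once you know the kernel is generated by them you are done; but establishing that generation takes a short combinatorial argument (reduce an arbitrary relation by repeatedly composing a domain whose output appears as some other domain's input, decreasing the total vertex count). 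The paper does not spell this out either, so your level of detail already matches it.
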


Similarly, we say that a Heegaard diagram is homologically $\mathbb{Z}/n$-gradable
if Equation (\ref{eq:hom-gradable}) holds modulo $n$. If it is homologically
$\mathbb{Z}/n$-gradable, then ${\cal A}$ can be (homologically)
$\mathbb{Z}/n$-graded.

Note that if the attaching curves are $\boldsymbol{\alpha}_{0}^{E_{0}},\cdots,\boldsymbol{\alpha}_{m}^{E_{m}}$,
then we are free to shift the homological gradings of each $\boldsymbol{CF}^{-}(\boldsymbol{\alpha}_{i}^{E_{i}},\boldsymbol{\alpha}_{i+1}^{E_{i+1}})$
by a constant $C_{i}$.
\begin{defn}
We use the convention that if $\boldsymbol{\alpha}_{j}$ is a small,
standard translate of $\boldsymbol{\alpha}_{i}$, then $\Theta^{+}\in\boldsymbol{CF}^{-}(\boldsymbol{\alpha}_{i}^{E_{i}},\boldsymbol{\alpha}_{j}^{E_{j}})$
has homological grading $0$\footnote{If the local systems are trivial, then $\Theta^{+}$ can be viewed
as an element of $\boldsymbol{CF}^{-}(\#^{N}S^{1}\times S^{2})$.
Our convention is different from the absolute $\mathbb{Q}$-grading
of $\Theta^{+}$, which is $N/2$.}.
\end{defn}

\subsubsection{\label{subsec:z/2-structure}An Alexander $\mathbb{Z}/2$-splitting}

Sometimes, the $A_{\infty}$-category can be given an \emph{Alexander
$\mathbb{Z}/2$-splitting} in the sense of Proposition \ref{prop:Z/2-splitting}.
For links in $S^{3}$, this is the relative, collapsed Alexander $\mathbb{Z}/2$-grading,
and in general, it is closely related to the ${\rm Spin}^{c}$-splitting
discussed in Subsection \ref{subsec:spinc-link}.
\begin{defn}
\label{def:z/2-gradable}A Heegaard diagram is \emph{(Alexander) $\mathbb{Z}/2$-splittable}
if whenever ${\cal D}$ and ${\cal D}'$ are two domains with the
same vertices, we have $P({\cal D})=P({\cal D}')$ modulo $2$.
\end{defn}

To talk about the corresponding splitting in the $A_{\infty}$-category,
we consider $\boldsymbol{CF}^{-}(\boldsymbol{\alpha}^{E_{\boldsymbol{\alpha}}},\boldsymbol{\beta}^{E_{\boldsymbol{\beta}}})$
not as an $R$-module, but as an $R':=\mathbb{F}\llbracket U_{1},\cdots,U_{m}\rrbracket$-module,
as in Subsection \ref{subsec:spinc-link}. If $G$ exists, then one
of the $U_{i}$'s is the distinguished variable $U$.
\begin{defn}
An \emph{$R'$-generator }of $\boldsymbol{CF}^{-}(\boldsymbol{\alpha}^{E_{\boldsymbol{\alpha}}},\boldsymbol{\beta}^{E_{\boldsymbol{\beta}}})$
is of the form $fU_{1}^{n_{1}}\cdots U_{k}^{n_{k}}{\bf x}$ where
$f$ is an $R$-basis element of ${\rm Hom}_{R}(E_{\boldsymbol{\alpha}},E_{\boldsymbol{\beta}})$,
$n_{i}\in\{0,1/2\}$, and ${\bf x}\in\boldsymbol{\alpha}\cap\boldsymbol{\beta}$.
We write $U^{{\bf n}}=U_{1}^{n_{1}}\cdots U_{k}^{n_{k}}$ for ${\bf n}=(n_{1},\cdots,n_{k})$.
Note that these form an $R'$-basis of $\boldsymbol{CF}^{-}(\boldsymbol{\alpha}^{E_{\boldsymbol{\alpha}}},\boldsymbol{\beta}^{E_{\boldsymbol{\beta}}})$.

Let $f_{i}U^{{\bf n}_{i}}{\bf x}_{i}\in\boldsymbol{CF}^{-}(\boldsymbol{\alpha}_{i-1}^{E_{i-1}},\boldsymbol{\alpha}_{i}^{E_{i}})$
for $i=1,\cdots,d$ and $\boldsymbol{y}_{d+1}=f_{d+1}U^{{\bf n}_{d+1}}{\bf x}_{d+1}\in\boldsymbol{CF}^{-}(\boldsymbol{\alpha}_{0}^{E_{0}},\boldsymbol{\alpha}_{d}^{E_{d}})$
be $R'$-generators. Let $D(f_{1}U^{{\bf n}_{1}}{\bf x}_{1},\cdots,f_{d+1}U^{{\bf n}_{d+1}}{\bf x}_{d+1})$
be the set of domains ${\cal D}\in D({\bf x}_{1},\cdots,{\bf x}_{d+1})$
such that $w({\cal D})\rho({\cal D})(f_{1}U^{{\bf n}_{1}}\otimes\cdots\otimes f_{d}U^{{\bf n}_{d}})=rf_{d+1}U^{{\bf n}_{d+1}}$
for some nonzero $r\in R'$.
\end{defn}

A similar idea as above gives the following proposition.
\begin{prop}
\label{prop:Z/2-splitting}If a weakly admissible Heegaard datum is
Alexander $\mathbb{Z}/2$-splittable, then the associated $A_{\infty}$-category
${\cal A}$ can be equipped with an \emph{(Alexander) $\mathbb{Z}/2$-splitting}
in the following sense:
\begin{itemize}
\item The chain complexes $\boldsymbol{CF}^{-}(\boldsymbol{\alpha}^{E_{\boldsymbol{\alpha}}},\boldsymbol{\beta}^{E_{\boldsymbol{\beta}}})=C_{0}\oplus C_{1}$
are $\mathbb{Z}/2$-graded, and satisfies the three conditions in
Proposition \ref{prop:Z-grading}. In particular, the summands $C_{0}$
and $C_{1}$ are $R'$-modules. We denote this grading as ${\rm gr}_{A}^{\mathbb{Z}/2}(\boldsymbol{y})$
for a homogeneous element $\boldsymbol{y}$, and call it the\emph{
Alexander $\mathbb{Z}/2$-grading}.
\item Let $\boldsymbol{y}_{i}\in\boldsymbol{CF}^{-}(\boldsymbol{\alpha}_{i-1}^{E_{i-1}},\boldsymbol{\alpha}_{i}^{E_{i}})$
for $i=1,\cdots,d$ and $\boldsymbol{y}_{d+1}\in\boldsymbol{CF}^{-}(\boldsymbol{\alpha}_{0}^{E_{0}},\boldsymbol{\alpha}_{d}^{E_{d}})$
be $R'$-generators. If there exists a domain ${\cal D}\in D(\boldsymbol{y}_{1},\cdots,\boldsymbol{y}_{d+1})$,
then
\[
\sum_{i=1}^{d+1}{\rm gr}_{A}^{\mathbb{Z}/2}(\boldsymbol{y}_{i})=0.
\]
In particular, the composition maps $\mu_{d}$ have Alexander $\mathbb{Z}/2$-degree
$0$.
\end{itemize}
\end{prop}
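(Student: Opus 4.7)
The plan is to mimic the proof of Proposition~\ref{prop:Z-grading}, but to reduce everything modulo $2$ and use $P(\mathcal{D})$ in place of $P(\mathcal{D})-\mu(\mathcal{D})$ as the relevant domain invariant. Alexander $\mathbb{Z}/2$-splittability is exactly the statement that $P(\mathcal{D})\bmod 2$ depends only on the vertices of $\mathcal{D}$, which is what will make the definition go through.

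First I would define the grading on intersection points. For each ordered pair $(\boldsymbol{\alpha}_i^{E_i},\boldsymbol{\alpha}_j^{E_j})$, partition $\boldsymbol{\alpha}_i\cap\boldsymbol{\alpha}_j$ into equivalence classes under the relation $\mathbf{x}\sim\mathbf{y}$ iff $D(\mathbf{x},\mathbf{y})\neq\emptyset$, pick a representative $\mathbf{x}_0$ in each class and assign it grading $0$, and set $\mathrm{gr}_A^{\mathbb{Z}/2}(\mathbf{x}):=P(\mathcal{D})\bmod 2$ for any $\mathcal{D}\in D(\mathbf{x}_0,\mathbf{x})$. By $\mathbb{Z}/2$-splittability this is well-defined, and it satisfies $\mathrm{gr}_A^{\mathbb{Z}/2}(\mathbf{y})-\mathrm{gr}_A^{\mathbb{Z}/2}(\mathbf{x})\equiv P(\mathcal{D})\bmod 2$ for every $\mathcal{D}\in D(\mathbf{x},\mathbf{y})$. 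Then extend to $R'$-generators by
\[
\mathrm{gr}_A^{\mathbb{Z}/2}(fU^{\mathbf{n}}\mathbf{x}):=\mathrm{gr}_A^{\mathbb{Z}/2}(\mathbf{x})+2\,\mathrm{gr}_U(f)+2\textstyle\sum_l n_l\pmod 2.
\]
The three bullet-pointed conditions of Proposition~\ref{prop:Z-grading} then follow directly, and $C_0,C_1$ are $R'$-submodules because every element of $R'$ has Alexander degree $0$.

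Next I would verify that $\mu_d$ has Alexander $\mathbb{Z}/2$-degree $0$. Fix a domain $\mathcal{D}\in D(\mathbf{x}_1,\dots,\mathbf{x}_d,\mathbf{y})$ contributing a summand $rhU^{\mathbf{m}}\mathbf{y}$ to $\mu_d(f_1U^{\mathbf{n}_1}\mathbf{x}_1,\dots,f_dU^{\mathbf{n}_d}\mathbf{x}_d)$, where $h$ is a basis element of $\mathrm{Hom}(E_0,E_d)$ and $\mathbf{m}\in\{0,\tfrac{1}{2}\}^k$. A direct calculation combining Lemma~\ref{lemma:deg-output} with the explicit weight formula
\[
w(\mathcal{D})=U^{n_{\partial_-G}(\mathcal{D})}\prod_{\text{link }i}U_i^{(n_{w_i}+n_{z_i})/2}\prod_{\text{free }i}U_i^{n_{\mathrm{free}_i}}
\]
shows that, modulo $2$, the local-system monodromy contributes $n_{\partial_+G}(\mathcal{D})+n_{\partial_-G}(\mathcal{D})$, the link-basepoint weights contribute $n_{\text{link basepts of }L}(\mathcal{D})$, and the free-basepoint plus integer-$U_i$ contributions cancel, so the total discrepancy is exactly $P(\mathcal{D})\bmod 2$. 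Hence $\mu_d$-compatibility reduces to the identity
\[
\mathrm{gr}_A^{\mathbb{Z}/2}(\mathbf{y})-\textstyle\sum_i\mathrm{gr}_A^{\mathbb{Z}/2}(\mathbf{x}_i)\equiv P(\mathcal{D})\pmod 2
\]
for every $(d{+}1)$-vertex domain $\mathcal{D}$.

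For $d=1$ this is exactly the defining property of the intersection-point gradings. For $d\ge 2$, the plan is to compose $\mathcal{D}$ with the chosen 2-vertex reference domains at each of $\mathbf{x}_1,\dots,\mathbf{x}_d,\mathbf{y}$ to produce a ``reference'' $(d{+}1)$-vertex domain $\mathcal{D}_{\mathrm{ref}}$ whose vertices are all class representatives; additivity of $P$ under domain composition then reduces the identity to the parity statement $P(\mathcal{D}_{\mathrm{ref}})\equiv 0\bmod 2$. The main obstacle will be to arrange this by suitable per-Hom-space grading shifts $c_{ij}\in\mathbb{Z}/2$: the shifts must satisfy a cocycle-like system indexed by tuples of attaching curves, and one must verify that the obstruction cocycle is a coboundary, which I expect to follow by composing reference polygons between adjacent tuples and invoking the additivity of $P$ together with $\mathbb{Z}/2$-splittability.
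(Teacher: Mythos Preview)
Your proposal is correct and is exactly the paper's approach: the paper's entire proof is the single sentence ``A similar idea as above gives the following proposition,'' referring back to Proposition~\ref{prop:Z-grading}, and you have simply written out what that sentence means. Regarding the cocycle obstruction you flag at the end: it does vanish, and your expectation is right---any $\mathbb{Z}/2$-linear dependence among the polygon constraints pairs up into differences of domains with the same vertex tuple (as in the associativity square, where two ways of composing triangles yield two quadrilaterals with identical vertices), so $\mathbb{Z}/2$-splittability applied to higher polygons kills it; in practice the paper verifies $\mathbb{Z}/2$-splittability (e.g.\ Proposition~\ref{prop:diagram-z/2-splitting}) by checking $P\equiv 0\bmod 2$ on \emph{all} cornerless two-chains, which dispatches the obstruction immediately.
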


We use the convention that $\Theta^{+}$ has Alexander $\mathbb{Z}/2$-grading
$0$.
\begin{example}
Recall Example \ref{exa:simple-obstruction}. The left hand side of
Figure \ref{fig:genus1-ab} is Alexander $\mathbb{Z}/2$-splittable,
and the Alexander $\mathbb{Z}/2$-splitting on $\boldsymbol{CF}^{-}(\alpha,\beta)$
is $\boldsymbol{CF}^{-}(\alpha,\beta;\mathfrak{s}(\sigma))\oplus\boldsymbol{CF}^{-}(\alpha,\beta;\mathfrak{s}(\tau))$.
\end{example}

\begin{rem}
Note that Alexander $\mathbb{Z}/2$-splittings do not see the Maslov
index of domains: they only see whether domains exist. The second
condition of Proposition \ref{prop:Z-grading} implies that if the
$A_{\infty}$-category is equipped with an Alexander $\mathbb{Z}/2$-splitting,
then the $A_{\infty}$-functor ${\cal F}$ induced by changing the
almost complex structure preserves the splitting, i.e. ${\cal F}_{d}$
has Alexander $\mathbb{Z}/2$-degree $0$ for all $d$.
\end{rem}

\begin{rem}
We consider twisted complexes in this setting as well, in which case
we add the condition that the differential $\delta$ is homogeneous
of degree $0$. The composition maps $\mu_{d}^{{\rm Tw}}$ and the
functor induced by changing the almost complex structure are homogeneous
of degree $0$.
\end{rem}

\subsection{\label{subsec:Homologous-attaching-curves}Handlebody-equivalent
attaching curves}
\begin{defn}
Two attaching curves $\boldsymbol{\beta}_{1},\boldsymbol{\beta}_{2}$
on a Heegaard surface $\Sigma$ are \emph{handlebody-equivalent} if
they define the same handlebody (ignoring the basepoints on $\Sigma$).
\end{defn}

In this subsection, let us consider a Heegaard diagram $(\Sigma,\boldsymbol{\alpha}_{0},\cdots,\boldsymbol{\alpha}_{a},\boldsymbol{\beta}_{0},\cdots,\boldsymbol{\beta}_{b},\boldsymbol{p})$
(and possibly also $G$) such that the $\boldsymbol{\alpha}_{i}$'s
are pairwise handlebody-equivalent and also the $\boldsymbol{\beta}_{i}$'s
are pairwise handlebody-equivalent\footnote{If all the attaching curves are handlebody-equivalent, then we also
are implicitly making a choice of $a$; i.e. we choose which ones
are the $\boldsymbol{\alpha}_{i}$'s and which are the $\boldsymbol{\beta}_{j}$'s.}. Under this assumption, it is easier to check whether the Heegaard
diagram is homologically $\mathbb{Z}$-gradable or Alexander $\mathbb{Z}/2$-splittable.
Similar statements to the results in this subsection hold when the
attaching curves are cyclically permuted.
\begin{rem}
It is possible to relax ``handlebody-equivalent'' to ``\emph{homologous},''
where two attaching curves $\boldsymbol{\beta}_{1},\boldsymbol{\beta}_{2}$
on a Heegaard surface $\Sigma$ are \emph{homologous} if they span
the same subspace of $H_{1}(\Sigma)$.
\end{rem}

\begin{lem}
\label{lem:domain-lemma}If ${\cal D},{\cal D}'\in D({\bf x}_{0},\cdots,{\bf x}_{a+b+1})$
are domains with the same vertices, then there exist cornerless domains
${\cal P}_{j}\in D({\bf x}_{j},{\bf x}_{j})$ for $j\in J$ where
$J=\{0,\cdots,a+b+1\}\backslash\{a+b+1\}$ or $\{0,\cdots,a+b+1\}\backslash\{a\}$,
such that 
\[
{\cal D}={\cal D}'+\sum_{j\in J}{\cal P}_{j}.
\]
\end{lem}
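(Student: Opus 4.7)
Set $\mathcal{E}:=\mathcal{D}-\mathcal{D}'$. Since $\mathcal{D},\mathcal{D}'$ share the same vertex sequence, the vertex-boundary conditions $\partial(\partial_{\boldsymbol{\gamma}_i}\mathcal{D})=\mathbf{x}_{j+1}-\mathbf{x}_j$ cancel in the difference, so $\mathcal{E}$ is a cornerless two-chain with each $\partial_{\boldsymbol{\gamma}_i}\mathcal{E}$ a $1$-cycle on the attaching curve $\boldsymbol{\gamma}_i$. The goal is to decompose $\mathcal{E}$ as a sum of cornerless pieces $\mathcal{P}_j$, each supported on a pair of adjacent attaching curves in the cyclic order $\boldsymbol{\alpha}_0,\boldsymbol{\alpha}_1,\ldots,\boldsymbol{\alpha}_a,\boldsymbol{\beta}_0,\ldots,\boldsymbol{\beta}_b$.

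The core tool is the following push lemma: if $\boldsymbol{\gamma}$ and $\boldsymbol{\gamma}'$ are handlebody-equivalent attaching curves on $\Sigma$ and $c$ is a $1$-cycle on $\boldsymbol{\gamma}$, then there is a cornerless two-chain $P$ with $\partial_{\boldsymbol{\gamma}}P=c$ and $\partial P$ supported on $\boldsymbol{\gamma}\cup\boldsymbol{\gamma}'$. Indeed, $\boldsymbol{\gamma}$ and $\boldsymbol{\gamma}'$ span the same subspace of $H_1(\Sigma)$, and the circles of $\boldsymbol{\gamma}'$ form a basis of it; so there is a unique $1$-cycle $c'$ on $\boldsymbol{\gamma}'$ with $[c']=[c]$ in $H_1(\Sigma)$, and $c-c'$ is null-homologous, hence bounds some $P$ in $\Sigma$.

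Consider the case $J=\{0,\ldots,a+b\}$. Starting from $\mathcal{F}:=\mathcal{E}$, apply the push lemma successively for $k=b,b-1,\ldots,1$ with $(\boldsymbol{\gamma},\boldsymbol{\gamma}')=(\boldsymbol{\beta}_k,\boldsymbol{\beta}_{k-1})$, producing a cornerless $\mathcal{P}_{a+k}\in D(\mathbf{x}_{a+k},\mathbf{x}_{a+k})$ with $\partial_{\boldsymbol{\beta}_k}\mathcal{P}_{a+k}=\partial_{\boldsymbol{\beta}_k}\mathcal{F}$, and replace $\mathcal{F}$ by $\mathcal{F}-\mathcal{P}_{a+k}$. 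Each $\mathcal{P}_{a+k}$ is supported only on $\boldsymbol{\beta}_{k-1}\cup\boldsymbol{\beta}_k$, so these pushes do not reintroduce boundary on previously zeroed curves; after $b$ steps $\partial_{\boldsymbol{\beta}_j}\mathcal{F}=0$ for all $j=1,\ldots,b$. Then do the analogous sequence on the alpha side for $k=1,\ldots,a$ with $(\boldsymbol{\gamma},\boldsymbol{\gamma}')=(\boldsymbol{\alpha}_{k-1},\boldsymbol{\alpha}_k)$, producing $\mathcal{P}_0,\ldots,\mathcal{P}_{a-1}$ and leaving $\partial_{\boldsymbol{\alpha}_j}\mathcal{F}=0$ for all $j=0,\ldots,a-1$. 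The final $\mathcal{F}$ is cornerless with boundary supported only on $\boldsymbol{\alpha}_a\cup\boldsymbol{\beta}_0$, so set $\mathcal{P}_a:=\mathcal{F}\in D(\mathbf{x}_a,\mathbf{x}_a)$; this gives $\mathcal{E}=\sum_{j\in J}\mathcal{P}_j$. The case $J=\{0,\ldots,a+b+1\}\setminus\{a\}$ is entirely symmetric with the push directions reversed, so the residue sits on $\boldsymbol{\alpha}_0\cup\boldsymbol{\beta}_b$ and becomes $\mathcal{P}_{a+b+1}$.

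The point to keep straight is the bookkeeping: each push zeros one more boundary without disturbing those already zeroed, which is automatic because each $\mathcal{P}_j$ is supported on exactly two adjacent attaching curves. Handlebody-equivalence is used only within the alpha and beta families, consistent with the absence of a decomposition piece at one of the two alpha-beta transitions.
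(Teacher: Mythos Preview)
Your proof is correct and follows essentially the same recursive strategy as the paper: take the cornerless difference $\mathcal{E}=\mathcal{D}-\mathcal{D}'$ and push its boundary off one attaching curve at a time onto an adjacent handlebody-equivalent one, until only the $\boldsymbol{\alpha}_a\cup\boldsymbol{\beta}_0$ boundary remains as $\mathcal{P}_a$. One minor inaccuracy in your push lemma: in the multi-pointed setting an attaching curve may have more than $g$ circles, so they need not form a \emph{basis} of the $g$-dimensional span---but they do span it, which is all you need for the existence (not uniqueness) of $c'$, and the argument goes through unchanged.
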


\begin{proof}
Let us consider the case where $J=\{0,\cdots,a+b+1\}\backslash\{a+b+1\}$;
the other case follows similarly. Let ${\cal P}$ be the difference
${\cal D}-{\cal D}'$ of the underlying two-chains. Recursively define
${\cal P}_{a+j}$ for $j=b,\cdots,1$ such that the $\boldsymbol{\beta}_{i}$-boundary
of the two-chain ${\cal P}-\sum_{\ell=a+j}^{a+b}{\cal P}_{\ell}$
is empty for $i\ge j$. Similarly define ${\cal P}_{j}$ for $j=0,\cdots,a-1$
such that (all the $\boldsymbol{\beta}_{k}$-boundaries and) the $\boldsymbol{\alpha}_{i}$-boundary
of the two-chain ${\cal P}-\sum_{\ell=a+1}^{a+b}{\cal P}_{\ell}-\sum_{\ell=0}^{j}{\cal P}_{\ell}$
is empty for $i\le j$. Let ${\cal P}_{a}$ be the cornerless $\boldsymbol{\alpha}_{a}\boldsymbol{\beta}_{0}$-domain
with underlying two-chain ${\cal P}-\sum_{j\in J\backslash\{a\}}{\cal P}_{j}$.
\end{proof}
Identify the alpha-handlebodies and also identify the beta-handlebodies,
and let $Y$ be the three-manifold given by the alpha-handlebody and
the beta-handlebody. Define the homology class $H({\cal D})$ of a
cornerless two-chain ${\cal D}$ as an element in $H_{2}(Y;\mathbb{Z})$.
We get the following as a corollary.
\begin{lem}
\label{lem:cornerless-maslov-1}If ${\cal D},{\cal D}'\in D({\bf x}_{0},\cdots,{\bf x}_{a+b+1})$
are domains with the same vertices, then 
\[
\mu({\cal D})-\mu({\cal D}')=\left\langle c(\mathfrak{s}({\bf x}_{a})),H({\cal D}-{\cal D}')\right\rangle +P({\cal D})-P({\cal D}').
\]
\end{lem}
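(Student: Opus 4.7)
The plan is to reduce to Lemma~\ref{lem:cornerless-maslov} via the decomposition of Lemma~\ref{lem:domain-lemma}.

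First, I apply Lemma~\ref{lem:domain-lemma} with $J=\{0,\dots,a+b+1\}\setminus\{a+b+1\}$, which contains $a$. This writes ${\cal D}={\cal D}'+\sum_{j\in J}{\cal P}_{j}$ with each ${\cal P}_{j}\in D({\bf x}_{j},{\bf x}_{j})$ cornerless. Additivity of the Maslov index and of the total multiplicity $P$ along composition of domains yields $\mu({\cal D})-\mu({\cal D}')=\sum_{j}\mu({\cal P}_{j})$ and $P({\cal D})-P({\cal D}')=\sum_{j}P({\cal P}_{j})$.

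For $j=a$, the cornerless domain ${\cal P}_{a}$ is a periodic domain in the two-curve sub-diagram $(\Sigma,\boldsymbol{\alpha}_{a},\boldsymbol{\beta}_{0},\boldsymbol{p})$, which represents $Y$ by handlebody-equivalence. Lemma~\ref{lem:cornerless-maslov} yields
\[
\mu({\cal P}_{a})=\langle c_{1}(\mathfrak{s}({\bf x}_{a})),H({\cal P}_{a})\rangle+P({\cal P}_{a}).
\]
For $j\neq a$, the domain ${\cal P}_{j}$ is of pure $\alpha\alpha$- or $\beta\beta$-type: its boundary lies in two handlebody-equivalent attaching curves that bound a common handlebody $U$, so capping off inside $U$ produces a closed surface lying entirely in $U$, which is nullhomologous in $Y$. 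Hence $H({\cal P}_{j})=0$ in $H_{2}(Y;\mathbb{Z})$, and so $H({\cal D}-{\cal D}')=H({\cal P}_{a})$ in $H_{2}(Y;\mathbb{Z})$.

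It remains to show that $\mu({\cal P}_{j})=P({\cal P}_{j})$ for $j\neq a$; this is the main obstacle. Applying Lemma~\ref{lem:cornerless-maslov} in the two-curve sub-diagram representing the double $\#^{N}(S^{1}\times S^{2})$ of $U$ reduces the claim to showing that the Chern-class pairing $\langle c_{1}(\mathfrak{s}({\bf x}_{j})),H({\cal P}_{j})\rangle$ computed in that double vanishes. This pairing need not vanish a priori, but it can be verified directly from the combinatorial Maslov index formula \cite{MR2811652,MR2240908}, using the explicit structure of the pure-type cornerless domains ${\cal P}_{j}$ produced by Lemma~\ref{lem:domain-lemma}, whose boundaries are sums of $\alpha$-circles (or $\beta$-circles) homologous in the handlebody. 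Once this combinatorial identity is established, summing over $j\in J$ together with $H({\cal D}-{\cal D}')=H({\cal P}_{a})$ produces the stated identity.
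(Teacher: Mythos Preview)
Your approach is the same as the paper's: decompose ${\cal D}-{\cal D}'$ via Lemma~\ref{lem:domain-lemma}, apply Lemma~\ref{lem:cornerless-maslov} to each ${\cal P}_{j}$, and observe $H({\cal P}_{j})=0$ in $H_{2}(Y)$ for $j\neq a$. You are also right that the crux is the identity $\mu({\cal P}_{j})=P({\cal P}_{j})$ for $j\neq a$, equivalently the vanishing of $\langle c_{1}(\mathfrak{s}({\bf x}_{j})),H_{Y_{j}}({\cal P}_{j})\rangle$ in the double $Y_{j}$.

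The gap is in your resolution of that step. Appealing to ``the combinatorial Maslov index formula, using the explicit structure of the pure-type cornerless domains ${\cal P}_{j}$'' is not a proof. The ${\cal P}_{j}$ are not canonically determined by the construction in Lemma~\ref{lem:domain-lemma}, and the quantity $\mu({\cal P}_{j})-P({\cal P}_{j})=e({\cal P}_{j})+2n_{{\bf x}_{j}}({\cal P}_{j})-P({\cal P}_{j})$ has no reason to vanish for an arbitrary cornerless $\alpha\alpha$- or $\beta\beta$-domain at an arbitrary vertex ${\bf x}_{j}$: by Lemma~\ref{lem:cornerless-maslov} it equals $\langle c_{1}(\mathfrak{s}({\bf x}_{j})),H_{Y_{j}}({\cal P}_{j})\rangle$, so it vanishes precisely when the Chern class does, not because of any feature of the domain itself.

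The paper's own proof simply writes $\sum_{k}\langle c_{1}(\mathfrak{s}({\bf x}_{k})),H({\cal P}_{k})\rangle=\langle c_{1}(\mathfrak{s}({\bf x}_{a})),H({\cal P}_{a})\rangle$ without comment, so it is equally terse at this point. What makes the collapse legitimate is that $c_{1}(\mathfrak{s}({\bf x}_{j}))=0$ for $j\neq a$: this is the relevant input, not the shape of ${\cal P}_{j}$. Note that in the only application, Proposition~\ref{prop:torsion}, the morphism spaces between $\alpha$-curves (resp.\ $\beta$-curves) are by definition the $c_{1}=0$ summands, so this hypothesis is in force whenever the lemma is used. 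If you want the lemma to stand on its own, the clean fix is to argue (or assume) that intersection points between handlebody-equivalent attaching curves lie in the torsion ${\rm Spin}^{c}$-structure of the double; your proposed combinatorial verification of the domain side does not accomplish this.
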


\begin{proof}
By Lemma \ref{lem:domain-lemma}, there exist cornerless domains ${\cal P}_{k}\in D({\bf x}_{k},{\bf x}_{k})$
for $k=0,\cdots,a+b$ such that ${\cal D}={\cal D}'+\sum_{k=0}^{a+b}{\cal P}_{k}$.
Since $\mu$ and $P$ are additive, we have (by Lemma \ref{lem:cornerless-maslov})
\[
(\mu({\cal D})-P({\cal D}))-(\mu({\cal D}')-P({\cal D}'))=\sum_{k=0}^{a+b}\left\langle c_{1}(\mathfrak{s}({\bf x}_{k})),H({\cal P}_{k})\right\rangle =\left\langle c(\mathfrak{s}({\bf x}_{a})),H({\cal P}_{a})\right\rangle .
\]
The lemma follows since $H({\cal D}-{\cal D}')=H({\cal P}_{a})\in H_{2}(Y;\mathbb{Z})$.
\end{proof}
\begin{lem}
\label{lem:chern-class-additive}Let ${\cal D}\in D({\bf x}_{0},\cdots,{\bf x}_{a+b+1})$
be a domain. If $c_{1}(\mathfrak{s}({\bf x}_{j}))=0$ for $j\in\{0,\cdots,a+b+1\}\backslash\{a,a+b+1\}$,
then $c_{1}(\mathfrak{s}({\bf x}_{a}))=c_{1}(\mathfrak{s}({\bf x}_{a+b+1}))$.
\end{lem}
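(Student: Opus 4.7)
The plan is to apply Lemma \ref{lem:cornerless-maslov-1} twice, taking advantage of the fact that Lemma \ref{lem:domain-lemma} offers two admissible choices of $J$. The first, $J=\{0,\ldots,a+b\}$, is the one used in the statement of Lemma \ref{lem:cornerless-maslov-1} and singles out the vertex ${\bf x}_a$; the second, $J=\{0,\ldots,a+b+1\}\setminus\{a\}$, is entirely symmetric and produces an identical identity with $c_{1}(\mathfrak{s}({\bf x}_{a+b+1}))$ in place of $c_{1}(\mathfrak{s}({\bf x}_a))$. In both cases, reducing the sum $\sum_{k\in J}\langle c_{1}(\mathfrak{s}({\bf x}_k)),H({\cal P}_k)\rangle$ to the single distinguished term relies on the vanishing of the pairings for the remaining ``intermediate'' vertices, and under our hypothesis $c_{1}(\mathfrak{s}({\bf x}_j))=0$ for $j\in\{0,\ldots,a+b+1\}\setminus\{a,a+b+1\}$, this vanishing is automatic.

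Equating the two resulting identities, we obtain
\[
\langle c_{1}(\mathfrak{s}({\bf x}_a)) - c_{1}(\mathfrak{s}({\bf x}_{a+b+1})),\; H({\cal D}-{\cal D}')\rangle = 0
\]
for every ${\cal D}'\in D({\bf x}_0,\ldots,{\bf x}_{a+b+1})$. To finish, I will show that as ${\cal D}'$ varies, the class $H({\cal D}-{\cal D}')$ realizes every element of $H_{2}(Y;\mathbb{Z})$. Given $\eta\in H_{2}(Y;\mathbb{Z})$, represent it by a cornerless $\boldsymbol{\alpha}_{a}\boldsymbol{\beta}_{0}$-domain ${\cal P}\in D({\bf x}_a,{\bf x}_a)$; this is possible because $(\Sigma,\boldsymbol{\alpha}_a,\boldsymbol{\beta}_0)$ is a genuine Heegaard diagram for $Y$ and its periodic domains span $H_{2}(Y;\mathbb{Z})$. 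Setting ${\cal D}':={\cal D}+{\cal P}$ produces another element of $D({\bf x}_0,\ldots,{\bf x}_{a+b+1})$, since the boundary of ${\cal P}$ is supported entirely on $\boldsymbol{\alpha}_a\cup\boldsymbol{\beta}_0$ and is a cycle on each of those two curves, so all the vertex incidences of ${\cal D}$ are preserved. Thus $H({\cal D}-{\cal D}')=-\eta$, ranging over $H_{2}(Y;\mathbb{Z})$, and the pairing identity forces $c_{1}(\mathfrak{s}({\bf x}_a))=c_{1}(\mathfrak{s}({\bf x}_{a+b+1}))$.

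The main obstacle will be cleanly verifying the ``dual'' version of Lemma \ref{lem:cornerless-maslov-1} using $J=\{0,\ldots,a+b+1\}\setminus\{a\}$, and in particular checking that the hypothesis on intermediate Chern classes supplies exactly the vanishing needed to match the handlebody-equivalence argument used in the original direction. Once that symmetric identity is in hand, the passage to the final conclusion is purely formal.
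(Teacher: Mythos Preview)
Your two applications of Lemma~\ref{lem:domain-lemma}/\ref{lem:cornerless-maslov-1} and the surjectivity of $H(\mathcal D-\mathcal D')$ onto $H_2(Y;\mathbb Z)$ are fine. The gap is in the very last step. From
\[
\bigl\langle c_{1}(\mathfrak{s}({\bf x}_a)) - c_{1}(\mathfrak{s}({\bf x}_{a+b+1})),\; \eta\bigr\rangle = 0 \quad\text{for all }\eta\in H_2(Y;\mathbb Z)
\]
you can only conclude that the difference lies in the kernel of the evaluation map $H^{2}(Y;\mathbb Z)\to{\rm Hom}(H_{2}(Y;\mathbb Z),\mathbb Z)$. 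By the universal coefficient theorem this kernel is ${\rm Ext}^{1}(H_{1}(Y;\mathbb Z),\mathbb Z)$, i.e.\ the torsion subgroup of $H^{2}(Y;\mathbb Z)$. So your argument proves only that $c_{1}(\mathfrak{s}({\bf x}_a)) - c_{1}(\mathfrak{s}({\bf x}_{a+b+1}))$ is torsion. For $Y$ with torsion in $H_{1}$ (any rational homology sphere other than $S^{3}$, for instance) this does not force the difference to vanish, and the lemma genuinely needs equality: it is used in Proposition~\ref{prop:torsion} to show the composition maps land in the correct $c$-summand $\boldsymbol{CF}^{-}(\,\cdot\,,\,\cdot\,;c)$.

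The paper's proof avoids the pairing altogether. It uses that the domain $\mathcal D$ carries a $\mathrm{Spin}^{c}$-structure on the associated $4$-manifold $X$, whose Chern class in $H^{2}(X;\mathbb Z)$ restricts to $c_{1}(\mathfrak{s}_{\boldsymbol{v}\sqcup\boldsymbol{w}}({\bf x}_j))$ on each boundary piece; then an explicit description of $H^{2}(X)$ and the restriction maps, together with the link classes $\overrightarrow{L_{\boldsymbol\gamma}}$, lets one solve for $c_{1}(\mathfrak{s}({\bf x}_{a+b+1}))$ directly in terms of $c_{1}(\mathfrak{s}({\bf x}_{a}))$ and the (vanishing) intermediate Chern classes. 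This gives exact equality in $H^{2}(Y;\mathbb Z)$, not merely equality modulo torsion.
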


\begin{proof}
Let $\boldsymbol{v}$ be the set of free basepoints. If $G$ exists,
for the purpose of this Lemma, consider $\partial_{+}G,\partial_{-}G$
as link basepoints and replace $\boldsymbol{p}$ with $\boldsymbol{p}\sqcup\{\partial_{+}G\}$,
and let us choose which link basepoints are $z$-basepoints; write
$\boldsymbol{p}=\boldsymbol{v}\sqcup\boldsymbol{w}\sqcup\boldsymbol{z}$. 

We can define the $\boldsymbol{v}\sqcup\boldsymbol{w}$-${\rm Spin}^{c}$
structure of ${\cal D}$ and its Chern class $c_{1}(\mathfrak{s}_{\boldsymbol{v}\sqcup\boldsymbol{w}}({\cal D}))\in H^{2}(X;\mathbb{Z})$.
The key property we use is that $c_{1}(\mathfrak{s}_{\boldsymbol{v}\sqcup\boldsymbol{w}}({\cal D}))$
restricts to $c_{1}(\mathfrak{s}_{\boldsymbol{v}\sqcup\boldsymbol{w}}({\bf x}_{i}))\in H^{2}(Y_{\boldsymbol{\delta\epsilon}};\mathbb{Z})$
where ${\bf x}_{i}\in\boldsymbol{\delta}\cap\boldsymbol{\epsilon}$.
Let $\overrightarrow{L_{\boldsymbol{\delta\epsilon}}}\subset Y_{\boldsymbol{\delta\epsilon}}$
be the corresponding oriented link; then we have $c_{1}(\mathfrak{s}_{\boldsymbol{v}\sqcup\boldsymbol{w}}({\bf x}_{i}))=c_{1}(\mathfrak{s}({\bf x}_{i}))+PD(\overrightarrow{L_{\boldsymbol{\delta\epsilon}}})$
by Equation (\ref{eq:c1-average}).

Recall the identification
\[
H^{2}(X;\mathbb{Z})\simeq{\rm coker}\left(H_{1}(\Sigma;\mathbb{Z})\to\bigoplus_{\boldsymbol{\gamma}}H_{1}(U_{\boldsymbol{\gamma}};\mathbb{Z})\right),\ H^{2}(Y_{\boldsymbol{\delta\epsilon}};\mathbb{Z})\simeq{\rm coker}\left(H_{1}(\Sigma;\mathbb{Z})\to H_{1}(U_{\boldsymbol{\delta}};\mathbb{Z})\oplus H_{1}(U_{\boldsymbol{\epsilon}};\mathbb{Z})\right),
\]
where $U_{\boldsymbol{\gamma}}$ is the $\boldsymbol{\gamma}$-handlebody.
For each pair of link basepoints, choose an oriented arc on $\Sigma$
from the $w$-basepoint to the $z$-basepoint, and let $\overrightarrow{L_{\boldsymbol{\gamma}}}\in H_{1}(U_{\boldsymbol{\gamma}};\mathbb{Z})$
be obtained by adding the above oriented arc to an oriented arc on
$\Sigma\backslash\boldsymbol{\gamma}$ from the $z$-basepoint to
the $w$-basepoint. Then the oriented link $\overrightarrow{L_{\boldsymbol{\delta\epsilon}}}\in H_{1}(Y_{\boldsymbol{\delta\epsilon}};\mathbb{Z})\simeq H^{2}(Y_{\boldsymbol{\delta\epsilon}};\mathbb{Z})$
is represented by the class $(\overrightarrow{L_{\boldsymbol{\delta}}},\overrightarrow{L_{\boldsymbol{\epsilon}}})$
under the above identification. Using the above identification together
with that the restriction map $H^{2}(X;\mathbb{Z})\to H^{2}(Y_{\boldsymbol{\delta\epsilon}};\mathbb{Z})$
is induced by the projection map $\bigoplus_{\boldsymbol{\gamma}}H_{1}(U_{\boldsymbol{\gamma}};\mathbb{Z})\to H_{1}(U_{\boldsymbol{\delta}};\mathbb{Z})\oplus H_{1}(U_{\boldsymbol{\epsilon}};\mathbb{Z})$,
one can write $c_{1}(\mathfrak{s}_{\boldsymbol{v}\sqcup\boldsymbol{w}}({\bf x}_{a+b+1}))$
in terms of $c_{1}(\mathfrak{s}_{\boldsymbol{v}\sqcup\boldsymbol{w}}({\bf x}_{a}))$
and the $\overrightarrow{L_{\boldsymbol{\gamma}}}$'s. Combining everything,
we get $c_{1}(\mathfrak{s}({\bf x}_{a}))=c_{1}(\mathfrak{s}({\bf x}_{a+b+1}))$.
\end{proof}
The above proof also shows that if the hypothesis on $c_{1}$ is removed,
then there is a relation between the $c_{1}(\mathfrak{s}({\bf x}_{j}))$'s.

\begin{prop}
\label{prop:torsion}Consider a weakly admissible Heegaard datum whose
underlying Heegaard diagram is $(\Sigma,\boldsymbol{\alpha}_{0},\cdots,\boldsymbol{\alpha}_{a},\boldsymbol{\beta}_{0},\cdots,\boldsymbol{\beta}_{b},\boldsymbol{p})$
(it might also have $G$) such that the $\boldsymbol{\alpha}_{i}$'s
are pairwise handlebody-equivalent and also the $\boldsymbol{\beta}_{i}$'s
are pairwise handlebody-equivalent. Identify the alpha-handlebodies
and also identify the beta-handlebodies, let $Y$ be the three-manifold
given by the alpha-handlebody and the beta-handlebody, and let $c\in H^{2}(Y;\mathbb{Z})$.
Consider the $A_{\infty}$-category ${\cal A}$ whose objects are
the attaching curves (equipped with local systems), and 
\begin{gather*}
{\rm Hom}_{{\cal A}}(\boldsymbol{\alpha}_{k}^{E_{\boldsymbol{\alpha}_{k}}},\boldsymbol{\beta}_{\ell}^{E_{\boldsymbol{\beta}_{\ell}}})=\boldsymbol{CF}^{-}(\boldsymbol{\alpha}_{k}^{E_{\boldsymbol{\alpha}_{k}}},\boldsymbol{\beta}_{\ell}^{E_{\boldsymbol{\beta}_{\ell}}};c)\\
{\rm Hom}_{{\cal A}}(\boldsymbol{\alpha}_{i}^{E_{\boldsymbol{\alpha}_{i}}},\boldsymbol{\alpha}_{j}^{E_{\boldsymbol{\alpha}_{j}}})=\boldsymbol{CF}^{-}(\boldsymbol{\alpha}_{i}^{E_{\boldsymbol{\alpha}_{i}}},\boldsymbol{\alpha}_{j}^{E_{\boldsymbol{\alpha}_{j}}};0),\ {\rm Hom}_{{\cal A}}(\boldsymbol{\beta}_{i}^{E_{\boldsymbol{\beta}_{i}}},\boldsymbol{\beta}_{j}^{E_{\boldsymbol{\beta}_{j}}})=\boldsymbol{CF}^{-}(\boldsymbol{\beta}_{i}^{E_{\boldsymbol{\beta}_{i}}},\boldsymbol{\beta}_{j}^{E_{\boldsymbol{\beta}_{j}}};0),
\end{gather*}
for all $k,\ell$ and $i<j$; if $i\ge j$, then define ${\rm Hom}_{{\cal A}}(\boldsymbol{\alpha}_{i}^{E_{\boldsymbol{\alpha}_{i}}},\boldsymbol{\alpha}_{j}^{E_{\boldsymbol{\alpha}_{j}}})={\rm Hom}_{{\cal A}}(\boldsymbol{\beta}_{i}^{E_{\boldsymbol{\beta}_{i}}},\boldsymbol{\beta}_{j}^{E_{\boldsymbol{\beta}_{j}}})=0$.

Then, ${\cal A}$ is homologically $\mathbb{Z}/\mathfrak{d}(c)$-gradable.
\end{prop}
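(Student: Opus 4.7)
My plan is to establish the natural $\mathbb{Z}/n$-analogue of the gradability condition in Definition \ref{def:gradable} restricted to vertices appearing as generators of the Hom spaces in ${\cal A}$, from which the $\mathbb{Z}/\mathfrak{d}(c)$-grading on ${\cal A}$ follows by the straightforward $\mathbb{Z}/n$-version of Proposition \ref{prop:Z-grading}. Concretely, I must show that for any two domains ${\cal D},{\cal D}'\in D({\bf x}_{0},\cdots,{\bf x}_{a+b+1})$ with vertices among those generating Hom spaces of ${\cal A}$,
\[
\mu({\cal D})-P({\cal D})\equiv\mu({\cal D}')-P({\cal D}')\pmod{\mathfrak{d}(c)}.
\]

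First I record the Chern classes of the vertices. A cyclic composition in ${\cal A}$ runs through attaching curves forming a block of $\boldsymbol{\alpha}$-type curves followed by a block of $\boldsymbol{\beta}$-type curves, so by the definition of the Hom spaces in ${\cal A}$, any vertex ${\bf x}_j$ sitting at a transition between two curves of the same type has $c_{1}(\mathfrak{s}({\bf x}_j))=0$, while a vertex at an $\alpha$-to-$\beta$ or $\beta$-to-$\alpha$ transition has Chern class $c$ (Lemma \ref{lem:chern-class-additive} ensures the two such vertices have the same Chern class). In particular the vertex ${\bf x}_a$ that appears in Lemma \ref{lem:cornerless-maslov-1} has Chern class either $0$ or $c$.

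Applying Lemma \ref{lem:cornerless-maslov-1} gives
\[
\mu({\cal D})-\mu({\cal D}')-\bigl(P({\cal D})-P({\cal D}')\bigr)=\bigl\langle c_{1}(\mathfrak{s}({\bf x}_a)),H({\cal D}-{\cal D}')\bigr\rangle.
\]
If $c_{1}(\mathfrak{s}({\bf x}_a))=0$ (e.g.\ when only $\boldsymbol{\alpha}$'s or only $\boldsymbol{\beta}$'s appear) the right-hand side is zero; otherwise $c_{1}(\mathfrak{s}({\bf x}_a))=c$, and the pairing $\langle c,\cdot\rangle\colon H_{2}(Y;\mathbb{Z})\to\mathbb{Z}$ takes values in $\mathfrak{d}(c)\mathbb{Z}$ by the very definition of divisibility. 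In either case the right side vanishes modulo $\mathfrak{d}(c)$, verifying the gradability condition. There is no real obstacle here; the work is already done by the handlebody-equivalence hypothesis (which enters via Lemma \ref{lem:domain-lemma} in the proof of Lemma \ref{lem:cornerless-maslov-1}) together with the Chern-class constraints built into the definition of ${\cal A}$.

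Finally, the $\mathbb{Z}/\mathfrak{d}(c)$-grading on ${\cal A}$ is constructed exactly as in Proposition \ref{prop:Z-grading}: assign to each $R$-generator $f{\bf x}$ a value ${\rm gr}_{{\cal A}}(f{\bf x})\in\mathbb{Z}/\mathfrak{d}(c)$ so that $U_{i}^{k}$ has degree $-2k$, $\Theta^{+}$ has degree $0$, and if $f{\bf x}$ and $g{\bf x}$ share an intersection point then their degrees differ by $-2({\rm gr}_{U}f-{\rm gr}_{U}g)$. The verified gradability condition, together with Lemma \ref{lemma:deg-output}, ensures that every composition map $\mu_{d}$ has degree $d-2$ modulo $\mathfrak{d}(c)$, completing the proof.
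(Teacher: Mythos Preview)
Your proof is correct and follows essentially the same approach as the paper's: the paper's two-sentence proof simply cites Lemma \ref{lem:chern-class-additive} for well-definedness of ${\cal A}$ (that compositions land in the correct Chern-class summand) and Lemma \ref{lem:cornerless-maslov-1} for $\mathbb{Z}/\mathfrak{d}(c)$-gradability, while you spell out explicitly how those lemmas apply. Your case analysis on whether ${\bf x}_a$ has Chern class $0$ or $c$ and the observation that $\langle c,\cdot\rangle$ takes values in $\mathfrak{d}(c)\mathbb{Z}$ are exactly the content implicit in the paper's terse citation.
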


\begin{proof}
The composition maps are well-defined and ${\cal A}$ is an $A_{\infty}$-category
by Lemma \ref{lem:chern-class-additive}. It is homologically $\mathbb{Z}/\mathfrak{d}(c)$-gradable
by Lemma \ref{lem:cornerless-maslov-1}.
\end{proof}

\subsubsection{\label{subsec:Strong-admissibility-c}Strong $c$-admissibility}

We can define strong $c$-admissibility in the context of this subsection.
Compare \cite[Definition 8.8 and Section 8.4.2]{MR2113019} and \cite[Section 4.8]{1512.01184}.
\begin{defn}
\label{def:strongly-admissible-multi}Let the Heegaard diagram and
the three-manifold $Y$ be as in Proposition \ref{prop:torsion}.
The Heegaard diagram is \emph{$c$-strongly admissible} for $c\in H^{2}(Y;\mathbb{Z})$
if every cornerless two-chain ${\cal D}$ such that 
\[
\left\langle c,H({\cal D})\right\rangle +P({\cal D})=0
\]
has both positive and negative local multiplicities.
\end{defn}

If a Heegaard diagram is $c$-strongly admissible, recall from Subsubsection
\ref{subsec:Strong-admissibility-2} that we can work over polynomial
rings and define $CF^{-}$. In this setting, $\mu_{d}$ is well-defined
by Lemmas \ref{lem:cornerless-maslov-1} and \ref{lem:chern-class-additive},
the $A_{\infty}$-relations hold, and Proposition \ref{prop:torsion}
works for $CF^{-}$.

\section{\label{sec:band-maps}Band maps in unoriented link Floer homology}

The goal of this section is to precisely define the objects we consider,
to define the band maps, and to show that they are well-defined. Compare
\cite[Section 6]{MR3905679} and \cite{1711.07110}. Note that the
objects we consider are different, and hence the band maps are different
as well. The band maps we consider are different from \cite{1711.07110}
even for non-orientable bands: Fan chooses ``the other generator,''
i.e. the generator that corresponds to $\sigma$ in the left hand
side of Figure \ref{fig:local-band}.

We first define the unoriented link Floer chain complex of a link
in a three-manifold, together with various decorations.
\begin{defn}
\label{def:unoriented-suture-datum}Let $(L,\boldsymbol{u},\alpha_{L})$
be a minimally pointed link equipped with a suture datum, inside a
pointed three-manifold $(Y,\boldsymbol{v})$. Define an equivalence
relation on $\boldsymbol{u}\sqcup\boldsymbol{v}$ by declaring $x\sim y$
if and only if $x,y\in\boldsymbol{u}$ and $x,y$ belong to the same
component of $L$ (compare Subsection \ref{subsec:unoriented-local-system}).
Assume that the equivalence classes are totally ordered. A Heegaard
datum \emph{represents this data} (or represents $(Y,\boldsymbol{v}),(L,\boldsymbol{u},\alpha_{L})$,
for simplicity, when the total order is understood) if:
\begin{itemize}
\item the underlying Heegaard diagram $(\Sigma,\boldsymbol{\alpha},\boldsymbol{\beta},\boldsymbol{u}\sqcup\boldsymbol{v})$
represents $(Y,\boldsymbol{v}),(L,\boldsymbol{u},\alpha_{L})$, and
\item the \emph{coefficient ring} is $R=\mathbb{F}\llbracket U_{1}^{k_{1}},\cdots,U_{n}^{k_{n}}\rrbracket$,
where the $k_{\ell}$'s and the weight function are as follows: if
the $\ell$th equivalence class has size two, then $k_{\ell}=1/2$
and assign the weight $U_{\ell}^{1/2}$ to both basepoints in the
equivalence class. If the $\ell$th equivalence class has size one,
then $k_{\ell}=1$ and assign the weight $U_{\ell}$ to the basepoint
in the equivalence class.
\end{itemize}
\emph{The unoriented link Floer chain complex of $(Y,\boldsymbol{v}),(L,\boldsymbol{u},\alpha_{L})$},
denoted
\[
\boldsymbol{CFL'}^{-}((Y,\boldsymbol{v}),(L,\boldsymbol{u},\alpha_{L})),
\]
is defined as the chain complex of a weakly admissible Heegaard datum
that represents this data.
\end{defn}

\subsection{Bands and balled links}

In this subsection, we introduce the notion of a balled link and define
its unoriented link Floer homology. We first explain the motivation
behind considering balled links. Recall (from Definition \ref{def:unoriented-intro})
that we wanted each link component to have exactly two link basepoints.
Hence, the merge and split band maps involve links with different
number of basepoints. As in the discussion of Subsection \ref{subsec:The-proof},
to go around this issue, we free-stabilize our ambient manifold when
we consider the link with fewer basepoints. 

Figure \ref{fig:split-band} is a diagram of a kind of split band
that we consider. The link $L_{a}$ has two link basepoints $z$ and
$w$, but there is also a free basepoint $v$ in the underlying three-manifold.
Band surgery along the specified band creates a new link component
(the left hand side part of $L_{b}$). We added the free basepoint
$v$ so that after we do this band surgery, we can make the basepoint
$v$ into two link basepoints $v,v'$ of the left hand side component
(compare the Heegaard diagram). However, to do this, we have to isotope
the left hand side component so that it goes through $v$ (and $v'$).
Hence, the link $L_{a}$, the basepoints $v,w,z$, and the band do
not uniquely determine $L_{b}$, let alone the isotopy.

We introduce \emph{baseballs} and consider \emph{balled links} to
specify $L_{b}$ and the isotopy up to choices that do not affect
the unoriented link Floer homology and maps between them: we specify
a baseball $B_{v}\ni v$ that intersects $L_{a}$ in an interval,
and isotope the link inside the baseball $B_{v}$. (We will also let
$v'\in B_{v}$.)

Considering baseballs and balled links also has the advantage that
the following can be encoded in a nice way: in this paper, we only
consider bands that are supported in the $\beta$-handlebody parts
($\beta$-bands in the sense of \cite[Section 6.1]{MR3905679}). This
can be thought of as that the two link basepoints in each link component
are ``very close to each other'' with respect to the bands (the
$\alpha$-handlebody part is ``short''), and that we treat the two
basepoints as one basepoint. This is encoded as that each link component
has one \emph{link baseball} (one should think of baseballs as being
very small and very round), which contains these two basepoints. The
link baseballs mark the $\alpha$-handlebody part of the link. 
\begin{figure}[h]
\begin{centering}
\includegraphics[scale=2]{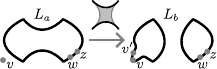}\includegraphics[scale=0.75]{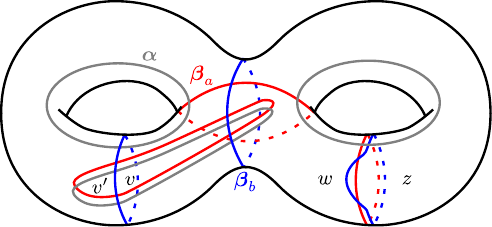}
\par\end{centering}
\caption{\label{fig:split-band}A split band and a Heegaard diagram for it}
\end{figure}

\begin{defn}
A \emph{balled link $L$ in a three-manifold $Y$} consists of:
\begin{itemize}
\item a (potentially empty) link $L^{link}$ in $Y$, referred to as the
\emph{underlying link} (also denoted $L$),
\item a finite, nonempty, ordered sequence of pairwise disjoint, embedded
(closed) three-balls $B_{1},\cdots,B_{n}$ in $Y$ ($n\ge1$), called
\emph{baseballs}, such that each link component of $L^{link}$ intersects
some baseball, and if a baseball $B_{b}$ intersects $L^{link}$,
then there exists an open neighborhood $U\supset B_{b}$ such that
$(U,L^{link}\cap U,B_{b})$ is diffeomorphic to $(\mathbb{R}^{3},\mathbb{R}\times\{(0,0)\},\{{\bf x}:\left|{\bf x}\right|\le1\})$,
\item one of two \emph{types} (\emph{link} or \emph{free}) associated to
each baseball: for each link component of $L^{link}$, one baseball
intersecting the component is said to be a \emph{link baseball}, and
all baseballs not said to be link baseballs are said to be \emph{free
baseballs},
\item a \emph{marking} in $\{1/2,1,\infty\}$ associated to each link baseball
and a \emph{marking} in $\{1,\infty\}$ associated to each free baseball.
\end{itemize}
\end{defn}

To define the unoriented link Floer chain complex of a balled link
$L\subset Y$, we choose an \emph{auxiliary datum} for $L\subset Y$.
\begin{defn}
\label{def:auxiliary-datum}Let $L\subset Y$ be a balled link with
baseballs $B_{1},\cdots,B_{n}$. An \emph{auxiliary datum} for $L\subset Y$
is a link $\widetilde{L}$ together with a choice of \emph{link basepoints}
$\boldsymbol{u}$ and \emph{free basepoints} $\boldsymbol{v}$ as
follows:
\begin{itemize}
\item $\widetilde{L}$ agrees with $L^{link}$ outside the union of the
baseballs.
\item The link $\widetilde{L}$ and the baseballs $B_{1},\cdots,B_{n}$
form a balled link.
\item For each link baseball $B_{b}$, choose two distinct points (\emph{``link
basepoints''}) in $({\rm int}B_{b})\cap\widetilde{L}$. Let $\boldsymbol{u}$
be the set of these link basepoints.
\item For each free baseball $B_{b}$, choose a point (\emph{``free basepoint''})
in $({\rm int}B_{b})\backslash\widetilde{L}$. Let $\boldsymbol{v}$
be the set of these free basepoints.
\end{itemize}
If $(\widetilde{L},\boldsymbol{u},\boldsymbol{v})$ is an auxiliary
datum for $L\subset Y$, then each link component of $\widetilde{L}$
has exactly two link basepoints (i.e. $(\widetilde{L},\boldsymbol{u})$
is minimally pointed), and they divide the component into two components,
exactly one of which is entirely contained in the interior of a link
baseball. Define the \emph{suture datum} $\alpha_{\widetilde{L}}$
on $(\widetilde{L},\boldsymbol{u})$ by choosing the components of
$\widetilde{L}\backslash\boldsymbol{u}$ that are contained in the
link baseballs.

Define equivalence classes on $\boldsymbol{u}\sqcup\boldsymbol{v}$
as in Definition \ref{def:unoriented-suture-datum}, with the order
on the equivalence classes given by the order of the baseballs. We
say that a Heegaard datum \emph{represents the auxiliary datum $(\widetilde{L},\boldsymbol{u},\boldsymbol{v})$}
if it represents $(Y,\boldsymbol{v}),(\widetilde{L},\boldsymbol{u},\alpha_{\widetilde{L}})$.
\end{defn}

We define the \emph{unoriented link Floer chain complex of a balled
link}, and hence the \emph{unoriented link Floer homology of a balled
link}. We will show in Proposition \ref{prop:naturality} that the
chain complex is well-defined up to chain homotopy equivalence, and
also show \emph{naturality}, i.e. that we have a preferred choice
of the homotopy equivalence, up to homotopy.
\begin{defn}
\label{def:unoriented-link-Floer-balled-link}Let $L\subset Y$ be
a balled link, and let $(\widetilde{L},\boldsymbol{u},\boldsymbol{v})$
be an auxiliary datum for $L\subset Y$. The \emph{unoriented link
Floer chain complex} of $L\subset Y$ is 
\[
\boldsymbol{CFL'}^{-}(Y,L):=\boldsymbol{CFL'}^{-}((Y,\boldsymbol{v}),(\widetilde{L},\boldsymbol{u},\alpha_{\widetilde{L}})).
\]
The \emph{coefficient ring} of $Y,L$ is the coefficient ring of $(Y,\boldsymbol{v}),(\widetilde{L},\boldsymbol{u},\alpha_{\widetilde{L}})$
from Definition \ref{def:unoriented-suture-datum}.

Let $R=\mathbb{F}\llbracket U_{1}^{k_{1}},\cdots,U_{n}^{k_{n}}\rrbracket$
be the coefficient ring of $Y,L$. Define the \emph{infinity version}
as 
\[
\boldsymbol{CFL'}^{\infty}(Y,L):=\boldsymbol{CFL'}^{-}(Y,L)\otimes_{R}R^{\infty}=\boldsymbol{CFL'}^{-}(Y,L)\otimes_{R}R[U_{1}^{-1},\cdots,U_{n}^{-1}].
\]
If at least one of the baseballs is marked with $1/2$ or $1$, define
the \emph{hat version} as 
\[
\widehat{CFL'}(Y,L):=\boldsymbol{CFL'}^{-}(Y,L)/(U_{1}^{m_{1}},\cdots,U_{n}^{m_{n}}),
\]
where the $i$th baseball is marked with $m_{i}\in\{1/2,1,\infty\}$,
and $U_{i}^{\infty}:=0$.
\end{defn}

\begin{rem}
In Subsection \ref{subsec:Unoriented-link-Floer}, we defined the
unreduced and reduced hat versions of unoriented link Floer homology
separately. The unreduced hat version (resp. reduced hat version)
corresponds to the hat version of Definition \ref{def:unoriented-link-Floer-balled-link},
where exactly one of the baseballs is marked with $1$ (resp. $1/2$),
and the rest are marked with $\infty$.

Let $L\subset Y$ be a balled link. Then, $\widehat{HFL}$ of the
underlying link is the hat version $\widehat{HFL'}(Y,L)$ of Definition
\ref{def:unoriented-link-Floer-balled-link}, if every link baseball
is marked with $1/2$ and every free baseball is marked with $\infty$.
\end{rem}

As usual, the infinity version is simple. 
\begin{prop}
\label{prop:infinity}Let $L$ be a balled link in $S^{3}$, and let
its coefficient ring be $R=\mathbb{F}\llbracket U_{1}^{k_{1}},\cdots,U_{n}^{k_{n}}\rrbracket$.
Then $\boldsymbol{HFL'}^{\infty}(S^{3},L)$ does not depend on the
underlying link. In other words,
\[
\boldsymbol{HFL'}^{\infty}(S^{3},L)\simeq\mathbb{F}\llbracket U_{1}^{k_{1}},\cdots,U_{n}^{k_{n}},U\rrbracket[U_{1}^{-1},\cdots,U_{n}^{-1},U^{-1}]/(U_{1}-U,\cdots,U_{n}-U).
\]
\end{prop}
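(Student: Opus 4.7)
The plan is to reduce the computation to the genus-zero Heegaard diagram for $S^{3}$ via iterated stabilizations, and then track what happens on the infinity page using Proposition \ref{prop:free-stabilization}. Since the statement claims the answer is independent of the underlying link, we will first argue that $\boldsymbol{HFL'}^{\infty}(S^{3},L)$ depends only on the balled structure of $L$, so that we may replace $L$ by a convenient representative—an unlink whose components carry exactly the link baseballs of $L$—and then pick a Heegaard diagram for this representative that is built inductively from the base case of $(S^{3},\{v\})$ by a sequence of free stabilizations (for free baseballs) and free link stabilizations (for link baseballs).

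For the invariance step, any two links in $S^{3}$ with the same balled structure are related by a sequence of crossing changes supported outside the baseballs, and it suffices to check that each such crossing change induces a chain homotopy equivalence on $\boldsymbol{CFL'}^{\infty}$. This is analogous to the standard fact that $HF^{\infty}$ of a closed three-manifold is a topological invariant: the cone of the natural chain map between the two sides of a crossing change involves a factor of some $U_{i}$, which is invertible after localization, so the cone becomes acyclic on $\boldsymbol{CFL'}^{\infty}$. Alternatively, one can bypass this step entirely by observing that, by Heegaard move invariance of $\boldsymbol{CFL'}^{-}$, any two diagrams (with the same balled structure) give chain homotopy equivalent complexes, so we are free to work with the specific stabilized diagram below.

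For the computation, start with the genus-zero Heegaard diagram for $S^{3}$ with a single basepoint of weight $U_{1}$; here $\boldsymbol{CFL'}^{-}\simeq\mathbb{F}\llbracket U_{1}\rrbracket$ with zero differential, so $\boldsymbol{HFL'}^{\infty}=\mathbb{F}\llbracket U_{1}\rrbracket[U_{1}^{-1}]$, matching the statement in the one-baseball case. Each subsequent baseball is added by a free stabilization or a free link stabilization, which by Proposition \ref{prop:free-stabilization} (together with the splitting $\boldsymbol{CFL'}^{-}(\mathcal{H}^{\mathrm{freelink}})\simeq\boldsymbol{CFL'}^{-}(\mathcal{H}^{\mathrm{free}})\oplus V^{1/2}\boldsymbol{CFL'}^{-}(\mathcal{H}^{\mathrm{free}})$ from Subsection \ref{subsec:Stabilizations}) introduces a mapping-cone factor whose differential is homotopic to $V-W$, where $V\in\{U_{i},U_{i}^{1/2}\}$ is the new variable and $W$ is the weight of the region of $\Sigma\setminus\boldsymbol{\alpha}$ containing the new basepoint. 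After inverting all the $U_{j}$, the element $V-W$ becomes a non-zero-divisor and the mapping cone is quasi-isomorphic to the quotient by the relation $V=W$, identifying on homology the new $U_{i}$ with an expression in previously introduced variables. Since by construction every region weight is built from variables that have already been identified with the original $U:=U_{1}$ (a free baseball weight contributes $V=W=U$, and a link baseball in a region of combined weight $U$ contributes $V^{2}=U$, i.e.\ $U_{i}=U$), an inductive bookkeeping delivers exactly the quotient ring in the statement.

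The main obstacle is the invariance step together with its compatibility with the stabilization calculation: one must check that crossings can be changed without disturbing the balled structure, and that the resulting maps become quasi-isomorphisms on $\boldsymbol{CFL'}^{\infty}$. The stabilization computation itself is essentially a Koszul-type collapse and is straightforward once the region weights are correctly identified, so the heart of the proof lies in justifying why nontrivial link topology in $S^{3}$ is invisible to $\boldsymbol{HFL'}^{\infty}$.
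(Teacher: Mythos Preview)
Your invariance step has a genuine gap, and both proposed justifications fail. The ``alternative'' claim---that Heegaard move invariance of $\boldsymbol{CFL'}^{-}$ lets you freely swap the underlying link---is false: Heegaard moves relate diagrams for the \emph{same} pointed link, so naturality (Proposition~\ref{prop:naturality}) only identifies complexes coming from different diagrams for the same $L$, not complexes for different links that happen to share a balled structure. If your claim held, $\boldsymbol{HFL'}^{-}$ itself would be link-independent. The crossing-change argument is not an argument as written: there is no ``natural chain map'' between $\boldsymbol{CFL'}^{-}$ of two links differing by a crossing change (the band maps in this paper go between resolutions, not between over- and under-crossings), so the assertion that ``the cone involves a factor of some $U_i$'' has no referent. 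The analogy with $HF^{\infty}$ of three-manifolds does not help either---that statement concerns independence from the diagram, not from the manifold.

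The paper's proof is entirely different and bypasses the issue. Fix any weakly admissible diagram for $L$, choose a $w|z$-partition, and compare the unoriented weight function $w'$ (each link basepoint weighted $U_\ell^{1/2}$) with the modified weight $w$ assigning $U_\ell$ to each $z$-basepoint and $1$ to each $w$-basepoint. Since $w(\mathcal{P}) = w'(\mathcal{P})$ for every cornerless two-chain $\mathcal{P}$, the $R^{\infty}$-linear rescaling $\mathbf{y} \mapsto \dfrac{w(\mathcal{D}_{\mathbf{y}})}{w'(\mathcal{D}_{\mathbf{y}})}\,\mathbf{y}$ (for any fixed domains $\mathcal{D}_{\mathbf{y}}\in D(\mathbf{x},\mathbf{y})$) is a chain isomorphism from $\boldsymbol{CFL'}^{\infty}$ to the complex with weights $w$. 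The latter ignores the $w$-basepoints entirely and hence computes $\boldsymbol{HF}^{\infty}$ of $S^3$ with only free basepoints---manifestly independent of the link. Your stabilization computation for unlinks (essentially Proposition~\ref{prop:Let--be} localized) is fine, but it only handles unlinks; what you are missing is exactly this weight-shifting trick, which kills the link dependence for an arbitrary $L$ in one line.
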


\begin{proof}
This argument is standard. Consider a weakly admissible Heegaard diagram
$(\Sigma,\boldsymbol{\alpha},\boldsymbol{\beta},\boldsymbol{u}\sqcup\boldsymbol{v})$
for $(S^{3},L)$. Choose a $w|z$ partition $\boldsymbol{u}=\boldsymbol{w}\sqcup\boldsymbol{z}$
of the minimally pointed link $(L,\boldsymbol{u})$, and also assume
that $(\Sigma,\boldsymbol{\alpha},\boldsymbol{\beta},\boldsymbol{v}\sqcup\boldsymbol{z})$
is also weakly admissible. Let $\boldsymbol{CF}'{}^{\infty}$ be the
usual, unoriented link Floer chain complex, and let the corresponding
weight function be $w'$. Let $\boldsymbol{CF}^{\infty}$ be the chain
complex whose underlying group is the same as that of $\boldsymbol{CF}'{}^{\infty}$,
but if the $\ell$th baseball $B_{\ell}$ is a link baseball, then
assign the weight $U_{\ell}$ to the $z$-basepoint corresponding
to $B_{\ell}$ and assign the weight $1$ to the $w$-basepoint corresponding
to $B_{\ell}$ . If the $\ell$th baseball $B_{\ell}$ is a free baseball,
then assign the weight $U_{\ell}$ to the free basepoint corresponding
to $B_{\ell}$. Call this new weight function $w$.

We claim that $\boldsymbol{CF}'{}^{\infty}$ and $\boldsymbol{CF}^{\infty}$
are isomorphic as chain complexes over $R^{\infty}$. Fix an intersection
point ${\bf x}\in\boldsymbol{\alpha}\cap\boldsymbol{\beta}$, and
for each ${\bf y}\in\boldsymbol{\alpha}\cap\boldsymbol{\beta}$, choose
a domain ${\cal D}_{{\bf y}}\in D({\bf x},{\bf y})$. Then the $R^{\infty}$-linear
map $\boldsymbol{CF}'{}^{\infty}\to\boldsymbol{CF}^{\infty}$ given
by 
\[
{\bf y}\mapsto\frac{w({\cal D}_{{\bf y}})}{w'({\cal D}_{{\bf y}})}{\bf y}
\]
is a chain map, since $w'({\cal P})=w({\cal P})$ for any cornerless
two-chain ${\cal P}$. It is an isomorphism since it has an obvious
inverse.
\end{proof}
Proposition \ref{prop:naturality} shows that $\boldsymbol{HFL'}^{-}(Y,L)$,
$\boldsymbol{HFL'}^{\infty}(Y,L)$, and $\widehat{HFL'}(Y,L)$ (in
fact, the chain complexes, up to chain homotopy equivalence) do not
depend on the additional choices we made. Compare \cite{MR4337438},
\cite[Proposition 2.3]{MR3649355}, \cite[Proposition 3.5]{MR3905679}.
\begin{lem}
\label{lem:Let--and}Let $D^{3}=\{{\bf x}\in\mathbb{R}^{3}:|{\bf x}|\le1\}$
and $T=\{(x,0,0):|x|\le1\}\subset D^{3}$. Then, any diffeomorphism
of $D^{3}$ that fixes $\partial D^{3}\cup T$ pointwise is isotopic
rel $\partial D^{3}\cup T$ (i.e. the isotopy fixes $\partial D^{3}\cup T$
pointwise) to the identity.
\end{lem}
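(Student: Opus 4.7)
The plan is to combine the Alexander isotopy with the tubular neighborhood theorem and Cerf's theorem. Identify $D^3$ with the closed unit ball in $\mathbb{R}^3$, so that $T=\{(x,0,0):|x|\le 1\}$. The first observation is that the classical Alexander isotopy
\[
f_t(x)=\begin{cases} tf(x/t), & |x|\le t,\\ x, & |x|\ge t,\end{cases}\qquad t\in(0,1],
\]
extended by $f_0=\operatorname{id}$, already gives a continuous isotopy from $f$ to $\operatorname{id}$ fixing $\partial D^3\cup T$ pointwise: for $x=(s,0,0)\in T$ with $|s|\le t$ one has $f_t(x)=t\cdot(s/t,0,0)=x$ because $f|_T=\operatorname{id}$, and the remaining cases ($x\in\partial D^3$ or $|s|>t$) are immediate. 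For many naturality arguments this topological version is already sufficient.

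For a smooth isotopy, I would proceed in two further steps. First, reduce to the case that $f$ is the identity on an open neighborhood of $\partial D^3\cup T$. Near $\partial D^3$, this is a standard application of the collar neighborhood theorem. Near $T$, one uses the relative tubular neighborhood theorem to linearize $f$ in the normal directions of $T$, leaving a possible residual ``twist'' classified by $\pi_1 GL^+(\mathbb{R}^2)\cong\mathbb{Z}$. Such a twist has the explicit model $(x,s)\mapsto(R_{2\pi\phi(s)\rho(|x|)}x,s)$ in coordinates $D^2\times I$ with $T=\{0\}\times I$, where $R_\theta$ is rotation of $D^2$ and $\phi,\rho$ are bump functions vanishing on $\partial I$ and $\partial D^2$ respectively. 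Scaling $\phi$ linearly to zero yields an explicit one-parameter family isotoping this twist to the identity while fixing $\partial D^3\cup T$ pointwise throughout.

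Once $f$ equals the identity on an open neighborhood of $\partial D^3\cup T$, the remaining isotopy is produced from Cerf's theorem, which asserts $\pi_0\operatorname{Diff}(D^3,\partial D^3)=0$ (equivalently, the smooth Alexander trick, a consequence of the Smale conjecture proven by Hatcher). To ensure the resulting isotopy also fixes $T$, one uses the restriction fibration
\[
\operatorname{Diff}(D^3,\partial D^3)\longrightarrow\operatorname{Emb}(T,D^3;\partial T),
\]
whose fiber over the standard inclusion is $\operatorname{Diff}(D^3,\partial D^3\cup T)$. Its long exact sequence of homotopy groups gives
\[
\pi_1\operatorname{Emb}(T,D^3;\partial T)\longrightarrow\pi_0\operatorname{Diff}(D^3,\partial D^3\cup T)\longrightarrow\pi_0\operatorname{Diff}(D^3,\partial D^3)=0.
\]
The main obstacle is showing that the leftmost group vanishes; this amounts to the classical fact that the space of properly embedded unknotted arcs in a $3$-ball with fixed endpoints is simply connected (in fact contractible), which follows from Schoenflies-type disk isotopy results or from Hatcher's proof of the Smale conjecture.
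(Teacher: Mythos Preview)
Your reduction in step~2 (arranging $f=\operatorname{id}$ on a neighborhood of $\partial D^3\cup T$) is correct and matches the paper's first move. After that point the two arguments diverge. The paper observes that the complement of such a neighborhood is a solid torus $S^1\times D^2$, takes a meridian disk $\{*\}\times D^2$, isotopes its image under $f$ back to the standard disk rel boundary (a $\pi_0$--level fact, via an innermost--circle argument and the smooth Schoenflies/Alexander theorem in dimension~3), and then applies Cerf's theorem to the $3$--ball obtained by cutting along that disk. This uses only Cerf's theorem and classical $3$--manifold disk isotopy.

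Your step~3 instead appeals to the fibration $\operatorname{Diff}(D^3,\partial D^3)\to\operatorname{Emb}(T,D^3;\partial T)$ and the vanishing of $\pi_1\operatorname{Emb}(T,D^3;\partial T)$. This is formally valid, but note that the very exact sequence you write down shows (using Hatcher's $\operatorname{Diff}(D^3,\partial D^3)\simeq *$) that $\pi_1\operatorname{Emb}(T,D^3;\partial T)\cong\pi_0\operatorname{Diff}(D^3,\partial D^3\cup T)$: the vanishing you cite is \emph{equivalent} to the lemma, not a reduction of it. Your justification ``Schoenflies-type disk isotopy results'' points in the right direction, but Schoenflies gives $\pi_0$ information about disks, not $\pi_1$ information about arcs; turning the former into the latter is exactly the content of the paper's solid--torus argument. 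So your proof is correct if one is willing to invoke Hatcher's full Smale--conjecture machinery for the arc space, but the paper's route is more elementary and self-contained. Also note that once you take the fibration approach, your carefully executed step~2 becomes unnecessary, since the exact sequence already computes $\pi_0$ of the full group.
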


\begin{proof}
We use Cerf's theorem \cite{ctx41632241560006421}, i.e. that any
diffeomorphism of $D^{3}$ that fixes $\partial D^{3}$ pointwise
is isotopic to the identity rel $\partial D^{3}$. After an initial
isotopy rel $\partial D^{3}\cup T$, we can assume that the diffeomorphism
fixes a neighborhood of $\partial D^{3}\cup T$. Identify the complement
of a small neighborhood of $\partial D^{3}\cup T$ in $D^{3}$ with
$S^{1}\times D^{2}$. Now, isotope (rel boundary) the embedding of
standard disk $\{\ast\}\times D^{2}$ composed with the diffeomorphism,
to the standard disk, and then apply Cerf's theorem.
\end{proof}
\begin{prop}[Naturality]
\label{prop:naturality}Let $L$ be a balled link in a three-manifold
$Y$, and let $R$ be its coefficient ring. Recall that to define
$\boldsymbol{CFL'}^{-}(Y,L)$, we chose a Heegaard datum ${\cal H}$
that represents some auxiliary datum for $L\subset Y$. Given two
such Heegaard data ${\cal H},{\cal H}'$, there is an $R$-linear
chain homotopy equivalence 
\[
\boldsymbol{CF}^{-}({\cal H})\to\boldsymbol{CF}^{-}({\cal H}'),
\]
which is well-defined and functorial up to $R$-linear chain homotopy.
\end{prop}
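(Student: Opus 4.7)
The plan is to factor the proof through classical Heegaard Floer naturality. Fix a balled link $L\subset Y$. An auxiliary datum specifies the tuple $(\widetilde{L},\boldsymbol{u},\boldsymbol{v})$ together with the suture datum $\alpha_{\widetilde{L}}$ and the ordered equivalence-class structure on $\boldsymbol{u}\sqcup\boldsymbol{v}$; with that datum fixed, a Heegaard datum representing it is just an ordinary multi-pointed Heegaard datum representing $(Y,\boldsymbol{v}),(\widetilde{L},\boldsymbol{u},\alpha_{\widetilde{L}})$ equipped with the prescribed coefficient ring $R$ and weight function. For this step, classical naturality for multi-pointed link Floer homology \cite{MR4337438} provides, for any two such Heegaard data ${\cal H},{\cal H}'$, an $R$-linear chain homotopy equivalence $\boldsymbol{CF}^-({\cal H})\to\boldsymbol{CF}^-({\cal H}')$ well-defined up to chain homotopy and compatible with composition. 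The adaptation to our setting (with some basepoints weighted by $U_i^{1/2}$) is straightforward since the proof only involves counting holomorphic curves with weighted basepoints.

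The new content is to show that two different auxiliary data $(\widetilde{L},\boldsymbol{u},\boldsymbol{v})$ and $(\widetilde{L}',\boldsymbol{u}',\boldsymbol{v}')$ for the same balled link yield canonically chain homotopy equivalent complexes. Outside the baseballs the data agree, so the comparison reduces to one baseball at a time. Inside each link baseball $B_b$, both arcs $\widetilde{L}\cap B_b$ and $\widetilde{L}'\cap B_b$ are unknotted intervals meeting $\partial B_b$ in the same two points, each decorated with two interior basepoints; the configurations differ only in how they are isotoped inside the ball, and in the placement (and possibly order) of the basepoints. For free baseballs, the comparison is analogous and even easier. Using Lemma \ref{lem:Let--and}, any diffeomorphism of $B_b$ carrying one configuration to the other and fixing $\partial B_b$ is isotopic rel boundary and the arc to the identity; patching these local isotopies, we obtain an ambient isotopy $\varphi_t$ of $Y$ that carries one auxiliary datum to the other, and the space of such isotopies is contractible (after, if necessary, absorbing a swap of the two link basepoints in a link baseball, which is harmless since both carry weight $U_i^{1/2}$ and the equivalence class is unchanged).

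Pushing forward a Heegaard datum ${\cal H}$ for the first auxiliary datum along $\varphi_1$ gives a Heegaard datum $\varphi_1({\cal H})$ for the second, and tautologically $\boldsymbol{CF}^-({\cal H})=\boldsymbol{CF}^-(\varphi_1({\cal H}))$ as chain complexes over $R$. Combining this with the previous paragraph yields a chain homotopy equivalence between the complexes for any two Heegaard data representing any two auxiliary data for $L\subset Y$. Functoriality up to chain homotopy follows from the contractibility of the space of isotopies together with the functoriality of the classical naturality package.

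The main obstacle is bookkeeping: one must check that the ordered equivalence-class structure on basepoints, on which the coefficient ring and weight function depend, is preserved throughout (which is clear because the ordered baseballs fix this structure), and that the composite transitive system of chain homotopy equivalences is independent of the intermediate choices of isotopies and Heegaard moves. The latter is where Lemma \ref{lem:Let--and} (and its analogue for moving a free basepoint inside its baseball) does the real work: it provides the contractibility statement needed to conclude that the induced map on $\boldsymbol{CF}^-$ is canonical up to $R$-linear chain homotopy.
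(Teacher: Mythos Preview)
Your overall strategy matches the paper's: reduce to classical naturality \cite{MR4337438} once an auxiliary datum is fixed, then show independence of auxiliary datum by producing a diffeomorphism supported in the baseballs and arguing that the induced map is canonical up to chain homotopy. The link-baseball case is handled correctly via Lemma~\ref{lem:Let--and}.

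However, there is a genuine gap in the free-baseball case. You assert this case is ``analogous and even easier'' and that Lemma~\ref{lem:Let--and} (or an analogue) gives a contractibility statement. But a free baseball $B_b$ may intersect the link in an arc $T$, and the free basepoint lies in $({\rm int}\,B_b)\setminus T$, which has $\pi_1\cong\mathbb{Z}$. A diffeomorphism of $B_b$ fixing $\partial B_b\cup T\cup\{v\}$ need \emph{not} be isotopic to the identity rel that data: the obstruction is exactly the basepoint-moving map that drags $v$ once around a meridian of $T$. So the space of isotopies carrying one auxiliary datum to another is not contractible, and Lemma~\ref{lem:Let--and} alone does not suffice.

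The paper closes this gap with an additional Heegaard Floer input: after using Lemma~\ref{lem:Let--and} to reduce an arbitrary such diffeomorphism to a basepoint-moving map along a meridional loop, it invokes \cite[Proposition~14.20]{1512.01184} to conclude that this map is chain homotopic to the identity (intuitively, because the free basepoint can be realized as a free-stabilization region circling the nearby link basepoint). Your argument needs this extra step; without it, the claimed well-definedness up to $R$-linear chain homotopy does not follow.
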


\begin{proof}
The statement holds if we fix an auxiliary datum and only consider
Heegaard data that represent that auxiliary datum, by \cite{MR4337438}.
For two auxiliary data $(\widetilde{L},\boldsymbol{u},\boldsymbol{v})$
and $(\widetilde{L}',\boldsymbol{u}',\boldsymbol{v}')$, the tuples
$(Y,\widetilde{L},\boldsymbol{u},\boldsymbol{v})$ and $(Y,\widetilde{L}',\boldsymbol{u}',\boldsymbol{v}')$
are diffeomorphic via a diffeomorphism supported in the baseballs.
Define the chain homotopy equivalence between the chain complexes
for the two auxiliary data as the map induced by such a diffeomorphism.
We claim that this chain homotopy equivalence is well-defined up to
chain homotopy: we show that for an auxiliary datum $(\widetilde{L},\boldsymbol{u},\boldsymbol{v})$,
any diffeomorphism of $(Y,\widetilde{L},\boldsymbol{u},\boldsymbol{v})$
that is supported in the baseballs induces a chain map homotopic to
the identity. It is sufficient to show this for the case where the
diffeomorphism is supported in one baseball.

First, if the diffeomorphism is supported in a link baseball, then
the claim follows since it is isotopic to the identity by Lemma \ref{lem:Let--and}.
Also, by Lemma \ref{lem:Let--and}, for $v\in D^{3}\backslash T$,
any diffeomorphism of $D^{3}$ that fixes $\partial D^{3}\cup T\cup\{v\}$
pointwise is isotopic rel $\partial D^{3}\cup T\cup\{v\}$ to such
a diffeomorphism that is given by isotoping $v$ inside $D^{3}\backslash T$
along some loop (``the basepoint moving map''). Hence, if the diffeomorphism
is supported in a free baseball, then the claim follows since the
basepoint moving map corresponding to a meridional loop is homotopic
to the identity by \cite[Proposition 14.20]{1512.01184}. (We thank
Ian Zemke for pointing out that this is a simpler case, since it can
be realized as a free-stabilization region going around the corresponding
link basepoint.)
\end{proof}
\begin{rem}
The space ${\rm Spin}^{c}(Y(\widetilde{L^{sut}}))$ of ${\rm Spin}^{c}$-structures
for all the different choices of an auxiliary datum $(\widetilde{L},\boldsymbol{u},\boldsymbol{v})$
are canonically identified as well, where $\widetilde{L^{sut}}:=(\widetilde{L},\boldsymbol{u},\alpha_{\widetilde{L}})$.
Hence, we can define ${\rm Spin}^{c}(Y(L))$ as ${\rm Spin}^{c}(Y(\widetilde{L^{sut}}))$
and define $\boldsymbol{CFL'}^{-}(Y,L;\mathfrak{s})$ for $\mathfrak{s}\in{\rm Spin}^{c}(Y(L))$.
These are also well-defined (natural) in the above sense.

If we only want to consider finitely many holomorphic disks, then
we can work with $\mathfrak{s}$-strongly admissible Heegaard diagrams
(Definition \ref{def:strongly-admissible}). Hence, we can also work
over polynomial rings and define $CFL'{}^{-}(Y,L;\mathfrak{s})$. 
\end{rem}

\begin{defn}
\label{def:A-band-on}A \emph{band on a balled link $L\subset Y$}
is a band $B$ on the underlying link $L^{link}$ that does not intersect
any of the baseballs. We say \emph{$B$ is a band from the balled
link $L$ to the balled link $L'$} if the underlying link $L'{}^{link}$
is obtained by surgering $L^{link}$ along $B$, the orders and the
markings of the baseballs of $L$ and $L'$ are the same, and the
types of the baseballs of $L$ and $L'$ are the same, except in the
following cases:
\begin{itemize}
\item If $B$ is a merge band, then exactly one link baseball of $L$ becomes
a free baseball of $L'$.
\item If $B$ is a split band, then exactly one free baseball of $L$ becomes
a link baseball of $L'$.
\end{itemize}
\end{defn}

Note that if the band is a merge or a split band, then the type of
at least one baseball has to change in order for $L'$ to be a balled
link. Also note that the baseball whose type changes must be marked
with $1$ or $\infty$.

We handle each band type separately and define band maps using Heegaard
triple diagrams. We closely follow \cite[Section 6.1]{MR3905679};
note that our bands are always $\beta$-bands. Also compare \cite[Section 4.1]{MR2222356}.

\subsection{\label{subsec:Non-orientable-bands}Non-orientable bands}

\begin{figure}[h]
\begin{centering}
\includegraphics{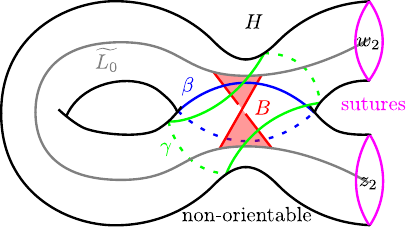}
\par\end{centering}
\caption{\label{fig:sutured-band-1}The region $H$, the tangle $\widetilde{L_{0}}$,
and the $\beta$ and $\gamma$ circles for Definition \ref{def:triple-non-orientable-band}
and \ref{def:triple-merge-band}.}
\end{figure}

Let $B$ be a non-orientable band from a balled link $L$ to $L'$
in a three-manifold $Y$. Then $L$ and $L'$ have the same coefficient
ring; denote it as $R$. We will define the band map for the minus
version as an $R$-linear map
\[
\boldsymbol{CFL'}^{-}(Y,L)\to\boldsymbol{CFL'}^{-}(Y,L')
\]
which is well-defined up to $R$-linear chain homotopy. Given this,
define the band map for the hat version by quotienting by $(U_{1}^{m_{1}},\cdots,U_{n}^{m_{n}})$
as in Definition \ref{def:unoriented-link-Floer-balled-link}. Recall
from Definition \ref{def:A-band-on} that the markings of the baseballs
of $L$ and $L'$ are the same; hence we indeed quotient by the same
variables for $L$ and $L'$. Define the band map for the infinity
version by localizing as in Definition \ref{def:unoriented-link-Floer-balled-link}.
The well-definedness of band maps for the hat and infinity versions
follows from that of the minus version. Therefore, we focus on the
minus version.

Let $(\widetilde{L},\boldsymbol{u},\boldsymbol{v})$ be an auxiliary
datum for $L$. Note that if $\widetilde{L'}$ is the link obtained
by surgering $\widetilde{L}$ along $B$, then $(\widetilde{L'},\boldsymbol{u},\boldsymbol{v})$
is an auxiliary datum for $L'$. Define the sutured manifold obtained
from $Y\backslash N(\widetilde{L}\cup B\cup\boldsymbol{v})$ as follows.
On $\partial N(\widetilde{L}\cup B)$, we add ``meridional'' sutures
for each basepoint of $\boldsymbol{u}$, such that $R_{-}$\footnote{Our convention is that $R_{-}$ corresponds to the $\alpha$-handlebody
part.} corresponds to the parts of $\partial N(\widetilde{L}\cup B)$ that
live over $\alpha_{\widetilde{L}}$. Also, add one suture to each
connected component of $\partial N(\boldsymbol{v})$.

Let $\widetilde{L_{0}}$ be the connected component of $\widetilde{L}\backslash\boldsymbol{u}$
that intersects the band $B$, let $H\subset\overline{N}(\widetilde{L}\cup B)$
be the points that live over $\widetilde{L_{0}}\cup B$, and let $S=\partial H\cap\partial N(\widetilde{L}\cup B)=\partial H\cap R_{+}$.
In particular, $H$ is a solid torus, and $S$ is a torus minus two
disks. See Figure \ref{fig:sutured-band-1} for a schematic.
\begin{defn}
\label{def:triple-non-orientable-band}Let $(\widetilde{L},\boldsymbol{u},\alpha_{\widetilde{L}})$
be a minimally pointed link equipped with a suture datum, inside a
pointed three-manifold $(Y,\boldsymbol{v})$. We say that the Heegaard
triple 
\[
(\Sigma,\boldsymbol{\alpha}=\{\alpha^{1},\cdots,\alpha^{n}\},\boldsymbol{\beta}=\{\beta^{1},\cdots,\beta^{n}\},\boldsymbol{\gamma}=\{\gamma^{1},\cdots,\gamma^{n}\},\boldsymbol{u}\sqcup\boldsymbol{v})
\]
is \emph{subordinate to $B$}, if:
\begin{enumerate}
\item The diagram $(\Sigma,\{\alpha^{1},\cdots,\alpha^{n}\},\{\beta^{2},\cdots,\beta^{n}\},\boldsymbol{u}\sqcup\boldsymbol{v})$
corresponds to the sutured manifold $Y\backslash N(\widetilde{L}\cup B\cup\boldsymbol{v})$
defined above.
\item The circles $\gamma^{2},\cdots,\gamma^{n}$ are standard translates
of $\beta^{2},\cdots,\beta^{n}$.
\item Let $\widetilde{L_{0}},H,S$ be as above. Let $\beta\subset S$ be
the closed curve that is defined up to isotopy (one can show this,
for instance, by an innermost disk/arc argument) by the property that
$\beta$ bounds a disk $D_{\beta}$ in $H$, such that $D_{\beta}$
does not intersect $\widetilde{L}$ and is non-separating in $H$.
Similarly, let $\gamma\subset S$ be the closed curve that bounds
a non-separating disk $D_{\gamma}$ in $H$, that does not intersect
$\widetilde{L'}$.

Then, $\beta^{1}$ is a closed curve that is obtained by projecting
$\beta$ onto $\Sigma\backslash(\beta^{2}\cup\cdots\cup\beta^{n})$
and $\gamma^{1}$ is a curve that is obtained by projecting $\gamma$
onto $\Sigma\backslash(\gamma^{2}\cup\cdots\cup\gamma^{n})$.

\end{enumerate}
A Heegaard datum is \emph{subordinate to $B$} if the corresponding
Heegaard diagram is subordinate to $B$ and the sub Heegaard datum
with attaching curves $\boldsymbol{\alpha},\boldsymbol{\beta}$ represents
$(Y,\boldsymbol{v}),(\widetilde{L},\boldsymbol{u},\alpha_{\widetilde{L}})$.
\end{defn}

\begin{figure}[h]
\begin{centering}
\includegraphics{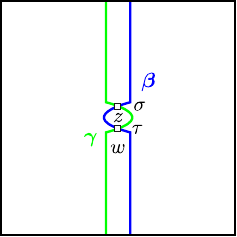} 
\par\end{centering}
\caption{\label{fig:local-band-1}A local Heegaard diagram for non-orientable
bands. Work over $\mathbb{F}\llbracket U^{1/2}\rrbracket$ and assign
the weight $U^{1/2}$ to both $w$ and $z$.}
\end{figure}

Given a Heegaard datum with underlying Heegaard diagram $(\Sigma,\boldsymbol{\alpha},\boldsymbol{\beta},\boldsymbol{\gamma},\boldsymbol{u}\sqcup\boldsymbol{v})$
subordinate to a non-orientable band $B$ from $L$ to $L'$, we would
like to define a \emph{``canonical element''} $\Theta_{B}\in\boldsymbol{HF}^{-}(\boldsymbol{\beta},\boldsymbol{\gamma})$.
First, we can show, as in \cite[Lemma 6.4]{MR3905679}, that the sub
Heegaard datum with attaching curves $\boldsymbol{\beta},\boldsymbol{\gamma}$
can be obtained from a stabilization of Figure \ref{fig:local-band-1}
by handleslides and isotopies.

Let us compute the homology group of the Heegaard datum of Figure
\ref{fig:local-band-1} (recall Example \ref{exa:simple-obstruction}).
The Heegaard diagram is homologically $\mathbb{Z}$-gradable and Alexander
$\mathbb{Z}/2$-splittable. The homology group is
\[
\boldsymbol{HF}^{-}(\boldsymbol{\beta},\boldsymbol{\gamma})\simeq\sigma\mathbb{F}\llbracket U^{1/2}\rrbracket\oplus\tau\mathbb{F}\llbracket U^{1/2}\rrbracket.
\]
Both $\sigma$ and $\tau$ lie in torsion ${\rm Spin}^{c}$-structures
and are homogeneous with respect to the homological $\mathbb{Z}$-grading,
the ${\rm Spin}^{c}$-splitting, and the Alexander $\mathbb{Z}/2$-grading.
They lie in the same homological $\mathbb{Z}$-grading, but have different
${\rm Spin}^{c}$-structures and different Alexander $\mathbb{Z}/2$-gradings.

Hence, given a Heegaard datum with underlying Heegaard diagram $(\Sigma,\boldsymbol{\alpha},\boldsymbol{\beta},\boldsymbol{\gamma},\boldsymbol{u}\sqcup\boldsymbol{v})$
subordinate to a non-orientable band $B$ from $L$ to $L'$, we can
compute $\boldsymbol{HF}^{-}(\boldsymbol{\beta},\boldsymbol{\gamma})$:
the top homological $\mathbb{Z}$-grading part of 
\[
\boldsymbol{HF}^{-}(\boldsymbol{\beta},\boldsymbol{\gamma};0):=\bigoplus_{c_{1}(\mathfrak{s})=0}\boldsymbol{HF}^{-}(\boldsymbol{\beta},\boldsymbol{\gamma};\mathfrak{s})
\]
has rank $2$, say $f\mathbb{F}\oplus g\mathbb{F}$, where $f,g$
are homogeneous with respect to the ${\rm Spin}^{c}$-splitting. These
correspond to $S(\sigma),S(\tau)$ if the Heegaard datum is a stabilization
of Figure \ref{fig:local-band-1}.
\begin{rem}
\label{rem:The-elements-}The elements $S(\sigma),S(\tau)$ are indeed
cycles in any almost complex structure since they are cycles in a
pinched almost complex structure by Proposition \ref{prop:stabilization},
and they are invariant under changing the almost complex structure
since there are no relevant Maslov index $0$ domains.
\end{rem}

We can distinguish $f$ and $g$ using the following. Let ${\cal R}=\mathbb{F}\llbracket\{P_{p}\}_{p\in\boldsymbol{u}\sqcup\boldsymbol{v}}\rrbracket$
be the power series ring with one indeterminate $P_{p}$ for each
basepoint $p\in\boldsymbol{u}\sqcup\boldsymbol{v}$, and consider
the ${\cal R}$-chain complex ${\cal CF}^{-}(\boldsymbol{\beta},\boldsymbol{\gamma})$
given by working over the coefficient ring ${\cal R}$ and assigning
the weight $P_{p}$ to $p$. View $R$ as an ${\cal R}$-module ($R$
is a quotient of ${\cal R}$) by identifying the weight of $p$ in
$R$ with $P_{p}$. Then, we have a map 
\[
{\cal HF}^{-}(\boldsymbol{\beta},\boldsymbol{\gamma})\otimes_{{\cal R}}R\to\boldsymbol{HF}^{-}(\boldsymbol{\beta},\boldsymbol{\gamma}).
\]
The element $g$ is in the image of this map, but $f$ is not. We
let $\Theta_{B}:=g$, i.e. $S(\tau)$.

To summarize, we define the \emph{canonical element} as follows.
\begin{defn}
\label{def:characterize-thetaB-1}Let a Heegaard datum with underlying
Heegaard diagram $(\Sigma,\boldsymbol{\beta},\boldsymbol{\gamma},\boldsymbol{u}\sqcup\boldsymbol{v})$
be obtained from a stabilization of Figure \ref{fig:local-band-1}
by handleslides and isotopies. Define the \emph{canonical element}
$\Theta_{B}\in\boldsymbol{HF}^{-}(\boldsymbol{\beta},\boldsymbol{\gamma})$
as the unique nonzero element that satisfies the following properties:
\end{defn}

\begin{itemize}
\item It is homogeneous with respect to the ${\rm Spin}^{c}$-splitting.
\item It lies in the top homological $\mathbb{Z}$-grading part of $\boldsymbol{HF}^{-}(\boldsymbol{\beta},\boldsymbol{\gamma};0)$.
\item It is in the image of the map ${\cal HF}^{-}(\boldsymbol{\beta},\boldsymbol{\gamma})\to\boldsymbol{HF}^{-}(\boldsymbol{\beta},\boldsymbol{\gamma})$.
\end{itemize}
\begin{defn}
\label{def:band-maps-1}Let $B$ be a non-orientable band from a balled
link $L$ to $L'$, let $(\widetilde{L},\boldsymbol{u},\boldsymbol{v})$
be an auxiliary datum for $L$, and let $\widetilde{L'}$ be the link
obtained by surgering $\widetilde{L}$ along $B$. Choose a weakly
admissible Heegaard datum subordinate to $B$ (for $(\widetilde{L},\boldsymbol{u},\alpha_{\widetilde{L}})$),
with attaching curves $\boldsymbol{\alpha},\boldsymbol{\beta},\boldsymbol{\gamma}$.
Then, the \emph{band map} is the triangle counting map
\[
\mu_{2}(-,\Theta_{B}):\boldsymbol{CF}^{-}(\boldsymbol{\alpha},\boldsymbol{\beta})\to\boldsymbol{CF}^{-}(\boldsymbol{\alpha},\boldsymbol{\gamma}),
\]
where $\Theta_{B}\in\boldsymbol{CF}^{-}(\boldsymbol{\beta},\boldsymbol{\gamma})$
is any cycle that represents the canonical element $\Theta_{B}\in\boldsymbol{HF}^{-}(\boldsymbol{\beta},\boldsymbol{\gamma})$.
\end{defn}

\begin{prop}
\label{prop:band-map-well-defined-1}Band maps for non-orientable
bands are well-defined up to $R$-linear chain homotopy.
\end{prop}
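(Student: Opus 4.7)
The plan is to separate the proof into three independent parts, corresponding to the three choices made in Definition \ref{def:band-maps-1}: (i) the choice of a cycle representing the canonical element $\Theta_B \in \boldsymbol{HF}^-(\boldsymbol{\beta},\boldsymbol{\gamma})$, (ii) the choice of a Heegaard datum subordinate to $B$ (with fixed auxiliary datum for $L$), and (iii) the choice of auxiliary datum for $L$.

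Part (i) is a standard $A_\infty$ argument. Suppose $\Theta_B$ and $\Theta_B'$ are two cycles representing the same class, so $\Theta_B - \Theta_B' = \partial H$ for some $H \in \boldsymbol{CF}^-(\boldsymbol{\beta},\boldsymbol{\gamma})$. The $A_\infty$ relation
\[
\mu_1(\mu_2(x, H)) + \mu_2(\mu_1(x), H) + \mu_2(x, \mu_1(H)) = 0
\]
shows that $\mu_2(-, H)$ is an $R$-linear chain homotopy between $\mu_2(-, \Theta_B)$ and $\mu_2(-, \Theta_B')$. (Weak admissibility of the triple Heegaard datum, which we may assume since it can be arranged by isotopy, ensures these composition maps are defined.)

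For part (ii), observe that two weakly admissible Heegaard data subordinate to $B$ for the same auxiliary datum are related by a finite sequence of: isotopies and handleslides of $\boldsymbol{\alpha}$; isotopies and handleslides of the $\boldsymbol{\beta}$ (resp.\ $\boldsymbol{\gamma}$) circles among $\beta^2,\dots,\beta^n$ (resp.\ $\gamma^2,\dots,\gamma^n$); genus and (link or free) stabilizations; and changes of almost complex structure. Each of these induces $A_\infty$-quasi-isomorphisms on $\boldsymbol{CF}^-(\boldsymbol{\alpha},\boldsymbol{\beta})$ and $\boldsymbol{CF}^-(\boldsymbol{\alpha},\boldsymbol{\gamma})$, and it suffices to show that each preserves the canonical element $\Theta_B$ up to a cycle representative, so that the induced triangle map commutes with the band map up to homotopy (by a standard argument in triple Heegaard diagrams, using the associativity $A_\infty$-relation relating $\mu_3$ and $\mu_2$). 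The preservation of $\Theta_B$ is where the three defining properties in Definition \ref{def:characterize-thetaB-1} do the work: the continuation maps respect the ${\rm Spin}^c$-splitting and the homological $\mathbb Z$-grading (since these are topological, and the stabilization invariance of $\Theta_B$ in particular is covered by Remark \ref{rem:The-elements-} and Proposition \ref{prop:stabilization}), and they are induced by $R$-linear maps on the universal chain complex ${\cal CF}^-(\boldsymbol{\beta},\boldsymbol{\gamma})$, hence preserve the image of the natural map ${\cal HF}^- \to \boldsymbol{HF}^-$. The main obstacle is here: verifying carefully that all of these moves can be realized inside the universal setup (i.e.\ that handleslides and isotopies of $\beta^2,\dots,\beta^n$ and $\gamma^2,\dots,\gamma^n$ may be chosen so that the local picture of Figure \ref{fig:local-band-1} is preserved), so that the three characterizing properties of Definition \ref{def:characterize-thetaB-1} are genuinely preserved and uniquely pin down the image.

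For part (iii), given two auxiliary data $(\widetilde L_1,\boldsymbol u_1,\boldsymbol v_1)$ and $(\widetilde L_2,\boldsymbol u_2,\boldsymbol v_2)$ for $L$, the proof of Proposition \ref{prop:naturality} gives a diffeomorphism of $(Y,L^{\mathrm{link}})$ supported in the baseballs, inducing preferred chain homotopy equivalences of the chain complexes. Since $B$ is disjoint from the baseballs, the same diffeomorphism takes a Heegaard datum subordinate to $B$ for the first auxiliary datum to one for the second, and transports the canonical element to the canonical element (as the three defining properties are diffeomorphism invariants). This commutativity, together with parts (i) and (ii), shows that the band map defines a single $R$-linear chain homotopy class $\boldsymbol{CFL'}^-(Y,L) \to \boldsymbol{CFL'}^-(Y,L')$, completing the proof.
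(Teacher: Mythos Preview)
Your three-part decomposition matches the paper's approach, and parts (i) and (ii) are essentially what the paper does (it cites \cite[Proposition 4.6]{MR2222356} and \cite[Lemma 6.5]{MR3905679} for the moves and invariance). Two small points on part (ii): first, your list of moves is incomplete---one must also allow isotopies of $\beta^1,\gamma^1$ and handleslides of $\beta^1$ (resp.\ $\gamma^1$) over the remaining $\beta$ (resp.\ $\gamma$) circles, as in \cite[Lemma 4.5]{MR2222356}; second, your framing of the ``main obstacle'' is slightly off: one does not need the moves to preserve the local picture of Figure~\ref{fig:local-band-1}, because the three properties in Definition~\ref{def:characterize-thetaB-1} are intrinsic to the pair $(\boldsymbol{\beta},\boldsymbol{\gamma})$ and the basepoints, and the continuation maps respect the ${\rm Spin}^c$-splitting, the relative grading, and lift to ${\cal R}$-linear maps on ${\cal CF}^-$ automatically. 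For part (iii) the paper takes a different route: rather than pushing forward a Heegaard triple by a diffeomorphism supported in the baseballs (your argument, which is valid and arguably cleaner), the paper interpolates between any two auxiliary data by a finite sequence in which consecutive pairs share a common Heegaard triple subordinate to $B$ up to moving basepoints within elementary two-chains, and then invokes \cite[Proposition 9.27]{MR4337438} to identify the tautological ``identity'' map with the naturality map. Your approach avoids this interpolation at the cost of leaning on the full proof of Proposition~\ref{prop:naturality}.
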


\begin{proof}
Let $B$ be a band from a balled link $L$ to $L'$. Let us first
fix an arbitrary choice of auxiliary datum for $L$ and show that
the band map does not depend, up to homotopy, on the Heegaard triple
subordinate to $B$ (for this auxiliary datum). The proof is basically
the same as \cite[Proposition 4.6]{MR2222356} and \cite[Lemma 6.5]{MR3905679}.
One first shows that two Heegaard triples subordinate to the band
can be related by some moves as in \cite[Lemma 4.5]{MR2222356} and
\cite[Lemma 6.3]{MR3905679}. The maps are invariant under (de)stabilization
(compare Proposition \ref{prop:stabilization-1}), various handleslides,
and isotopies since the conditions in Definition \ref{def:characterize-thetaB-1}
are invariant under these moves. 

Now, to show the proposition, we recall the following. Let ${\cal H}_{i}$
for $i=1,2$ be a Heegaard datum (with two attaching curves) that
represents $(Y_{i},\boldsymbol{v}_{i}),(L_{i},\boldsymbol{u}_{i},\alpha_{L_{i}})$,
where $(L_{i},\boldsymbol{u}_{i},\alpha_{L_{i}})$ is a minimally
pointed link equipped with a suture datum inside the pointed three-manifold
$(Y_{i},\boldsymbol{v}_{i})$. Assume that ${\cal H}_{2}$ is obtained
from ${\cal H}_{1}$ by moving the basepoints inside each elementary
domain. Moving the basepoints induces a diffeomorphism between $((Y_{1},\boldsymbol{v}_{1}),(L_{1},\boldsymbol{u}_{1},\alpha_{L_{1}}))$
and $((Y_{2},\boldsymbol{v}_{2}),(L_{2},\boldsymbol{u}_{2},\alpha_{L_{2}}))$,
and hence induces a chain map between the corresponding unoriented
link Floer chain complexes. Since the intersection points of ${\cal H}_{1}$
and ${\cal H}_{2}$ are the same, and the differentials on the chain
complexes count the same bigons, the ``identity map'' $\boldsymbol{CF}^{-}({\cal H}_{1})\to\boldsymbol{CF}^{-}({\cal H}_{2})$
is a chain map. \cite[Proposition 9.27]{MR4337438} implies that these
two chain maps are homotopic.

Using this, we can show that the band maps do not depend on the auxiliary
datum. For any two choices of auxiliary data for $L$, there exists
a finite sequence of auxiliary data for $L$ such that for each consecutive
pair, there exists a Heegaard triple that is subordinate to $B$ for
both auxiliary data modulo moving the basepoints inside each elementary
two-chain of the Heegaard triple diagram.
\end{proof}

\subsection{Merge and split bands}

\begin{figure}[h]
\begin{centering}
\includegraphics{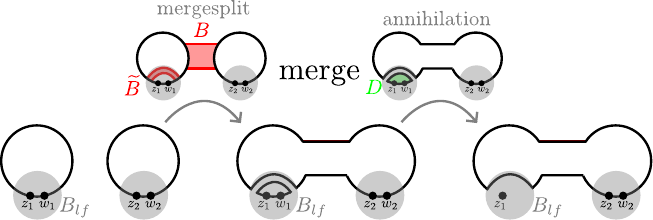}
\par\end{centering}
\begin{centering}
\includegraphics{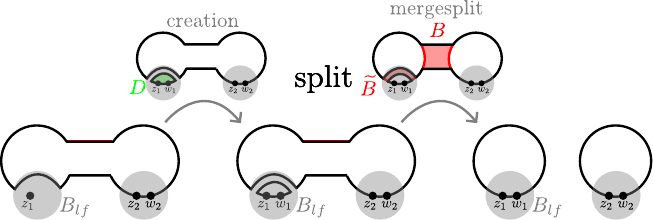}
\par\end{centering}
\caption{\label{fig:merge-composition}We view merge and split maps as the
composition of two maps}
\end{figure}

Merge and split bands are more complicated to define. We will conceptually
think of them as the composition of a map that is induced by two bands
(a \emph{``mergesplit pair''}), and a birth/death type map (we call
these \emph{creation} and \emph{annihilation}, since they are different
from the birth and death cobordism maps \cite{MR3905679}), as in
Figure \ref{fig:merge-composition}. We first define these maps separately
in Subsubsections \ref{subsec:Birth-and-death} and \ref{subsec:mergesplit}.

Similarly to Subsection \ref{subsec:Non-orientable-bands}, we will
focus on the minus version: let $B$ be a merge band (resp. split
band) from a balled link $L$ to $L'$ in a three-manifold $Y$, and
let $S$ be the coefficient ring of $L'$ (resp. $L$). Then, we will
define the band map for $B$ for the minus version as an $S$-linear
map 
\[
\boldsymbol{CFL'}^{-}(Y,L)\to\boldsymbol{CFL'}^{-}(Y,L')
\]
which is well-defined up to $S$-linear chain homotopy. Given this,
define the band map for the hat version by quotienting by $(U_{1}^{m_{1}},\cdots,U_{n}^{m_{n}})$
as in Definition \ref{def:unoriented-link-Floer-balled-link}, and
define the band map for the infinity version by localizing. The well-definedness
of band maps for the hat and infinity versions follows from that of
the minus version.

\subsubsection{\label{subsec:Birth-and-death}Creation and annihilation maps}

Let $(L\sqcup K,\boldsymbol{u},\alpha_{L\sqcup K})$ be a minimally
pointed link equipped with a suture datum, inside a pointed three-manifold
$(Y,\boldsymbol{v})$. Assume that $K$ bounds a disk $D$ that is
disjoint from $L$ and $\boldsymbol{v}$, and let $w,z\in\boldsymbol{u}$
be the two basepoints on $K$. Consider the link $(L,\boldsymbol{u}\backslash\{w,z\},\alpha_{L})$
in $(Y,\boldsymbol{v}\sqcup\{z\})$. Order the equivalence classes
of $\boldsymbol{u}\sqcup\boldsymbol{v}$, which also orders the equivalence
classes of $(\boldsymbol{u}\sqcup\boldsymbol{v})\backslash\{w\}$.
In this subsubsection, we define creation and annihilation maps (compare
\cite[Equation (5.1) and Remark 5.2]{https://doi.org/10.1112/topo.12338})
\begin{gather*}
\boldsymbol{CFL'}^{-}((Y,\boldsymbol{v}\sqcup\{z\}),(L,\boldsymbol{u}\backslash\{w,z\},\alpha_{L}))\xrightarrow{{\rm creation}}\boldsymbol{CFL'}^{-}((Y,\boldsymbol{v}),(L\sqcup K,\boldsymbol{u},\alpha_{L\sqcup K}))\\
\boldsymbol{CFL'}^{-}((Y,\boldsymbol{v}),(L\sqcup K,\boldsymbol{u},\alpha_{L\sqcup K}))\xrightarrow{{\rm annihilation}}\boldsymbol{CFL'}^{-}((Y,\boldsymbol{v}\sqcup\{z\}),(L,\boldsymbol{u}\backslash\{w,z\},\alpha_{L})).
\end{gather*}

Let $R$ (resp. $S$) be the coefficient rings for $(L\sqcup K,\boldsymbol{u},\alpha_{L\sqcup K})$
inside $(Y,\boldsymbol{v})$ (resp. $(L,\boldsymbol{u}\backslash\{w,z\},\alpha_{L})$
inside $(Y,\boldsymbol{v}\sqcup\{z\})$). If $\{w,z\}$ is the $k$th
equivalence class of $\boldsymbol{u}\sqcup\boldsymbol{v}$, then $R=S\oplus U_{k}^{1/2}S$
as $S$-modules.

We say that a Heegaard diagram $(\Sigma,\boldsymbol{\alpha},\boldsymbol{\beta},\boldsymbol{u}\sqcup\boldsymbol{v})$
that represents $(Y,\boldsymbol{v}),(L\sqcup K,\boldsymbol{u},\alpha_{L\sqcup K})$
is \emph{subordinate to the disk $D$} if the two basepoints $w,z$
lie in the same connected component of $\Sigma\backslash(\boldsymbol{\alpha}\cup\boldsymbol{\beta})$,
and there exists a path $P$ between $w$ and $z$ inside that connected
component of $\Sigma\backslash(\boldsymbol{\alpha}\cup\boldsymbol{\beta})$,
such that $P$ represents the disk $D$.

Consider a Heegaard datum ${\cal H}_{L\sqcup K}$ that represents
$(Y,\boldsymbol{v}),(L\sqcup K,\boldsymbol{u},\alpha_{L\sqcup K})$,
whose underlying Heegaard diagram is subordinate to the disk $D$.
Consider the Heegaard datum ${\cal H}_{L}$ obtained from ${\cal H}_{L\sqcup K}$
by deleting the basepoint $w$, changing the coefficient ring to $S$,
and changing the weight of the basepoint $z$ to $U_{k}$. Then, since
$w$ and $z$ lie in the same connected component of $\Sigma\backslash(\boldsymbol{\alpha}\cup\boldsymbol{\beta})$,
\[
\boldsymbol{CF}_{R}^{-}({\cal H}_{L\sqcup K})\simeq\boldsymbol{CF}_{S}^{-}({\cal H}_{L})\oplus U_{k}^{1/2}\boldsymbol{CF}_{S}^{-}({\cal H}_{L})
\]
as chain complexes over $S$. Define the \emph{creation map} as inclusion
onto the first summand
\[
\boldsymbol{CF}_{S}^{-}({\cal H}_{L})\to\boldsymbol{CF}_{R}^{-}({\cal H}_{L\sqcup K}),
\]
and the \emph{annihilation map }as the projection onto the second
summand composed with multiplication by $U_{k}^{-1/2}$:
\[
\boldsymbol{CF}_{R}^{-}({\cal H}_{L\sqcup K})\to U_{k}^{1/2}\boldsymbol{CF}_{S}^{-}({\cal H}_{L})\xrightarrow{\cdot U_{k}^{-1/2}}\boldsymbol{CF}_{S}^{-}({\cal H}_{L}).
\]
If the disk $D$ is fixed, then these $S$-linear maps are well-defined
up to $S$-linear chain homotopy.

\subsubsection{\label{subsec:mergesplit}The map for a mergesplit pair}

\begin{figure}[h]
\begin{centering}
\includegraphics[scale=1.5]{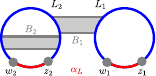}
\par\end{centering}
\caption{\label{fig:sutured-band-2}A schematic of a mergesplit pair $B_{1},B_{2}$
on $L$}
\end{figure}

\begin{defn}
Let $(L,\boldsymbol{u},\alpha_{L})$ be a minimally pointed link equipped
with a suture datum inside a pointed three-manifold $(Y,\boldsymbol{v})$.
A \emph{mergesplit pair on $L$} is a pair of disjoint bands $B_{1},B_{2}$
disjoint from $\alpha_{L}$, whose configuration is as in Figure \ref{fig:sutured-band-2}:
there exist two link components $L_{1}$ and $L_{2}$ of $L$, such
that $B_{2}$ splits $L_{2}$ into two components, and if $M$ is
the component that does not intersect $\alpha_{L}$, then $B_{1}$
merges $M$ and $L_{1}$.
\end{defn}

Let $B_{1},B_{2}$ be a mergesplit pair on $(L,\boldsymbol{u},\alpha_{L})$
in $(Y,\boldsymbol{v})$, let $L'$ be the link given by surgering
$L$ along both $B_{1}$ and $B_{2}$, and let the suture datum $\alpha_{L'}$
be the one induced by $\alpha_{L}$. Let $R$ be the coefficient ring
for both $L$ and $L'$. In this subsubsection, we define an $R$-linear
map 
\[
\boldsymbol{CFL'}^{-}((Y,\boldsymbol{v}),(L,\boldsymbol{u},\alpha_{L}))\to\boldsymbol{CFL'}^{-}((Y,\boldsymbol{v}),(L',\boldsymbol{u},\alpha_{L'}))
\]
which is well-defined up to $R$-linear chain homotopy. This subsubsection
is organized similarly to Subsection \ref{subsec:Non-orientable-bands}.

\begin{figure}[h]
\begin{centering}
\includegraphics{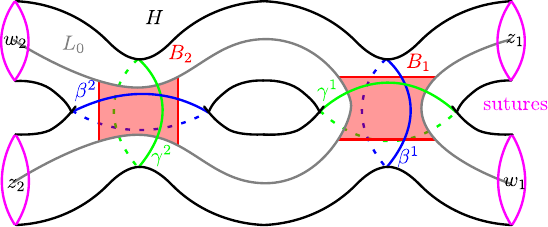}
\par\end{centering}
\caption{\label{fig:sutured-band}The region $H$, the tangle $L_{0}$, and
the $\beta$ and $\gamma$ circles for Definition \ref{def:triple-non-orientable-band}.}
\end{figure}

Define the sutured manifold obtained from $Y\backslash N(L\cup B_{1}\cup B_{2}\cup\boldsymbol{v})$
as follows. On $\partial N(L\cup B_{1}\cup B_{2})$, we add ``meridional''
sutures for each basepoint of $\boldsymbol{u}$, such that $R_{-}$
corresponds to the parts of $\partial N(L\cup B_{1}\cup B_{2})$ that
live over $\alpha_{L}$. Also, add one suture to each connected component
of $\partial N(\boldsymbol{v})$.

Let $L_{0}$ be the union of the connected components of $L\backslash\boldsymbol{u}$
that intersects the band $B$, let $H\subset\overline{N}(L\cup B_{1}\cup B_{2})$
be the points that live over $L_{0}\cup B_{1}\cup B_{2}$, and let
$S=\partial H\cap\partial N(L\cup B_{1}\cup B_{2})=\partial H\cap R_{+}$.
In particular, $H$ is a solid torus, and $S$ is a torus minus four
disks. See Figure \ref{fig:sutured-band} for a schematic.
\begin{defn}
\label{def:triple-merge-band}Let $(L,\boldsymbol{u},\alpha_{L})$
be a minimally pointed link equipped with a suture datum, inside a
pointed three-manifold $(Y,\boldsymbol{v})$, and let $B_{1}$, $B_{2}$
be a mergesplit pair on $L$. We say that the Heegaard triple 
\[
(\Sigma,\boldsymbol{\alpha}=\{\alpha^{1},\cdots,\alpha^{n}\},\boldsymbol{\beta}=\{\beta^{1},\cdots,\beta^{n}\},\boldsymbol{\gamma}=\{\gamma^{1},\cdots,\gamma^{n}\},\boldsymbol{u}\sqcup\boldsymbol{v})
\]
is \emph{subordinate to $B_{1}\cup B_{2}$} if:
\begin{enumerate}
\item The diagram $(\Sigma,\{\alpha^{1},\cdots,\alpha^{n}\},\{\beta^{3},\cdots,\beta^{n}\},\boldsymbol{u}\sqcup\boldsymbol{v})$
corresponds to the sutured manifold $Y\backslash N(L\cup B_{1}\cup B_{2}\cup\boldsymbol{v})$
defined above.
\item The circles $\gamma^{3},\cdots,\gamma^{n}$ are standard translates
of $\beta^{3},\cdots,\beta^{n}$.
\item Let $L_{0},H,S$ be as above. Define closed curves $\beta^{1},\beta^{2},\gamma^{1},\gamma^{2}\subset S$
according to Figure \ref{fig:sutured-band}, similarly to Definition
\ref{def:triple-non-orientable-band}. Let $\beta^{1},\beta^{2}\subset\Sigma\backslash(\beta^{3}\cup\cdots\cup\beta^{n})$
and $\gamma^{1},\gamma^{2}\subset\Sigma\backslash(\gamma^{3}\cup\cdots\cup\gamma^{n})$
be the closed curves obtained by projecting the corresponding curves
on $S$.
\end{enumerate}
A Heegaard datum is \emph{subordinate to $B_{1}\cup B_{2}$} if the
corresponding Heegaard diagram is subordinate to \emph{$B_{1}\cup B_{2}$}
and the sub Heegaard datum with attaching curves $\boldsymbol{\alpha},\boldsymbol{\beta}$
represents $(Y,\boldsymbol{v}),(L,\boldsymbol{u},\alpha_{L})$.
\end{defn}

\begin{figure}[h]
\begin{centering}
\includegraphics{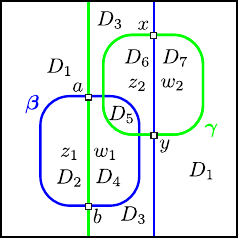}
\par\end{centering}
\caption{\label{fig:local-band}A local Heegaard diagram for a mergesplit pair.
Unless otherwise specified, work over $\mathbb{F}\llbracket U_{1}^{1/2},U_{2}^{1/2}\rrbracket$
and assign the weight $U_{i}^{1/2}$ to $w_{i}$ and $z_{i}$.}
\end{figure}

Given a Heegaard datum with underlying Heegaard diagram $(\Sigma,\boldsymbol{\alpha},\boldsymbol{\beta},\boldsymbol{\gamma},\boldsymbol{u}\sqcup\boldsymbol{v})$
subordinate to a mergesplit pair, we would like to define a \emph{``canonical
element''} $\Theta_{B}\in\boldsymbol{HF}^{-}(\boldsymbol{\beta},\boldsymbol{\gamma})$.
The sub Heegaard datum with attaching curves $\boldsymbol{\beta},\boldsymbol{\gamma}$
can be obtained from a stabilization of Figure \ref{fig:local-band}
by handleslides and isotopies.

Let us study the Heegaard diagram of Figure \ref{fig:local-band}.
\begin{lem}
\label{lem:merge-split-diagram}Consider the Heegaard diagram $(\mathbb{T}^{2},\boldsymbol{\beta},\boldsymbol{\gamma},\{w_{1},z_{1},w_{2},z_{2}\})$
given by Figure \ref{fig:local-band}. Let us work over the ring $R=\mathbb{F}[W_{1},Z_{1},W_{2},Z_{2}]$,
and assign weights $W_{1},Z_{1},W_{2},Z_{2}$ to $w_{1},z_{1},w_{2},z_{2}$,
respectively. Then $CF^{-}(\boldsymbol{\beta},\boldsymbol{\gamma})$
is one of the following, depending on the almost complex structure.
\[\begin{tikzcd}[sep=huge]
	ax & ay & ax & ay \\
	bx & by & bx & by
	\arrow["{w_1 + w_2 }", from=1-1, to=1-2]
	\arrow["{z_1 + z_2}"', from=1-1, to=2-1]
	\arrow["{z_1 + z_2}"', from=1-2, to=2-2]
	\arrow["{w_1 + w_2 }", curve={height=-6pt}, from=1-3, to=1-4]
	\arrow["{z_1 + z_2}"', curve={height=6pt}, from=1-3, to=2-3]
	\arrow["{z_1 + z_2 }", curve={height=-6pt}, from=1-4, to=1-3]
	\arrow["{z_1 + z_2}"', curve={height=6pt}, from=1-4, to=2-4]
	\arrow["{w_1 + w_2 }", from=2-1, to=2-2]
	\arrow["{w_1 + w_2}"', curve={height=6pt}, from=2-3, to=1-3]
	\arrow["{w_1 + w_2 }", curve={height=-6pt}, from=2-3, to=2-4]
	\arrow["{w_1 + w_2}"', curve={height=6pt}, from=2-4, to=1-4]
	\arrow["{z_1 + z_2 }", curve={height=-6pt}, from=2-4, to=2-3]
\end{tikzcd}\]
\end{lem}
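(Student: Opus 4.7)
The plan is to compute $CF^-(\boldsymbol{\beta}, \boldsymbol{\gamma})$ from Figure \ref{fig:local-band} by direct enumeration. Since the generators lie in $\mathbb{T}_{\boldsymbol{\beta}} \cap \mathbb{T}_{\boldsymbol{\gamma}} \subset \mathrm{Sym}^2(\mathbb{T}^2)$, I first read off the intersection pattern $\beta^i \cap \gamma^j$ from the figure. By inspection, this is nonempty only for $i = j$, with $\beta^1 \cap \gamma^1 = \{a, b\}$ and $\beta^2 \cap \gamma^2 = \{x, y\}$, giving exactly the four generators $ax$, $ay$, $bx$, $by$, consistent with the statement.

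Next I enumerate Maslov-index-$1$ domains between these generators using the combinatorial formula of \cite{MR2240908, MR2811652}, and count holomorphic representatives in $\mathrm{Sym}^2(\mathbb{T}^2)$. Each generator pair that differs in a single coordinate is connected by bigons in $\mathbb{T}^2$ (in which the other coordinate stays constant), whose weights are determined by which basepoints they cover. The ``short'' bigons — the ones responsible for the four standard differentials $ax \to ay$, $bx \to by$, $ax \to bx$, $ay \to by$ — each contain exactly one of $w_1$, $w_2$ (respectively $z_1$, $z_2$), and by the Riemann mapping theorem each has a unique holomorphic representative. Summing contributions produces the coefficients $W_1 + W_2$ and $Z_1 + Z_2$ common to both diagrams in the statement.

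The remaining Maslov-index-$1$ domains are the ``reverse'' bigons going around the torus the other way (for example, from $ay$ back to $ax$), carrying the complementary basepoints. Whether these admit a holomorphic representative in $\mathrm{Sym}^2(\mathbb{T}^2)$ depends on the almost complex structure, and this is the main obstacle. I would handle it by a model computation on $\mathrm{Sym}^2(\mathbb{T}^2)$ in the spirit of \cite{MR2113020, MR2141852}: for one natural class of almost complex structures (e.g.\ suitably pinched) the count is $0$, producing the left-hand diagram; for another class (e.g.\ after neck-stretching along an appropriate curve) the count is $1$, adding the four reverse arrows of the right-hand diagram. In both cases $\partial^2 = 0$ follows automatically from Lemma \ref{lem:d2=00003D0}, since the total contribution of boundary degenerations along $\boldsymbol{\beta}$ and $\boldsymbol{\gamma}$ equals $W_1 Z_1 + W_2 Z_2$ and cancels.
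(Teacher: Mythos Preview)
Your proposal has a genuine gap. The lemma asserts that for \emph{every} choice of almost complex structure the chain complex is one of the two listed; exhibiting both outcomes via model computations in two specific almost complex structures would only show that each complex is realized, not that no third possibility occurs. Your domain enumeration is also incomplete: the reverse directions are not governed by a single ``reverse bigon''. For instance, from $bx$ to $ax$ there are \emph{three} Maslov-index-$1$ domains (with two-chains $D_4+D_5$, $D_1+D_3+D_4$, $D_1+D_7$), one of which is an embedded bigon that always has a holomorphic representative, while the other two involve a common elementary region $D_1$ and have $J$-dependent counts. So the real question is not whether ``the reverse bigons count'', but which of two competing $D_1$-domains counts in each reverse direction.

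The paper resolves this with two constraints that hold for all $J$ simultaneously. First, $(\mathbb{T}^2,\boldsymbol{\beta},\boldsymbol{\gamma},\{z_1,z_2\})$ (and likewise with $\{w_1,w_2\}$) is weakly admissible and has $\widetilde{HF}$ of rank four, equal to the number of generators; hence the fully blocked complex $\widetilde{CF}$ has zero differential for any $J$, forcing exactly one of the two ambiguous domains in each direction to contribute. Second, $(\partial^-)^2=0$ over $\mathbb{F}[W_1,Z_1,W_2,Z_2]$---used as a \emph{constraint}, not merely a consistency check---links the remaining binary ambiguities together, leaving exactly the two cases in the statement.
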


\begin{proof}
The Heegaard diagrams $(\mathbb{T}^{2},\boldsymbol{\beta},\boldsymbol{\gamma},\{z_{1},z_{2}\})$
and $(\mathbb{T}^{2},\boldsymbol{\beta},\boldsymbol{\gamma},\{w_{1},w_{2}\})$
are weakly admissible, and the \emph{``fully blocked''} chain complex
$\widetilde{CF}(\boldsymbol{\beta},\boldsymbol{\gamma})$ (i.e. the
chain complex is freely generated over $\mathbb{F}$ and we only count
differentials without any basepoints) of either of these Heegaard
diagrams have no differential, since there are four generators and
$\widetilde{HF}$ has rank four.

The following are the two-chains of all the Maslov index $1$ domains:
\begin{itemize}
\item $ax\to bx,ay\to by$: $D_{2}$, $D_{3}+D_{6}$
\item $bx\to ax,by\to ay$: $D_{4}+D_{5}$, $D_{1}+D_{3}+D_{4}$, $D_{1}+D_{7}$
\item $ax\to ay,bx\to by$: $D_{3}+D_{4}$, $D_{7}$
\item $ay\to ax,by\to bx$: $D_{1}+D_{2}$, $D_{5}+D_{6}$, $D_{1}+D_{3}+D_{6}$
\end{itemize}
Among these, the domains with two-chain $D_{2}$, $D_{3}+D_{6}$,
$D_{4}+D_{5}$, $D_{3}+D_{4}$, $D_{7}$, or $D_{5}+D_{6}$ always
have exactly one holomorphic representative regardless of the almost
complex structure.

However, $D_{1}+D_{7}\in D(bx,ax)$, for instance, depends on the
almost complex structure. Note that exactly one of $D_{1}+D_{3}+D_{4}$
and $D_{1}+D_{7}\in D(bx,ax)$ has an odd number of holomorphic representatives,
since $\widetilde{CF}$ of $(\mathbb{T}^{2},\boldsymbol{\beta},\boldsymbol{\gamma},\{z_{1},z_{2}\})$
has no differentials. Similarly, exactly one of these two domains,
viewed as domains in $D(by,ay)$, contributes to the differential,
and the corresponding statement holds for $D_{1}+D_{2}$ and $D_{1}+D_{3}+D_{6}$.

Now, the lemma follows from that $\left(\partial^{-}\right)^{2}=0$
for the chain complex $CF^{-}(\boldsymbol{\beta},\boldsymbol{\gamma})$
over $R$. Indeed, we get two cases depending on whether $D_{1}+D_{3}+D_{4}$
or $D_{1}+D_{7}$ contributes to the differential.
\end{proof}
Hence, we can compute the homology group of the Heegaard datum of
Figure \ref{fig:local-band}. The Heegaard diagram is homologically
$\mathbb{Z}$-gradable and Alexander $\mathbb{Z}/2$-splittable. We
have
\[
\boldsymbol{HF}^{-}(\boldsymbol{\beta},\boldsymbol{\gamma})\simeq X\mathbb{F}\llbracket U^{1/2}\rrbracket\oplus(ay+bx)\mathbb{F}\llbracket U^{1/2}\rrbracket,
\]
where $X=by$ or $ax+by$ depending on the almost complex structure,
and 
\[
\mathbb{F}\llbracket U^{1/2}\rrbracket:=\mathbb{F}\llbracket U_{1}^{1/2},U_{2}^{1/2},U^{1/2}\rrbracket/(U_{1}^{1/2}-U^{1/2},U_{2}^{1/2}-U^{1/2}),
\]
i.e. $U_{i}^{1/2}$ acts by multiplication by $U^{1/2}$. Both $X$
and $ay+bx$ lie in torsion ${\rm Spin}^{c}$-structures and are homogeneous
with respect to the homological $\mathbb{Z}$-grading, the ${\rm Spin}^{c}$-splitting,
and the Alexander $\mathbb{Z}/2$-grading. They lie in the same homological
$\mathbb{Z}$-grading, but have different ${\rm Spin}^{c}$-structures
and different Alexander $\mathbb{Z}/2$-gradings. This time, we choose
$S(ay+bx)$ (for a stabilization of Figure \ref{fig:local-band}).
\begin{rem}
\label{rem:The-element-}The element $S(ay+bx)$ is indeed a cycle
in any almost complex structure by the same argument as in Remark
\ref{rem:The-elements-}.
\end{rem}

\begin{defn}
\label{lem:characterize-thetaB-1-1}Let a Heegaard datum with underlying
Heegaard diagram $(\Sigma,\boldsymbol{\beta},\boldsymbol{\gamma},\boldsymbol{u}\sqcup\boldsymbol{v})$
be obtained from a stabilization of Figure \ref{fig:local-band} by
handleslides and isotopies. Define the \emph{canonical element} $\Theta_{B}\in\boldsymbol{HF}^{-}(\boldsymbol{\beta},\boldsymbol{\gamma})$
as the unique nonzero element that satisfies the following properties:
\end{defn}

\begin{itemize}
\item It is homogeneous with respect to the ${\rm Spin}^{c}$-splitting.
\item It lies in the top homological $\mathbb{Z}$-grading part of $\boldsymbol{HF}^{-}(\boldsymbol{\beta},\boldsymbol{\gamma};0)$.
\item It is \emph{not} in the image of the map ${\cal HF}^{-}(\boldsymbol{\beta},\boldsymbol{\gamma})\to\boldsymbol{HF}^{-}(\boldsymbol{\beta},\boldsymbol{\gamma})$. 
\end{itemize}
\begin{defn}
\label{def:band-maps-1-1}Let $(L,\boldsymbol{u},\alpha_{L})$ be
a minimally pointed link equipped with a suture datum inside a pointed
three-manifold $(Y,\boldsymbol{v})$, $B_{1}$ and $B_{2}$ be a mergesplit
pair on $L$, and let $L'$ be the link given by surgering $L$ along
$B_{1}$ and $B_{2}$. Choose a weakly admissible Heegaard datum subordinate
to $B_{1}\cup B_{2}$, with attaching curves $\boldsymbol{\alpha},\boldsymbol{\beta},\boldsymbol{\gamma}$.
Then, the\emph{ mergesplit map} is the triangle counting map
\[
\mu_{2}(-,\Theta_{B}):\boldsymbol{CF}^{-}(\boldsymbol{\alpha},\boldsymbol{\beta})\to\boldsymbol{CF}^{-}(\boldsymbol{\alpha},\boldsymbol{\gamma}),
\]
where $\Theta_{B}\in\boldsymbol{CF}^{-}(\boldsymbol{\beta},\boldsymbol{\gamma})$
is any cycle that represents the canonical element $\Theta_{B}\in\boldsymbol{HF}^{-}(\boldsymbol{\beta},\boldsymbol{\gamma})$.
\end{defn}

As in the first paragraph of the proof of Proposition \ref{prop:band-map-well-defined-1},
this map is well-defined up to $R$-linear chain homotopy, where $R$
is the coefficient ring for both $L$ and $L'$.

\subsubsection{\label{subsec:Merge-bands}Merge bands}

Let $B$ be a merge band from a balled link $L$ to $L'$, and let
$B_{lf}$ be the link baseball of $L$ that becomes a free baseball
in $L'$. To define the merge map, we choose an auxiliary datum $(\widetilde{L},\boldsymbol{u},\boldsymbol{v})$
for $L$ together with two more auxiliary data: an auxiliary band
$\widetilde{B}$ and an auxiliary disk $D$ (see Figure \ref{fig:merge-composition}).
We assume that they are chosen in the following way.

Let $L_{1}\subset L$ be the component of $L$ that $B_{lf}$ intersects.
Then, we consider an embedded disk 
\[
\{(x,y)\in\mathbb{R}^{2}:|(x,y)|\le1\}\hookrightarrow B_{lf}
\]
such that the right hand side boundary $\{(x,y):|(x,y)|=1,x\ge0\}$
maps to $\widetilde{L}\cap B_{lf}$, and contains the two link basepoints
of $\widetilde{L}$ that are inside $B_{lf}$. Let $\widetilde{B}$
be the image of $\{(x,y):x\le0.5\}$, and let $D$ be the image of
$\{(x,y):x\ge0.5\}$. Note that $B,\widetilde{B}$ is a mergesplit
pair on $\widetilde{L}$. Let $\widetilde{L'}$ be the link such that
$\widetilde{L'}\sqcup\partial D$ is the link obtained by surgering
$\widetilde{L}$ along $B$ and $\widetilde{B}$. If $z,w$ are the
two link basepoints of $\widetilde{L}$ inside $B_{lf}$, then $(\widetilde{L'},\boldsymbol{u}\backslash\{z,w\},\boldsymbol{v}\cup\{z\})$
is an auxiliary datum for $L'$.
\begin{defn}
\label{def:band-maps}Let $B$ be a merge band from a balled link
$L$ to $L'$, let $(\widetilde{L},\boldsymbol{u},\boldsymbol{v})$
be an auxiliary datum for $L$, and let $\widetilde{B}$ and $D$
be an auxiliary band and an auxiliary disk as above. The band map
is the composite
\begin{multline*}
\boldsymbol{CFL'}^{-}((Y,\boldsymbol{v}),(\widetilde{L},\boldsymbol{u},\alpha_{\widetilde{L}}))\xrightarrow{{\rm mergesplit}}\boldsymbol{CFL'}^{-}((Y,\boldsymbol{v}),(\widetilde{L'}\sqcup\partial D,\boldsymbol{u},\alpha_{\widetilde{L'}\sqcup\partial D}))\\
\xrightarrow{{\rm annihilation}}\boldsymbol{CFL'}^{-}((Y,\boldsymbol{v}\sqcup\{z\}),(\widetilde{L'},\boldsymbol{u}\backslash\{z,w\},\alpha_{\widetilde{L'}})).
\end{multline*}
\end{defn}

Let $S$ be the coefficient ring of $L'$. Similarly to Proposition
\ref{prop:band-map-well-defined-1}, the band map for the merge band
$B$ is $S$-linear and is well-defined up to $S$-linear chain homotopy:
indeed, any two choices of $\widetilde{B},D$ are isotopic.

\subsubsection{\label{subsec:Split-bands}Split bands}

We define the band maps for split bands similarly.
\begin{defn}
\label{def:band-maps-2}Let $B$ be a split band from a balled link
$L$ to $L'$, and choose an auxiliary datum $(\widetilde{L'},\boldsymbol{u},\boldsymbol{v})$
for $L'$ and an auxiliary band $\widetilde{B}$ and an auxiliary
disk $D$ on $\widetilde{L'}$ as in Subsubsection \ref{subsec:Merge-bands}
(see Figure \ref{fig:merge-composition}). Let $z,w$ be the two link
basepoints in the link baseball of $L'$ that is a free baseball in
$L$. Let $\widetilde{L}$ be the link such that $\widetilde{L}\sqcup\partial D$
is the link obtained by surgering $\widetilde{L'}$ along $B$ and
$\widetilde{B}$, and consider the suture datum $\alpha_{\widetilde{L}}$
obtained from $\alpha_{\widetilde{L'}}$. Then, the dual bands of
$B,\widetilde{B}$ form a mergesplit pair on $\widetilde{L}\sqcup\partial D$.
The band map is the composite
\begin{multline*}
\boldsymbol{CFL'}^{-}((Y,\boldsymbol{v}\sqcup\{z\}),(\widetilde{L},\boldsymbol{u}\backslash\{z,w\},\alpha_{\widetilde{L}}))\xrightarrow{{\rm creation}}\boldsymbol{CFL'}^{-}((Y,\boldsymbol{v}),(\widetilde{L}\sqcup\partial D,\boldsymbol{u},\alpha_{\widetilde{L}\sqcup\partial D}))\\
\xrightarrow{{\rm mergesplit}}\boldsymbol{CFL'}^{-}((Y,\boldsymbol{v}),(\widetilde{L'},\boldsymbol{u},\alpha_{\widetilde{L'}})).
\end{multline*}
\end{defn}

Let $S$ be the coefficient ring of $L$. The band map for the split
band $B$ is $S$-linear and is well-defined up to $S$-linear chain
homotopy.

\subsubsection{\label{subsec:Merge-and-split}Merge and split maps in one go}

It is possible to define the merge and split band maps using a single
triple Heegaard diagram.

Let $B$ be a \textbf{merge} band from $L$ to $L'$. We say that
a Heegaard triple diagram $(\Sigma,\boldsymbol{\alpha},\boldsymbol{\beta},\boldsymbol{\gamma},\boldsymbol{u}\sqcup\boldsymbol{v})$
is \emph{subordinate to $B$} if there exists some choice of auxiliary
data $(\widetilde{L},\boldsymbol{u},\boldsymbol{v})$, $\widetilde{B}$,
and $D$ as in Subsubsection \ref{subsec:Merge-bands}, such that:
if $z,w$ are the two basepoints on the link component of $\widetilde{L}$
that intersects $D$, then
\begin{itemize}
\item the Heegaard diagram is subordinate to the mergesplit pair $B,\widetilde{B}$,
and
\item there exists a path $P$ between $z$ and $w$ in a connected component
of $\Sigma\backslash(\boldsymbol{\alpha}\cup\boldsymbol{\gamma})$,
such that $P$ represents the disk $D$ and $|P\cap\boldsymbol{\beta}|=1$\footnote{The condition $|P\cap\boldsymbol{\beta}|=1$ is not necessary, but
we assume this for Remark \ref{rem:band-non-trivial-local-system}
and Section \ref{sec:An-unoriented-skein}.}.
\end{itemize}

Choose a weakly admissible Heegaard datum whose Heegaard diagram is
subordinate to $B$, such that the sub Heegaard datum with attaching
curves $\boldsymbol{\alpha},\boldsymbol{\beta}$ represents $(\widetilde{L},\boldsymbol{u},\boldsymbol{v})$.
Let $R$ (resp. $S$) be the coefficient ring for $L$ (resp. $L'$).
If $z,w$ have weight $U_{k}^{1/2}$, then $R=S\oplus U_{k}^{1/2}S$
as $S$-modules. Since $z,w$ lie in the same connected component
of $\Sigma\backslash(\boldsymbol{\alpha}\cup\boldsymbol{\gamma})$,
we can define $\boldsymbol{CF}_{S}^{-}(\boldsymbol{\alpha},\boldsymbol{\gamma})$,
and we have an $S$-linear splitting of chain complexes $\boldsymbol{CF}_{R}^{-}(\boldsymbol{\alpha},\boldsymbol{\gamma})\simeq\boldsymbol{CF}_{S}^{-}(\boldsymbol{\alpha},\boldsymbol{\gamma})\oplus U_{k}^{1/2}\boldsymbol{CF}_{S}^{-}(\boldsymbol{\alpha},\boldsymbol{\gamma})$.
The merge map is the composite
\[
\boldsymbol{CF}_{R}^{-}(\boldsymbol{\alpha},\boldsymbol{\beta})\xrightarrow{\mu_{2}(-\otimes\Theta_{B})}\boldsymbol{CF}_{R}^{-}(\boldsymbol{\alpha},\boldsymbol{\gamma})\to U_{k}^{1/2}\boldsymbol{CF}_{S}^{-}(\boldsymbol{\alpha},\boldsymbol{\gamma})\xrightarrow{\cdot U_{k}^{-1/2}}\boldsymbol{CF}_{S}^{-}(\boldsymbol{\alpha},\boldsymbol{\gamma})
\]
where the second map is projection onto the second summand.

Similarly, if $B$ is a \textbf{split} band from $L$ to $L'$, we
say that a Heegaard triple diagram $(\Sigma,\boldsymbol{\alpha},\boldsymbol{\beta},\boldsymbol{\gamma},\boldsymbol{u}\sqcup\boldsymbol{v})$
is \emph{subordinate to $B$} if there exists some choice of auxiliary
data $(\widetilde{L'},\boldsymbol{u},\boldsymbol{v})$, $\widetilde{B}$,
and $D$ as in Subsubsection \ref{subsec:Split-bands}, such that:
if $z,w$ are the two basepoints on the link component of $\widetilde{L'}$
that intersects $D$, then
\begin{itemize}
\item the Heegaard diagram is subordinate to the mergesplit pair given by
the dual bands of $B,\widetilde{B}$, and
\item there exists a path $P$ between $z$ and $w$ in a connected component
of $\Sigma\backslash(\boldsymbol{\alpha}\cup\boldsymbol{\beta})$,
such that $P$ represents the disk $D$ and $|P\cap\boldsymbol{\gamma}|=1$.
\end{itemize}

Choose a weakly admissible Heegaard datum whose Heegaard diagram is
subordinate to $B$, such that the sub Heegaard datum with attaching
curves $\boldsymbol{\alpha},\boldsymbol{\gamma}$ represents $(\widetilde{L'},\boldsymbol{u},\boldsymbol{v})$.
Let $R$ (resp. $S$) be the coefficient ring for $L'$ (resp. $L$).
If $z,w$ have weight $U_{k}^{1/2}$, then as before, we have an $S$-linear
splitting of chain complexes $\boldsymbol{CF}_{R}^{-}(\boldsymbol{\alpha},\boldsymbol{\beta})\simeq\boldsymbol{CF}_{S}^{-}(\boldsymbol{\alpha},\boldsymbol{\beta})\oplus U_{k}^{1/2}\boldsymbol{CF}_{S}^{-}(\boldsymbol{\alpha},\boldsymbol{\beta})$.
The split map is the composite
\[
\boldsymbol{CF}_{S}^{-}(\boldsymbol{\alpha},\boldsymbol{\beta})\to\boldsymbol{CF}_{R}^{-}(\boldsymbol{\alpha},\boldsymbol{\beta})\xrightarrow{\mu_{2}(-\otimes\Theta_{B})}\boldsymbol{CF}_{R}^{-}(\boldsymbol{\alpha},\boldsymbol{\gamma}),
\]
where the first map is inclusion into the first summand.
\begin{rem}
\label{rem:band-non-trivial-local-system}We can describe the merge
and split band maps using local systems. We consider the merge band
map first. Let us consider a Heegaard triple diagram subordinate to
$B$ and let $R$ and $S$ be as above. Work over the coefficient
ring $S$. Instead of considering both basepoints $w$ and $z$, we
only consider one of them, say $z$. Let $G$ be the oriented arc
from $z$ to $w$ given by the path $P$. Consider the rank $2$ local
system $E$ on $\boldsymbol{\beta}$ described in Subsection \ref{subsec:unoriented-local-system}.
By Remark \ref{rem:same-as-unoriented}, $\boldsymbol{CF}_{R}^{-}(\boldsymbol{\alpha},\boldsymbol{\beta})\simeq\boldsymbol{CF}_{S}^{-}(\boldsymbol{\alpha},\boldsymbol{\beta}^{E})$
and $\boldsymbol{CF}_{R}^{-}(\boldsymbol{\beta},\boldsymbol{\gamma})\simeq\boldsymbol{CF}_{S}^{-}(\boldsymbol{\beta}^{E},\boldsymbol{\gamma})$,
and under this identification, the merge band map is just
\[
\mu_{2}(-\otimes\Theta_{B}):\boldsymbol{CF}_{S}^{-}(\boldsymbol{\alpha},\boldsymbol{\beta}^{E})\to\boldsymbol{CF}_{S}^{-}(\boldsymbol{\alpha},\boldsymbol{\gamma}).
\]

The split band map is similar: in this case, $\boldsymbol{\gamma}$
is the attaching curve that intersects $G$ and has a nontrivial local
system, and the split band map is just
\[
\mu_{2}(-\otimes\Theta_{B}):\boldsymbol{CF}_{S}^{-}(\boldsymbol{\alpha},\boldsymbol{\beta})\to\boldsymbol{CF}_{S}^{-}(\boldsymbol{\alpha},\boldsymbol{\gamma}^{E}).
\]
\end{rem}

If $B$ is a merge or split band, then the band map is a priori only
$S$-linear. Let the $k$th and $\ell$th baseballs be the link baseballs
on the link components of $L$ (if $B$ is a merge band) or $L'$
(if $B$ is a split band) that $B$ intersects. If we view $S$ as
the $R$-module $R/(U_{k}^{1/2}-U_{\ell}^{1/2})$, then the merge
and split band maps are $R$-linear on homology. In other words, we
have the following.
\begin{prop}
\label{prop:actions-commute}Let $F_{B}$ be the band map between
the chain complexes. If $B$ is a merge band, then $F_{B}(U_{k}^{1/2}\cdot-)$
and $F_{B}(U_{\ell}^{1/2}\cdot-)$ are $S$-linearly chain homotopic.
If $B$ is a split band, then $U_{k}^{1/2}F_{B}$ and $U_{\ell}^{1/2}F_{B}$
are $S$-linearly chain homotopic.
\end{prop}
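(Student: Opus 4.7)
The plan is to reduce both statements (merge and split) to showing that the cycle $(U_k^{1/2} - U_\ell^{1/2})\,\Theta_B \in \boldsymbol{CF}^-(\boldsymbol{\beta},\boldsymbol{\gamma})$ is a boundary, and then leverage the $A_\infty$-relation for $\mu_2$. The key observation is that in characteristic $2$ we have $U_k^{1/2}-U_\ell^{1/2} = U_k^{1/2}+U_\ell^{1/2}$, and the local merge/split model (Figure~\ref{fig:local-band}) is engineered so that exactly this combination appears as a differential coefficient.

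First I would set up the reduction. Recall from Subsubsection~\ref{subsec:Merge-and-split} that, up to the $S$-linear inclusion $\iota$ or projection $\pi$ (composed with division by $U_k^{1/2}$) associated with the splitting $R = S \oplus U_k^{1/2} S$, the band map $F_B$ equals $\mu_2(-,\Theta_B)$. Since the basepoints $z,w$ lie in the same connected component of $\Sigma\setminus(\boldsymbol{\alpha}\cup\boldsymbol{\gamma})$ (merge) or $\Sigma\setminus(\boldsymbol{\alpha}\cup\boldsymbol{\beta})$ (split), the differential respects the splitting, so $\iota$ and $\pi$ are $S$-linear chain maps. As $\mu_2$ is $R$-bilinear,
\[
(U_k^{1/2} - U_\ell^{1/2})\,F_B(x) \;=\; (\iota\text{ or }\pi)\bigl(\mu_2(x,\,(U_k^{1/2} - U_\ell^{1/2})\Theta_B)\bigr).
\]
If we can write $(U_k^{1/2} - U_\ell^{1/2})\Theta_B = \partial H$ for some $H \in \boldsymbol{CF}^-(\boldsymbol{\beta},\boldsymbol{\gamma})$, then the $A_\infty$-relation
\[
\partial\,\mu_2(x,H) + \mu_2(\partial x,H) = \mu_2(x,\partial H)
\]
exhibits $x \mapsto (\iota\text{ or }\pi)\,\mu_2(x,H)$ as an $S$-linear chain homotopy between $F_B(U_k^{1/2}\cdot -)$ and $F_B(U_\ell^{1/2}\cdot -)$ (and analogously between $U_k^{1/2}F_B$ and $U_\ell^{1/2}F_B$ in the split case).

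Next I would establish the boundary claim by local computation. By Definition~\ref{lem:characterize-thetaB-1-1}, the subordinate Heegaard subdatum on $\boldsymbol{\beta},\boldsymbol{\gamma}$ is obtained from a stabilization of Figure~\ref{fig:local-band} by handleslides and isotopies, with $\Theta_B$ represented (after stabilization) by $S(ay+bx)$. On the local model, Lemma~\ref{lem:merge-split-diagram} gives
\[
\partial(ax) \;=\; (w_1+w_2)\,ay + (z_1+z_2)\,bx \;=\; (U_1^{1/2}+U_2^{1/2})\,(ay+bx),
\]
using the unoriented weight convention $w_i = z_i = U_i^{1/2}$; over $\mathbb{F}$ this equals $(U_k^{1/2} - U_\ell^{1/2})(ay+bx)$. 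Applying Proposition~\ref{prop:stabilization-1} (stabilization commutes with $\partial$ in a pinched neck) gives $(U_k^{1/2} - U_\ell^{1/2})\,S(ay+bx) = \partial\,S(ax)$. The handleslides and isotopies relating the local model to the general subordinate Heegaard subdatum induce chain homotopy equivalences, under which this boundary relation is preserved: the image of $S(ax)$ provides the desired $H$ up to adjustment by the chain homotopy, which is still a valid witness.

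The main obstacle is mostly bookkeeping, and two checks in particular. First, one must verify that in both the merge case (involving $\pi$ and multiplication by $U_k^{-1/2}$) and the split case (involving $\iota$), the chain homotopy $x \mapsto (\iota\text{ or }\pi)\,\mu_2(x,H)$ is genuinely $S$-linear, not merely $R$-linear — this follows because $\iota,\pi$ are $S$-linear and $\mu_2$ is $R$-bilinear, so one is evaluating an $R$-linear map and then projecting/including $S$-linearly. Second, one must confirm that $\Theta_B$ for the full Heegaard triple (not just for the local stabilized model) remains null-homologous after multiplication by $U_k^{1/2}-U_\ell^{1/2}$; this is ensured by the fact that being a boundary is an intrinsic property of a cycle class in $\boldsymbol{HF}^-(\boldsymbol{\beta},\boldsymbol{\gamma})$ preserved by the chain homotopy equivalences induced by the handleslides and isotopies relating Figure~\ref{fig:local-band} to the general Heegaard triple subordinate to the band.
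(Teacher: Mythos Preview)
Your proposal is correct and follows essentially the same approach as the paper: both reduce to the claim that $(U_k^{1/2}-U_\ell^{1/2})\Theta_B$ is a boundary in $\boldsymbol{CF}_R^-(\boldsymbol{\beta},\boldsymbol{\gamma})$ and then use $\mu_2(-,\zeta)$ as the chain homotopy. The paper simply asserts the existence of such a $\zeta$ (which is implicit in the homology computation $\boldsymbol{HF}^-(\boldsymbol{\beta},\boldsymbol{\gamma})\simeq\mathbb{F}\llbracket U^{1/2}\rrbracket^2$ where $U_1^{1/2}$ and $U_2^{1/2}$ act identically), whereas you go further and exhibit $\zeta = S(ax)$ explicitly from Lemma~\ref{lem:merge-split-diagram}; the additional bookkeeping you do with $\iota,\pi$ is left implicit in the paper.
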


\begin{proof}
There exists some $\zeta\in\boldsymbol{CF}_{R}^{-}(\boldsymbol{\beta},\boldsymbol{\gamma})$
such that $\partial\zeta=U_{k}^{1/2}\Theta_{B}-U_{\ell}^{1/2}\Theta_{B}$.
Hence, 
\[
\mu_{2}(-\otimes\zeta):\boldsymbol{CF}_{R}^{-}(\boldsymbol{\alpha},\boldsymbol{\beta})\to\boldsymbol{CF}_{R}^{-}(\boldsymbol{\alpha},\boldsymbol{\gamma})
\]
is an $R$-linear chain homotopy between $U_{k}^{1/2}\mu_{2}(-\otimes\Theta_{B})$
and $U_{\ell}^{1/2}\mu_{2}(-\otimes\Theta_{B})$. The proposition
follows from this.
\end{proof}

\subsection{\label{subsec:spinc-structures-and-grading}${\rm Spin}^{c}$-structures
and relative homological gradings}

Let $B$ be a band from a balled link $L$ to $L'$, and let $(\Sigma,\boldsymbol{\alpha},\boldsymbol{\beta},\boldsymbol{\gamma},\boldsymbol{u}\sqcup\boldsymbol{v})$
be a Heegaard triple diagram that is \emph{subordinate to $B$}. Then
$\boldsymbol{\beta}$ and $\boldsymbol{\gamma}$ are handlebody-equivalent
and $\Theta_{B}\in\boldsymbol{HF}^{-}(\boldsymbol{\beta},\boldsymbol{\gamma};0)$,
and so in Definitions \ref{def:band-maps-1} and \ref{def:band-maps-1-1},
if we chose a cycle $\Theta_{B}\in\boldsymbol{CF}^{-}(\boldsymbol{\beta},\boldsymbol{\gamma};0)$
(that represents the canonical element $\Theta_{B}\in\boldsymbol{HF}^{-}(\boldsymbol{\beta},\boldsymbol{\gamma};0)$)
that is homogeneous with respect to the relative homological $\mathbb{Z}$-grading,
then Proposition \ref{prop:torsion} implies that for each $c\in H^{2}(Y;\mathbb{Z})$,
the band maps restrict to 
\[
\boldsymbol{HFL'}^{-}(Y,L;c)\to\boldsymbol{HFL'}^{-}(Y,L';c),
\]
(one can show that the creation and annihilation maps preserve the
Chern class summands using Equation (\ref{eq:c1-average})) and the
band maps preserve relative homological gradings.

We can work with $c$-strongly admissible triples (Definition \ref{def:strongly-admissible-multi})
and define $HFL'{}^{-}(Y,L;c)\to HFL'{}^{-}(Y,L';c)$ as well.

\section{\label{sec:An-unoriented-skein}An unoriented skein exact triangle}

We can now state Theorem \ref{thm:unoriented-exact} precisely. Recall
that for merge and split bands between balled links, the type of exactly
one baseball had to change.
\begin{defn}
\label{def:unoriented-skein-triple-balled}Three balled links $L_{a},L_{b},L_{c}\subset Y$
form an \emph{unoriented skein triple }if the following hold:
\end{defn}

\begin{itemize}
\item there exists a ball $B^{3}\subset Y$ that is disjoint from all the
baseballs, such that the underlying links are identical outside $B^{3}$,
and they differ as in Figure \ref{fig:skein-moves} inside $B^{3}$,
and 
\item the baseballs and the orders, markings, and types of the baseballs
of $L_{a},L_{b},L_{c}$ are identical, except that there exists exactly
one baseball whose type can change.
\end{itemize}
\begin{thm}
\label{thm:unoriented-exact-triangle}If three balled links $L_{a},L_{b},L_{c}\subset Y$
form an unoriented skein triple, then the band maps given by the bands
described in Figure \ref{fig:skein-moves-bands} give rise to exact
triangles 
\begin{gather*}
\cdots\to\boldsymbol{HFL'}^{-}(Y,L_{a})\to\boldsymbol{HFL'}^{-}(Y,L_{b})\to\boldsymbol{HFL'}^{-}(Y,L_{c})\to\boldsymbol{HFL'}^{-}(Y,L_{a})\to\cdots\\
\cdots\to\boldsymbol{HFL'}^{\infty}(Y,L_{a})\to\boldsymbol{HFL'}^{\infty}(Y,L_{b})\to\boldsymbol{HFL'}^{\infty}(Y,L_{c})\to\boldsymbol{HFL'}^{\infty}(Y,L_{a})\to\cdots\\
\cdots\to\widehat{HFL'}(Y,L_{a})\to\widehat{HFL'}(Y,L_{b})\to\widehat{HFL'}(Y,L_{c})\to\widehat{HFL'}(Y,L_{a})\to\cdots.
\end{gather*}
\end{thm}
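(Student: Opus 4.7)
The plan is to apply the triangle detection lemma (Lemma \ref{lem:exact-triangle}) to a twisted complex $\underline{\boldsymbol{\beta}}$ assembled from local pieces attached to a common alpha diagram. Concretely, I would first choose a Heegaard multi-diagram $(\Sigma,\boldsymbol{\alpha},\boldsymbol{\beta}_a,\boldsymbol{\beta}_b,\boldsymbol{\beta}_c,\boldsymbol{u}\sqcup\boldsymbol{v})$ that is simultaneously subordinate to the three bands of Figure \ref{fig:skein-moves-bands}, meaning each sub-Heegaard-datum $(\boldsymbol{\alpha},\boldsymbol{\beta}_i)$ represents $L_i$ (with the extra free basepoint absorbed into the single link component on the $c$-side, in the spirit of Figure \ref{fig:genus2-everything} and Remark \ref{rem:band-non-trivial-local-system}). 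After connected-summing and pinching, we may assume the diagram is obtained from a common ``outer'' diagram by a stabilization at a pinched neck along which the three $\boldsymbol{\beta}_i$ differ only in a local region supported in $\mathrm{Sym}^2(\mathbb{T}^2)$.

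Next, I would build the twisted complex
\[
\underline{\boldsymbol{\beta}}\;=\;\bigl(\boldsymbol{\beta}_a\xrightarrow{\Theta_{ab}}\boldsymbol{\beta}_b\xrightarrow{\Theta_{bc}}\boldsymbol{\beta}_c\xrightarrow{\Theta_{ca}}\boldsymbol{\beta}_a'\bigr),
\]
where $\Theta_{ab},\Theta_{bc},\Theta_{ca}$ are the canonical cycles of Definitions \ref{def:characterize-thetaB-1} and \ref{lem:characterize-thetaB-1-1} (with the creation/annihilation corrections of Subsection \ref{subsec:Merge-and-split} inserted via the local systems as in Remark \ref{rem:band-non-trivial-local-system}), together with higher triangle/quadrilateral fillings. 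By Proposition \ref{prop:stabilization}, the $A_\infty$-structure maps among $\boldsymbol{\beta}_a,\boldsymbol{\beta}_b,\boldsymbol{\beta}_c$ split as (stabilized) local contributions on the pinched neck times contributions on the rest. Thus it suffices to verify that, in the local model, the resulting twisted complex is morally quasi-isomorphic to $\Theta^+_{\underline{\boldsymbol{\beta}_a'}}:\boldsymbol{\beta}_a\to\boldsymbol{\beta}_a'$; by Lemma \ref{lem:theta-quasi-iso}, this forces $H(\mathrm{Hom}(\boldsymbol{\alpha},\underline{\boldsymbol{\beta}}))=0$, and Lemma \ref{lem:exact-triangle} then yields the exact triangle in the minus version. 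The induced maps are exactly the band maps by construction (together with the projection onto the $U_k^{1/2}$-summand for the merge/split case, cf.\ Subsection \ref{subsec:Merge-and-split}).

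The essential content is therefore the local computation in $\mathrm{Sym}^2(\mathbb{T}^2)$, which is what I expect to be the main obstacle. Direct counts of Maslov-index-$(2-d)$ disks, triangles, and quadrilaterals in $\mathrm{Sym}^2(\mathbb{T}^2)$ are intractable by elementary combinatorics. The trick, foreshadowed in Subsection \ref{subsec:The-proof} and Figure \ref{fig:genus2-everything}, is to invoke the local $2$-surgery triangle, Claim \ref{claim:step-3}: it identifies one side of $\mathrm{Sym}^2(\mathbb{T}^2)$ with two separate $\mathbb{T}^2$ computations (the $\lambda$- and $\lambda+2\mu$-surgery torus diagrams), which are combinatorial and well understood. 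I would first prove the $\mathrm{Sym}^2(\mathbb{T}^2)$-computation by exhibiting explicit morally-quasi-inverse cycles $\underline{f},\underline{g}$ in the sense of Definition \ref{def:moral-def}, compute their compositions via the two $\mathbb{T}^2$-local counts supplied by Sections \ref{sec:proof-step1} and \ref{sec:proof-step3} (after applying Claim \ref{claim:step-3}), and check the degree/${\rm Spin}^c$ constraints using Propositions \ref{prop:Z-grading} and \ref{prop:Z/2-splitting} to rule out unwanted terms.

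Finally, for the hat and infinity versions, I would invoke Lemma \ref{lem:hat-infty-alg}: the mapping cone of the minus-level twisted complex $\underline{\boldsymbol{\beta}}$ is a free $\mathbb{F}\llbracket U\rrbracket$-chain complex with vanishing homology (by the minus-version argument applied to $\mathrm{Hom}(\boldsymbol{\alpha},\underline{\boldsymbol{\beta}})$), so quotienting by $U$ and localizing at $U$ preserve acyclicity, and Lemma \ref{lem:exact-triangle} once more delivers the exact triangles for $\widehat{HFL'}$ and $\boldsymbol{HFL'}^\infty$. Naturality of the band maps with respect to the three versions is built into Definitions \ref{def:band-maps-1}, \ref{def:band-maps-1-1}, \ref{def:band-maps}, and \ref{def:band-maps-2}, so no extra bookkeeping is needed beyond verifying that the Heegaard datum chosen for the twisted complex is consistent with the auxiliary data used to define each band map individually, which follows from Proposition \ref{prop:band-map-well-defined-1}.
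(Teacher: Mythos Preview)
Your outline captures the essential architecture of the argument---choose a multi-diagram so that the $\boldsymbol{\beta}$-curves differ only in a stabilized local region, reduce via Proposition \ref{prop:stabilization} to a computation on $\mathbb{T}^2$, use the $2$-surgery identification (Claim \ref{claim:step-3}) together with the computations of Sections \ref{sec:proof-step1} and \ref{sec:proof-step3} to avoid direct counts in ${\rm Sym}^2(\mathbb{T}^2)$, and pass to the hat and infinity flavors via Lemma \ref{lem:hat-infty-alg}. All of that matches the paper.

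There is, however, a genuine problem with your packaging. The four-term object
\[
\underline{\boldsymbol{\beta}}=\bigl(\boldsymbol{\beta}_a\xrightarrow{\Theta_{ab}}\boldsymbol{\beta}_b\xrightarrow{\Theta_{bc}}\boldsymbol{\beta}_c\xrightarrow{\Theta_{ca}}\boldsymbol{\beta}_a'\bigr)
\]
cannot be completed to a twisted complex by adding fillings: the obstruction in the $\boldsymbol{\beta}_a\to\boldsymbol{\beta}_a'$ slot is $\mu_3(\Theta_{ab},\Theta_{bc},\Theta_{ca})$ (plus filling contributions), and in the local model this class is $\Theta_a^+$, which is not a boundary. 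That the triple composite is $\Theta^+$ rather than nullhomotopic is precisely the content of the exact triangle, so the very thing you want to prove is what prevents $\underline{\boldsymbol{\beta}}$ from being a twisted complex. Your sentence ``the resulting twisted complex is morally quasi-isomorphic to $\Theta^+_{\underline{\boldsymbol{\beta}_a'}}$'' conflates a morphism with an object and does not type-check; and even granting the intent, Lemma \ref{lem:exact-triangle} is stated only for three-term complexes and yields exactness at a single vertex.

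The paper sidesteps this by not building a four-term complex at all. Theorem \ref{thm:sym2-t2} instead exhibits explicit cycles
\[
\underline{\boldsymbol{\rho}}:\bigl(\boldsymbol{\beta}_a\xrightarrow{\tau}\boldsymbol{\beta}_b\bigr)\longrightarrow\boldsymbol{\beta}_c^E,\qquad
\underline{\boldsymbol{\sigma}}:\boldsymbol{\beta}_c^E\longrightarrow\bigl(\boldsymbol{\beta}_a\xrightarrow{\tau}\boldsymbol{\beta}_b\bigr)
\]
and shows $\mu_2(\underline{\boldsymbol{\rho}},\underline{\boldsymbol{\sigma}'})=\Theta^+$ and $\mu_2(\underline{\boldsymbol{\sigma}},\underline{\boldsymbol{\rho}'})=\Theta^+$. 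Unpacking $\mu_2^{\rm Tw}$, these equalities are exactly the vanishings $\mu_2(\tau,\boldsymbol{\rho})=\mu_2(\boldsymbol{\rho},\boldsymbol{\sigma}')=0$ together with $\mu_3(\tau,\boldsymbol{\rho},\boldsymbol{\sigma}')=\Theta_a^+$, $\mu_3(\boldsymbol{\rho},\boldsymbol{\sigma}',\tau')=\Theta_b^+$, and $\mu_4(\tau,\boldsymbol{\rho},\boldsymbol{\sigma}',\tau')=0$---the same polygon counts you would need, but organized so that Lemma \ref{lem:theta-quasi-iso} applies directly to give that $\mu_2(-,\underline{\boldsymbol{\rho}})$ and $\mu_2(-,\underline{\boldsymbol{\sigma}})$ are quasi-isomorphisms on $\boldsymbol{CF}^-(\boldsymbol{\alpha},-)$. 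This yields exactness at $L_a$ and $L_b$ via Lemma \ref{lem:exact-triangle}; exactness at $L_c$ is obtained by rerunning the same argument with the cone $\boldsymbol{\beta}_b\xrightarrow{\boldsymbol{\rho}}\boldsymbol{\beta}_c^E$ in place of $\boldsymbol{\beta}_a\xrightarrow{\tau}\boldsymbol{\beta}_b$. The paper also records (Subsection \ref{subsec:Gradings-1} and \ref{subsec:obstruction-class}) that the relevant generators are pinned down by the homological $\mathbb{Z}$-grading and the Alexander $\mathbb{Z}/2$-splitting, which is what lets one transport the local conclusion across changes of almost complex structure---a step your outline alludes to but does not isolate.
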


The proof of Theorem \ref{thm:unoriented-exact-triangle} boils down
to the following local computation on the torus.
\begin{thm}
\label{thm:sym2-t2}Consider the Heegaard datum given in Figure \ref{fig:drawing-genus1}.
The attaching curve $\boldsymbol{\beta}_{c}$ has a nontrivial, rank
$2$ local system $E$ given by the oriented arc $G$ as in Subsection
\ref{subsec:unoriented-local-system}. Let 
\[
\boldsymbol{\rho}:=e_{0}\rho_{1}+e_{0}\rho_{2}\in\boldsymbol{CF}^{-}(\boldsymbol{\beta}_{b},\boldsymbol{\beta}_{c}^{E}),\ \boldsymbol{\sigma}:=e_{1}^{\ast}\sigma_{1}+e_{1}^{\ast}\sigma_{2}\in\boldsymbol{CF}^{-}(\boldsymbol{\beta}_{c}^{E},\boldsymbol{\beta}_{a}).
\]
Then, $\tau\in\boldsymbol{CF}^{-}(\boldsymbol{\beta}_{a},\boldsymbol{\beta}_{b})$
is a cycle, and the following dashed maps between the twisted complexes
$\boldsymbol{\beta}_{a}\xrightarrow{\tau}\boldsymbol{\beta}_{b}$
and $\boldsymbol{\beta}_{c}^{E}$ are morally homotopy inverses (Definition
\ref{def:moral-def}), i.e. if we write them as $\underline{\boldsymbol{\rho}}$
and $\underline{\boldsymbol{\sigma}}$, respectively, then they are
cycles and their compositions $\mu_{2}(\underline{\boldsymbol{\rho}},\underline{\boldsymbol{\sigma}'})$
and $\mu_{2}(\underline{\boldsymbol{\sigma}},\underline{\boldsymbol{\rho}'})$
are $\Theta^{+}$.
\[\begin{tikzcd}[sep=large]
	{\boldsymbol{\beta}_{a}} & {\boldsymbol{\beta}_{b}} & {\boldsymbol{\beta}_{c}^E} \\
	& {\boldsymbol{\beta}_{c}^E} & {\boldsymbol{\beta}_{a}} & {\boldsymbol{\beta}_{b}}
	\arrow["{\tau }", from=1-1, to=1-2]
	\arrow["{\boldsymbol{\rho}}", dashed, from=1-2, to=2-2]
	\arrow["{\boldsymbol{\sigma}}", dashed, from=1-3, to=2-3]
	\arrow["\tau", from=2-3, to=2-4]
\end{tikzcd}\]
\begin{figure}[h]
\begin{centering}
\includegraphics{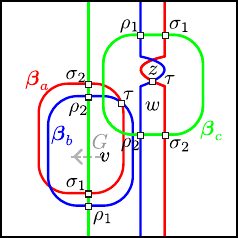}
\par\end{centering}
\caption{\label{fig:drawing-genus1}The genus $1$ diagram for Theorem \ref{thm:sym2-t2}.
Unless otherwise specified, work over $\mathbb{F}\llbracket U_{1},U_{2}^{1/2}\rrbracket$
and assign the weight $U_{1}$ to $v$, and $U_{2}^{1/2}$ to $w$
and $z$.}
\end{figure}
\end{thm}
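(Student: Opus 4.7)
The plan is to verify each of the four assertions (that $\tau$, $\underline{\boldsymbol{\rho}}$, $\underline{\boldsymbol{\sigma}}$ are cycles, and that the two compositions equal $\Theta^+$) by direct combinatorial computation on $\mathbb{T}^2$. Because the Heegaard surface has genus one, every relevant domain lifts to the universal cover $\mathbb{R}^2$, where $\mu_n$ counts (immersed) $(n{+}1)$-gons with vertices at specified lifts of the intersection points, weighted by the basepoint function ($U_1$ at $v$, $U_2^{1/2}$ at $w,z$) and by the local-system monodromy $\phi=e_1e_0^{*}+Ue_0e_1^{*}$ every time a boundary arc crosses $G$.

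Before any polygon counting, I would record the homological $\mathbb{Z}$-grading and the Alexander $\mathbb{Z}/2$-splitting supplied by Propositions \ref{prop:Z-grading} and \ref{prop:Z/2-splitting}, together with the ${\rm Spin}^c$-summands of Definition \ref{def:rprime-gen-gempty}. These should pin down, for each equality that must be checked, exactly which pairs (intersection point, basis element of ${\rm Hom}_R(E,E)$ or ${\rm Hom}_R(E,R)$) can appear as a summand. Since $\boldsymbol{\rho}$ has only $e_0$-coefficients and $\boldsymbol{\sigma}$ has only $e_1^{*}$-coefficients, the filtered condition of Subsubsection \ref{subsec:Positivity} is automatic, and the grading argument should cut down the infinite towers of domains (whose existence is allowed by weak admissibility) to a finite, explicitly listable set.

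The cycle checks for $\tau$, $\underline{\boldsymbol{\rho}}$, $\underline{\boldsymbol{\sigma}}$ are then finite bigon-plus-triangle counts: for $\tau$ one only needs $\mu_1\tau=0$, and for the other two, the equations $\mu_1(\boldsymbol{\rho})+\mu_2(\tau,\boldsymbol{\rho})=0$ and $\mu_1(\boldsymbol{\sigma})+\mu_2(\boldsymbol{\sigma},\tau)=0$. I expect that in each case the contribution of a $\mu_1$-bigon will pair off with a $\mu_2$-triangle with the same underlying two-chain minus one copy of the band region, modulo the $G$-monodromy which exactly converts an $e_0$ into an $e_1^{*}$ (or vice versa) so that the two terms algebraically cancel.

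For the two compositions, I would expand them via the twisted-complex composition formula. Writing $\underline{\boldsymbol{\beta}}_{ab}=\boldsymbol{\beta}_a\xrightarrow{\tau}\boldsymbol{\beta}_b$, the map $\mu_2(\underline{\boldsymbol{\rho}},\underline{\boldsymbol{\sigma}'}):\underline{\boldsymbol{\beta}}_{ab}\to\underline{\boldsymbol{\beta}'}_{ab}$ has four components to verify:
\[
\mu_2(\boldsymbol{\rho},\boldsymbol{\sigma}')=0,\ \mu_3(\tau,\boldsymbol{\rho},\boldsymbol{\sigma}')=\Theta^{+}_{\boldsymbol{\beta}_a},\ \mu_3(\boldsymbol{\rho},\boldsymbol{\sigma}',\tau')=\Theta^{+}_{\boldsymbol{\beta}_b},\ \mu_4(\tau,\boldsymbol{\rho},\boldsymbol{\sigma}',\tau')=0,
\]
plus the vanishing of all higher insertions of $\tau,\tau'$ (which I expect to follow from grading). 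The map $\mu_2(\underline{\boldsymbol{\sigma}},\underline{\boldsymbol{\rho}'}):\boldsymbol{\beta}_c^{E}\to\boldsymbol{\beta}_c'{}^{E}$ similarly reduces to $\mu_2(\boldsymbol{\sigma},\boldsymbol{\rho}')+\sum_{k\ge 1}\mu_{2+k}(\boldsymbol{\sigma},(\tau')^{k},\boldsymbol{\rho}')=\Theta^{+}_{\boldsymbol{\beta}_c^{E}}$. Each contributing polygon lifts to an embedded triangle or rectangle in $\mathbb{R}^2$ that I would classify by its lifted vertices; the target $\Theta^+$ should arise from the unique small triangles sitting in the translate region near the nearest-point intersections, while all other lifts either pair off in cancelling pairs or die by grading.

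The main obstacle I foresee is controlling the interaction between the nontrivial monodromy on $\boldsymbol{\beta}_c^E$ and the potentially infinite tower of rectangles and pentagons arising from winding around $\mathbb{T}^2$. The Maslov-index/power-of-$U$ identity of Lemma \ref{lemma:deg-output} together with homological $\mathbb{Z}$-gradability should force all but finitely many of these contributions into the wrong degree; the remaining finite residue then has to collapse exactly to $\Theta^+$ with no error term, and verifying this exact equality (rather than one up to a cycle boundary) will be the delicate calculation at the heart of the proof.
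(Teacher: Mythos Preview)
Your proposal has a significant gap at the very first step. In the Heegaard datum of Figure~\ref{fig:drawing-genus1} the three basepoints $v,w,z$ fall into two equivalence classes $\{v\}$ and $\{w,z\}$, so by the conventions of Subsection~\ref{subsec:unoriented-local-system} each attaching curve $\boldsymbol{\beta}_a,\boldsymbol{\beta}_b,\boldsymbol{\beta}_c$ consists of \emph{two} circles on $\mathbb{T}^2$. The composition maps $\mu_n$ therefore count holomorphic $(n{+}1)$-gons in ${\rm Sym}^2(\mathbb{T}^2)$, and such counts are \emph{not} combinatorial: they depend on the almost complex structure (this is already visible in Lemma~\ref{lem:merge-split-diagram}, where the chain complex for an entirely analogous two-curve diagram has two possible forms). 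Your assertion that ``$\mu_n$ counts (immersed) $(n{+}1)$-gons with vertices at specified lifts'' in $\mathbb{R}^2$ is simply false here, and the rest of your plan --- listing lifted polygons and pairing them off --- never gets off the ground.

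The paper flags exactly this difficulty (the paragraph preceding Theorem~\ref{thm:2surgery}) and circumvents it rather than confronting it. It inserts intermediate twisted complexes $\boldsymbol{\beta}_0\xrightarrow{\theta}\boldsymbol{\beta}_2$ and $\widetilde{\boldsymbol{\beta}_0}\xrightarrow{\widetilde{\theta}}\widetilde{\boldsymbol{\beta}_2}$ and proves a chain of moral quasi-isomorphisms $\underline{\boldsymbol{\beta}_{ab}}\leadsto\underline{\boldsymbol{\beta}_{02}}\leadsto\underline{\widetilde{\boldsymbol{\beta}_{02}}}\leadsto\boldsymbol{\beta}_c^E$ (Claims~\ref{claim:step-1}--\ref{claim:step-3}). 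The whole point is that the two nontrivial steps can each be checked after \emph{free-destabilizing} one basepoint, which drops to single-component curves on $(\mathbb{T}^2,\{w,z\})$ and $(\mathbb{T}^2,\{v\})$ respectively; there your polygon-counting method \emph{is} valid, and Sections~\ref{sec:proof-step1} and~\ref{sec:proof-step3} carry it out. The homological $\mathbb{Z}$-grading and Alexander $\mathbb{Z}/2$-splitting you invoke do enter, but only in Section~\ref{sec:composition}, where they are used to upgrade this chain of quasi-isomorphisms to the exact equalities $\mu_2(\underline{\boldsymbol{\rho}},\underline{\boldsymbol{\sigma}'})=\Theta^+$ and $\mu_2(\underline{\boldsymbol{\sigma}},\underline{\boldsymbol{\rho}'})=\Theta^+$ --- not as a substitute for a direct ${\rm Sym}^2(\mathbb{T}^2)$ computation.
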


We carry out the proof of Theorem \ref{thm:sym2-t2} in Sections \ref{sec:The-plan},
\ref{sec:proof-step1}, \ref{sec:proof-step3}, and \ref{sec:composition}.
\begin{proof}[Proof of Theorem \ref{thm:unoriented-exact-triangle} assuming Theorem
\ref{thm:sym2-t2}]
We focus on the minus version: the statements for the hat and infinity
versions can be deduced using Lemma \ref{lem:hat-infty-alg}.

Without loss of generality, assume that $L_{c}$ has one more link
component than $L_{a}$ and $L_{b}$. This is equivalent to that the
two endpoints on the left hand side of each tangle in Figure \ref{fig:skein-moves-bands}
are connected to each other and the two endpoints on the right hand
side are connected to each other. Choose a weakly admissible Heegaard
datum with Heegaard diagram $(\Sigma,\boldsymbol{\alpha},S(\boldsymbol{\beta}_{a}),S(\boldsymbol{\beta}_{b}),S(\boldsymbol{\beta}_{c}),\boldsymbol{p},G)$
such that
\begin{itemize}
\item its coefficient ring and its weight function together with the Heegaard
diagram $(\Sigma,\boldsymbol{\alpha},S(\boldsymbol{\beta}_{a}),\boldsymbol{p})$
represent the balled link $L_{a}$ in $Y$,
\item the sub Heegaard datum with attaching curves $S(\boldsymbol{\beta}_{a}),S(\boldsymbol{\beta}_{b}),S(\boldsymbol{\beta}_{c})$
is a stabilization of Figure \ref{fig:drawing-genus1},
\item the triple diagram $(\Sigma,\boldsymbol{\alpha},S(\boldsymbol{\beta}_{a}),S(\boldsymbol{\beta}_{b}),\boldsymbol{p})$
(for some choice of auxiliary datum) is subordinate to the corresponding
non-orientable band, and
\item the triple diagrams $(\Sigma,\boldsymbol{\alpha},S(\boldsymbol{\beta}_{b}),S(\boldsymbol{\beta}_{c}),\boldsymbol{p}\sqcup\{\partial_{+}G\})$
and $(\Sigma,\boldsymbol{\alpha},S(\boldsymbol{\beta}_{c}),S(\boldsymbol{\beta}_{a}),\boldsymbol{p}\sqcup\{\partial_{+}G\})$
are subordinate to the corresponding bands, where $G$ corresponds
to the path $P$ from Subsubsection \ref{subsec:Merge-and-split}
as in Remark \ref{rem:band-non-trivial-local-system}.
\end{itemize}

Note that $\boldsymbol{CF}^{-}(\boldsymbol{\alpha},S(\boldsymbol{\beta}_{a}))=\boldsymbol{CFL'}^{-}(Y,L_{a})$,
$\boldsymbol{CF}^{-}(\boldsymbol{\alpha},S(\boldsymbol{\beta}_{b}))=\boldsymbol{CFL'}^{-}(Y,L_{b})$,
and $\boldsymbol{CF}^{-}(\boldsymbol{\alpha},S(\boldsymbol{\beta}_{c}^{E}))=\boldsymbol{CFL'}^{-}(Y,L_{c})$.
Also, the cycles $S(\boldsymbol{\rho})\in\boldsymbol{CF}^{-}(S(\boldsymbol{\beta}_{b}),S(\boldsymbol{\beta}_{c}^{E}))$,
$S(\boldsymbol{\sigma})\in\boldsymbol{CF}^{-}(S(\boldsymbol{\beta}_{c}^{E}),S(\boldsymbol{\beta}_{a}))$,
and $S(\tau)\in\boldsymbol{CF}^{-}(S(\boldsymbol{\beta}_{a}),S(\boldsymbol{\beta}_{b}))$
are the cycles that define the band maps. (Recall from Remarks \ref{rem:The-elements-}
and \ref{rem:The-element-} that they are indeed cycles.)

Let $\underline{S(\boldsymbol{\beta}_{ab})}:=S(\boldsymbol{\beta}_{a})\xrightarrow{S(\tau)}S(\boldsymbol{\beta}_{b})$,
and let $\underline{S(\boldsymbol{\rho})}:\underline{S(\boldsymbol{\beta}_{ab})}\to S(\boldsymbol{\beta}_{c}^{E})$
and $\underline{S(\boldsymbol{\sigma})}:S(\boldsymbol{\beta}_{c}^{E})\to\underline{S(\boldsymbol{\beta}_{ab})}$
be given by $S(\boldsymbol{\rho})$ and $S(\boldsymbol{\sigma})$,
respectively. By Proposition \ref{prop:stabilization}, $\underline{S(\boldsymbol{\rho})}$
and $\underline{S(\boldsymbol{\sigma})}$ are cycles and $\mu_{2}(\underline{S(\boldsymbol{\rho})},\underline{S(\boldsymbol{\sigma})'})$
and $\mu_{2}(\underline{S(\boldsymbol{\sigma})},\underline{S(\boldsymbol{\rho})'})$
are $\Theta^{+}$ in a pinched almost complex structure.

To consider $\boldsymbol{\alpha}$ as well, we work in an unpinched
almost complex structure; consider the $A_{\infty}$-functor given
by changing the almost complex structures. The cycles $S(\tau)$,
$S(\boldsymbol{\rho})$, and $S(\boldsymbol{\sigma})$ are invariant
under the $A_{\infty}$-functor as there are no relevant Maslov index
$0$ domains. Hence, the twisted complex $\underline{S(\boldsymbol{\beta}_{ab})}$
is invariant under the $A_{\infty}$-functor, and if we let $\underline{f}$
(resp. $\underline{g}$) be the image of $\underline{S(\boldsymbol{\rho})}$
(resp. $\underline{S(\boldsymbol{\sigma})}$) under the $A_{\infty}$-functor,
then the $S(\boldsymbol{\beta}_{b})\to S(\boldsymbol{\beta}_{c}^{E})$
component of $\underline{f}$ is still $S(\boldsymbol{\rho})$\footnote{This is all we need, but actually there is no $S(\boldsymbol{\beta}_{a})\to S(\boldsymbol{\beta}_{c}^{E})$
component because of grading reasons.} and the $S(\boldsymbol{\beta}_{c}^{E})\to S(\boldsymbol{\beta}_{a})$
component of $\underline{g}$ is still $S(\boldsymbol{\sigma})$.
In particular, these define the band maps. Let $\underline{f'}$ (resp.
$\underline{g'}$) be the image of $\underline{S(\boldsymbol{\rho})'}$
(resp. $\underline{S(\boldsymbol{\sigma})'}$) under the $A_{\infty}$-functor;
then similar statements hold for $\underline{f'},\underline{g'}$.

The following maps are quasi-isomorphisms by Lemma \ref{lem:theta-quasi-iso}:
\begin{gather*}
\mu_{2}(-,\mu_{2}(\underline{f},\underline{g'})):\boldsymbol{CF}^{-}(\boldsymbol{\alpha},\underline{S(\boldsymbol{\beta}_{ab})})\to\boldsymbol{CF}^{-}(\boldsymbol{\alpha},\underline{S(\boldsymbol{\beta}_{ab})'})\\
\mu_{2}(-,\mu_{2}(\underline{g'},\underline{f})):\boldsymbol{CF}^{-}(\boldsymbol{\alpha},S(\boldsymbol{\beta}_{c}^{E})')\to\boldsymbol{CF}^{-}(\boldsymbol{\alpha},S(\boldsymbol{\beta}_{c}^{E})).
\end{gather*}
Hence, 
\[
\mu_{2}(-,\underline{f}):\boldsymbol{CF}^{-}(\boldsymbol{\alpha},\underline{S(\boldsymbol{\beta}_{ab})})\to\boldsymbol{CF}^{-}(\boldsymbol{\alpha},S(\boldsymbol{\beta}_{c}^{E}))
\]
is a quasi-isomorphism. Similarly, $\mu_{2}(-,\underline{g})$ is
also a quasi-isomorphism. Hence, the triangles in Theorem \ref{thm:unoriented-exact-triangle}
are exact at $L_{b}$ and $L_{a}$ by Lemmas \ref{lem:exact-triangle}
and \ref{lem:hat-infty-alg}.

That the triangles are exact at $L_{c}$ follows similarly, using
the following: since $\mu_{2}(\underline{\boldsymbol{\rho}},\underline{\boldsymbol{\sigma}'})$
and $\mu_{2}(\underline{\boldsymbol{\sigma}},\underline{\boldsymbol{\rho}'})$
are $\Theta^{+}$, if we consider the twisted complex $\text{\ensuremath{\underline{\boldsymbol{\beta}_{bc}}:=\boldsymbol{\beta}_{b}\xrightarrow{\boldsymbol{\rho}}\boldsymbol{\beta}_{c}^{E}} }$
and consider the maps $\underline{\tau}:\boldsymbol{\beta}_{a}\to\underline{\boldsymbol{\beta}_{bc}}$
and $\underline{\boldsymbol{\sigma}}:\underline{\boldsymbol{\beta}_{bc}}\to\boldsymbol{\beta}_{a}$
given by $\tau$ and $\boldsymbol{\sigma}$, respectively, then they
are cycles and $\mu_{2}(\underline{\tau},\underline{\boldsymbol{\sigma}'})$
and $\mu_{2}(\underline{\boldsymbol{\sigma}},\underline{\tau'})$
are also $\Theta^{+}$.
\end{proof}
\begin{rem}
\label{rem:skein-exact-hfl}Recall from Subsection \ref{subsec:spinc-structures-and-grading}
that we can define band maps on $HFL'{}^{-}(Y,L;c)$ for $c\in H^{2}(Y;\mathbb{Z})$.
The above proof works for $HFL'{}^{-}$ as well, if we work with $c$-strongly
admissible diagrams (Subsubsection \ref{subsec:Strong-admissibility-c}).
\end{rem}

Although it should be possible to show Theorem \ref{thm:sym2-t2}
directly, we find it easier to use a trick than to carry out the necessary
triangle and quadrilateral map computations in ${\rm Fuk}({\rm Sym}^{2}(\mathbb{T}^{2}))$.
Our trick is introducing intermediate objects that are morally quasi-isomorphic
to both $\boldsymbol{\beta}_{a}\xrightarrow{\tau}\boldsymbol{\beta}_{b}$
and $\boldsymbol{\beta}_{c}^{E}$, which reduces the local computations
to local computations in the Fukaya category of $\mathbb{T}^{2}$,
which is combinatorial.

These intermediate objects have nice topological interpretations.
Indeed, we get the following theorem as a corollary.
\begin{thm}
\label{thm:2surgery}Let $Y$ be a closed, oriented three-manifold
$Y$, let $K\subset Y$ be a knot, and let $L\subset Y$ be a link
(which may be empty). Consider a balled link in $Y$ whose underlying
link is $K\sqcup L$, such that the link baseball of $K$ is marked
with $1$ or $\infty$. Denote this balled link also as $K\sqcup L$.

Let $\lambda$ be a framing of $K$, and let $\mu$ be the meridian
of $K$. Consider $L\subset Y_{\lambda}(K)$ and $L\subset Y_{\lambda+2\mu}(K)$
as balled links, where the baseballs and the orders, markings, and
types of the baseballs are the same as those of $K\sqcup L\subset Y$,
except that the link baseball of $K$ is a free baseball for $L\subset Y_{\lambda}(K)$
and $L\subset Y_{\lambda+2\mu}(K)$.

There exist exact triangles
\begin{gather*}
\cdots\to\boldsymbol{HFL'}^{-}(Y,K\sqcup L)\to\boldsymbol{HFL'}^{-}(Y_{\lambda}(K),L)\to\boldsymbol{HFL'}^{-}(Y_{\lambda+2\mu}(K),L)\to\boldsymbol{HFL'}^{-}(Y,K\sqcup L)\to\cdots\\
\cdots\to\boldsymbol{HFL'}^{\infty}(Y,K\sqcup L)\to\boldsymbol{HFL'}^{\infty}(Y_{\lambda}(K),L)\to\boldsymbol{HFL'}^{\infty}(Y_{\lambda+2\mu}(K),L)\to\boldsymbol{HFL'}^{\infty}(Y,K\sqcup L)\to\cdots\\
\cdots\to\widehat{HFL'}(Y,K\sqcup L)\to\widehat{HFL'}(Y_{\lambda}(K),L)\to\widehat{HFL'}(Y_{\lambda+2\mu}(K),L)\to\widehat{HFL'}(Y,K\sqcup L)\to\cdots.
\end{gather*}
These maps come from Heegaard triple diagrams, and are $S$-linear
where $S$ is the coefficient ring of $L\subset Y_{\lambda}(K)$.
\end{thm}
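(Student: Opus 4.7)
The plan is to reduce this to the same local torus computation that powers the unoriented skein exact triangle, namely Theorem \ref{thm:sym2-t2}. The key observation is that on the boundary torus of $N(K)$, the three slopes $\lambda$, $\lambda+2\mu$, and $\mu$ have pairwise geometric intersection numbers $2$, $1$, $1$, which is precisely the intersection pattern realized by the three attaching curves $\boldsymbol{\beta}_a$, $\boldsymbol{\beta}_b$, $\boldsymbol{\beta}_c$ of Figure \ref{fig:drawing-genus1}. The strategy is therefore to glue the local model into a global Heegaard diagram for $L \subset Y \setminus N(K)$ and run the same moral-homotopy-inverse argument from the proof of Theorem \ref{thm:unoriented-exact-triangle}.

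Concretely, I first construct a weakly admissible Heegaard datum with diagram $(\Sigma, \boldsymbol{\alpha}, S(\boldsymbol{\beta}_\gamma), S(\boldsymbol{\beta}_\delta), S(\boldsymbol{\beta}_\varepsilon^{E}), \boldsymbol{p}, G)$, where the punctured-torus summand is a stabilization of Figure \ref{fig:drawing-genus1}, with $\boldsymbol{\beta}_\gamma$, $\boldsymbol{\beta}_\delta$, $\boldsymbol{\beta}_\varepsilon^E$ playing the roles of $\boldsymbol{\beta}_a$, $\boldsymbol{\beta}_b$, $\boldsymbol{\beta}_c^E$, respectively. On the local torus I arrange that the essential curve in $\boldsymbol{\beta}_\gamma$ is the $\lambda$-slope, the essential curve in $\boldsymbol{\beta}_\delta$ is the $(\lambda+2\mu)$-slope, and the essential curve in $\boldsymbol{\beta}_\varepsilon$ is the meridian $\mu$; I also place the oriented arc $G$ on the local torus, adjacent to $\boldsymbol{\beta}_\varepsilon$. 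Then $(\Sigma, \boldsymbol{\alpha}, S(\boldsymbol{\beta}_\gamma), \boldsymbol{p})$ represents $L \subset Y_\lambda(K)$ and $(\Sigma, \boldsymbol{\alpha}, S(\boldsymbol{\beta}_\delta), \boldsymbol{p})$ represents $L \subset Y_{\lambda+2\mu}(K)$, with the baseball of $K$ becoming the free baseball required in the theorem's statement. The identification of $(\Sigma, \boldsymbol{\alpha}, S(\boldsymbol{\beta}_\varepsilon^E), \boldsymbol{p} \sqcup \{\partial_+G\})$ with $\boldsymbol{CFL'}^-(Y, K \sqcup L)$ uses Remark \ref{rem:same-as-unoriented}: since $\mu$-surgery on $K$ restores $Y$ with $K$ as a core circle, and since the nontrivial local system $E$ on $\boldsymbol{\beta}_\varepsilon$ is chain-isomorphic to assigning weight $U^{1/2}$ to each of $\partial_\pm G$, the resulting complex is precisely the unoriented link Floer chain complex of $K \sqcup L$ in $Y$, with $\partial_+G$ and $\partial_-G$ serving as the two link basepoints on $K$.

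Having identified the three chain complexes, the rest of the argument is verbatim the proof of Theorem \ref{thm:unoriented-exact-triangle}: the cycles $S(\tau)$, $S(\boldsymbol{\rho})$, and $S(\boldsymbol{\sigma})$ from Theorem \ref{thm:sym2-t2} remain cycles in a pinched almost complex structure by Proposition \ref{prop:stabilization}, and they induce triangle-counting chain maps among the three complexes. Because there are no relevant Maslov-index $0$ domains spanning the local torus, the $A_\infty$-functor induced by deforming to an unpinched almost complex structure preserves these cycles and preserves the compositions $\mu_2(\underline{\boldsymbol{\rho}}, \underline{\boldsymbol{\sigma}'}) = \mu_2(\underline{\boldsymbol{\sigma}}, \underline{\boldsymbol{\rho}'}) = \Theta^+$. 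Invoking Lemma \ref{lem:theta-quasi-iso} three times (rotating which pair of vertices one quotients by) yields exactness at each vertex of the triangle, giving the minus version. The hat and infinity versions follow from Lemma \ref{lem:hat-infty-alg}, and the $S$-linearity is automatic since the triangle maps are built from triangle counts with a fixed cycle representative.

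The main obstacle is bookkeeping in the second step: one must carefully match the weights of the local model ($U_1$ at $v$, $U_2^{1/2}$ at $w,z$) with the markings of the baseballs of the three balled links and check that this is consistent with the hypothesis that the link baseball of $K$ be marked with $1$ or $\infty$. The case of marking $1$ matches directly, while the case $\infty$ is obtained by quotienting. One also needs weak admissibility simultaneously for all three sub-diagrams, which can be arranged by standard winding arguments. A subtler point is naturality: the identification of $(\Sigma, \boldsymbol{\alpha}, S(\boldsymbol{\beta}_\varepsilon^E))$ with $\boldsymbol{CFL'}^-(Y, K \sqcup L)$ should respect the preferred chain-homotopy class of Proposition \ref{prop:naturality}, so that the exact triangle lives on the level of link Floer invariants, not merely specific chain complex representatives.
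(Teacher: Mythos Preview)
Your plan has a genuine gap at the identification step. The three attaching curves $\boldsymbol{\beta}_a$, $\boldsymbol{\beta}_b$, $\boldsymbol{\beta}_c$ of Figure~\ref{fig:drawing-genus1} are all \emph{handlebody-equivalent} on the torus (this is exactly what is used in the proof of Proposition~\ref{prop:diagram-graded}), so they are isotopic as unpointed curves and in particular do \emph{not} realize three distinct slopes. Concretely, the statement of Theorem~\ref{thm:sym2-t2} already exhibits two intersection points $\rho_1,\rho_2\in\boldsymbol{\beta}_b\cap\boldsymbol{\beta}_c$ and two intersection points $\sigma_1,\sigma_2\in\boldsymbol{\beta}_c\cap\boldsymbol{\beta}_a$, so the intersection pattern is not $2,1,1$ as you claim. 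Gluing a stabilization of Figure~\ref{fig:drawing-genus1} to $\boldsymbol{\alpha}$ therefore produces three Heegaard diagrams for links in the \emph{same} three-manifold $Y$ --- this is precisely the skein setup of Theorem~\ref{thm:unoriented-exact-triangle} --- not for $Y_\lambda(K)$, $Y_{\lambda+2\mu}(K)$, and $Y$.

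The paper's proof reduces instead to the local computation of Section~\ref{sec:proof-step3} (Claim~\ref{claim:step-3}), whose curves $\widetilde{\boldsymbol{\beta}_0}$, $\widetilde{\boldsymbol{\beta}_2}$, $\boldsymbol{\beta}_\infty^E$ genuinely have slopes $0$, $2$, $\infty$ and hence, after stabilizing and gluing to $\boldsymbol{\alpha}$, represent $L\subset Y_\lambda(K)$, $L\subset Y_{\lambda+2\mu}(K)$, and $K\sqcup L\subset Y$ respectively. The needed input is that $(\widetilde{\boldsymbol{\beta}_0}\xrightarrow{\widetilde\theta}\widetilde{\boldsymbol{\beta}_2})$ and $\boldsymbol{\beta}_\infty^E$ are morally quasi-isomorphic, together with the extra vanishing $\mu_2(e_0\xi,e_1^\ast\zeta)=0$ recorded in the remark following Claim~\ref{claim:step-3}. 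Once you substitute this local model for Figure~\ref{fig:drawing-genus1}, the rest of your outline (Proposition~\ref{prop:stabilization}, invariance under changing the almost complex structure, Lemma~\ref{lem:theta-quasi-iso}, Lemma~\ref{lem:hat-infty-alg}) goes through essentially as you wrote it.
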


\begin{proof}
This reduces to the local computation carried out in Section \ref{sec:proof-step3},
using an argument similar to the above proof of Theorem \ref{thm:unoriented-exact-triangle}
assuming Theorem \ref{thm:sym2-t2}.
\end{proof}
We recover Theorem \ref{thm:2-surgery-intro} from the hat version
of Theorem \ref{thm:2surgery} by taking $L$ to be the empty link
and letting the balled link $K\sqcup L$ have exactly one baseball,
which is marked with $1$.
\begin{rem}
\label{rem:2-exact-hfl}Theorem \ref{thm:2surgery} does not work
for $HFL'{}^{-}$\footnote{Indeed, we are not in the setting of Subsection \ref{subsec:Homologous-attaching-curves}.},
just like Ozsv\'{a}th and Szab\'{o}'s surgery exact triangles \cite[Section 9]{MR2113020}.
This is because, morally speaking, the maps correspond to cobordism
maps where we sum over infinitely many different ${\rm Spin}^{c}$-structures.
\end{rem}

\section{\label{sec:Computation-for-unlinks}Computation for planar links}

In this section, we deduce from Theorem \ref{thm:unoriented-exact-triangle}
that the band maps for planar links (\emph{``planar band maps''})
in the various versions of link Floer homology coincide with band
maps in the various versions of Khovanov homology. More precisely,
we consider bands $B:L\to L'$ between balled links in $S^{3}$, such
that the band $B$ and the underlying links of $L$ and $L'$ (ignoring
the baseballs) all lie in $S^{2}\subset S^{3}$. Note that $L$ and
$L'$ are unlinks, and that $B$ is necessarily a merge or split band.
We show the following:
\begin{itemize}
\item The planar band maps on $\boldsymbol{HFL'}^{-}$ agree with the planar
band maps on equivariant Khovanov homology given by the Lee deformation.
\item If exactly one baseball is marked with $1$ (resp. $1/2$) and the
rest are marked with $\infty$, then the planar band maps on $\widehat{HFL'}$
agree with the planar band maps on unreduced (resp. reduced) Khovanov
homology.
\end{itemize}

First, let us compute the unoriented link Floer homology of unlinks.
\begin{prop}
\label{prop:Let--be}Let $L$ be a balled link in $S^{3}$, whose
underlying link is an unlink. Let the link baseballs correspond to
variables $U_{1},\cdots,U_{k}$. Then,
\[
\boldsymbol{HFL'}^{-}(S^{3},L)\simeq\mathbb{F}\llbracket U_{1}^{1/2},\cdots,U_{k}^{1/2},U\rrbracket/(U_{1}-U,\cdots,U_{k}-U)
\]
over the coefficient ring, hence in particular over $\mathbb{F}\llbracket U_{1}^{1/2},\cdots,U_{k}^{1/2}\rrbracket$.

If exactly one baseball is marked with $1$ and the rest are marked
with $\infty$, then 
\[
\widehat{HFL'}(S^{3},L)\simeq\mathbb{F}\llbracket U_{1}^{1/2},\cdots,U_{k}^{1/2}\rrbracket/(U_{1},\cdots,U_{k})
\]
over the coefficient ring.

If exactly one baseball is marked with $1/2$ (say the one that corresponds
to the variable $U_{1}$) and the rest are marked with $\infty$,
then 
\[
\widehat{HFL'}(S^{3},L)\simeq\mathbb{F}\llbracket U_{1}^{1/2},\cdots,U_{k}^{1/2}\rrbracket/(U_{1}^{1/2},U_{2},\cdots,U_{k}).
\]

In both cases, the base change map $\boldsymbol{HFL'}^{-}(S^{3},L)\to\widehat{HFL'}(S^{3},L)$
is the $\mathbb{F}\llbracket U_{1}^{1/2},\cdots,U_{k}^{1/2}\rrbracket$-linear
map that maps $1$ to $1$; in particular, it is surjective.
\end{prop}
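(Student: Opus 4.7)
The plan is to prove the minus version by induction on the number of components $k$, using the unoriented skein exact triangle (Theorem \ref{thm:unoriented-exact-triangle}), and then to deduce the two hat versions and the base change map from it.

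For the base case $k=1$, the unknot admits a standard genus-one Heegaard diagram in which $\alpha$ is a meridian and $\beta$ is a longitude meeting transversely at a single point, with both basepoints placed in the unique region. The chain complex is freely generated by the unique intersection point over $\mathbb{F}\llbracket U_1^{1/2}\rrbracket$ with trivial differential, so $\boldsymbol{HFL'}^-(S^3, \text{unknot}) \simeq \mathbb{F}\llbracket U_1^{1/2}\rrbracket$, matching the formula.

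For $k \geq 2$, I introduce a Reidemeister-I style kink on one component of $UL_{k-1}$ and consider the resulting unoriented skein triple, in which $L_a = L_b = UL_{k-1}$ and $L_c = UL_k$ (the smoothing splits the kinked component off as a small disjoint unknot). Applying Theorem \ref{thm:unoriented-exact-triangle} yields an exact triangle
\[
\cdots \to \boldsymbol{HFL'}^-(S^3, UL_{k-1}) \xrightarrow{F} \boldsymbol{HFL'}^-(S^3, UL_{k-1}) \to \boldsymbol{HFL'}^-(S^3, UL_k) \to \cdots,
\]
where $F$ is the non-orientable band map. Using the Alexander $\mathbb{Z}/2$-splitting (Proposition \ref{prop:Z/2-splitting}) together with the explicit form of the canonical element $\Theta_B$ from Definition \ref{def:characterize-thetaB-1} on the local model (Figure \ref{fig:local-band-1}) and the inductive cyclic presentation of $\boldsymbol{HFL'}^-(S^3, UL_{k-1})$, one shows that $F = 0$: the relevant grading shift induced by $\Theta_B$ is incompatible with the single Alexander $\mathbb{Z}/2$-grading in which $\boldsymbol{HFL'}^-(S^3, UL_{k-1})$ is supported inductively. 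The resulting short exact sequence, combined with the inductive hypothesis and the explicit form of the split and merge maps read off from the local model, determines $\boldsymbol{HFL'}^-(S^3, UL_k)$ to have the claimed quotient-ring presentation.

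The hat versions follow by applying $- \otimes_R R/(U_1^{m_1}, \ldots, U_n^{m_n})$ with the appropriate markings; since the minus version is a cyclic module with the explicit presentation above, these quotients reduce directly to the claimed formulas. The base change map is by construction the $R$-linear quotient map sending $1 \mapsto 1$, and its surjectivity is immediate from the presentations. The main obstacle lies in verifying $F = 0$ and identifying the connecting split and merge maps explicitly; both steps rely on a careful local analysis of the triple Heegaard diagrams subordinate to the relevant bands and of the canonical elements $\Theta_B$ constructed in Section \ref{sec:band-maps}, combined with grading constraints coming from the Alexander $\mathbb{Z}/2$-splitting.
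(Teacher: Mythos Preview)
The paper's proof is one line: iterate Proposition~\ref{prop:free-stabilization}. Starting from the standard genus-one diagram for the unknot, each additional unknot component (resp.\ free baseball) is realized by a free link stabilization (resp.\ free stabilization), and Proposition~\ref{prop:free-stabilization} says the resulting chain complex is the previous one tensored with a two-step complex whose off-diagonal map is homotopic to $V-W$. This immediately gives the relations $U_i - U$ and the cyclic presentation; the hat versions and the base change map are then read off algebraically. No exact triangle is used.

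Your route, by contrast, invokes the main theorem of the paper to prove what is really an elementary computation, and it has genuine gaps. First, the argument that the non-orientable band map $F$ vanishes via the Alexander $\mathbb{Z}/2$-splitting does not work as stated: already for the unknot, $\boldsymbol{HFL'}^-(S^3,\text{unknot}) \simeq \mathbb{F}\llbracket U^{1/2}\rrbracket$ is \emph{not} supported in a single Alexander $\mathbb{Z}/2$-grading, since $U^{1/2}$ has odd degree. (The paper does establish $F=0$ for unlinks, in Corollary~\ref{cor:Non-orientable-minus}, but by a rank count on $\boldsymbol{HFL'}^\infty$ that already relies on the present Proposition via Proposition~\ref{prop:infinity}'s analogue.) Second, even granting $F=0$, the short exact sequence alone does not determine the module structure of $\boldsymbol{HFL'}^-(UL_k)$; you must identify the split and merge maps, and ``read off from the local model'' hides a nontrivial triangle computation. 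In the paper this identification is Proposition~\ref{prop:band-maps-unlink}, which is proved \emph{using} the present Proposition, so appealing to it would be circular. Third, in your skein triple the links $L_a=L_b=UL_{k-1}$ necessarily carry an extra free baseball (the one that becomes the $k$th link baseball of $L_c$), so your inductive hypothesis as stated does not apply to them; handling that free baseball is precisely a free stabilization, which brings you back to Proposition~\ref{prop:free-stabilization} anyway.
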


\begin{proof}
Iterate Proposition \ref{prop:free-stabilization}.
\end{proof}
Since $\boldsymbol{HFL'}^{-}(S^{3},L)\to\widehat{HFL'}(S^{3},L)$
is surjective and agrees with the corresponding map in Khovanov homology
for the two cases of Proposition \ref{prop:Let--be}, to show that
the planar band maps in the various versions of unoriented link Floer
homology agree with various versions of Khovanov homology, it is sufficient
to prove this for $\boldsymbol{HFL'}^{-}$. We work with $\boldsymbol{HFL'}^{-}$
throughout the remainder of this section.

Let us compute non-orientable band maps between unlinks: they agree
with Khovanov homology as well.
\begin{prop}
\label{prop:Non-orientable-hfinf}Non-orientable band maps between
balled links in $S^{3}$ are zero on $\boldsymbol{HFL'}^{\infty}$.
\end{prop}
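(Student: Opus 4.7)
My plan is to apply the unoriented skein exact triangle in the infinity version (Theorem~\ref{thm:unoriented-exact-triangle}) combined with a rank count, exploiting a subtle characteristic-two phenomenon.

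Given a non-orientable band $B: L \to L'$ in $S^3$, I first extend $B$ to an unoriented skein triple $(L, L', L'')$ by performing the two local smoothings at $B$. Since the cyclic order of bands in a skein triple is non-orientable, split, merge, the third link $L''$ has one more link component (and hence one more link baseball) than $L$ and $L'$. Theorem~\ref{thm:unoriented-exact-triangle} then produces an exact triangle
\[
\boldsymbol{HFL'}^\infty(S^3, L) \xrightarrow{f_{\mathrm{na}}} \boldsymbol{HFL'}^\infty(S^3, L') \xrightarrow{f_{\mathrm{sp}}} \boldsymbol{HFL'}^\infty(S^3, L'') \xrightarrow{f_{\mathrm{mg}}} \boldsymbol{HFL'}^\infty(S^3, L),
\]
in which all three maps are linear over the smaller coefficient ring $S$ shared by $L$ and $L'$.

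The key computation is the rank over the common quotient field $K := S^\infty/(U_i - U)$, a Laurent series field in $U^{1/2}$ or $U$. By Proposition~\ref{prop:infinity}, each $\boldsymbol{HFL'}^\infty(S^3, -)$ is free of rank one over its own localized coefficient ring. For $L$ and $L'$, that ring is already $K$, so each gives a one-dimensional $K$-vector space. For $L''$, the ring is $S[U_k^{1/2}]$ for the extra square-root variable; quotienting out $(U_i - U)$ yields $K[T]/(T^2 - U)$. In characteristic two this factors as $(T - U^{1/2})^2$, so $K[T]/(T^2 - U) \cong K[\epsilon]/(\epsilon^2)$ when $U^{1/2} \in K$ and is the degree-two field extension $K(U^{1/2})$ otherwise. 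Either way, $\boldsymbol{HFL'}^\infty(S^3, L'')$ is two-dimensional over $K$.

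This reduces the proposition to an elementary rank analysis of an exact triangle of $K$-vector spaces of dimensions $(1, 1, 2)$ in cyclic order. Writing $r_i = \operatorname{rank} f_i$, exactness at the three vertices gives $r_1 = 1 - r_2$, $r_2 = 2 - r_3$, and $r_3 = 1 - r_1$; the unique integer solution forces $r_1 = r_{\mathrm{na}} = 0$ (concretely, assuming $f_{\mathrm{na}}$ is an isomorphism forces $f_{\mathrm{sp}} = f_{\mathrm{mg}} = 0$ by exactness at the two rank-one vertices, but then exactness at the rank-two vertex implies $\dim_K \boldsymbol{HFL'}^\infty(S^3, L'') = 0$, a contradiction). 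The hard part is the rank-two computation for $L''$, which is where characteristic two enters essentially; everything else is formal.
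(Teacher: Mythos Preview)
Your approach is the same as the paper's: complete the non-orientable band to a skein triple, apply the infinity version of the exact triangle, and do a rank count using Proposition~\ref{prop:infinity}. However, there is a genuine gap in your choice of base ring. You assert that $K := S^\infty/(U_i - U)$ is a Laurent series field, but this fails whenever $L$ (and hence $L'$) has at least two link components. If $U_i^{1/2}$ and $U_j^{1/2}$ both occur in $S$, then in characteristic~$2$ the element $U_i^{1/2} + U_j^{1/2}$ squares to $U_i + U_j = 0$ in $K$, so $K$ has nonzero nilpotents and is not a field; the phrase ``$K$-vector spaces of dimensions $(1,1,2)$'' is then meaningless, and your rank--nullity system does not apply as stated. (Your aside about the factorization $(T-U^{1/2})^2$ is in fact another instance of this same nilpotence, not an ingredient you need.)

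The paper sidesteps this by working over the honest field $\mathbb{F}\llbracket U\rrbracket[U^{-1}]$ throughout: if $L,L'$ have $k$ link components then by Proposition~\ref{prop:infinity} the three infinity groups have $\mathbb{F}\llbracket U\rrbracket[U^{-1}]$-dimensions $2^k,\,2^k,\,2^{k+1}$, and your same exact-triangle rank equations (now with $2^k$ in place of $1$ and $2^{k+1}$ in place of $2$) force the non-orientable map to vanish. With this single change of base your argument is correct, and the characteristic-two discussion becomes irrelevant.
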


\begin{proof}
Let $B:L_{c}\to L_{a}$ be a non-orientable band between two links
with $k$ components. We use the infinity version of the unoriented
skein exact triangle: complete $L_{c}\xrightarrow{B}L_{a}$ to an
unoriented skein triple; let $L_{b}$ be the third link. Then $L_{b}$
has $k+1$ components. As $\mathbb{F}\llbracket U\rrbracket[U^{-1}]$-vector
spaces, $\boldsymbol{HFL'}^{\infty}(S^{3},L_{c})$ and $\boldsymbol{HFL'}^{\infty}(S^{3},L_{a})$
have rank $2^{k}$, and $\boldsymbol{HFL'}^{\infty}(S^{3},L_{b})$
has rank $2^{k+1}$ by Proposition \ref{prop:infinity}. Hence, the
band map 
\[
\boldsymbol{HFL'}^{\infty}(S^{3},L_{c})\to\boldsymbol{HFL'}^{\infty}(S^{3},L_{a})
\]
is zero. 
\end{proof}
\begin{cor}
\label{cor:Non-orientable-minus}Non-orientable band maps between
balled unlinks in $S^{3}$ are zero on $\boldsymbol{HFL'}^{-}$.
\end{cor}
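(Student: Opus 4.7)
The plan is to deduce Corollary \ref{cor:Non-orientable-minus} from Proposition \ref{prop:Non-orientable-hfinf} using that $\boldsymbol{HFL'}^-$ of an unlink is torsion-free, so that the localization map $\boldsymbol{HFL'}^-\to\boldsymbol{HFL'}^\infty$ is injective.

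First I would observe that for any balled link $L$ with coefficient ring $R=\mathbb{F}\llbracket U_1^{k_1},\cdots,U_n^{k_n}\rrbracket$, the localization map $\boldsymbol{HFL'}^-(S^3,L)\to \boldsymbol{HFL'}^\infty(S^3,L)$ is the base change $-\otimes_R R^\infty$ applied to a free chain complex, so it factors through (the homology of) $\boldsymbol{HFL'}^-(S^3,L)\otimes_R R^\infty$. Hence its kernel consists of $R$-torsion elements.

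Next I would invoke Proposition \ref{prop:Let--be}: for an unlink $L_a$ in $S^3$ with link baseballs corresponding to $U_1,\cdots,U_k$, we have $\boldsymbol{HFL'}^-(S^3,L_a)\simeq \mathbb{F}\llbracket U_1^{1/2},\cdots,U_k^{1/2},U\rrbracket/(U_1-U,\cdots,U_k-U)$, which is just the power series ring $\mathbb{F}\llbracket U_1^{1/2},\cdots,U_k^{1/2}\rrbracket$ (with $U_i=U_1$ for all $i$). This is an integral domain, and in particular it is $U_i$-torsion-free for every $i$, so the localization map
\[
\boldsymbol{HFL'}^-(S^3,L_a)\longrightarrow \boldsymbol{HFL'}^\infty(S^3,L_a)
\]
is injective.

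Finally, I would combine these with the naturality of localization. Given a non-orientable band $B:L_c\to L_a$ between balled unlinks in $S^3$, the band map (which is $R$-linear, where $R$ is the common coefficient ring) fits in the commutative square obtained by base change $-\otimes_R R^\infty$:
\[
\begin{array}{ccc}
\boldsymbol{HFL'}^-(S^3,L_c) & \xrightarrow{F_B} & \boldsymbol{HFL'}^-(S^3,L_a)\\
\downarrow & & \downarrow\\
\boldsymbol{HFL'}^\infty(S^3,L_c) & \xrightarrow{F_B^\infty} & \boldsymbol{HFL'}^\infty(S^3,L_a).
\end{array}
\]
By Proposition \ref{prop:Non-orientable-hfinf} the bottom map $F_B^\infty$ vanishes, and by the previous paragraph the right vertical arrow is injective. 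Therefore $F_B=0$, proving the corollary. There is no real obstacle here: the content is entirely in Proposition \ref{prop:Non-orientable-hfinf} together with the explicit description of $\boldsymbol{HFL'}^-$ for unlinks.
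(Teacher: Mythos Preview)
Your proof is correct and follows exactly the paper's approach: use Proposition~\ref{prop:Non-orientable-hfinf} together with the injectivity of $\boldsymbol{HFL'}^{-}(S^{3},UL)\to\boldsymbol{HFL'}^{\infty}(S^{3},UL)$ for unlinks, which you justify via Proposition~\ref{prop:Let--be}. The paper's proof is the one-line version of what you wrote out.
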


\begin{proof}
This follows from Proposition \ref{prop:Non-orientable-hfinf} together
with that for unlinks $UL$, $\boldsymbol{HFL'}^{-}(S^{3},UL)\to\boldsymbol{HFL'}^{\infty}(S^{3},UL)$
is injective.
\end{proof}
Let $B:L\to L'$ be a planar band between two planar balled links,
as in the start of this section. Then, we can complete $L\xrightarrow{B}L'$
to an unoriented skein triple $L_{a},L_{b},L_{c}$, where $(L,L')=(L_{a},L_{b})$
if $B$ is a split band, and $(L,L')=(L_{b},L_{c})$ if $B$ is a
merge band. Without loss of generality, assume that the coefficient
ring for $L_{b}$ is $\mathbb{F}\llbracket U_{1}^{1/2},U_{2}^{1/2},U_{3}^{k_{3}},\cdots,U_{n}^{k_{n}}\rrbracket$,
that the band intersects the link components of $L_{b}$ with link
baseballs that correspond to $U_{1}$ and $U_{2}$, and that the baseball
that corresponds to $U_{2}$ becomes a free baseball for the links
$L_{a}$ and $L_{c}$.

Let $S=\mathbb{F}\llbracket U_{1}^{1/2},U_{2},U_{3}^{k_{3}},\cdots,U_{n}^{k_{n}},U\rrbracket/(U_{1}-U,\cdots,U_{k}-U)$.
Then let us identify $\boldsymbol{HFL'}^{-}(S^{3},L_{a})$ and $\boldsymbol{HFL'}^{-}(S^{3},L_{c})$
with $S$ as $S$-modules, and consider $\boldsymbol{HFL'}^{-}(S^{3},L_{b})$
as a free $S$-module freely generated by $1,U_{2}^{1/2}$.
\begin{prop}
\label{prop:band-maps-unlink}The merge band map for $L_{a}\to L_{b}$
is the $S$-linear map given by $1\mapsto U_{1}^{1/2}+U_{2}^{1/2}$,
and the split band map for $L_{b}\to L_{c}$ is the $S$-linear map
given by $1\mapsto1,\ U_{2}^{1/2}\mapsto U_{1}^{1/2}$.
\end{prop}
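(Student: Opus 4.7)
The plan is to combine the unoriented skein exact triangle (Theorem \ref{thm:unoriented-exact-triangle}) with the action-commutation of Proposition \ref{prop:actions-commute}. First I would apply Corollary \ref{cor:Non-orientable-minus}: since the non-orientable band map $L_c \to L_a$ vanishes on $\boldsymbol{HFL'}^-$, the exact triangle collapses into a short exact sequence of $S$-modules
\[
0 \to \boldsymbol{HFL'}^-(S^3, L_a) \xrightarrow{f} \boldsymbol{HFL'}^-(S^3, L_b) \xrightarrow{g} \boldsymbol{HFL'}^-(S^3, L_c) \to 0,
\]
in which $f$ and $g$ are precisely the band maps to be computed.

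I would then constrain the two maps using Proposition \ref{prop:actions-commute}. Write $f(1) = a + b\, U_2^{1/2}$ and $g(1) = c$ with $a, b, c \in S$. Since the band $L_a \to L_b$ is of split type with respect to the link baseballs corresponding to $U_1, U_2$ on $L_b$, the relation $U_1^{1/2} f(1) = U_2^{1/2} f(1)$ holds on homology; expanding both sides in the $S$-basis $\{1, U_2^{1/2}\}$ and using $(U_2^{1/2})^2 = U_2 = U$ forces $a = U_1^{1/2} b$, so $f(1) = b(U_1^{1/2} + U_2^{1/2})$. Similarly, since the band $L_b \to L_c$ is of merge type, the relation $g(U_1^{1/2}) = g(U_2^{1/2})$ together with $S$-linearity gives $g(U_2^{1/2}) = U_1^{1/2} c$. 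Thus $f$ and $g$ are determined by the two elements $b, c \in S$.

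Next, I would show that $b$ and $c$ are units in the complete local $\mathbb{F}$-algebra $S$. Since $\boldsymbol{HFL'}^-(L_c) \simeq S$ is free, tensoring the short exact sequence with the residue field $\mathbb{F} = S/\mathfrak{m}_S$ yields an exact sequence $0 \to \mathbb{F} \to \mathbb{F}^2 \to \mathbb{F} \to 0$ of $\mathbb{F}$-vector spaces. Under the reduction $\bar f$, the generator $\bar 1$ maps to $\bar b \cdot \overline{U_2^{1/2}}$ (since $\overline{U_1^{1/2}}$ vanishes in the residue field), and injectivity of $\bar f$ forces $\bar b \neq 0$. Similarly, $\bar g(\bar 1) = \bar c$ and surjectivity of $\bar g$ forces $\bar c \neq 0$, so both $b$ and $c$ are units.

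Finally, the isomorphisms $\boldsymbol{HFL'}^-(S^3, L_a) \simeq S$ and $\boldsymbol{HFL'}^-(S^3, L_c) \simeq S$ produced in Proposition \ref{prop:Let--be} are only specified up to multiplication by a unit in $S$, so after rescaling them by $b^{-1}$ and $c^{-1}$ respectively the maps take the claimed forms $f(1) = U_1^{1/2} + U_2^{1/2}$ and $g(1) = 1$, $g(U_2^{1/2}) = U_1^{1/2}$. The main obstacle is really the constraint step: one must remember that $U_2^{1/2} \notin S$, so multiplication by $U_2^{1/2}$ interchanges the two summands of $S \oplus U_2^{1/2} S$, and the identification $(U_2^{1/2})^2 = U$ coming from the $\boldsymbol{HFL'}^-(L_b)$ computation must be used carefully to extract both equations cleanly; once this algebraic bookkeeping is handled the remainder is a formal consequence of exactness and Nakayama.
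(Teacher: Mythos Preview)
Your proof follows the same strategy as the paper's: collapse the exact triangle to a short exact sequence via Corollary~\ref{cor:Non-orientable-minus}, then constrain the maps using Proposition~\ref{prop:actions-commute}. Your constraint computation and your Nakayama argument are both correct, yielding $f(1)=b(U_1^{1/2}+U_2^{1/2})$ and $g(1)=c$, $g(U_2^{1/2})=U_1^{1/2}c$ with $b,c$ units.

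The one substantive difference is in the final normalization. The paper does not rescale; instead it invokes the fact (stated explicitly in the proof) that band maps preserve the relative homological $\mathbb{Z}$-grading (Subsection~\ref{subsec:spinc-structures-and-grading}). Since $g=F_m$ is surjective and homogeneous, $g(1)$ must land in the one-dimensional top graded piece of $S$, which over $\mathbb{F}=\mathbb{Z}/2$ forces $g(1)=1$; then $f=F_s$ is a homogeneous isomorphism onto $\ker g=(U_1^{1/2}+U_2^{1/2})S$, whose top graded piece is spanned by $U_1^{1/2}+U_2^{1/2}$, forcing $f(1)=U_1^{1/2}+U_2^{1/2}$. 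Thus the paper obtains $b=c=1$ directly. Your rescaling works for this single proposition since the identifications with $S$ set up just before the statement are not otherwise pinned down, but the grading argument is both simpler and more robust: it shows the units are trivial for the identifications coming from Proposition~\ref{prop:Let--be}, which is what one actually needs when comparing many planar band maps at once with Khovanov homology (where rescaling each unlink independently would not be coherent).
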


\begin{proof}
We use Theorem \ref{thm:unoriented-exact-triangle} for the unoriented
skein triple $(L_{a},L_{b},L_{c})$ together with that the band map
$\boldsymbol{HFL'}^{-}(S^{3},L_{c})\to\boldsymbol{HFL'}^{-}(S^{3},L_{a})$
is zero by Corollary \ref{cor:Non-orientable-minus}. Recall from
Subsection \ref{subsec:spinc-structures-and-grading} that the band
maps preserve the relative homological $\mathbb{Z}$-grading.

The merge band map $F_{m}$ is surjective. Hence, by Proposition \ref{prop:actions-commute},
we have $F_{m}(1)=1$, and so $F_{m}(U_{2}^{1/2})=U_{1}^{1/2}F_{m}(1)=U_{1}^{1/2}$.

The split band map $F_{s}$ is an isomorphism 
\[
\boldsymbol{HFL'}^{-}(S^{3},L_{a})\to\ker F_{m}=(U_{1}^{1/2}+U_{2}^{1/2})S\le\boldsymbol{HFL'}^{-}(S^{3},L_{b}),
\]
and so $F_{s}(1)=U_{1}^{1/2}+U_{2}^{1/2}$.
\end{proof}

\section{\label{sec:The-plan}The plan for proving Theorem \ref{thm:sym2-t2}}

We will first establish three pairs of maps which are morally quasi-inverses
of each other, and we will then use this to prove Theorem \ref{thm:sym2-t2}
in Section \ref{sec:composition}. We consider the giant weakly admissible,
multi-Heegaard diagram given by the three diagrams in Figure \ref{fig:genus1-diagrams},
where the underlying pointed surface is $(\mathbb{T}^{2},\{v,w,z\})$
and the attaching curves are $\boldsymbol{\beta}_{a},\boldsymbol{\beta}_{b},\boldsymbol{\beta}_{0},\boldsymbol{\beta}_{2},\widetilde{\boldsymbol{\beta}_{0}},\widetilde{\boldsymbol{\beta}_{2}},\boldsymbol{\beta}_{c}$,
and their standard translates. Note that $\boldsymbol{\beta}_{\infty}=\boldsymbol{\beta}_{c}$.
The attaching curve $\boldsymbol{\beta}_{\infty}$ has a nontrivial
$\mathbb{F}\llbracket U_{1},U_{2}^{1/2}\rrbracket$-local system $E$,
given by the oriented arc $G$. Also note that it is not necessary
to wind any of the curves to put all of the figures into one weakly
admissible diagram.

\begin{figure}[h]
\begin{centering}
\includegraphics{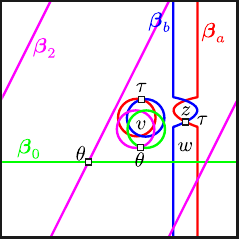}\qquad{}\includegraphics{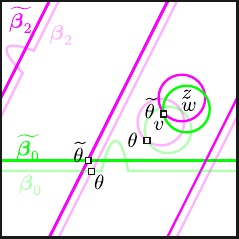}\qquad{}\includegraphics{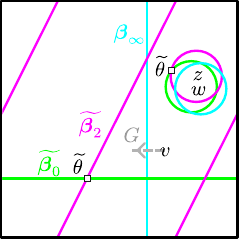}
\par\end{centering}
\caption{\label{fig:genus1-diagrams}The genus $1$ diagrams for Claims \ref{claim:step-1},
\ref{claim:step-2}, and \ref{claim:step-3}. Unless otherwise specified,
work over $\mathbb{F}\llbracket U_{1},U_{2}^{1/2}\rrbracket$ and
assign the weight $U_{1}$ to $v$, and $U_{2}^{1/2}$ to $w$ and
$z$.}
\end{figure}

\begin{claim}
\label{claim:step-1}Consider the cycles $\tau\in\boldsymbol{CF}^{-}(\boldsymbol{\beta}_{a},\boldsymbol{\beta}_{b})$
and $\theta\in\boldsymbol{CF}^{-}(\boldsymbol{\beta}_{0},\boldsymbol{\beta}_{2})$.
The mapping cones $\left(\boldsymbol{\beta}_{a}\xrightarrow{\tau}\boldsymbol{\beta}_{b}\right)$
and $\left(\boldsymbol{\beta}_{0}\xrightarrow{\theta}\boldsymbol{\beta}_{2}\right)$
are morally quasi-isomorphic over $\mathbb{F}\llbracket U_{1},U_{2}^{1/2}\rrbracket$,
if the almost complex structure is pinched along the free stabilization
region.
\end{claim}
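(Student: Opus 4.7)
The plan is to produce explicit cycles
\[
\underline{f}:\bigl(\boldsymbol{\beta}_a\xrightarrow{\tau}\boldsymbol{\beta}_b\bigr)\longrightarrow\bigl(\boldsymbol{\beta}_0\xrightarrow{\theta}\boldsymbol{\beta}_2\bigr),\qquad
\underline{g}:\bigl(\boldsymbol{\beta}_0\xrightarrow{\theta}\boldsymbol{\beta}_2\bigr)\longrightarrow\bigl(\boldsymbol{\beta}_a\xrightarrow{\tau}\boldsymbol{\beta}_b\bigr)
\]
and then to check that $\mu_2(\underline{f},\underline{g'})$ and $\mu_2(\underline{g},\underline{f'})$ satisfy the conditions of Lemma \ref{lem:theta-quasi-iso} modulo the maximal ideal $(U_1,U_2^{1/2})$. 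The pinching hypothesis is what makes this tractable: by Proposition \ref{prop:stabilization} (applied to the neck around $v$, which is precisely the free stabilization region distinguishing the two pairs of curves), every relevant polygon count decomposes as a product of a polygon count ``outside the neck'' and a polygon count ``inside the neck,'' and the latter is governed by the standard free stabilization model of Proposition \ref{prop:free-stabilization}.

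First, I identify $\tau$ and $\theta$ as the nearest-point cycles between the respective pairs of attaching curves in the diagram of Figure \ref{fig:genus1-diagrams}~(left); a quick inspection of the homological grading and ${\rm Spin}^c$-splitting (using Lemma \ref{lemma:deg-output} and Proposition \ref{prop:torsion}) shows each is a closed element of $\boldsymbol{CF}^-$, and that $\tau\in\boldsymbol{CF}^-(\boldsymbol{\beta}_a,\boldsymbol{\beta}_b;0)$, $\theta\in\boldsymbol{CF}^-(\boldsymbol{\beta}_0,\boldsymbol{\beta}_2;0)$. Next, I define $\underline{f}$ by specifying three components $f_{aa}\colon\boldsymbol{\beta}_a\to\boldsymbol{\beta}_0$, $f_{bb}\colon\boldsymbol{\beta}_b\to\boldsymbol{\beta}_2$, and a diagonal correction term $f_{ab}\colon\boldsymbol{\beta}_a\to\boldsymbol{\beta}_2$, each taken to be a sum of nearest-point intersection points. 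The closedness of $\underline{f}$ amounts to the three identities $\partial f_{aa}=\partial f_{bb}=0$ and
\[
\mu_2(\tau,f_{bb})+\mu_2(f_{aa},\theta)+\partial f_{ab}=0,
\]
all of which reduce to finite, combinatorial counts of small bigons and triangles on $\mathbb{T}^2$ once the neck is pinched. The cycle $\underline{g}$ is defined symmetrically.

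Finally, I compute the two compositions. For this I work modulo $(U_1,U_2^{1/2})$, so that by Lemma \ref{lem:derived-nakayama} it suffices to produce the correct leading terms. In the pinched almost complex structure, Proposition \ref{prop:stabilization} lets me factor each higher composition as the product of a count on the outside of the neck and a count inside the neck. The inside count is forced to yield the top generator of the standard free stabilization, so modulo the maximal ideal the diagonal components of $\mu_2(\underline{f},\underline{g'})$ and $\mu_2(\underline{g},\underline{f'})$ reduce to $\Theta_{\boldsymbol{\beta}_a}^+$, $\Theta_{\boldsymbol{\beta}_b}^+$, $\Theta_{\boldsymbol{\beta}_0}^+$, $\Theta_{\boldsymbol{\beta}_2}^+$ respectively, as required. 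Off-diagonal components vanish modulo the maximal ideal by the grading and ${\rm Spin}^c$-splitting constraints of Propositions \ref{prop:Z-grading} and \ref{prop:Z/2-splitting}.

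The main obstacle I anticipate is verifying the $A_\infty$-cycle relation involving $f_{ab}$ (and its analogue for $\underline{g}$): unlike the pure bigon and triangle counts inside each chain complex, this equation mixes multiplications by $\tau$ and $\theta$ with a triangle-count differential, so one has to enumerate a small but nontrivial collection of immersed triangles before pinching. However, this is purely a bookkeeping exercise on the genus-$1$ surface with three basepoints, of the same flavor as the local computations in Sections \ref{sec:proof-step1} and \ref{sec:proof-step3} (on which the subsequent claims rely), and does not involve any analysis in $\mathrm{Sym}^2(\mathbb{T}^2)$.
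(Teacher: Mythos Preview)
Your overall strategy---produce explicit cycles between the two cones, verify that the compositions satisfy Lemma~\ref{lem:theta-quasi-iso} modulo the maximal ideal, and use the pinched almost complex structure together with Proposition~\ref{prop:stabilization} to reduce to a destabilized genus-one computation---matches the paper. The gap is in the \emph{shape} of the maps.

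You take $\underline{f}$ to be lower-triangular, with components $f_{aa}\colon\boldsymbol{\beta}_a\to\boldsymbol{\beta}_0$, $f_{bb}\colon\boldsymbol{\beta}_b\to\boldsymbol{\beta}_2$, and a correction $f_{ab}$, each described as a ``sum of nearest-point intersection points.'' But after destabilizing, $\beta_a$ and $\beta_0$ are single circles of \emph{different slopes} on $(\mathbb{T}^2,\{w,z\})$; they are not small translates of one another, so ``nearest point'' is undefined. More seriously, with this lower-triangular shape the $\boldsymbol{\beta}_a\to\boldsymbol{\beta}_a'$ component of $\mu_2(\underline{f},\underline{g'})$ has leading term $\mu_2(f_{aa},g'_{aa})$, and asking this to equal $\Theta_a^+$ amounts to asking that $\boldsymbol{\beta}_a$ and $\boldsymbol{\beta}_0$ be individually morally quasi-isomorphic. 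There is no reason to expect this; contrast Claim~\ref{claim:step-2}, where the analogous diagonal argument works precisely because $\widetilde{\boldsymbol{\beta}_0}$ is obtained from $\boldsymbol{\beta}_0$ by handleslides, whereas here $\boldsymbol{\beta}_a$ is not related to $\boldsymbol{\beta}_0$ in that way.

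The paper instead uses \emph{anti-diagonal} maps on the destabilized diagram $(\mathbb{T}^2,\beta_a,\beta_b,\beta_0,\beta_2,\{w,z\})$: the only nonzero component of $\underline{\zeta}$ is a single intersection point $\zeta\in\boldsymbol{CF}^-(\beta_b,\beta_0)$, and the only nonzero component of $\underline{\xi}$ is $\xi\in\boldsymbol{CF}^-(\beta_2,\beta_a)$; the desired morally quasi-inverses are then $\underline{S(\zeta)}$ and $\underline{S(\xi)}$. With this shape, the $\beta_a\to\beta_a'$ component of $\mu_2(\underline{\zeta},\underline{\xi'})$ is not a $\mu_2$ but the pentagon count $\mu_4(\tau,\zeta,\theta,\xi')$ (one traverses $\beta_a\xrightarrow{\tau}\beta_b\xrightarrow{\zeta}\beta_0\xrightarrow{\theta}\beta_2\xrightarrow{\xi'}\beta_a'$). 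Section~\ref{sec:proof-step1} then checks that the relevant $\mu_1$'s and $\mu_2$'s vanish, the $\mu_3$'s vanish, the four $\mu_4$'s equal $\Theta^+$ modulo $U^{1/2}$, and the $\mu_5$'s vanish modulo $U^{1/2}$---all by explicit enumeration of small polygons on the torus.
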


\begin{proof}
We show this in Section \ref{sec:proof-step1}: we define maps $\underline{\zeta}$,
$\underline{\xi}$ in a destabilized diagram; $\underline{S(\zeta)}$
and $\underline{S(\xi)}$ are the wanted morally quasi-inverses.
\end{proof}
\begin{claim}
\label{claim:step-2}The following dashed maps between the mapping
cones $\left(\boldsymbol{\beta}_{0}\xrightarrow{\theta}\boldsymbol{\beta}_{2}\right)$
and $\left(\widetilde{\boldsymbol{\beta}_{0}}\xrightarrow{\widetilde{\theta}}\widetilde{\boldsymbol{\beta}_{2}}\right)$
are morally quasi-inverses of each other, where $\Theta_{0}^{+}$
and $\Theta_{2}^{+}$ are the top homological grading generators.
Note that the curves $\widetilde{\boldsymbol{\beta}_{0}}$ and $\widetilde{\boldsymbol{\beta}_{2}}$
are obtained from $\boldsymbol{\beta}_{0}$ and $\boldsymbol{\beta}_{2}$,
respectively, by two handleslides.
\[\begin{tikzcd}[sep=large]
	{\boldsymbol{\beta}_0} & {\boldsymbol{\beta}_2} & {\widetilde{\boldsymbol{\beta}_0 }} & {\widetilde{\boldsymbol{\beta}_2 }} \\
	{\widetilde{\boldsymbol{\beta}_0}} & {\widetilde{\boldsymbol{\beta}_2 }} & {\boldsymbol{\beta}_0} & {\boldsymbol{\beta}_2}
	\arrow["\theta", from=1-1, to=1-2]
	\arrow["{\Theta_0 ^+}"', dashed, from=1-1, to=2-1]
	\arrow["{\Theta_2 ^+}"', dashed, from=1-2, to=2-2]
	\arrow["{\widetilde{\theta}}", from=1-3, to=1-4]
	\arrow["{\Theta_0 ^+}"', dashed, from=1-3, to=2-3]
	\arrow["{\Theta_2 ^+}"', dashed, from=1-4, to=2-4]
	\arrow["{\widetilde{\theta}}", from=2-1, to=2-2]
	\arrow["\theta", from=2-3, to=2-4]
\end{tikzcd}\]
\end{claim}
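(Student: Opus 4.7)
The plan is to reduce Claim \ref{claim:step-2} to combinatorial triangle counts in the genus $1$ Heegaard diagram shown in the middle of Figure \ref{fig:genus1-diagrams}. By Definition \ref{def:moral-def}, I need to show that both dashed maps $\underline{\Theta}$ are cycles in the twisted-complex Hom, and that their two compositions satisfy the hypotheses of Lemma \ref{lem:theta-quasi-iso} after reducing modulo the maximal ideal $(U_1, U_2^{1/2})$. Throughout, the Heegaard diagram will be homologically $\mathbb{Z}$-gradable and Alexander $\mathbb{Z}/2$-splittable (Subsection \ref{subsec:Gradings}), which drastically restricts the list of domains we have to consider.

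First I would check the cycle condition. Each $\Theta_i^+$ is a cycle in $\boldsymbol{CF}^-(\boldsymbol{\beta}_i, \widetilde{\boldsymbol{\beta}_i})$ by inspection: it is a top-grading intersection point, and there are no candidate Maslov index $1$ domains emanating from it of Alexander $\mathbb{Z}/2$-grading $0$. The cycle condition for $\underline{\Theta}$ in the mapping-cone complex then reduces, by the twisted $\mu_1^{\mathrm{Tw}}$ formula of Subsection \ref{subsec:Twisted-complexes}, to the triangle identity
\[
\mu_2(\theta, \Theta_2^+) \equiv \mu_2(\Theta_0^+, \widetilde{\theta}) \pmod{\partial}
\]
in the triple $(\boldsymbol{\beta}_0, \widetilde{\boldsymbol{\beta}_2})$ with intermediate curve either $\boldsymbol{\beta}_2$ or $\widetilde{\boldsymbol{\beta}_0}$, and its analogue for the reverse direction. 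This is a direct enumeration of small Maslov index $0$ triangles on $\mathbb{T}^2$.

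Next I would verify the moral quasi-inverse condition. By Lemma \ref{lem:theta-quasi-iso} together with Remark \ref{rem:theta-quasi-iso}, it suffices to show that for each summand $\boldsymbol{\beta}_i$, the $\boldsymbol{\beta}_i \to \boldsymbol{\beta}_i'$ diagonal component of the composition is homotopic to $\Theta_{\boldsymbol{\beta}_i}^+$ modulo $(U_1, U_2^{1/2})$. That diagonal component is precisely $\mu_2(\Theta_{\boldsymbol{\beta}_i}^+, \Theta_{\widetilde{\boldsymbol{\beta}_i}}^+)$ (or its reverse) in the triple $(\boldsymbol{\beta}_i, \widetilde{\boldsymbol{\beta}_i}, \boldsymbol{\beta}_i')$. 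Modulo the maximal ideal only the small, weight-$1$ triangles contribute, and the standard handleslide composition identity (compare the argument in the proof of handleslide invariance, e.g.\ \cite[Section 9]{MR2113020}, \cite[Proposition 7.9]{2011.00113}) produces $\Theta^+_{\boldsymbol{\beta}_i}$ exactly.

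The main obstacle is the weight bookkeeping in the triangle identity $\mu_2(\theta, \Theta_2^+) \equiv \mu_2(\Theta_0^+, \widetilde{\theta})$. The ``handleslid'' curves $\widetilde{\boldsymbol{\beta}_i}$ wind around the basepoints $v, w, z$ in a nontrivial way, so contributing triangles may carry various weights in $U_1, U_2^{1/2}$, and these must pair off across the two sides of the identity. The Alexander $\mathbb{Z}/2$-splitting forces only triangles of total multiplicity $\equiv 0 \pmod 2$ to enter, and the homological $\mathbb{Z}$-grading fixes the Maslov index; after imposing these constraints on domains with vertices $\theta, \Theta_2^+$ (or $\Theta_0^+, \widetilde{\theta}$), one is left with only a handful of explicit domains whose weighted counts can be directly matched.
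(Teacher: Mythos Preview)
Your approach is correct in spirit but takes an unnecessarily laborious route for the cycle condition, and the paper's argument sidesteps precisely the obstacle you flag.

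For the identity $\mu_2(\theta,\Theta_2^+)=\mu_2(\Theta_0^+,\widetilde{\theta})$, the paper does \emph{not} enumerate triangles. Instead, it observes that both sides are equal on homology: each $\mu_2(-,\Theta_i^+)$ is a handleslide map and hence a grading-preserving isomorphism on $\boldsymbol{HF}^-$, and $\theta,\widetilde{\theta}$ are the top-grading generators of their respective complexes, so both compositions land on the (unique) top-grading generator of $\boldsymbol{HF}^-(\boldsymbol{\beta}_0,\widetilde{\boldsymbol{\beta}_2})$. The crucial second step is that the triples $(\boldsymbol{\beta}_0,\widetilde{\boldsymbol{\beta}_0},\widetilde{\boldsymbol{\beta}_2})$ and $(\boldsymbol{\beta}_0,\boldsymbol{\beta}_2,\widetilde{\boldsymbol{\beta}_2})$ are homologically $\mathbb{Z}$-gradable, so both outputs lie in the top homological grading of $\boldsymbol{CF}^-(\boldsymbol{\beta}_0,\widetilde{\boldsymbol{\beta}_2})$. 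Since there is nothing above them to be a boundary, equality on homology forces equality on the chain level. This yields the exact identity (not just $\pmod{\partial}$), so the dashed map with only $\Theta_0^+,\Theta_2^+$ components is already a cycle, with no correction term needed. The ``weight bookkeeping in the triangle identity'' that you identify as the main obstacle simply never arises.

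For the composition step, your argument and the paper's are essentially the same: both invoke Lemma~\ref{lem:theta-quasi-iso} with the filtration $\{\boldsymbol{\beta}_2\}\subset\{\boldsymbol{\beta}_0,\boldsymbol{\beta}_2\}$, noting that the diagonal components are handleslide compositions $\mu_2(\Theta^+,\Theta^+)$, which are homotopic to $\Theta^+$ by the standard argument. Your direct-enumeration approach would work, but the conceptual shortcut via handleslide naturality and $\mathbb{Z}$-gradability is what makes Claim~\ref{claim:step-2} the easy step among the three.
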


\begin{proof}
Let us show that the dashed maps are cycles. First, the maps $\theta,\widetilde{\theta},\Theta_{0}^{+},\Theta_{2}^{+}$
are cycles, and note that $\boldsymbol{\beta}_{i}$ and $\widetilde{\boldsymbol{\beta}_{i}}$
are related by handleslides and $\Theta_{i}^{+}$ is the top homological
grading generator. Since $\mu_{2}(\Theta_{0}^{+},\widetilde{\theta})=\mu_{2}(\theta,\Theta_{2}^{+})$
on homology, they must be equal on the chain level since they are
in the top homological grading ($(\boldsymbol{\beta}_{0},\widetilde{\boldsymbol{\beta}_{0}},\widetilde{\boldsymbol{\beta}_{2}})$
and $(\boldsymbol{\beta}_{0},\boldsymbol{\beta}_{2},\widetilde{\boldsymbol{\beta}_{2}})$
are homologically $\mathbb{Z}$-gradable). Similarly, the dashed map
on the right is a cycle. Their compositions satisfy the conditions
of Lemma \ref{lem:theta-quasi-iso}, with respect to the filtrations
$\{\boldsymbol{\beta}_{2}\}\subset\{\boldsymbol{\beta}_{0},\boldsymbol{\beta}_{2}\}$
and $\{\widetilde{\boldsymbol{\beta}_{2}}\}\subset\{\widetilde{\boldsymbol{\beta}_{0}},\widetilde{\boldsymbol{\beta}_{2}}\}$.
\end{proof}
\begin{claim}
\label{claim:step-3}The mapping cones $\left(\widetilde{\boldsymbol{\beta}_{0}}\xrightarrow{\widetilde{\theta}}\widetilde{\boldsymbol{\beta}_{2}}\right)$
and $\boldsymbol{\beta}_{\infty}^{E}$ are morally quasi-isomorphic
over $\mathbb{F}\llbracket U_{1},U_{2}^{1/2}\rrbracket$, if the almost
complex structure is pinched along the free stabilization region.
\end{claim}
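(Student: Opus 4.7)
The plan is to prove Claim \ref{claim:step-3} by producing explicit cycles $\underline{f}: \underline{\widetilde{\boldsymbol{\beta}}_{02}} \to \boldsymbol{\beta}_\infty^E$ and $\underline{g}: \boldsymbol{\beta}_\infty^E \to \underline{\widetilde{\boldsymbol{\beta}}_{02}}$, where $\underline{\widetilde{\boldsymbol{\beta}}_{02}} := (\widetilde{\boldsymbol{\beta}}_0 \xrightarrow{\widetilde{\theta}} \widetilde{\boldsymbol{\beta}}_2)$, and verifying that they are morally quasi-inverses in the sense of Definition \ref{def:moral-def}. Since the computation takes place on the genus-$1$ surface $\mathbb{T}^2$ in a pinched almost complex structure, Proposition \ref{prop:stabilization} reduces all triangle and quadrilateral counts to counts on $\mathbb{T}^2$ itself, which can be carried out combinatorially by lifting to the universal cover $\mathbb{R}^2$.

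First I would write $\underline{f} = f_0 \oplus f_2$ with $f_i : \widetilde{\boldsymbol{\beta}}_i \to \boldsymbol{\beta}_\infty^E$, and similarly $\underline{g} = g_0 \oplus g_2$ with $g_i : \boldsymbol{\beta}_\infty^E \to \widetilde{\boldsymbol{\beta}}_i$. Each $f_i$ (resp.\ $g_i$) is a particular $\mathbb{F}\llbracket U_1, U_2^{1/2} \rrbracket$-linear combination of intersection points labeled by the basis vectors $e_0, e_1$ (resp.\ the duals $e_0^\ast, e_1^\ast$) of the local system $E$ on $\boldsymbol{\beta}_\infty$. The cycle condition for $\underline{f}$ is that $\mu_1 f_2 = 0$ and $\mu_1 f_0 + \mu_2(\widetilde{\theta}, f_2) = 0$, and there is an analogous condition for $\underline{g}$. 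Natural candidates are suggested by the usual surgery-triangle picture on the torus at slopes $0$, $2$, $\infty$: on the universal cover, $\widetilde{\boldsymbol{\beta}}_0$ meets $\boldsymbol{\beta}_\infty$ once per fundamental domain while $\widetilde{\boldsymbol{\beta}}_2$ meets it twice, and the local-system monodromy dictated by the arc $G$ forces specific choices of $e_i$ or $e_i^\ast$ labels on these generators. One then checks the cycle conditions by enumerating the bigons between the relevant pairs of curves on the torus and showing that the prescribed linear combinations are annihilated by $\mu_1$ (up to the $\widetilde{\theta}$-correction in the $f_0$ and $g_2$ terms), while also verifying positivity via Lemma \ref{lem:positivity} so that the resulting maps genuinely land in $\boldsymbol{CF}_{fil}^-$.

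With $\underline{f}$ and $\underline{g}$ in hand, the compositions $\mu_2(\underline{f}, \underline{g}')$ and $\mu_2(\underline{g}', \underline{f})$ reduce to triangle counts in $\mathbb{T}^2$, which I would compute by lifting to $\mathbb{R}^2$ and summing geometric series in $U_1$ and $U_2^{1/2}$ arising from the infinitely many lifts. To verify the moral quasi-isomorphism condition, by Definition \ref{def:moral-def} and Lemma \ref{lem:theta-quasi-iso} it suffices to work modulo the maximal ideal $(U_1, U_2^{1/2})$, where only the triangles of total basepoint-multiplicity zero survive. The expected outcome is that each composition becomes lower triangular with respect to the evident filtration on $\underline{\widetilde{\boldsymbol{\beta}}_{02}}$ (and on its standard translate), with the induced map on each successive graded piece being homotopic to $\Theta_{\widetilde{\boldsymbol{\beta}}_i}^+$, so that the hypotheses of Lemma \ref{lem:theta-quasi-iso} are met.

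The main obstacle will be the combinatorial bookkeeping imposed by three simultaneous complications: the rank-$2$ local system $E$ on $\boldsymbol{\beta}_\infty$, which doubles the number of generators in every Hom-space touching $\boldsymbol{\beta}_\infty$ and forces one to track the monodromy factor $\phi^{\#(G \cap p)}$ along every arc segment; the handleslid curves $\widetilde{\boldsymbol{\beta}}_0$ and $\widetilde{\boldsymbol{\beta}}_2$ (rather than $\boldsymbol{\beta}_0, \boldsymbol{\beta}_2$ themselves), whose intersection pattern with $\boldsymbol{\beta}_\infty$ is geometrically more involved than for the standard slopes; and the weak-admissibility setup, under which infinitely many lifts in $\mathbb{R}^2$ contribute and the resulting power series must be shown, using the positivity analysis of Subsection \ref{subsec:Positivity}, to have only nonnegative powers of $U$. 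I would attack these in order: first identify the combinatorial skeleton of each cycle while ignoring the local system, then decorate with $e_i, e_i^\ast$ labels as forced by $G$, verify filtered positivity, and finally carry out the composition counts on $\mathbb{R}^2$ and read off the answer modulo $(U_1, U_2^{1/2})$.
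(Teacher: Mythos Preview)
Your overall strategy is correct and matches the paper's: produce explicit cycles between the two twisted complexes, then verify they are morally quasi-inverses by computing polygon counts on the torus modulo the maximal ideal. However, you miss two simplifications that the paper exploits, and one of your descriptions is inaccurate.

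First, the phrase ``pinched along the free stabilization region'' in the claim is the key simplification, not just a technicality. The handleslides taking $\boldsymbol{\beta}_0,\boldsymbol{\beta}_2$ to $\widetilde{\boldsymbol{\beta}_0},\widetilde{\boldsymbol{\beta}_2}$ in Claim~\ref{claim:step-2} are performed precisely so that $w,z$ sit in a free link stabilization region for the third diagram. After pinching and invoking Proposition~\ref{prop:stabilization}, the paper passes to a \emph{destabilized} genus-$1$ diagram with a single basepoint $v$ and coefficient ring $\mathbb{F}\llbracket U\rrbracket$ (Figure~\ref{fig:local-02inf}), and only at the end stabilizes the resulting maps via $S$. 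So the worry you raise about the handleslid curves being ``geometrically more involved'' dissolves: on the destabilized diagram the curves are the standard slopes $0,2,\infty$ with one basepoint. Your sentence that Proposition~\ref{prop:stabilization} ``reduces counts to $\mathbb{T}^2$ itself'' is confused, since the original diagram is already on $\mathbb{T}^2$; what it actually buys is the reduction from two variables $U_1,U_2^{1/2}$ to one variable $U$.

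Second, the paper's maps are strictly simpler than your $f_0\oplus f_2$, $g_0\oplus g_2$: it takes $f_0=0$ and $g_2=0$, using the single-generator maps $e_0\xi:\beta_2\to\beta_\infty^E$ and $e_1^\ast\zeta:\beta_\infty^E\to\beta_0$. The cycle conditions then become the vanishing of $\mu_2(\theta,e_0\xi)$ and $\mu_2(e_1^\ast\zeta,\theta)$, with no correction terms to balance.

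Finally, your claim that the compositions $\mu_2^{\mathrm{Tw}}(\underline{f},\underline{g}')$ ``reduce to triangle counts'' is not right: because $\underline{\widetilde{\boldsymbol{\beta}}_{02}}$ carries the differential $\widetilde{\theta}$, the twisted $\mu_2$ unfolds into $\mu_2$, $\mu_3$, and $\mu_4$ of the underlying $A_\infty$-category, i.e.\ triangle, quadrilateral, and pentagon counts. The heart of the verification in the paper is showing $\mu_3(\theta,e_0\xi,e_1^\ast\zeta')=\Theta_0^+$, $\mu_3(e_0\xi,e_1^\ast\zeta',\theta')=\Theta_2^+$, and $\mu_3(e_1^\ast\zeta,\theta,e_0\xi')=\mathrm{Id}_E\Theta_\infty^+$ modulo $U$, which are quadrilateral counts where the local-system monodromy $\phi$ plays an essential role.
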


\begin{proof}
We show this in Section \ref{sec:proof-step3}: we define maps $e_{0}\underline{\xi}$,
$e_{1}^{\ast}\underline{\zeta}$ in a destabilized diagram (the names
are unfortunately the same as the maps for Claim \ref{claim:step-1});
$\underline{S(e_{0}\xi)}$ and $\underline{S(e_{1}^{\ast}\zeta)}$
are the wanted morally quasi-inverses.
\end{proof}
\begin{rem}
Claims \ref{claim:step-2} and \ref{claim:step-3} hold over the ring
$\mathbb{F}\llbracket V,W,Z\rrbracket$, where we assign weights $V,W,Z$
to $v,w,z$, respectively. However, we need to assign the same weights
to $w$ and $z$ for Claim \ref{claim:step-1}.
\end{rem}

\begin{rem}
For Theorem \ref{thm:2surgery}, we only need to additionally check
that $\mu_{2}(e_{0}\xi,e_{1}^{\ast}\zeta)=0$ in addition to Claim
\ref{claim:step-3}.
\end{rem}

\begin{rem}
\label{rem:explicit-moral-homotopy-equiv}In fact, the above three
pairs of twisted complexes are morally homotopy equivalent. For Claim
\ref{claim:step-1}, $u_{2}^{-1}\underline{S(\zeta)}$ and $\underline{S(\xi)}$,
where $u_{2}=\sum_{n=1}^{\infty}U_{2}^{n^{2}-n}$, are morally homotopy
inverses. For Claim \ref{claim:step-2}, the given maps are morally
homotopy inverses: one way to show this is to check that $(\mathbb{T}^{2},\boldsymbol{\beta}_{0},\boldsymbol{\beta}_{2},\widetilde{\boldsymbol{\beta}_{0}},\widetilde{\boldsymbol{\beta}_{2}},\boldsymbol{\beta}_{0}',\boldsymbol{\beta}_{2}',\{v,w,z\})$
is homologically $\mathbb{Z}$-gradable. For Claim \ref{claim:step-3},
$\underline{S(e_{0}\xi)}$ and $u_{1}^{-1}\underline{S(e_{1}^{\ast}\zeta)}$,
where $u_{1}=\sum_{n=1}^{\infty}U_{1}^{n^{2}-n}$, are morally homotopy
inverses.
\end{rem}

We believe that finishing off the proof from Claims \ref{claim:step-1},
\ref{claim:step-2}, and \ref{claim:step-3} is standard; we spell
out one way to prove Theorem \ref{thm:sym2-t2} in Section \ref{sec:composition}.

\section{\label{sec:proof-step1}Local computation for Claim \ref{claim:step-1}}

\begin{figure}[h]
\begin{centering}
\includegraphics[scale=1.5]{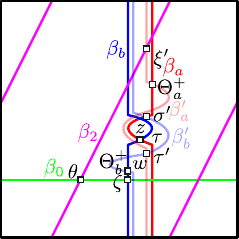}\enskip{}\includegraphics[scale=1.5]{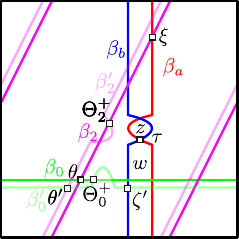}
\par\end{centering}
\caption{\label{fig:ab02ab-t2}Heegaard diagrams $(\mathbb{T}^{2},\beta_{a},\beta_{b},\beta_{0},\beta_{2},\beta_{a}',\beta_{b}',\{w,z\})$
and $(\mathbb{T}^{2},\beta_{0},\beta_{2},\beta_{a},\beta_{b},\beta_{0}',\beta_{2}',\{w,z\})$}
\end{figure}

Consider the Heegaard diagram $(\mathbb{T}^{2},\beta_{a},\beta_{b},\beta_{0},\beta_{2},\{w,z\})$
(together with standard translates) given by Figure \ref{fig:ab02ab-t2},
and work over the ring $\mathbb{F}\llbracket U^{1/2}\rrbracket$ and
assign the weight $U^{1/2}$ to both basepoints $z$ and $w$. Consider
the following vertical maps $\underline{\zeta}$ and $\underline{\xi}$
between the twisted complexes $\beta_{a}\xrightarrow{\tau}\beta_{b}$
and $\beta_{0}\xrightarrow{\theta}\beta_{2}$.
\[\begin{tikzcd}[sep=large]
	{\beta_a} & {\beta_b} && {\beta_0} & {\beta_2} \\
	& {\beta_0} & {\beta_2} && {\beta_a} & {\beta_b}
	\arrow["\tau", from=1-1, to=1-2]
	\arrow["\zeta"', dashed, from=1-2, to=2-2]
	\arrow["\theta", from=1-4, to=1-5]
	\arrow["\xi"', dashed, from=1-5, to=2-5]
	\arrow["\theta", from=2-2, to=2-3]
	\arrow["\tau", from=2-5, to=2-6]
\end{tikzcd}\]We claim that they are cycles and that $\mu_{2}(\underline{\zeta},\underline{\xi'})$
and $\mu_{2}(\underline{\xi},\underline{\zeta'})$ are $\Theta^{+}$
modulo $U^{1/2}$. Recall that for instance, for $\mu_{2}(\underline{\zeta},\underline{\xi'})$,
this means that the composition of the vertical maps on the left hand
side is the right hand side modulo $U^{1/2}$.
\[\begin{tikzcd}[sep=large]
	{\beta_a} & {\beta_b} &&& {\beta _a } & {\beta _b } \\
	& {\beta_0} & {\beta_2} && {\beta_a'} & {\beta_b '} \\
	&& {\beta _a '} & {\beta _b '}
	\arrow["\tau", from=1-1, to=1-2]
	\arrow["\zeta"', dashed, from=1-2, to=2-2]
	\arrow["{\tau }", from=1-5, to=1-6]
	\arrow["{\Theta_a ^+}"', dashed, from=1-5, to=2-5]
	\arrow["{\Theta_b ^+}"', dashed, from=1-6, to=2-6]
	\arrow["\theta", from=2-2, to=2-3]
	\arrow["{\xi '}"', dashed, from=2-3, to=3-3]
	\arrow["{\tau '}", from=2-5, to=2-6]
	\arrow["{\tau '}", from=3-3, to=3-4]
\end{tikzcd}\]

Showing the above is equivalent to the following:
\begin{itemize}
\item $\beta_{a}\xrightarrow{\tau}\beta_{b}$ and $\beta_{0}\xrightarrow{\theta}\beta_{2}$
are twisted complexes: $\mu_{1}(\tau)=0$, $\mu(\theta)=0$
\item $\mu_{1}(\underline{\zeta})=0$: $\mu_{1}(\zeta)=0$, $\mu_{2}(\tau,\zeta)=0$,
$\mu_{2}(\zeta,\theta)=0$, $\mu_{3}(\tau,\zeta,\theta)=0$
\item $\mu_{1}(\underline{\xi})=0$: $\mu_{1}(\xi)=0$, $\mu_{2}(\theta,\xi)=0$,
$\mu_{2}(\xi,\tau)=0$, $\mu_{3}(\theta,\xi,\tau)=0$
\item $\mu_{2}(\underline{\zeta},\underline{\xi'})$: $\mu_{3}(\zeta,\theta,\xi')=0$,
$\mu_{4}(\tau,\zeta,\theta,\xi')=\Theta_{a}^{+}$, $\mu_{4}(\zeta,\theta,\xi',\tau')=\Theta_{b}^{+}$,
$\mu_{5}(\tau,\zeta,\theta,\xi',\tau')=0$ modulo $U^{1/2}$
\item $\mu_{2}(\underline{\xi},\underline{\zeta'})$: $\mu_{3}(\xi,\tau,\zeta')=0$,
$\mu_{4}(\theta,\xi,\tau,\zeta')=\Theta_{0}^{+}$, $\mu_{4}(\xi,\tau,\zeta',\theta')=\Theta_{2}^{+}$,
$\mu_{5}(\theta,\xi,\tau,\zeta',\theta')=0$ modulo $U^{1/2}$
\end{itemize}
\begin{rem}
In fact, we can explicitly compute $\mu_{2}(\underline{\zeta},\underline{\xi'})$
and $\mu_{2}(\underline{\xi},\underline{\zeta'})$. All the $\mu_{3}$'s
and $\mu_{5}$'s vanish, and the $\mu_{4}$'s are $u\Theta^{+}$ for
the unit $u=\sum_{n=1}^{\infty}U^{n^{2}-n}$.
\end{rem}

Some of these are easy to verify. Indeed, the $\mu_{1}$'s, $\mu_{2}(\tau,\zeta)$,
and $\mu_{2}(\xi,\tau)$ vanish because there are no relevant bigons
and triangles. The hexagon count $\mu_{5}$ vanishes modulo $U^{1/2}$
because there are no relevant hexagons that do not contain any basepoints.

To verify the rest, we work in the universal cover of $\mathbb{T}^{2}$.
It will be useful to have names for some right triangles.
\begin{defn}
\label{def:triangle-label}Let $\beta_{\infty}$ be the straight,
vertical line that is in the middle of $\beta_{a}$ and $\beta_{b}$.
We abuse notation and label the intersection points as follows: $\beta_{\infty}\cap\beta_{0}=\{\zeta\}$,
$\beta_{2}\cap\beta_{\infty}=\{\xi\}$. Also, let $\beta_{0}\cap\beta_{2}=\{\theta_{0},\theta_{1}\}$,
where $\theta_{1}=\theta$. See Figure \ref{fig:labelled-triangles}.
\begin{itemize}
\item Let triangles $T_{n}^{+}$ and $S_{n}^{+}$, respectively, for $n\in\mathbb{Z}_{\ge1}$
be the $n$th right-side up triangle that has vertices $\theta_{1},\xi,\zeta$
and $\theta_{0},\xi,\zeta$, respectively.
\item Let triangles $T_{n}^{-}$ and $S_{n}^{-}$, respectively, for $n\in\mathbb{Z}_{\ge1}$
be the $n$th upside down triangle that has vertices $\theta_{1},\xi,\zeta$
and $\theta_{0},\xi,\zeta$, respectively.
\end{itemize}
\end{defn}

\begin{figure}
\begin{centering}
\includegraphics[scale=0.65]{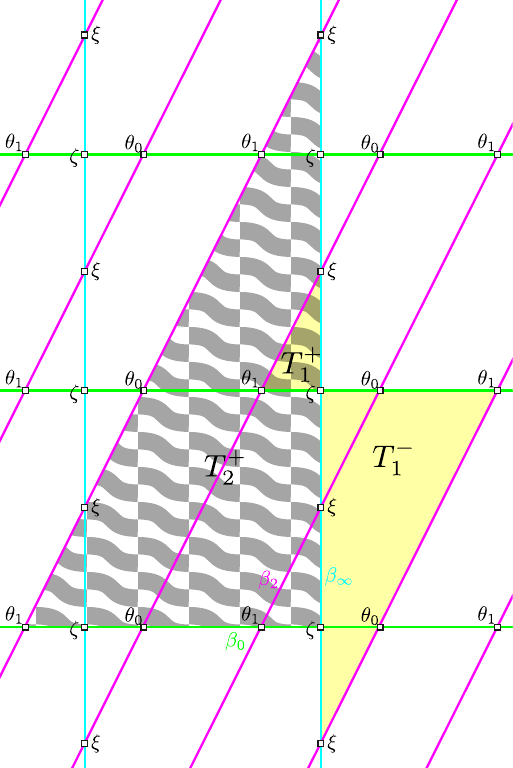}\enskip{}\includegraphics[scale=0.9]{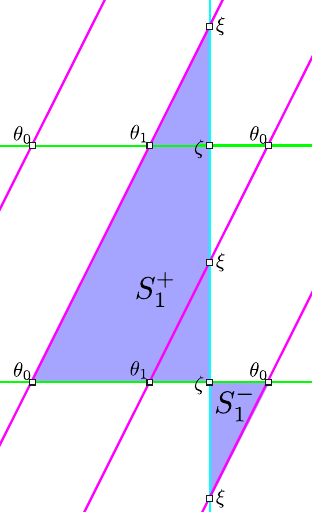}
\par\end{centering}
\caption{\label{fig:labelled-triangles}Right triangles and their names}
\end{figure}

\subsection{\label{subsec:mu2-vanish}The \texorpdfstring{$\mu_2$}{mu2}'s vanish}

For $\mu_{2}(\zeta,\theta)$ and $\mu_{2}(\theta,\xi)$, there are
two families of triangles that cancel each other. These are related
by ``rotation by $\pi$.'' In fact, all of the quadrilaterals (and
hexagons) that cancel in pairs are also related by ``rotation by
$\pi$'' in some sense.

We consider $\mu_{2}(\zeta,\theta)$. The two smallest triangles are
drawn in the left hand side of Figure \ref{fig:small-triangles-quadrilaterals}.
These have one basepoint, and hence have weight $U^{1/2}$. The next
smallest triangles are drawn in Figure \ref{fig:large-triangles},
and they have weight $U^{5/2}$, as they contain five basepoints.

We view these triangles as small perturbations of $T_{n}^{\pm}$.
(We think of $\beta_{a}$ and $\beta_{b}$ as being close to each
other: the region between them is ``small.'') Such triangle $T_{n}^{+}$
or $T_{n}^{-}$ uniquely determines the corresponding triangle that
contributes to $\mu_{2}(\zeta,\theta)$. For each $n$, there are
two of them, one from $T_{n}^{+}$ and one from $T_{n}^{-}$, and
they have weight $U^{n^{2}-n+\frac{1}{2}}$. Hence they all cancel.

\begin{figure}[h]
\begin{centering}
\includegraphics{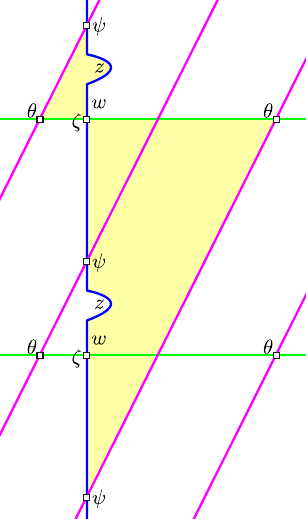}\includegraphics{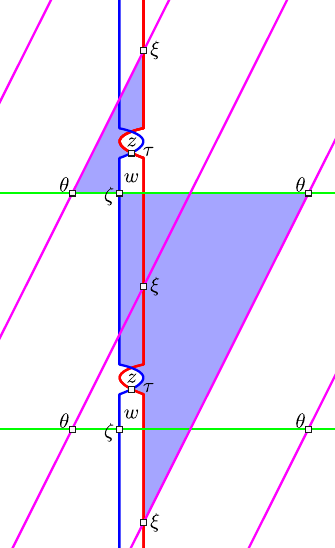}
\par\end{centering}
\caption{\label{fig:small-triangles-quadrilaterals}Left: two small triangles
with weight $U^{1/2}$; Right: Two small quadrilaterals with weight
$1$}
\end{figure}

\subsection{\label{subsec:mu3-vanish}The \texorpdfstring{$\mu_3$}{mu3}'s vanish}

We will write $\zeta$ instead of $\zeta'$ and $\beta_{0}$ instead
of $\beta_{0}'$, etc. since distinguishing them in this subsection
is meaningless.

Let us show that $\mu_{3}(\tau,\zeta,\theta)$, $\mu_{3}(\zeta,\theta,\xi)$,
and $\mu_{3}(\theta,\xi,\tau)$ vanish. Any quadrilateral that contributes
in these quadrilateral count has vertices $\xi,\tau,\zeta,\theta$.
There are two families of quadrilaterals with vertices $\xi,\tau,\zeta,\theta$
that cancel each other. The two smallest quadrilaterals are drawn
in the right hand side of Figure \ref{fig:small-triangles-quadrilaterals}.
These do not have any basepoints, and hence have weight $1$. The
next smallest are drawn in Figure \ref{fig:large-quadrilaterals},
and they have weight $U^{2}$.

We view these quadrilaterals as small perturbations of $T_{n}^{\pm}$'s
as in Subsection \ref{subsec:mu2-vanish}. Given a triangle $T_{n}^{\pm}$,
specifying such a quadrilateral is equivalent to choosing a lift of
$\tau$ as vertices of the quadrilateral. For each of $T_{n}^{+}$
and $T_{n}^{-}$, there is $2n-1$ such choices, and there are exactly
$2n^{2}-2n$ basepoints for each such quadrilateral. Hence they all
cancel.

Computing $\mu_{3}(\xi,\tau,\zeta)$ is slightly more complicated
since the quadrilaterals do not need to have $\theta$ as a vertex,
but we only need to compute $\mu_{3}(\xi,\tau,\zeta)$ modulo $U^{1/2}$.
All the relevant quadrilaterals that do not have any basepoints have
$\theta$ as a vertex, and there are exactly two of them, which cancel.

\subsection{The \texorpdfstring{$\mu_4$}{mu4}'s are the identity modulo $U^{1/2}$}

For the $\mu_{4}$'s, in all the cases, the Maslov index $-2$ pentagons
are small perturbations of the above quadrilaterals (not every quadrilateral
has a small perturbation that contributes to $\mu_{4}$). In each
case, there is exactly one pentagon with vertices $\tau,\zeta,\theta,\xi$
and $\Theta_{i}^{+}$ ($i=a,b,0,2$) that does not contain any basepoints.
This pentagon is a small perturbation of the top quadrilateral in
the right hand side of Figure \ref{fig:small-triangles-quadrilaterals}.
Two of these are drawn in Figure \ref{fig:small-pentagons}.
\begin{figure}[h]
\begin{centering}
\includegraphics{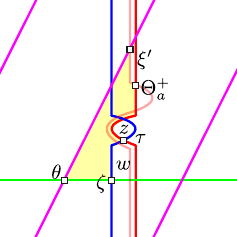}\enskip{}\includegraphics{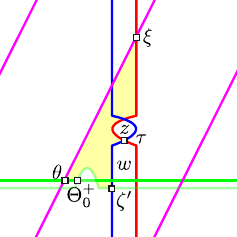}
\par\end{centering}
\caption{\label{fig:small-pentagons}Relevant pentagons for $\mu_{4}(\tau,\zeta,\theta,\xi')$
and $\mu_{4}(\theta,\xi,\tau,\zeta')$ modulo $U^{1/2}$. These are
small perturbations of the top quadrilateral of the right hand side
of Figure \ref{fig:small-triangles-quadrilaterals}.}
\end{figure}

\begin{figure}
\begin{centering}
\includegraphics[scale=0.7]{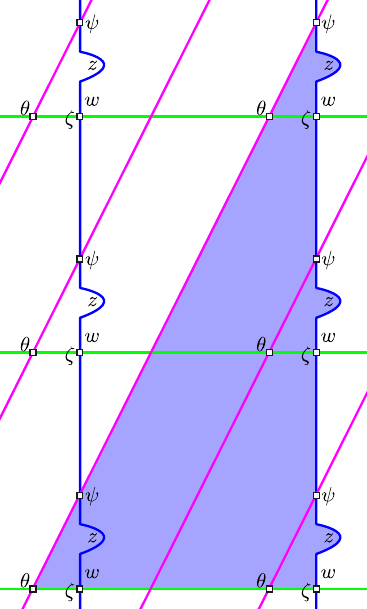}\enskip{}\includegraphics[scale=0.7]{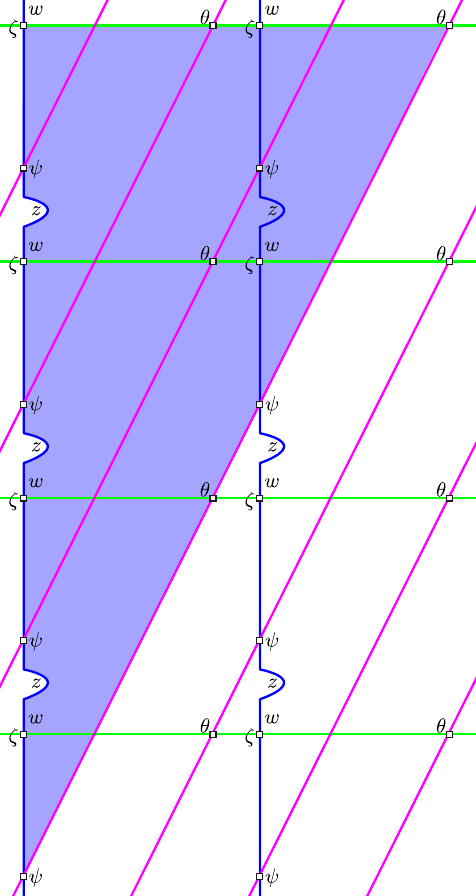}
\par\end{centering}
\caption{\label{fig:large-triangles}Two triangles with weight $U^{5/2}$}
\end{figure}
\begin{figure}
\begin{centering}
\includegraphics[scale=0.7]{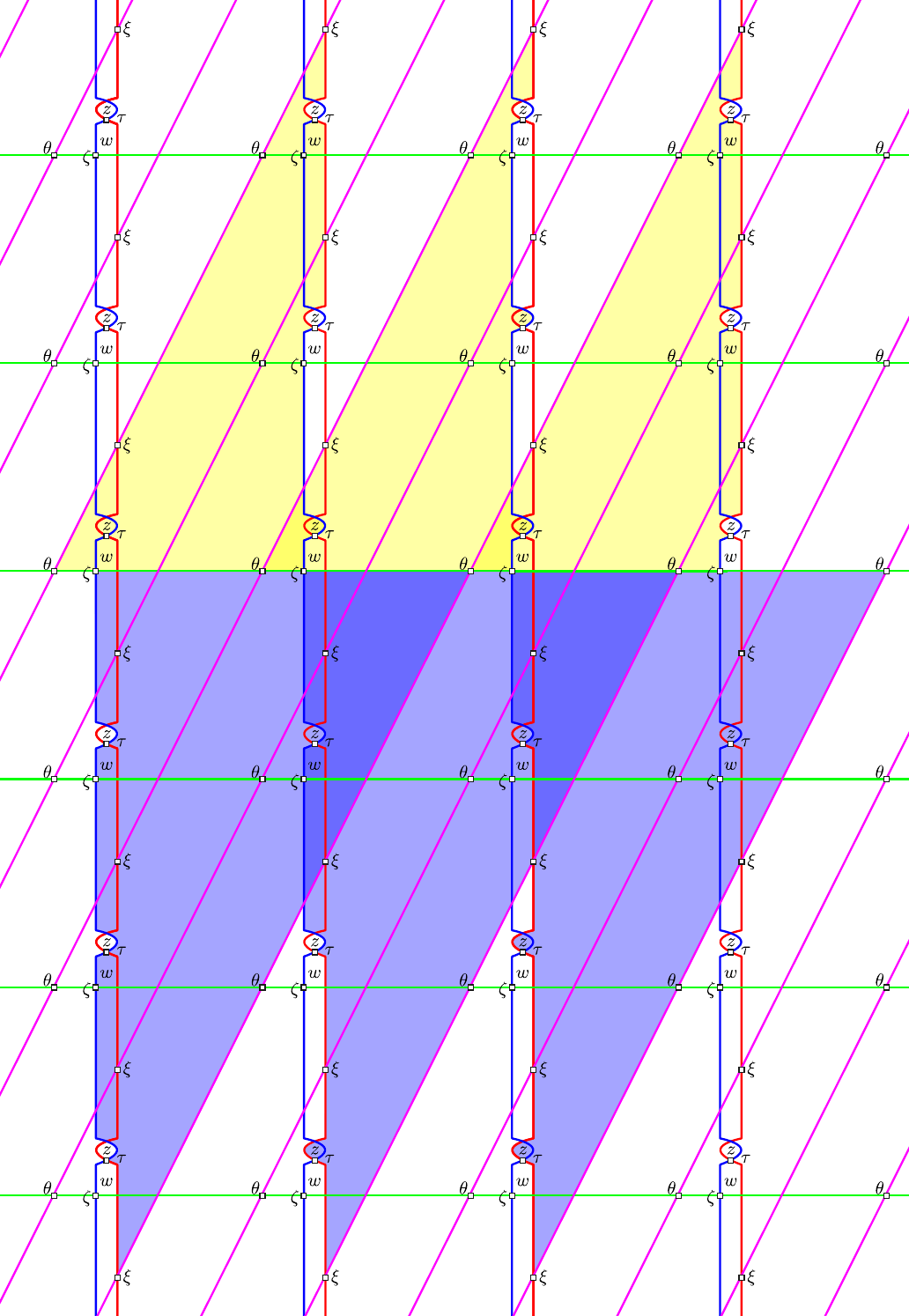}
\par\end{centering}
\caption{\label{fig:large-quadrilaterals}Six quadrilaterals with weight $U^{2}$.
The top quadrilaterals are small perturbations of $T_{2}^{+}$ and
the bottom ones are small perturbations of $T_{2}^{-}.$}
\end{figure}
\FloatBarrier

\section{\label{sec:proof-step3}Local computation for Claim \ref{claim:step-3}}

\begin{figure}
\begin{centering}
\includegraphics{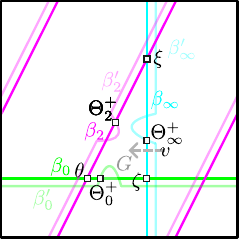}
\par\end{centering}
\caption{\label{fig:local-02inf}The genus $1$ Heegaard diagram $(\mathbb{T}^{2},\beta_{0},\beta_{2},\beta_{\infty},\beta_{0}',\beta_{2}',\beta_{\infty}',v)$}
\end{figure}

Consider the Heegaard diagram Figure \ref{fig:local-02inf}, and work
over the ring $\mathbb{F}\llbracket U\rrbracket$ and assign the weight
$U$ to $v$. The circle $\beta_{\infty}$ has a nontrivial local
system $E=e_{0}\mathbb{F}\llbracket U\rrbracket\oplus e_{1}\mathbb{F}\llbracket U\rrbracket$
whose monodromy is given by the oriented arc $G$.

Consider the following vertical maps $e_{0}\underline{\xi}$ and $e_{1}^{\ast}\underline{\zeta}$
between the twisted complexes $\beta_{0}\xrightarrow{\theta}\beta_{2}$
and $\beta_{\infty}^{E}$. 
\[\begin{tikzcd}[sep=large]
	{\beta_0} & {\beta_2} & {\beta_\infty ^E} \\
	& {\beta_\infty ^E} & {\beta _0 } & {\beta _2 }
	\arrow["{\theta }", from=1-1, to=1-2]
	\arrow["{e_0 \xi}"', dashed, from=1-2, to=2-2]
	\arrow["{e_1 ^\ast  \zeta }", dashed, from=1-3, to=2-3]
	\arrow["{\theta }", from=2-3, to=2-4]
\end{tikzcd}\]As in Section \ref{sec:proof-step1}, we will show that they are cycles
and that $\mu_{2}(e_{0}\underline{\xi},e_{1}^{\ast}\underline{\zeta'})$
and $\mu_{2}(e_{1}^{\ast}\underline{\zeta},e_{0}\underline{\xi'})$
are $\Theta^{+}$ modulo $U$.

As before, this is equivalent to showing the following.
\begin{itemize}
\item $\mu_{1}(\theta)=0$
\item $\mu_{1}(e_{0}\xi)=0$, $\mu_{2}(\theta,e_{0}\xi)=0$
\item $\mu_{1}(e_{1}^{\ast}\zeta)=0$, $\mu_{2}(e_{1}^{\ast}\zeta,\theta)=0$
\item $\mu_{2}(e_{0}\xi,e_{1}^{\ast}\zeta)=0$, $\mu_{3}(\theta,e_{0}\xi,e_{1}^{\ast}\zeta')=\Theta_{0}^{+}$,
$\mu_{3}(e_{0}\xi,e_{1}^{\ast}\zeta',\theta')=\Theta_{2}^{+}$, $\mu_{4}(\theta,e_{0}\xi,e_{1}^{\ast}\zeta',\theta')=0$
modulo $U$
\item $\mu_{3}(e_{1}^{\ast}\zeta,\theta,e_{0}\xi')={\rm Id}_{E}\Theta_{\infty}^{+}$
modulo $U$
\end{itemize}
We also need to check $\mu_{2}(e_{0}\xi,e_{1}^{\ast}\zeta)=0$ for
Theorem \ref{thm:2surgery}.
\begin{rem}
We can explicitly compute $\mu_{2}(e_{0}\underline{\xi},e_{1}^{\ast}\underline{\zeta'})$
and $\mu_{2}(e_{1}^{\ast}\underline{\zeta},e_{0}\underline{\xi'})$.
Indeed, we get $\mu_{2}(e_{0}\xi,e_{1}^{\ast}\zeta)=\mu_{4}(\theta,e_{0}\xi,e_{1}^{\ast}\zeta',\theta')=0$
and the $\mu_{3}$'s are $u\Theta^{+}$ where the $u$ is the same
as in Section \ref{sec:proof-step1}: $u=\sum_{n=1}^{\infty}U^{n^{2}-n}$.
\end{rem}

Similarly, the $\mu_{1}$'s vanish because there are no bigons, and
the $\mu_{4}$ vanish because there are no pentagons with Maslov index
$-2$.

\subsection{The \texorpdfstring{$\mu_2$}{mu2}'s vanish}

We evaluate the triangle counting maps. Recall Definition \ref{def:triangle-label}.

We compute $\mu_{2}(e_{0}\xi,e_{1}^{\ast}\zeta)$, which is the most
complicated case, since it is not a priori obvious that the triangles
$S_{n}^{+}$ and $S_{n}^{-}$ do not contribute. Recall that $\phi=e_{1}e_{0}^{\ast}+Ue_{0}e_{1}^{\ast}$.
\begin{itemize}
\item The triangle $T_{n}^{+}$ contributes 
\[
U^{(n-1)^{2}}e_{1}^{\ast}\phi^{2n-1}e_{0}\theta=U^{n^{2}-n}\theta;
\]
\item The triangle $T_{n}^{-}$ contributes 
\[
U^{(n-1)^{2}}e_{1}^{\ast}\left(U\phi^{-1}\right)^{2n-1}e_{0}\theta=U^{n^{2}-n}\theta;
\]
(note that $U\phi^{-1}=\phi$)
\item The triangle $S_{n}^{\pm}$ contributes some multiple of 
\[
e_{1}^{\ast}\phi^{2k}e_{0}\theta_{0}=0;
\]
\end{itemize}
and so they all cancel and we get $\mu_{2}(e_{0}\xi,e_{1}^{\ast}\zeta)=0$.

The rest vanish as well by direct computation.

\subsection{The \texorpdfstring{$\mu_3$}{mu3}'s are the identity}

\begin{figure}[h]
\begin{centering}
\includegraphics{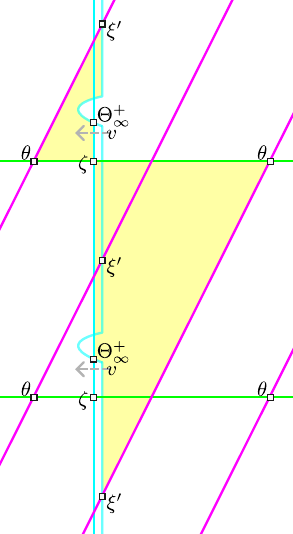}
\par\end{centering}
\caption{\label{fig:2surgery-quad}The two quadrilaterals highlighted in yellow
are the two quadrilaterals that contribute modulo $U$, and they add
up to ${\rm Id}_{E}\Theta_{\infty}^{+}$.}
\end{figure}

We compute $\mu_{3}(e_{1}^{\ast}\zeta,\theta,e_{0}\xi')$, which is
the most complicated case. If we work modulo $U$, then the two small
quadrilaterals highlighted in yellow in Figure \ref{fig:2surgery-quad}
are the only quadrilaterals that contribute. The top one outputs $e_{0}e_{1}^{\ast}\phi\Theta_{\infty}^{+}=e_{0}e_{0}^{\ast}\Theta_{\infty}^{+}$
and the bottom one outputs $U\phi^{-1}e_{0}e_{1}^{\ast}\Theta_{\infty}^{+}=e_{1}e_{1}^{\ast}\Theta_{\infty}^{+}$.
Hence, they add up to ${\rm Id}_{E}\Theta_{\infty}^{+}.$

For the other $\mu_{3}$'s, there is exactly one quadrilateral that
contributes, which gives $\Theta_{0}^{+}$ or $\Theta_{2}^{+}$.

\section{\label{sec:composition}Completing the proof of Theorem \ref{thm:sym2-t2}}

\subsection{\label{subsec:Gradings-1}A homological $\mathbb{Z}$-grading}

A useful observation that we will repeatedly use is that the following
Heegaard datum is homologically $\mathbb{Z}$-gradable. Compare \cite[Section 3]{MR2509750}
and \cite[Section 5.2]{MR2964628}.
\begin{prop}
\label{prop:diagram-graded}Any Heegaard diagram obtained from the
Heegaard diagram $(\mathbb{T}^{2},\boldsymbol{\beta}_{a},\boldsymbol{\beta}_{b},\boldsymbol{\beta}_{c},\{v,w,z\},G)$
given in Figure \ref{fig:drawing-genus1} by adding standard translates
of the attaching curves is homologically $\mathbb{Z}$-gradable.
\end{prop}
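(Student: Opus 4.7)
The approach is to reduce $\mathbb{Z}$-gradability to a Chern class vanishing condition on individual intersection points of two-attaching-curve subdiagrams, and then to verify this directly from Figure \ref{fig:drawing-genus1}.

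All attaching curves of the diagram --- the $\boldsymbol{\beta}_a, \boldsymbol{\beta}_b, \boldsymbol{\beta}_c$ of Figure \ref{fig:drawing-genus1} together with any finite collection of standard translates --- are simple closed curves on $\mathbb{T}^2$ isotopic to a fixed longitude, and in particular are pairwise handlebody-equivalent. After formally designating any one of them as the ``$\boldsymbol{\alpha}$'' curve (as permitted by the discussion at the beginning of Subsection \ref{subsec:Homologous-attaching-curves}), Lemma \ref{lem:domain-lemma} applies: for any two domains $\mathcal{D}, \mathcal{D}' \in D({\bf x}_0, \ldots, {\bf x}_k)$ with the same vertices, the difference $\mathcal{D} - \mathcal{D}'$ is a sum of cornerless domains $\mathcal{P}_j \in D({\bf x}_j, {\bf x}_j)$. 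Since $\mu$ and $P$ are additive, the gradability condition of Definition \ref{def:gradable} reduces to the claim that $\mu(\mathcal{P}) = P(\mathcal{P})$ for every cornerless $\mathcal{P} \in D({\bf x}, {\bf x})$ arising in any two-curve subdiagram.

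By Lemma \ref{lem:cornerless-maslov}, for cornerless $\mathcal{P} \in D({\bf x}, {\bf x})$ one has
\[
\mu(\mathcal{P}) - P(\mathcal{P}) = \langle c_1(\mathfrak{s}({\bf x})), H(\mathcal{P}) \rangle,
\]
where $H(\mathcal{P}) \in H_2(Y; \mathbb{Z})$ and $Y \cong S^1 \times S^2$ is the three-manifold presented by the relevant pair of curves. The annular region between two parallel longitudes of $\mathbb{T}^2$ is a cornerless two-chain whose homology class generates $H_2(S^1 \times S^2; \mathbb{Z}) \cong \mathbb{Z}$, so requiring $\mu(\mathcal{P}) = P(\mathcal{P})$ for all such $\mathcal{P}$ is equivalent to $c_1(\mathfrak{s}({\bf x})) = 0$ for every intersection point ${\bf x}$ of every two-curve subdiagram.

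Finally, I check this Chern class vanishing directly. When a curve is paired with its own standard translate, the two intersection points $\Theta^{\pm}$ tautologically sit in the torsion ${\rm Spin}^c$-structure. For the pairs $(\boldsymbol{\beta}_i, \boldsymbol{\beta}_j)$ with $i \neq j$ (and their perturbations by standard translates, which do not change the underlying topology), inspection of Figure \ref{fig:drawing-genus1} shows that the basepoints $\{v, w, z\}$ are placed so that each intersection point lies in the torsion ${\rm Spin}^c$-structure, analogously to the left-hand diagram of Example \ref{exa:simple-obstruction}; when $\boldsymbol{\beta}_c$ is involved, first adjoin $\partial_+ G$ to the basepoints as in Subsection \ref{subsec:Gneq0} and then apply Equation (\ref{eq:c1-average}) to verify that $c_1 = 0$. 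The main obstacle is this final diagrammatic case analysis, particularly for pairs involving $\boldsymbol{\beta}_c$ where the local system and the arc $G$ complicate the bookkeeping; but since the Heegaard surface is a torus with only a handful of intersection points per pair, the check is a short combinatorial exercise.
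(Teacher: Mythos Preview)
Your approach is correct and is essentially the paper's own proof. The paper invokes Lemma~\ref{lem:cornerless-maslov-1} directly, noting that the $\boldsymbol{\beta}_i$ are handlebody-equivalent and that all intersection points have $c_1=0$; you have simply unpacked that lemma into its two ingredients (Lemma~\ref{lem:domain-lemma} and Lemma~\ref{lem:cornerless-maslov}) and spelled out the $c_1=0$ check a bit more explicitly.
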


\begin{proof}
This follows from Lemma \ref{lem:cornerless-maslov-1}: the $\boldsymbol{\beta}_{i}$'s
are handlebody-equivalent, and the intersection points have $c_{1}=0$.
\end{proof}
Let the top homological $\mathbb{Z}$-grading elements of $\boldsymbol{CF}^{-}(\boldsymbol{\beta}_{a},\boldsymbol{\beta}_{b})$
and $\boldsymbol{CF}^{-}(\boldsymbol{\beta}_{b},\boldsymbol{\beta}_{c}^{E})$
have $\mathbb{Z}$-grading $0$. Then, the top $\mathbb{Z}$-grading
elements of $\boldsymbol{CF}^{-}(\boldsymbol{\beta}_{a},\boldsymbol{\beta}_{c}^{E})$,
$\boldsymbol{CF}^{-}(\boldsymbol{\beta}_{c}^{E},\boldsymbol{\beta}_{a}')$,
$\boldsymbol{CF}^{-}(\boldsymbol{\beta}_{c}^{E},\boldsymbol{\beta}_{b}')$,
$\boldsymbol{CF}^{-}(\boldsymbol{\beta}_{b},\boldsymbol{\beta}_{a}')$
have $\mathbb{Z}$-grading $0$, $-1$, $-1$, $-1$, respectively.

We will consider the twisted complex $\underline{\boldsymbol{\beta}_{ab}}:=\boldsymbol{\beta}_{a}\xrightarrow{\tau}\boldsymbol{\beta}_{b}$
in this graded sense as well, in which case we take $\underline{\boldsymbol{\beta}_{ab}}=\boldsymbol{\beta}_{a}[-1]\xrightarrow{\tau}\boldsymbol{\beta}_{b}$. 

\subsection{\label{subsec:obstruction-class}An Alexander $\mathbb{Z}/2$-splitting}

\begin{figure}[h]
\begin{centering}
\includegraphics{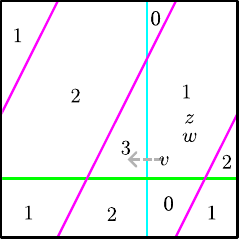}
\par\end{centering}
\caption{\label{fig:2surgery-cornerless}A cornerless two-chain ${\cal D}$
whose boundary has exactly one circle with slope $2$.}
\end{figure}

\begin{prop}
\label{prop:diagram-z/2-splitting}Any Heegaard diagram obtained from
the Heegaard diagram 
\[
(\mathbb{T}^{2},\boldsymbol{\beta}_{a},\boldsymbol{\beta}_{b},\boldsymbol{\beta}_{0},\boldsymbol{\beta}_{2},\widetilde{\boldsymbol{\beta}_{0}},\widetilde{\boldsymbol{\beta}_{2}},\boldsymbol{\beta}_{c}^{E},\{v,w,z\},G)
\]
given in Section \ref{sec:The-plan} by adding standard translates
of the attaching curves is Alexander $\mathbb{Z}/2$-splittable.
\end{prop}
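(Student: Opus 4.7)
The plan is to mirror the strategy of Proposition \ref{prop:diagram-graded}. By the argument of Lemma \ref{lem:domain-lemma} (adapted to a diagram consisting only of $\boldsymbol{\beta}$-type curves, where the role of $\mathcal{P}_a$ is absorbed elsewhere), any two domains $\mathcal{D}, \mathcal{D}'$ with the same vertices differ by a sum of cornerless two-chains, each lying in $D(\mathbf{x}_j, \mathbf{x}_j)$ for some $j$. Since $P$ is additive, the claim reduces to showing $P(\mathcal{P}) \equiv 0 \pmod 2$ for every cornerless two-chain $\mathcal{P}$ on $\Sigma = \mathbb{T}^2$. The free basepoint $v$ always contributes $2 n_v(\mathcal{P})$ to $P(\mathcal{P})$, so it suffices to prove
\[
n_w(\mathcal{P}) + n_z(\mathcal{P}) + n_{\partial_+ G}(\mathcal{P}) + n_{\partial_- G}(\mathcal{P}) \equiv 0 \pmod 2,
\]
where we set $Q = \{w, z, \partial_+ G, \partial_- G\}$.

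The plan is to interpret this sum homologically. The boundary $\partial \mathcal{P}$ is null-homologous in $\Sigma$, so its class $[\partial \mathcal{P}] \in H_1(\Sigma \setminus Q; \mathbb{Z}/2)$ lies in the kernel of the surjection $H_1(\Sigma \setminus Q; \mathbb{Z}/2) \to H_1(\Sigma; \mathbb{Z}/2)$, i.e.\ in the $\mathbb{Z}/2$-span of the meridians $\mu_q$ modulo the single relation $\sum_{q \in Q} \mu_q = 0$. Because $|Q| = 4$ is even, the assignment $\phi(\mu_q) := 1$ descends to a well-defined homomorphism on this kernel, and Stokes' theorem applied to a closed $1$-form representing $\sum_{q \in Q} \mu_q^\vee$ gives
\[
n_w(\mathcal{P}) + n_z(\mathcal{P}) + n_{\partial_+ G}(\mathcal{P}) + n_{\partial_- G}(\mathcal{P}) \equiv \phi([\partial \mathcal{P}]) \pmod 2.
\]
Writing $\partial \mathcal{P} = \sum_i a_i \boldsymbol{\beta}_i$, and using that $\sum_i a_i [\boldsymbol{\beta}_i]_\Sigma = 0$ (so the ambiguity in extending $\phi$ to the full group $H_1(\Sigma \setminus Q; \mathbb{Z}/2)$ washes out), the problem reduces to a finite check: for each pair of parallel attaching curves in the combined diagram (either a curve and its standard translate, or two curves of the same slope like $\boldsymbol{\beta}_0$ and $\widetilde{\boldsymbol{\beta}_0}$), the annular region between them contains an even number of points of $Q$.

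This elementary check is carried out by inspecting Figures \ref{fig:drawing-genus1} and \ref{fig:genus1-diagrams}. The essential observation is that $w$ and $z$ are positioned immediately next to each other, and $\partial_+ G, \partial_- G$ are the two endpoints of the short dashed arc $G$; in each of the diagrams, every annular region between parallel $\boldsymbol{\beta}$'s either contains both of $\{w, z\}$ or neither, and similarly for $\{\partial_+ G, \partial_- G\}$. The main obstacle is keeping bookkeeping honest across the various slopes, particularly for the cornerless two-chain of Figure \ref{fig:2surgery-cornerless}: its slope-$2$ boundary (together with the forced compensating boundary components on curves of other slopes) sweeps over the $Q$-basepoints in a way that is not immediately visible in any single subfigure, but a case analysis over the finitely many ``elementary'' cornerless chains — spanning all cornerless two-chains together with $[\mathbb{T}^2]$, which has $P \equiv |Q| \equiv 0 \pmod 2$ — completes the proof.
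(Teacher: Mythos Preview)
Your overall strategy --- reduce to showing $P(\mathcal{P})\equiv 0\pmod 2$ for every cornerless two-chain, then check on a spanning set --- is the same as the paper's.  However, there are two problems with the execution.

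First, a small but telling misreading: in this diagram $v$ is \emph{not} a free basepoint; it is $\partial_{-}G$ (the distinguished variable is $U_{1}$, and by the conventions of Subsection~\ref{subsec:unoriented-local-system} the basepoint with weight equal to the distinguished variable is $\partial_{-}G$).  So $P(\mathcal{P})=n_{w}+n_{z}+n_{\partial_{+}G}+n_{v}$ already, and there is no separate $2n_{v}$ contribution.  By luck your set $Q$ ends up being the correct four points anyway, so this does not break the argument, but the reasoning you give for discarding $v$ is wrong.

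Second, and more seriously, your claimed reduction is incomplete.  You assert that ``the problem reduces to a finite check: for each pair of parallel attaching curves \ldots\ the annular region between them contains an even number of points of $Q$.''  But annular regions between parallel circles, together with $[\mathbb{T}^{2}]$, do \emph{not} span the group of cornerless two-chains: there are cross-slope relations.  Concretely, in $H_{1}(\mathbb{T}^{2};\mathbb{Z}/2)$ a slope-$0$ circle and a slope-$2$ circle represent the same class, so there are cornerless two-chains whose boundary contains one circle of each slope (and possibly others) --- precisely the $\mathcal{D}_{1}$ of Figure~\ref{fig:2surgery-cornerless}.  Your annular check says nothing about these.  You seem to realize this in your final paragraph, where you backtrack to ``a case analysis over the finitely many elementary cornerless chains,'' but at that point you have abandoned the homological reduction and are asserting the paper's direct check without carrying it out.

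The paper's proof skips the homological detour entirely: it writes any cornerless two-chain as (a multiple of $\mathcal{D}_{1}$) $+$ (small annuli between standard translates) $+$ (a chain with boundary only in $\boldsymbol{\beta}_{a},\boldsymbol{\beta}_{b},\boldsymbol{\beta}_{c}$), computes $P(\mathcal{D}_{1})=6$ explicitly, notes the translate annuli have $P=0$, and checks the third piece directly.  Your homological framework does not shorten or avoid this computation --- it only repackages the question before deferring to the same finite check.
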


\begin{proof}
Let ${\cal D}$ be a cornerless two-chain. We check that its total
multiplicity $P({\cal D})$ is even. Let ${\cal D}_{1}$ be the cornerless
two-chain in Figure \ref{fig:2surgery-cornerless}, whose boundary
has exactly one circle with slope $2$. We have $P({\cal D}_{1})=6$,
which is even. The cornerless two-chain ${\cal D}$ can be written
as the sum of (1) some multiple of ${\cal D}_{1}$, (2) a linear combination
${\cal D}_{2}$ of some number of small cornerless two-chains between
two circles which are standard translates of each other, and (3) a
cornerless two-chain ${\cal D}_{3}$ whose boundary lies in the union
of the attaching curves $\boldsymbol{\beta}_{a},\boldsymbol{\beta}_{b},\boldsymbol{\beta}_{c}$.
We have $P({\cal D}_{2})=0$, and we can directly check that $P({\cal D}_{3})$
is always even.
\end{proof}
We let all the generators that we have considered in Section \ref{sec:The-plan}
have Alexander $\mathbb{Z}/2$-grading $0$. This is indeed possible:
there are two things to check. One is that we do not get a contradiction
from the maps from Claims \ref{claim:step-2}, and the other is that
the $\Theta^{+}$'s have Alexander $\mathbb{Z}/2$-grading $0$. Consider
the intersection point $s\in\boldsymbol{\beta}_{2}\cap\widetilde{\boldsymbol{\beta}_{0}}$
that corresponds to $\theta$. Then, the former follows from that
${\rm gr}_{A}^{\mathbb{Z}/2}(\Theta_{0}^{+})={\rm gr}_{A}^{\mathbb{Z}/2}(\theta)+{\rm gr}_{A}^{\mathbb{Z}/2}(s)$
and ${\rm gr}_{A}^{\mathbb{Z}/2}(\widetilde{\theta})={\rm gr}_{A}^{\mathbb{Z}/2}(\Theta_{2}^{+})+{\rm gr}_{A}^{\mathbb{Z}/2}(s)$.
The latter also follows from direct computation.

\subsection{Another computation}

Let us briefly study the top homological $\mathbb{Z}$-grading summand
of $\boldsymbol{CF}^{-}(\underline{\boldsymbol{\beta}_{ab}},\underline{\boldsymbol{\beta}_{ab}'})$.
The top homological $\mathbb{Z}$-grading is $0$, and the generators
with homological $\mathbb{Z}$-grading $0$ are $\Theta_{a}^{+}$,
$\Theta_{b}^{+}$, $x$, and $y$ in Figure \ref{fig:betaabab}. (The
intersection points in the free-stabilizing region are not drawn.)
The Alexander $\mathbb{Z}/2$-grading of $x$ is $1$, and the rest
are $0$. One can check that, among linear combinations of $\Theta_{a}^{+}$,
$\Theta_{b}^{+}$, and $y$, the sum $\Theta_{a}^{+}+\Theta_{b}^{+}$
is the only nonzero cycle, regardless of the almost complex structure.

\begin{figure}[h]
\begin{centering}
\includegraphics{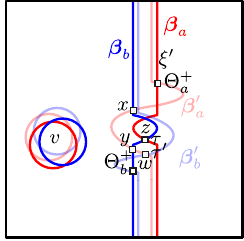}
\par\end{centering}
\caption{\label{fig:betaabab}A Heegaard diagram $(\mathbb{T}^{2},\boldsymbol{\beta}_{a},\boldsymbol{\beta}_{b},\boldsymbol{\beta}_{a}',\boldsymbol{\beta}_{b}',z,w,v)$}
\end{figure}

\subsection{Independence under variation of the almost complex structure}

We show that the statement of Theorem \ref{thm:sym2-t2} is insensitive
of the almost complex structure. Let us first check that the maps
are invariant. In all the cases, this will follow from that there
are no relevant domains with the right Maslov index, which is easy
to check thanks to Propositions \ref{prop:diagram-graded} and \ref{prop:diagram-z/2-splitting}.

The elements $\tau\in\boldsymbol{CF}^{-}(\boldsymbol{\beta}_{a},\boldsymbol{\beta}_{b})$,
${\rm Id}_{E}\Theta^{+}\in\boldsymbol{CF}^{-}(\boldsymbol{\beta}_{c}^{E},\boldsymbol{\beta}_{c}'{}^{E})$,
$\boldsymbol{\sigma}\in\boldsymbol{CF}^{-}(\boldsymbol{\beta}_{b},\boldsymbol{\beta}_{c}^{E})$,
and $\boldsymbol{\rho}\in\boldsymbol{CF}^{-}(\boldsymbol{\beta}_{c}^{E},\boldsymbol{\beta}_{a})$
are invariant under changing the almost complex structure, since the
chain map induced by changing the almost complex structure preserves
both the homological $\mathbb{Z}$-grading and the Alexander $\mathbb{Z}/2$-grading,
and the above elements are the unique nonzero cycle in the corresponding
$\mathbb{Z}\oplus\mathbb{Z}/2$-grading. Hence, in particular, $\underline{\boldsymbol{\beta}_{ab}}$
is preserved under changing the almost complex structure.

Since the top homological $\mathbb{Z}$-grading of $\boldsymbol{CF}^{-}(\boldsymbol{\beta}_{a}[-1],\boldsymbol{\beta}_{c}^{E})$
is $-1$ and $\underline{\boldsymbol{\sigma}}\in\boldsymbol{CF}^{-}(\underline{\boldsymbol{\beta}_{ab}},\boldsymbol{\beta}_{c}^{E})$
has homological $\mathbb{Z}$-grading $0$, the element $\underline{\boldsymbol{\sigma}}\in\boldsymbol{CF}^{-}(\underline{\boldsymbol{\beta}_{ab}},\boldsymbol{\beta}_{c}^{E})$
is invariant under changing the almost complex structure. Similarly,
$\underline{\boldsymbol{\rho}}\in\boldsymbol{CF}^{-}(\boldsymbol{\beta}_{c}^{E},\underline{\boldsymbol{\beta}_{ab}})$
is invariant.

The maps $\Theta_{a}^{+}\in\boldsymbol{CF}^{-}(\boldsymbol{\beta}_{a}[-1],\boldsymbol{\beta}_{a}'[-1])$
and $\Theta_{b}^{+}\in\boldsymbol{CF}^{-}(\boldsymbol{\beta}_{b},\boldsymbol{\beta}_{b}')$
are invariant as well. Since the top homological $\mathbb{Z}$-grading
of $\boldsymbol{CF}^{-}(\boldsymbol{\beta}_{a}[-1],\boldsymbol{\beta}_{b})$
is $-1$, they are invariant as elements of $\boldsymbol{CF}^{-}(\underline{\boldsymbol{\beta}_{ab}},\underline{\boldsymbol{\beta}_{ab}'})$.
Hence, $\Theta^{+}=\underline{\Theta_{a}^{+}}+\underline{\Theta_{b}^{+}}$
is invariant.

Finally, if we know that $\mu_{2}(\underline{\boldsymbol{\sigma}},\underline{\boldsymbol{\rho}'})=\Theta^{+}$
and $\mu_{2}(\underline{\boldsymbol{\rho}},\underline{\boldsymbol{\sigma}'})=\Theta^{+}$
for some choice of almost complex structure, then in any other almost
complex structure, we know that they are equal in homology, but since
they are in the top homological $\mathbb{Z}$-grading, they must be
equal.

\subsection{Finishing off the proof}

We have considered the twisted complexes $\underline{\boldsymbol{\beta}_{ab}}:=\boldsymbol{\beta}_{a}\xrightarrow{\tau}\boldsymbol{\beta}_{b}$,
$\underline{\boldsymbol{\beta}_{02}}:=\boldsymbol{\beta}_{0}\xrightarrow{\theta}\boldsymbol{\beta}_{2}$,
and $\underline{\widetilde{\boldsymbol{\beta}_{02}}}:=\widetilde{\boldsymbol{\beta}_{0}}\xrightarrow{\widetilde{\theta}}\widetilde{\boldsymbol{\beta}_{2}}$,
and we have defined maps between each consecutive pair in the sequence
$\underline{\boldsymbol{\beta}_{ab}},\underline{\boldsymbol{\beta}_{02}},\underline{\widetilde{\boldsymbol{\beta}_{02}}},\boldsymbol{\beta}_{c}^{E}$
in Sections \ref{sec:The-plan}, \ref{sec:proof-step1}, and \ref{sec:proof-step3}
(see Remark \ref{rem:explicit-moral-homotopy-equiv}). However, we
assumed in Claims \ref{claim:step-1} and \ref{claim:step-3} that
the underlying almost complex structures are sufficiently stretched.
Fortunately, the maps $\tau$, $\theta$, and $\widetilde{\theta}$
are invariant under the $A_{\infty}$-functors defined by changing
the almost complex structure since there are no domains that could
contribute. Hence, all the twisted complexes that we have considered
are invariant as well, and so by applying the $A_{\infty}$-functor,
we get pairs of maps (which are cycles) between each consecutive pair
in the sequence $\underline{\boldsymbol{\beta}_{ab}},\underline{\boldsymbol{\beta}_{02}},\underline{\widetilde{\boldsymbol{\beta}_{02}}},\boldsymbol{\beta}_{c}^{E}$,
in any almost complex structure. Let $\underline{\widetilde{\boldsymbol{\rho}}}:\underline{\boldsymbol{\beta}_{ab}}\to\boldsymbol{\beta}_{c}^{E}$
and $\underline{\widetilde{\boldsymbol{\sigma}}}:\boldsymbol{\beta}_{c}^{E}\to\underline{\boldsymbol{\beta}_{ab}}$
be compositions of these maps. Then, $\underline{\widetilde{\boldsymbol{\rho}}}$
and $\underline{\widetilde{\boldsymbol{\sigma}}}$ are cycles, and
have Alexander $\mathbb{Z}/2$-grading $0$.

Let $\boldsymbol{\alpha}$ be any attaching curve, and let $t:\underline{\boldsymbol{\gamma}_{1}}\to\underline{\boldsymbol{\gamma}_{2}}$
be any of the above maps between each consecutive pair in the sequence
$\underline{\boldsymbol{\beta}_{ab}},\underline{\boldsymbol{\beta}_{02}},\underline{\widetilde{\boldsymbol{\beta}_{02}}},\boldsymbol{\beta}_{c}^{E}$.
Then, 
\[
\mu_{2}(-,t):\boldsymbol{CF}^{-}(\boldsymbol{\alpha},\underline{\boldsymbol{\gamma}_{1}})\to\boldsymbol{CF}^{-}(\boldsymbol{\alpha},\underline{\boldsymbol{\gamma}_{2}})
\]
is a quasi-isomorphism by Lemma \ref{lem:theta-quasi-iso}, Remark
\ref{rem:theta-quasi-iso}, and Lemma \ref{lem:derived-nakayama}.
Hence, $\mu_{2}(-,\underline{\widetilde{\boldsymbol{\rho}}})$ and
$\mu_{2}(-,\underline{\widetilde{\boldsymbol{\sigma}}})$ are also
quasi-isomorphisms.

We claim that the top homological $\mathbb{Z}$-grading components
of $\mu_{2}(\underline{\widetilde{\boldsymbol{\rho}}},\underline{\widetilde{\boldsymbol{\sigma}}'}):\underline{\boldsymbol{\beta}_{ab}}\to\underline{\boldsymbol{\beta}_{ab}'}$
and $\mu_{2}(\underline{\widetilde{\boldsymbol{\sigma}}},\underline{\widetilde{\boldsymbol{\rho}}'}):\boldsymbol{\beta}_{c}^{E}\to\boldsymbol{\beta}_{c}'{}^{E}$
are $\Theta^{+}$. Let $\boldsymbol{\beta}=\boldsymbol{\beta}_{a}''$
be another standard translate of $\boldsymbol{\beta}_{a}$. Then,
$(\mathbb{T}^{2},\boldsymbol{\beta},\boldsymbol{\beta}_{a},\boldsymbol{\beta}_{b},\boldsymbol{\beta}_{a}',\boldsymbol{\beta}_{b}',v,w,z)$
is weakly admissible, homologically $\mathbb{Z}$-gradable, and Alexander
$\mathbb{Z}/2$-splittable. Since 
\[
\mu_{2}(-,\mu_{2}(\underline{\widetilde{\boldsymbol{\rho}}},\underline{\widetilde{\boldsymbol{\sigma}}'})):\boldsymbol{CF}^{-}(\boldsymbol{\beta},\underline{\boldsymbol{\beta}_{ab}})\to\boldsymbol{CF}^{-}(\boldsymbol{\beta},\underline{\boldsymbol{\beta}_{ab}'})
\]
is a quasi-isomorphism and $\boldsymbol{HF}^{-}(\boldsymbol{\beta},\underline{\boldsymbol{\beta}_{ab}})\neq0$,
it cannot strictly lower the homological $\mathbb{Z}$-grading. Since
the top homological $\mathbb{Z}$-grading of $\boldsymbol{CF}^{-}(\underline{\boldsymbol{\beta}_{ab}},\underline{\boldsymbol{\beta}_{ab}'})$
is $0$, the top homological $\mathbb{Z}$-grading component of $\mu_{2}(\underline{\widetilde{\boldsymbol{\rho}}},\underline{\widetilde{\boldsymbol{\sigma}}'})$
has homological $\mathbb{Z}$-grading $0$, and so it must be $\Theta^{+}=\underline{\Theta_{a}^{+}}+\underline{\Theta_{b}^{+}}$,
as $\Theta^{+}$ is the unique cycle in homological $\mathbb{Z}$-grading
$0$ and Alexander $\mathbb{Z}/2$-grading $0$. We similarly get
that the top homological $\mathbb{Z}$-grading component of $\mu_{2}(\underline{\widetilde{\boldsymbol{\sigma}}},\underline{\widetilde{\boldsymbol{\rho}}'})$
is ${\rm Id}_{E}\Theta_{c}^{+}$.

Now, we claim that $\underline{\boldsymbol{\rho}}$ and $\underline{\boldsymbol{\sigma}}$
are the top homological $\mathbb{Z}$-grading components of $\underline{\widetilde{\boldsymbol{\rho}}}$
and $\underline{\widetilde{\boldsymbol{\sigma}}}$, respectively.
Since the top homological $\mathbb{Z}$-gradings of $\boldsymbol{CF}^{-}(\underline{\boldsymbol{\beta}_{ab}},\boldsymbol{\beta}_{c}^{E})$
and $\boldsymbol{CF}^{-}(\boldsymbol{\beta}_{c}^{E},\underline{\boldsymbol{\beta}_{ab}'})$
are $0$ and the top homological $\mathbb{Z}$-grading component of
$\mu_{2}(\underline{\widetilde{\boldsymbol{\rho}}},\underline{\widetilde{\boldsymbol{\sigma}}'})$
has homological $\mathbb{Z}$-grading $0$, the top homological $\mathbb{Z}$-grading
component of $\underline{\widetilde{\boldsymbol{\rho}}}$ has homological
$\mathbb{Z}$-grading $0$. Since the top homological $\mathbb{Z}$-grading
of $\boldsymbol{CF}^{-}(\boldsymbol{\beta}_{b},\boldsymbol{\beta}_{c}^{E})$
is $0$ and the top homological $\mathbb{Z}$-grading of $\boldsymbol{CF}^{-}(\boldsymbol{\beta}_{a}[-1],\boldsymbol{\beta}_{c}^{E})$
is $-1$, the top homological $\mathbb{Z}$-grading and Alexander
$\mathbb{Z}/2$-grading $0$ summand of $\boldsymbol{CF}^{-}(\underline{\boldsymbol{\beta}_{ab}},\boldsymbol{\beta}_{c}^{E})$
is spanned by $e_{0}\underline{\rho_{1}}$ and $e_{0}\underline{\rho_{2}}$,
and $e_{0}\underline{\rho_{1}}+e_{0}\underline{\rho_{2}}=\underline{\boldsymbol{\rho}}$
is the unique nonzero element that can be a cycle in $\boldsymbol{CF}^{-}(\underline{\boldsymbol{\beta}_{ab}},\boldsymbol{\beta}_{c}^{E})$.
Since the homological $\mathbb{Z}$-grading $0$ component of $\underline{\widetilde{\boldsymbol{\rho}}}$
is a nonzero cycle, it must be $\underline{\boldsymbol{\rho}}$. Similarly,
$\underline{\boldsymbol{\sigma}}$ is the top homological grading
component of $\underline{\widetilde{\boldsymbol{\sigma}}}$.

Since the top homological $\mathbb{Z}$-grading components of $\underline{\widetilde{\boldsymbol{\rho}}}$,
$\underline{\widetilde{\boldsymbol{\sigma}}}$, $\mu_{2}(\underline{\widetilde{\boldsymbol{\rho}}},\underline{\widetilde{\boldsymbol{\sigma}}'})$,
and $\mu_{2}(\underline{\widetilde{\boldsymbol{\sigma}}},\underline{\widetilde{\boldsymbol{\rho}}'})$
are $\underline{\boldsymbol{\rho}}$, $\underline{\boldsymbol{\sigma}}$,
$\Theta^{+}$, and $\Theta^{+}$, respectively, and they have homological
$\mathbb{Z}$-grading $0$, we have $\mu_{2}(\underline{\boldsymbol{\rho}},\underline{\boldsymbol{\sigma}'})=\Theta^{+}\in\boldsymbol{CF}^{-}(\underline{\boldsymbol{\beta}_{ab}},\underline{\boldsymbol{\beta}_{ab}'})$
and $\mu_{2}(\underline{\boldsymbol{\sigma}},\underline{\boldsymbol{\rho}}')=\Theta^{+}\in\boldsymbol{CF}^{-}(\boldsymbol{\beta}_{c},\boldsymbol{\beta}_{c}'{}^{E})$.

\bibliographystyle{amsalpha}
\bibliography{essay_bib}

\end{document}